\documentclass[11pt,letterpaper]{amsart}
\usepackage{graphicx} 
\usepackage{tikz-cd}
\usepackage{mathrsfs}
\usepackage{amsfonts}
\usepackage{amsmath}
\usepackage{amssymb}
\usepackage{amsthm}
\usepackage{enumitem}
\usepackage{comment}
\usepackage{eucal} 
\usepackage[all]{xy}

\theoremstyle{plain}
\swapnumbers
\newtheorem{lemma}{Lemma}[section]
\newtheorem{prop}[lemma]{Proposition}
\newtheorem{corollary}[lemma]{Corollary}
\newtheorem{theorem}[lemma]{Theorem}
\newtheorem{conjecture}[lemma]{Conjecture}

\theoremstyle{definition}
\newtheorem{definition}[lemma]{Definition}
\newtheorem{remark}[lemma]{Remark}
\newtheorem{construction}[lemma]{Construction}
\newtheorem{warning}[lemma]{Warning}
\newtheorem{example}[lemma]{Example}
\newtheorem{convention}[lemma]{Convention}

\newcommand{\End}{\operatorname{End}}
\newcommand{\Hom}{\operatorname{Hom}}
\newcommand{\Map}{\operatorname{Map}}

\newcommand{\supp}{\operatorname{supp}}
\newcommand{\im}{\operatorname{im}}
\newcommand{\vol}{\operatorname{vol}}

\newcommand{\codim}{\operatorname{codim}}

\newcommand\takuya[1]{{\color{blue} \sf $\infty$ Takuya: [#1]}}

\title{Weak Hodge theorem on piecewise-algebraic spaces}
\author{Aliakbar Hosseini and Takuya Murata}

\address{School of Mathematics, Institute for Research in Fundamental Sciences (IPM), Teheran, Iran}
\email{aliakbarman@gmail.com}

\address{School of Mathematics, Institute for Research in Fundamental Sciences (IPM), Teheran, Iran}
\email{takusi@gmail.com}

\date{\today}

\begin{document}

\setlist[enumerate,1]{label={(\roman*)}}

\begin{abstract} We prove a weak version of the classical Hodge theorem on \emph{piecewise-algebraic spaces}, a class of spaces introduced by Kontsevich and Soibelman in \cite{Kontsevich}. Precisely, we first prove the Poincar\'e lemma that computes singular cohomology as a variant of de Rham cohomology. Then, as a weak Hodge theorem, we naturally embed the singular cohomology into the space of harmonic forms, instead of establishing an isomorphism (which does not hold for those spaces). Our approach in the latter is classical: Sobolev space theory.

In addition, we give more detailed proofs for the claims in the appendix to \cite{Kontsevich}. This work is part of a program of extending arithmetic intersection theory to singular spaces. In particular, a type of currents in this singular setup is introduced.
\end{abstract}

\maketitle

\setcounter{section}{-1}
\setcounter{tocdepth}{1}

\tableofcontents

\section{Introduction}

The present paper extends the Hodge theorem on compact Riemannian manifolds to \emph{piecewise-algebraic spaces}, with a suitable formulation. The notion of a piecewise-algebraic space or a PA-space for short was introduced by Kontsevich and Soibelman in \cite{Kontsevich} and is a globalization of a real-semialgebraic set. Among PA spaces are algebraic varieties over real numbers and simplicial complexes.

A key fact about PA-spaces is that a PA-space admits an analog of smooth differential forms and that the Poincar\'e lemma holds for them. In particular, the singular cohomology of a PA-space can be computed as the PA-variant of the de Rham cohomology. After developing the general theory on PA-spaces, we prove a version of the Hodge theorem is also valid for PA-spaces; see Theorem \ref{Hodge theorem intro} below. (Some reformulation is needed since the usual Hodge theorem implies, for example, the Poincar\'e duality, which fails in general.)

The last section contains some additional materials; with that section, the paper also reproduces the whole Appendix of \cite{Kontsevich} with detailed proofs and minor fixes.

\subsection{Summary of results}

We summarize the results of this paper.

\S \ref{sec:PA-spaces} introduces PA-spaces. A \emph{PA-space} is, roughly, a locally compact space that admits a compact cover consisting of those isomorphic to compact semialgebraic subsets of $\mathbb{R}^m$ ($m$ depending on each compact set). Thus, compact PA-spaces play a role analogous to that of an open affine cover of a real algebraic variety (or more generally a scheme over real numbers). On a PA-space, using generic smoothness, we introduce the complex $\Omega_X$ of sheaves of forms on $X$ over the structure sheaf $\mathcal{O}_X$. The Poincar\'e lemma turns out to fail for it, as already observed in the original work \cite{Kontsevich}. Following the outline in op. cit., in \S \ref{sec:PA-forms}, we therefore introduce the complex $\Omega_{X, PA}$ of PA-forms, which is roughly generated by $\Omega_X$ and fiber integration, and then prove the Poincar\'e lemma for it. A proof in the compact case was already given in \cite{Hardt}; the proof in the noncompact case here seems new (although it is essentially the same as the classical one).

In \S \ref{sec:currents}, we introduce currents (to be precise, a variant of normal currents in geometric measure theory). A usual definition uses partial derivatives, which are not well-defined here. Thus, here, we adapt a minimalist definition so that the differential $d$ is well-defined and is continuous on currents. This result is used in the proof of the Hodge theorem but we will also need it to develop the Arakelov intersection theory on singular spaces in a future paper. We also prove the Poincar\'e lemma for currents, essentially by dualizing the Poincar\'e lemma for PA-forms.

Finally, in \S \ref{sec:Hodge theorem}, we prove the Hodge theorem, the main result of the paper. We write $\operatorname{H}(X; \mathbb{C})$ for the singular cohomology and $\operatorname{H}_{L^2_{loc}}(X)$ the cohomology given by locally square-integrable forms (PA-forms).

\begin{theorem}[Theorem \ref{L^2 Hodge} and Theorem \ref{singular Hodge theorem}]\label{Hodge theorem intro} Let $X$ be an oriented PA-space whose regular locus $X_{reg}$ is given a Riemannian metric that is uniformly continuous. Also, assume $\codim(X - X_{reg}) \ge 2$. Let $\Delta_d = d^* d + dd^*$ be the Laplacian on $X$, not just on $X_{reg}$.

Then
\begin{enumerate}
\item $\operatorname{ker}(\Delta_d) \overset{\sim}\to \operatorname{H}_{L^2_{loc}}(X), \, u \mapsto [u]$ is an isomorphism.
\item There are natural linear maps $\alpha, \beta$ such that 
$$\operatorname{H}(X; \mathbb{C}) \overset{\beta}\to \operatorname{ker}(\Delta_d) \overset{\alpha}\to \operatorname{H}(X; \mathbb{C})$$
is the identity.
\end{enumerate}
\end{theorem}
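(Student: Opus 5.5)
The plan has two parts: an $L^2$ Hodge decomposition for (i), and then a comparison of $L^2_{loc}$-cohomology with singular cohomology through the PA-form and current Poincar\'e lemmas for (ii).

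For (i), I would work in the Hilbert space $L^2\Omega^k(X)$ of square-integrable PA-forms on $X_{reg}$. Here $d$ is taken as the closed (maximal, distributional) extension defined through currents, and $d^*$ is its Hilbert adjoint. The continuity of $d$ on currents proved in \S\ref{sec:currents} is what makes this extension closed and densely defined. The hypothesis $\codim(X - X_{reg}) \ge 2$ is used to show that the minimal and maximal extensions of $d$ agree. Concretely, I would construct cutoff functions $\chi_\epsilon$ that equal $1$ away from an $\epsilon$-neighbourhood of the singular locus and satisfy $\|d\chi_\epsilon\|_{L^2} \to 0$. This works because the $\epsilon$-tube has volume $O(\epsilon^2)$ while $|d\chi_\epsilon| = O(\epsilon^{-1})$, and uniform continuity of the metric keeps these volume estimates comparable to the Euclidean ones in a semialgebraic chart. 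With $\Delta_d$ defined as the self-adjoint operator $d^*d + dd^*$ (Friedrichs / Gaffney construction), the general Hilbert-space argument gives $\ker \Delta_d = \ker d \cap \ker d^*$ and the orthogonal decomposition $L^2 = \ker\Delta_d \oplus \overline{\operatorname{im} d} \oplus \overline{\operatorname{im} d^*}$.

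To upgrade this to the isomorphism $\ker(\Delta_d) \cong \operatorname{H}_{L^2_{loc}}(X)$, I need $\operatorname{im} d$ to be closed. For that I would use a Sobolev-type compactness (Rellich) statement for $\operatorname{dom} d \cap \operatorname{dom} d^*$ with the graph norm. I would prove it locally on compact semialgebraic pieces and patch with a partition of unity, working on compact pieces at a time, which is how $L^2$ passes to $L^2_{loc}$. Injectivity of $u \mapsto [u]$ then follows from orthogonality: if $u = dv$ with $u$ harmonic, then $\|u\|^2 = \langle dv, u \rangle = \langle v, d^*u\rangle = 0$. Surjectivity follows by projecting a closed form onto $\ker\Delta_d$, using closedness of $\operatorname{im} d$.

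For (ii), I would define the two maps and compose them. To define $\beta$, start from a class in $\operatorname{H}(X;\mathbb{C})$. By the Poincar\'e lemma for PA-forms (\S\ref{sec:PA-forms}), it is represented by a closed PA-form $\omega$. PA-forms are locally bounded, since fiber integrals of bounded semialgebraic forms stay bounded, so $\omega$ lies in $L^2_{loc}$. Set $\beta[\omega]$ to be its harmonic representative given by (i); this is well defined because exact PA-forms are $L^2_{loc}$-exact. To define $\alpha$, regard a harmonic form $u$ as a closed current. By the Poincar\'e lemma for currents, the cohomology of currents also computes $\operatorname{H}(X;\mathbb{C})$, and $\alpha(u)$ is the class of $u$ there. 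The composite $\alpha\beta$ sends $[\omega]$ to the current-class of $\omega + dv$ with $v \in L^2_{loc}$. This equals the class of $\omega$, because $dv$ is exact as a current, and PA-forms include into currents compatibly with the two Poincar\'e-lemma isomorphisms (both are resolutions of the constant sheaf). So $\alpha\beta = \mathrm{id}$.

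I expect the main obstacle to be the closed-range / compactness step near the singular locus. Rellich compactness for the graph norm is not automatic on singular semialgebraic sets, and the failure of an honest isomorphism with $\operatorname{H}(X;\mathbb{C})$ shows how delicate this is. My first attempt would be to prove a local Gaffney inequality on conical neighbourhoods of singular points, obtained via semialgebraic triangulation. If that fails, I would weaken (i) to reduced $L^2$-cohomology, i.e.\ quotienting by $\overline{\operatorname{im} d}$, which suffices for (ii).
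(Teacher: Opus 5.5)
\textbf{Gap in the construction of $\alpha$.} You assert that the complex of currents, like that of PA-forms, resolves the constant sheaf $\mathbb{C}$. It does not. Proposition~\ref{prop:Poincare for currents} shows that $(\Omega',d')$ resolves $\mathbb{C}_{Po}$, the sheaf associated to $U\mapsto\operatorname{H}^n_{cpt}(U)^*$. This sheaf differs from $\mathbb{C}$ at singular points even when $\codim(X-X_{reg})\ge 2$. In Example~\ref{Poincare example} ($X=V(s_1s_2)\subset\mathbb{P}^2$), the indicator function of one component is a closed, indeed harmonic, $0$-current. So closed $0$-currents already form a space of dimension at least $2$, while $\operatorname{H}^0(X;\mathbb{C})=\mathbb{C}$.

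Consequently the current class of a harmonic $u$ lives in $\operatorname{H}^*(X;\mathbb{C}_{Po})$. The inclusion of PA-forms into currents, which is a chain map by Remark~\ref{residue}, covers the natural map $\mathbb{C}\to\mathbb{C}_{Po}$, $1\mapsto([\varphi]\mapsto\int_{X_{reg}}\varphi)$, and that map is not an isomorphism. The missing ingredient is a left inverse $\mathbb{C}_{Po}\to\mathbb{C}$ of this map. The paper builds it from the integration functional, which is well defined on $\operatorname{H}^n_{cpt}(U)$ because $\int_{X_{reg}}d\varphi=0$ (Lemma~\ref{lem:integration map}; this is where $\codim\ge 2$ is really used). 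It then defines $\alpha$ as the composite $\ker(\Delta_d)\to\operatorname{H}^*(X;\mathbb{C}_{Po})\to\operatorname{H}^*(X;\mathbb{C})$. With that retraction inserted, your argument for $\alpha\circ\beta=\mathrm{id}$ goes through. It is also the reason the theorem claims only a one-sided inverse.

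\textbf{Gap in the construction of $\beta$.} PA-forms are neither locally bounded nor locally $L^2$. Example~\ref{non-square-integrable} gives $d\sqrt{x}$ on $[0,1]$, an exact PA-form with coefficient $1/(2\sqrt{x})$. So neither ``$\omega\in L^2_{loc}$'' nor ``exact PA-forms are $L^2_{loc}$-exact'' holds. The paper instead represents classes by continuous PA-forms with continuous differential, using the Poincar\'e lemma for that subcomplex. Such forms are locally $L^2$ because the metric is uniformly continuous.

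\textbf{Gap in part (i).} Your injectivity and surjectivity arguments match the paper's. However, the step that carries the content, closed range, is left open. Your fallback to reduced $L^2$-cohomology would not prove the statement, since $\operatorname{H}_{L^2_{loc}}$ is defined with $\operatorname{im} d$, not its closure.

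The step you do sketch is also unsound. A cut-off with $|d\chi_\epsilon|=O(\epsilon^{-1})$ on a tube of volume $O(\epsilon^2)$ gives only $\|d\chi_\epsilon\|_{L^2}^2=O(1)$, not $o(1)$. More seriously, $d_{\min}=d_{\max}$ does not follow from $\codim\ge 2$. Take the cone $x_1^2+y_1^2=x_2^2+y_2^2$ in $\mathbb{R}^4=\mathbb{C}^2$. It has one singular point, and its induced metric is $dr^2+\frac{r^2}{2}(d\theta_1^2+d\theta_2^2)$ with $\theta_j=\arg z_j$. Let $\chi$ be a cut-off equal to $1$ near $0$. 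The forms $\chi(r)\,d\theta_1$ and $\chi(r)\,d\theta_2$ lie in the maximal domain, yet $\int_{X_{reg}}d(\chi^2\,d\theta_1\wedge d\theta_2)=\pm4\pi^2\neq 0$. So Green's formula fails; this is Cheeger's middle-degree obstruction. Your identification of the Gaffney harmonic space with the kernel of the current Laplacian on $X$ in the statement relies on this claim, and so does your integration by parts.

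\textbf{How the paper handles (i).} The paper never passes through closed extensions of $d$. For compact $X\subset\mathbb{R}^m$ it proceeds as follows.
\begin{enumerate}
\item It defines Sobolev norms through the Fourier transform of currents (Lemma~\ref{lem:current resolution}).
\item It shows $\Delta_d$ is elliptic on each trivial piece of the phase space, since $\xi^*\xi+\xi\xi^*$ is invertible by exactness of the Koszul complex.
\item It proves G\aa rding's inequality (Proposition~\ref{Garding}) and a Rellich lemma via Ascoli and Plancherel (Proposition~\ref{laplacian elliptic}).
\item It obtains closed range and a compact Green operator from Lemma~\ref{abstract elliptic}.
\end{enumerate}
It then reaches non-compact $X$ by gluing the compact Hodge decompositions, using that the harmonic part is unchanged under restriction (Lemma~\ref{Hodge decomposition}).
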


When $X$ is compact, the Poincar\'e duality holds for $\operatorname{H}_{L^2_{loc}}(X)$. On the other hand, the duality can fail on singular cohomology for some singular projective variety over complex numbers. Thus, the above $\alpha$ is generally not an isomorphism.

The proof here follows very closely the classical one, at least in the compact case. Thus, in particular, the essential tool is a Sobolev space. We prove the Hodge decomposition for a compact PA-space. The non-compact case is obtained more or less formally from the compact case. Unlike in other generalizations that consider $L^2$-forms on non-compact spaces, we only consider locally $L^2$-forms, which allows us to bypass some known difficulties in the non-compact case.


The present paper started as the authors' work on an extension of Arakelov intersection theory to singular spaces. In a sequel, we plan to develop the Arakelov intersection theory on singular arithmetic varieties in part using the Hodge theorem introduced in this paper.

\subsection{Possible extensions}

Most of the results in this paper should be true over a real closed field. One delicate issue is that the Heine-Borel theorem need not hold in such a setup. So, some care is needed in places where compactness is used. Similarly, functional analysis relies on the Cauchy completeness of the real numbers and that part has to be reworked. We note \cite{Ohmoto} essentially claims that the theory of PA-spaces can be done over real closed fields (and we tend to agree with it).

\subsection{Conventions and notations}
\begin{itemize}
\item Following Bourbaki, compact spaces, locally compact spaces and paracompact spaces are all assumed to be Hausdorff. However, we sometimes say Hausdorff for emphasis.
\item To be grammatically correct, we prefer to say a local-ringed space instead of a locally ringed space. This usage is after \cite{Knutson}.
\item Ring homomorphisms appearing in morphisms between ringed spaces are assumed to be $\Lambda$-linear for some fixed base ring $\Lambda$.
\item The term ``definable'' means real-semialgebraic as well as more general use for PA-spaces.
\item A manifold means a second-countable manifold with boundary.
\end{itemize}

Here are the notations used in the paper.
\begin{itemize}[label={}]
\item $\Sigma_X$, a triangulation on $X$ by simplicial complexes.
\item $\mathcal{O}_X$, the structure sheaf of a ringed space.
\item $\mathcal{O}_{X, \, pol}, \Omega_{X, \, pol}$, the sheaves of polynomial functions and polynomial forms.
\item $\Omega_{X, \, reg}$, the complex of sheaves of generically-smooth forms.
\item $\Omega_X \subset \Omega_{X, \, reg}$, the subcomplex generated by $\mathcal{O}_X, d(\mathcal{O}_X)$
\item $\Omega_{\mathcal{C}^k}$, $k$-times continuously differentiable forms on smooth PA-manifolds.
\item $\Omega_{chain-L^1}$, chain-wise integrable forms, Definition \ref{chain-wise integrable}.
\item $\Omega_{X, \, PA}$, PA-forms on a PA-space, Definition \ref{piecewise-algebraic space}.
\item $\Omega'$, currents, Definition \ref{def:currents}.
\item $\Omega_{L^2}$, the presheaf of $L^2$-forms.
\item $\Omega_{L^{2, \, loc}}$, the sheaf of locally $L^2$-forms.
\item $\Omega^*_{L^{2, \, loc}}$, the complex of sheaves determined by $\Omega_{L^{2, \, loc}}$.
\item $\Omega_{L^2, \,\mathcal{C}^{\infty}}$, the presheaf of $L^2$-forms whose restrictions to the regular locus are smooth.
\item $C(X; \Lambda)$, the space of \emph{nondegenerate} PA-chains on $X$ with coefficients in $\Lambda$.
\item $*$, the Hodge star operator or convolution
\item $\mu$, a fixed volume form.
\item $d^*$, the formal adjoint of $d$.
\item $\Delta_d = dd^* + d^* d$, the Laplacian.
\item $G$, the Green operator.
\item $W^s_{loc}$, the local Sobolev space of weight $s$.
\item $\widehat{\cdot}$, the Fourier transform, Construction \ref{Fourier transformation}.
\item $\operatorname{H}^*_{L^{2, \, loc}}(X)$, $L^{2, \, loc}$-cohomology, not necessarily $\operatorname{H}_{(2)}^*(X)$, usual $L^2$-cohomology.
\item $\operatorname{H}^*_{cpt}(X)$, the cohomology of the complex $(\Omega_{PA, \, cpt}(X), d)$.
\item $\mathbb{C}_{Po}$, the sheaf associated to $U \mapsto \operatorname{H}^n_{cpt}(U)^*$.
\end{itemize}

\section{Piecewise-algebraic spaces}\label{sec:PA-spaces}

In this section, we introduce a \emph{piecewise-algebraic space} or a \emph{PA-space}, the notion due to Kontsevich and Soibelman \cite[Appendix 8.1.]{Kontsevich}.

\subsection{PA-spaces} Let $\mathfrak F$ be the smallest set of subsets of $\mathbb{R}^n$ with the properties (1) it contains a set of the form $\{ x \in \mathbb{R}^n \mid f(x) > 0 \}$ where $f$ is a real polynomial function on $\mathbb{R}^n$ and (2) it is closed under finite union, finite intersection and complement (cf. \cite[Ch. II, Exercise 3.18.]{Hart}).

We call the elements of $\mathfrak{F}$ \emph{definable} (also called \emph{real semi-algebraic} in literature). In \cite[\S 8.1.]{Kontsevich}, it is called \emph{constructible}. However, we prefer to avoid ``constructible'' since that term also appears in stratification theory; e.g., in the discussion of perverse sheaves.

Following \cite[\S 8.1.]{Kontsevich}, given a subset $X$ in $\mathbb{R}^n$, the sheaf $\mathcal{O}_{X}$ of rings on $X$ is defined as follows: for each open subset $U \subset X$, $\mathcal{O}_X(U)$ consists of all complex-valued continuous functions $f$ on $U$ such that the graph of $f$ over each compact definable subset of $U$ is definable \cite[\S 8.1.]{Kontsevich}.

Or, equivalently by the closed graph theorem, saying a map between compact spaces is continuous if and only if it has closed graph, and the fact that $X$ is compactly generated (it is a metric space), $\mathcal{O}_X(U)$ consists of all complex-valued functions $f$ such that the graph of $f$ over each compact definable subset of $U$ is compact and definable. (When a space is compactly generated, a map to another space is continuous if and only if it is continuous on each compact subset; see \cite[Ch. 5., \S 1., Lemma]{May}.)

For an open subset $U \subset X$, $\mathcal{O}_X(U)$ contains all the polynomial functions on $U$. It, however, does not contain the exponential function when $X \subset \mathbb{R}$ since the graph of it is not definable over any compact definable subset of $U$. Also, if $f$ is definable and the derivative $f'$ exists, then $f'$ is definable.

We note that $(X, \mathcal{O}_X)$ is then a local-ringed space; a ringed space whose stalk at each point is a local ring. Indeed, if $x$ is a point in $X$, let $\mathfrak{m}_x$ be the set of all germs in $\mathcal{O}_{X, x}$ that vanish at $x$. For an $f$ in an ideal $I$, if $f \not\in \mathfrak{m}_x$, then $1/f$ is in $\mathcal{O}_{X, x}$, because the map $t \mapsto 1/t$ is continuous and definable away from $0$, and so $1 = f/f$ is in $I$.

Following \cite[\S 8.1. Definition 20.]{Kontsevich} and generalizing the definition of $\mathcal{O}_X$ above, we define a PA-space as follows. We say a family of subsets of a topological space $X$ is a \emph{compact cover} of $X$ if it consists of compact subsets, has union equal to $X$ and is locally-finite; i.e., each point has a neighborhood that intersects only finitely many members in the cover.

\begin{definition}\label{piecewise-algebraic space} A \emph{piecewise-algebraic space} or \emph{PA-space} $X$ is a locally-compact ringed space $(X, \mathcal{O}_X)$ satisfying the property that there are a compact cover $X_i$ of it as well as homeomorphisms
$$\iota_i : X_i \hookrightarrow \mathbb{R}^{n_i}$$
such that
\begin{enumerate}[label=(\arabic*)]
\item ${\iota_i(X_i \cap X_j)}$ is definable and each component of $\iota_j \circ \iota_i^{-1}|_{\iota_i(X_i \cap X_j)}$ is a global section of $\mathcal{O}_{\iota_i(X_i \cap X_j)}$,
\item for each open subset $U \subset X$, $\mathcal{O}_X(U)$ consists of all functions $f$ such that $f|_{U \cap X_i}$ is a section of $\mathcal{O}_{X_i}$, where $\mathcal{O}_{X_i}$ is defined using (1).
\end{enumerate}


The cover $X_i$ together with $\iota_i$ is called a \emph{defining cover}. A \emph{PA-map} is a map whose graph is a PA-space in a natural way \cite[\S 8.1.]{Kontsevich}\footnote{That is, given a $f : X \to Y$, choosing defining covers $X_i$ and $Y_j$ of $X, Y$ such that $f(X_i) \subset Y_j$, we require that the graphs of all those $f : X_i \to Y_j$ are compact definable subsets of Euclidean spaces in such a way they glue to a PA-space.} or equivalently a morphism of local-ringed spaces as seen below (Proposition \ref{prop:PA-map}).

\end{definition}

Note, by the closed graph theorem mentioned earlier, a PA-map is necessarily continuous.

\begin{remark} Property (2) of Definition \ref{piecewise-algebraic space} \emph{does not} mean $\mathcal{O}_X|_{X_i} \simeq \mathcal{O}_{X_i}$ for each $i$. The original definition of a PA-space is that $\mathcal{O}_X$ is ``locally isomorphic’’ to each $\mathcal{O}_{X_i}$, where the meaning of locally isomorphic is unclear to us. We believe our definition is a correct interpretation of the original definition (see also Remark \ref{compact topology} below for an alternative but equivalent definition).
\end{remark}

\begin{remark}[bijective PA-maps]\label{bijective PA-maps} A bijective PA-map is necessarily an isomorphism. Indeed, the inverse is a PA-space since the graph of it is the same as the original map.
\end{remark}

We note that $X$ is in fact local-ringed; i.e., each stalk of $\mathcal{O}_X$ is a local ring for the same reason as before. Also,

\begin{prop}\label{prop:paracompact} Each PA-space is paracompact.
\end{prop}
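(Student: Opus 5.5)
Here $X$ is a PA-space with defining cover $\{X_i\}$: a locally finite family of compact subsets whose union is $X$, with each $X_i$ homeomorphic to a compact subset of $\mathbb{R}^{n_i}$. The plan is to use these facts together with local compactness and Hausdorffness (our standing convention) to show that every connected component of $X$ is $\sigma$-compact. A locally compact Hausdorff space that is a disjoint union of open $\sigma$-compact pieces is paracompact. This is a standard fact: a locally compact $\sigma$-compact Hausdorff space is paracompact, and a disjoint union of paracompact open pieces is paracompact.

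First, I will show that each compact $K \subset X$ meets only finitely many $X_i$. Every point $x \in K$ has an open neighborhood $U_x$ meeting only finitely many $X_i$. Finitely many $U_x$ cover $K$, so $K$ meets only finitely many $X_i$.

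Next, I will reduce to connected components. Since $X$ is locally compact, every point has a neighborhood that is connected relative to the cover, in the following sense. Call $i \sim j$ if there is a chain $X_i = X_{i_0}, X_{i_1}, \dots, X_{i_r} = X_j$ in which consecutive members intersect. Let $Y_c$ be the union of the $X_i$ in one equivalence class $c$.

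I claim each $Y_c$ is open. Take $x \in Y_c$ and an open neighborhood $U$ of $x$ meeting only finitely many $X_i$. Shrink $U$ so that it avoids every $X_i$ not containing $x$; this is possible because those finitely many $X_i$ are closed. Then every $X_i$ meeting $U$ contains $x$, and hence is equivalent to a member of class $c$ containing $x$. So $U \subset Y_c$.

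The $Y_c$ are pairwise disjoint and cover $X$, so $X$ is their topological disjoint union. It therefore suffices to show each $Y_c$ is $\sigma$-compact.

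Fix a class $c$ and some $X_{i_0}$ in it. Define $K_0 = X_{i_0}$ and let $K_{r+1}$ be the union of all $X_i$ meeting $K_r$. By the first step, $K_r$ compact implies $K_{r+1}$ is a finite union of compact sets, hence compact. Every $X_i$ in class $c$ lies in some $K_r$, so $Y_c = \bigcup_r K_r$ is $\sigma$-compact.

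Since $Y_c$ is also open in the locally compact Hausdorff space $X$, it is locally compact, and hence paracompact. To make this self-contained, I would use exhaustion: choose compacts $L_r \subset \operatorname{int} L_{r+1}$ with union $Y_c$. Given an open cover, refine it on each annulus $L_{r+1} \setminus \operatorname{int} L_r$ by finitely many open sets contained in $\operatorname{int} L_{r+2} \setminus L_{r-1}$. The resulting refinement is locally finite.

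The main obstacle is the openness of the $Y_c$, which rests entirely on the local finiteness of the cover combined with Hausdorffness; closedness of each $X_i$ comes from compactness in a Hausdorff space. Everything else is routine.
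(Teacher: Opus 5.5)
Your proof is correct, but it takes a different route from the paper's. The paper does not decompose $X$ at all. It notes that $X$ is regular, being locally compact Hausdorff, and invokes Michael's theorem. That reduces the problem to producing, for an arbitrary open cover $\mathcal{U}$, a locally finite refinement by not necessarily open sets. The refinement is written down directly as $\{V_{i,j}\cap X_i\}$: finitely many open sets $V_{i,j}$, each contained in a member of $\mathcal{U}$, cover the compact $X_i$, and local finiteness is inherited from the defining cover.

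You instead split $X$ into the chain-equivalence classes $Y_c$ of the defining cover. You show each $Y_c$ is clopen, using local finiteness and the closedness of the compact $X_i$. You show each $Y_c$ is $\sigma$-compact via the iterated unions $K_{r+1}$, using that a compact set meets only finitely many $X_i$. Then you apply the classical fact that a locally compact, $\sigma$-compact Hausdorff space is paracompact.

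The paper's argument is shorter but rests on Michael's characterization of paracompactness. Yours is elementary and self-contained, and it proves more: every connected PA-space is $\sigma$-compact. The paper later uses that fact without proof, in the proposition on the Whitney topology, where a connected $Y$ is written as an increasing union of compact subspaces. Your route therefore also justifies that step.

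One cosmetic point: drop the sentence saying local compactness gives neighbourhoods ``connected relative to the cover''. It plays no role, and your openness argument actually uses local finiteness and Hausdorffness, not local compactness.
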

\begin{proof} Let $X$ be a PA-space. Since $X$ is locally compact, it is regular (\cite[Ch. I., § 9., No. 7., Proposition 9.]{bourbaki}). Thus, by Michael's theorem (\cite[Theorem 20.7.]{Willard}), it is enough to show each open cover $\mathcal{U}$ admits a locally finite refinement not necessarily of open sets.

Let $X_i$ be some defining cover of $X$, which is locally finite by definition. Then, for each $x$ in $X$, we can find a neighborhood $V_x$ of it such that only finitely many $X_i$'s intersect $V_x$. Each $x$ is contained in some $U \in \mathcal{U}$. Then, shirking $V_x$, we can assume $x \in V_x \subset U$. For each $i$, since $X_i$ is compact, among $V_x$'s, we can find finitely many $V_{i, j}, 1 \le j \le m_i$ that cover $X_i$. Then $\{ V_{i, j} \cap X_i \mid i, 1 \le j \le m_i \}$ is a locally finite refinement, since the cover $X_i$ is locally finite.
\end{proof}



It is clear that a PA-map (Definition \ref{piecewise-algebraic space}) induces a local-ringed-space morphism; i.e., a pair
$$(f : X \to Y, \, \varphi : \mathcal{O}_Y \to f_* \mathcal{O}_X)$$
consisting of a continuous map and a sheaf morphism that induces local homomorphisms between stalks. The converse holds by the following:

\begin{prop}\label{prop:PA-map} Each local-ringed-space morphism between PA-spaces is induced by a PA-map, where, by convention, ring homomorphisms are assumed to be $\mathbb{C}$-linear.

In particular, an isomorphism of PA-spaces as local-ringed spaces is the same thing as a bijective PA-map (Remark \ref{bijective PA-maps})
\end{prop}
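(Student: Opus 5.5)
The plan is to show that the sheaf map $\varphi$ is forced to be pullback along $f$, and then to use the coordinate functions of the defining covers to prove that $f$ has definable graph. Let $(f, \varphi) : (X, \mathcal{O}_X) \to (Y, \mathcal{O}_Y)$ be a local-ringed-space morphism, with $\varphi$ $\mathbb{C}$-linear.

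\emph{Step 1: $\varphi$ is composition with $f$.} Let $g \in \mathcal{O}_Y(V)$ and $x \in f^{-1}(V)$. Since $\varphi$ is $\mathbb{C}$-linear and unital, it sends the constant $g(f(x))$ to the same constant. The germ of $g - g(f(x))$ at $f(x)$ lies in $\mathfrak{m}_{f(x)}$, which by the description of the maximal ideal given above is the set of germs vanishing at the point. Locality of $\varphi_x$ then puts $\varphi(g) - g(f(x))$ in $\mathfrak{m}_x$, so $\varphi(g)(x) = g(f(x))$. Doing this at every $x$ gives $\varphi(g) = g \circ f$. In particular, for every local section $g$ of $\mathcal{O}_Y$, the function $g \circ f$ is a section of $\mathcal{O}_X$.

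\emph{Step 2: definability of the graph.} Choose defining covers $\{X_i, \iota_i\}$ and $\{Y_j, \kappa_j\}$. I then need the coordinate functions of the $\kappa_j$ as sections of $\mathcal{O}_Y$ on an open set. These coordinates are only defined on the compact set $Y_j$, so I must extend them, or restrict to something open. For each $y \in Y_j$ I will pick an open neighbourhood $W_y$ and a compact definable neighbourhood $K_y \subset W_y \cap Y_j$, and on this piece I want coordinate functions that are genuine sections of $\mathcal{O}_Y$ over an open set. Obtaining these is the main obstacle.

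\begin{itemize}
\item \emph{First attempt.} Extend each component of $\kappa_j$ definably and continuously to all of $Y$. Apply semialgebraic Tietze extension on each $\kappa_k(Y_j \cap Y_k)$ with a PA cutoff, using local finiteness.
\item \emph{Fallback.} Work instead with the open set $V = Y - \bigcup_{k:\, Y_k \cap Y_j = \emptyset} Y_k$. On it, use sections obtained as pullbacks of continuous semialgebraic functions on $\kappa_j(Y_j)$ extended by Tietze.
\end{itemize}

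Granting these sections $h = (h_1, \dots, h_m)$, which agree with $\kappa_j$ on $Y_j$, Step 1 gives that each $h_l \circ f$ is a section of $\mathcal{O}_X$. Restricted to $X_i \cap f^{-1}(Y_j)$, this section is definable in the chart $\iota_i$ over compact definable pieces. Next I must show $X_i \cap f^{-1}(Y_j)$ is itself definable. It is the preimage under the definable map $h \circ f$ of $\kappa_j(Y_j)$, which is a definable set, so it is definable.

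\emph{Step 3: conclusion.} Refine the cover $X_i$ into compact definable pieces $X_i \cap f^{-1}(Y_j)$, all mapping into a single $Y_j$. On each piece the graph of $\kappa_j \circ f \circ \iota_i^{-1}$ is a compact definable subset of $\mathbb{R}^{n_i + m_j}$. These graphs glue along definable overlaps, by condition (1) for both covers, into a PA-space structure on the graph of $f$, so $f$ is a PA-map inducing $(f, \varphi)$.

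For the second assertion, an isomorphism of local-ringed spaces is then a bijective PA-map. Conversely, a bijective PA-map has PA inverse by Remark \ref{bijective PA-maps}, and both induce local morphisms.
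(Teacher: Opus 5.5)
\textbf{Verdict.} Your Step 1 is exactly the paper's argument. Your overall route, applying $\varphi=f^*$ to chart coordinates to get local definability of $f$, is the one the paper compresses into a single sentence. The trouble is in your elaboration of Step 2: one step is false as written, and the key construction is left unproved.

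\textbf{The false step.} You claim that $X_i\cap f^{-1}(Y_j)$ is the preimage of $\kappa_j(Y_j)$ under $h\circ f$. This needs $h$ to send no point of $V\setminus Y_j$ into $\kappa_j(Y_j)$, and in general no extension of $\kappa_j$ does that. For example, let $Y\subset\mathbb{R}^3$ be the closed unit disk $D$ in the plane $z=0$ together with the segment $S=\{0\}\times\{0\}\times[0,1]$. Take the defining cover $Y_1=D\hookrightarrow\mathbb{R}^2$ and $Y_2=S\hookrightarrow\mathbb{R}$.
\begin{itemize}
\item Any continuous extension $h$ of $\kappa_1$ sends points of $S$ near the origin close to $(0,0)$, which is an interior point of $\kappa_1(Y_1)$.
\item So $h^{-1}(\kappa_1(Y_1))$ contains a whole initial segment of $S$.
\item With $f=\mathrm{id}$ and $X_i=Y_2$, your description therefore yields a segment, not the single point $Y_1\cap Y_2$.
\end{itemize}
Hence the definability of the refined pieces, which Step 3 needs, is not established.

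\textbf{Fixing it.} Add one more coordinate $\delta\in\mathcal{O}_Y(V)$ with $\delta\ge 0$ and $\delta^{-1}(0)=Y_j$. Then $X_i\cap f^{-1}(Y_j)$ is the zero set of $\delta\circ f$ on a compact definable set $L$ with $\iota_i(X_i\cap f^{-1}(Y_j))\subset L\subset \iota_i(X_i\cap f^{-1}(V))$. You need such an $L$ anyway, because $h\circ f$ is only known to be definable on compact definable subsets of the open set $X_i\cap f^{-1}(V)$.

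\textbf{Constructing $h$ and $\delta$.} You grant the existence of $h$, and your first attempt would not produce it. Tietze extensions made separately from each $\kappa_k(Y_j\cap Y_k)$ to $\kappa_k(Y_k)$ need not agree on $Y_k\cap Y_{k'}$ away from $Y_j$. What works is a finite induction.
\begin{itemize}
\item Let $Y_{k_1},\dots,Y_{k_N}$ be the charts meeting $Y_j$; there are finitely many by compactness and local finiteness.
\item At stage $s$, use semialgebraic Tietze in the chart $\kappa_{k_s}$ to extend from the compact definable set $A_s=\kappa_{k_s}\bigl((Y_j\cup Y_{k_1}\cup\dots\cup Y_{k_{s-1}})\cap Y_{k_s}\bigr)$. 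The function built so far is continuous and definable on $A_s$ by condition (1).
\item For $\delta$, do the same, but replace the extension $e$ at each stage by $|e|+\operatorname{dist}(\cdot,A_s)$. This keeps the zero set equal to $Y_j$.
\end{itemize}
Every chart meeting your open set $V$ is among the $Y_{k_s}$. So by condition (2), $h$ and $\delta$ are sections of $\mathcal{O}_Y$ over $V$, and your Step 3 gluing then goes through.
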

\begin{proof} Given a continuous map $f : X \to Y$ with $\varphi : \mathcal{O}_Y \to f_* \mathcal{O}_X$, we need to show $\varphi = f^*$ and $f$ is a PA-map.

First we shall show $\varphi = f^*$; that is, for an open subset $V \subset Y$ and $g$ in $\mathcal{O}_Y(V)$, we show $\varphi(g) = g \circ f$. Suppose $f(a) = b$. Then since $\varphi : \mathcal{O}_{Y, b} \to\mathcal{O}_{X, a} $ is local, $\varphi(\mathfrak{m}_b) \subset \mathfrak{m}_a.$ We have $g - g(b) \in \mathfrak{m}_b$ trivially 
and so $$\varphi(g) - g(b) \in \mathfrak{m}_a$$
since $\varphi$ is $\mathbb{C}$-linear. That is, $\varphi(g)(a) = g(b)$, which proves the claim. The claim also shows that $f$ is a PA-map since it implies that locally on compact definable sets, each coordinate component of $f$ is definable.
\end{proof}

We frequently use the following:

\begin{lemma}\label{fiber product} The fiber product of two PA-maps exists; i.e., if $f : Y \to X$, $g : Z \to X$ are PA-maps, then we have the PA-space $Y \times_X Z$ and PA-maps $p : Y \times_X Z \to Y$, $q : Y \times_X Z \to Z$ that satisfy the universal property of a fiber product.
\end{lemma}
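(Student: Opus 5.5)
We construct $Y \times_X Z$ set-theoretically as $\{(y,z) \in Y \times Z \mid f(y) = g(z)\}$ with the subspace topology. We then equip it with a PA-structure by producing a defining cover from defining covers of $Y$, $Z$ and $X$. The universal property will follow from Proposition \ref{prop:PA-map}, which identifies PA-maps with morphisms of local-ringed spaces.

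\textbf{Step 1: a defining cover.} Choose defining covers $\{Y_j, \iota_j\}$ of $Y$ and $\{Z_k, \kappa_k\}$ of $Z$. Using compactness and local finiteness, refine them so that each $f(Y_j)$ and each $g(Z_k)$ lies in a single member $X_{i}$ of a defining cover of $X$, as in the footnote to Definition \ref{piecewise-algebraic space}. The refinement can be made by intersecting with preimages of the $X_i$; these pieces are compact and definable in the charts. Put
$$W_{jk} = (Y_j \times Z_k) \cap (Y \times_X Z).$$
Under $\iota_j \times \kappa_k$, the set $W_{jk}$ maps to the subset of $\iota_j(Y_j) \times \kappa_k(Z_k)$ where $\lambda_i \circ f \circ \iota_j^{-1} = \lambda_i \circ g \circ \kappa_k^{-1}$, with $\lambda_i$ the chart of $X_i$ containing both images. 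Here we may restrict to pairs $(j,k)$ whose images meet a common $X_i$, handling the several possible $i$ by further subdivision. Both maps have definable graphs, so this is a compact definable set: it is the projection of a definable set, closed under projection by Tarski--Seidenberg. Local finiteness of $\{W_{jk}\}$ is inherited from the product cover $\{Y_j \times Z_k\}$, which is locally finite on $Y \times Z$. Compatibility (1) of Definition \ref{piecewise-algebraic space} follows because the transition maps are products of the transition maps of $Y$ and $Z$, restricted to definable subsets. We then define $\mathcal{O}_{Y \times_X Z}$ by condition (2). Local compactness holds because the fiber product is closed in the locally compact space $Y \times Z$, as $X$ is Hausdorff.

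\textbf{Step 2: the projections and the universal property.} The projections $p$ and $q$ are coordinate projections in each chart, so their graphs are definable, and they are PA-maps. For the universal property, let $T$ be a PA-space with PA-maps $a : T \to Y$ and $b : T \to Z$ such that $f a = g b$. The unique continuous map $h = (a,b) : T \to Y \times_X Z$ is forced set-theoretically. It remains to see that $h$ is a PA-map. Locally on a compact piece $T_l$ of a defining cover of $T$, mapping into $W_{jk}$, the graph of $h$ sits inside the product of the graphs of $a$ and $b$ as the definable set $\{(t,y,z) : (t,y) \in \Gamma_a,\ (t,z) \in \Gamma_b\}$. Hence each component of $h$ is a section of $\mathcal{O}$, and $h$ is a PA-map. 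Uniqueness holds since $h$ is determined pointwise. By Proposition \ref{prop:PA-map}, this is also the fiber product in local-ringed spaces.

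\textbf{Main obstacle.} The delicate step is the bookkeeping in Step 1. We need a refinement of the covers so that images land in single charts of $X$, while keeping the cover locally finite, compact, and compatible with the given $\mathcal{O}$. The same issue arises in verifying that the resulting sheaf does not depend on the choices, i.e.\ that two such covers give the same $\mathcal{O}$. I would handle this by showing that any two of these covers have a common refinement by compact definable pieces, for example by intersecting. Condition (2) is stated in terms of restrictions to the cover, and restriction to smaller compact definable pieces preserves it, so the common refinement yields the same sheaf.
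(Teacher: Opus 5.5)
Your proposal is correct and is essentially the paper's argument made explicit. The paper obtains $Y\times_X Z$ as the equalizer of $f\circ\mathrm{pr}_Y$ and $g\circ\mathrm{pr}_Z$ on $Y\times Z$ with the product cover $\{Y_j\times Z_k\}$, which is exactly your $W_{jk}$ construction. You also supply the verification the paper dismisses as ``clear'': definability via Tarski--Seidenberg, local finiteness, and the definable graph of $(a,b)$.

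Two small corrections. Proposition \ref{prop:PA-map} only gives the universal property among PA-spaces, not among arbitrary local-ringed spaces. The independence-of-choices worry in your last paragraph is also unnecessary: by your Step 2, the identity map between any two such constructions is a PA-map in both directions, so they coincide.
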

\begin{proof} In general, in a category, fiber products can be constructed from finite products and equalizers and it is clear that finite products and equalizers exist; indeed, the product $X \times Y$ exists since $\{ X_i \times Y_j \mid i, j \}$ is a compact cover if $X_i$ and $Y_i$ are compact covers of $X \times Y$.
\end{proof}

\begin{remark}[A PA-space as a sheaf]\label{compact topology} Definition \ref{piecewise-algebraic space} can be restated in terms of sheaves in the category theory sense. Indeed, let $P$ denote the category where objects are compact definable subsets of Euclidean spaces and morphisms are maps whose graphs are objects in the category. It has finite fiber products by Lemma \ref{fiber product}.

We can then consider a pre-sheaf of sets on $P$; i.e., a contravariant functor from $P$ to the category of sets. By a \emph{covering of an object $Y$ in $P$}, we mean a finite surjective family of morphisms $f_i : Y_i \to Y, \, i \in I$, where ``surjective family'' means $Y = \cup_i \im(f_i)$.

Then a pre-sheaf $F$ on $P$ is a \textit{sheaf} if it is a sheaf with respect to these coverings; that is, we have the equalizer
$$F(Y) \to \prod_i F(Y_i) \rightrightarrows \prod_{i, j} F(Y_i \times_Y Y_j)$$
for a covering $Y_i \to Y, \, i \in I$. For example, if $X$ is a PA-space, then $\Hom(-, X)$ is a sheaf on $P$, where $\Hom(Y, X) = $ the set of all PA-maps $Y \to X$. Moreover, $F = \Hom(-, X)$ satisfies the property: there is an epimorphism of presheaves
$$\sqcup_i h_{X_i} \to F$$
for some objects $X_i$ in $P$, where $h_Y = \Hom_P(-, Y)$, such that 
\begin{enumerate}[label=(\arabic*)]
\item For each $i$ and each object $T$ in $P$, $h_{X_i} \times_F h_T$ is representable, where a fiber product is that of presheaves.
\item The family $\{ X_i \mid i \}$ is locally finite; i.e., for each $T$ in $P$, $h_{X_i} \times_F h_T$ is nonempty only for finitely many $i$'s. (This definition of ``locally finite'' is equivalent to the usual one because of the ``locally compact'' assumption.)
\end{enumerate}

Conversely, if $F$ is a sheaf on $P$ satisfying the above property, then for each $i, j$, (1) gives an object $Z$ in $P$ such that $h_Z = h_{X_i} \times_F h_{X_j}$. By Yoneda's lemma, the projections $p : Z \to X_i, \, q : Z \to X_j$ are both in $P$ and so $X_i$'s glue to a PA-space $X$ such that $\Hom(-, X) = F$ as they agree on a covering.

(The above definition is analogous to the definition of algebraic spaces in algebraic geometry; e.g., \cite[Ch. 2., Definition 1.1.]{Knutson}.)
\end{remark}

In the language of scheme theory, a PA-space is an analog of a scheme of locally of finite type, rather than a scheme of finite type. Thus, we have the following, which is important for the authors of this paper.

\begin{example} A reduced scheme that is separated and locally of finite type over $\mathbb{C}$ and that has a countable affine cover is a PA-space in the classical topology. Indeed, such a space is clearly paracompact and locally compact and so, to see the claim, it is enough to consider affine such scheme $X$; i.e., $X \subset \mathbb{C}^n$ is a Zariski-closed subset. Since $X$ can be covered by $X \cap \overline{B_i}$, $\overline{B_i}$ closed balls, we have that $X$ is a PA-space.
\end{example}

Thus, a complex algebraic variety is in particular a PA-space. In addition, a cube or a simplex is a PA-space.

\begin{example} A finitely generated convex cone in $\mathbb{R}^n$ is an example of a non-compact PA-space; indeed, it is a finite intersection of half-spaces.

More generally, if $\Sigma$ is a toric fan in $\mathbb{R}^n$,\footnote{A toric fan is a set of finitely generated rational strictly convex cones that satisfies (1) each face of a cone in the fan is in the fan and (2) an intersection of two cones in the fan is a common face of each cone.} then the support $|\Sigma| := \bigcup_{\sigma \in \Sigma} \sigma$ of it is a PA-space, even if $\Sigma$ is not finite, since it is obtained by gluing PA-spaces.
\end{example}

\begin{example}[\cite{Robson} Example 3.1.] Let $U_1 = (0, 1)^2$ and $U_2$ the union of the open unit disk and the point $(0, 1)$. Let $X = U_1 \cup U_2$ with the topology that a subset $U$ is open if and only if $U \cap U_i$ is open for $i = 1, 2$. Let $A = \{ (x, y) \mid x^2 + y^2 = 1, x > 0, y > 0 \}$. Then $A$ is closed in $X$ since $A \cap U_1$ is closed and $A \cap U_2$ is empty (thus closed). Now, $A$ and $(0, 1)$ cannot be separated by disjoint open sets in $X$; thus, $X$ is not regular and in particular not paracompact (and not a PA-space).

A theorem of Robson (op. cit., Theorem 1.) says that a regular topological space obtained by gluing finitely many definable subsets of Euclidean spaces as open sets is definably homeomorphic to a definable subset of a Euclidean space. Thus, such a space is a PA-space.
\end{example}

\subsection{Topological results}
A basic question is whether a PA-space can be (simplicially) triangulated or not; i.e., whether a PA-space is a \emph{pl-space} \cite[Ch. I., \S 1.3.]{Borel} or not. We do not know the answer to that question, but we still have the following:

\begin{prop} A compact PA-space can be triangulated.
\end{prop}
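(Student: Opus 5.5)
The plan is to reduce to the classical triangulation theorem for compact semialgebraic sets (Łojasiewicz, Hironaka; see Bochnak--Coste--Roy, Theorem 9.2.1). In the form we need, it says: if $K \subset \mathbb{R}^n$ is compact definable and $A_1, \dots, A_r \subset K$ are definable, then there is a finite simplicial complex $\Sigma$ and a definable homeomorphism $|\Sigma| \to K$ under which each $A_k$ is a union of images of open simplices. The whole argument is to make a compact PA-space look like a single compact definable set, and then invoke this theorem.

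\textbf{Step 1: only finitely many charts.} Let $X$ be compact with defining cover $\{X_i\}$ and embeddings $\iota_i : X_i \hookrightarrow \mathbb{R}^{n_i}$. The cover is locally finite, so each point has a neighborhood meeting finitely many $X_i$. Finitely many such neighborhoods cover $X$ by compactness, hence only finitely many $X_i$ are nonempty, say $X_1, \dots, X_N$.

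\textbf{Step 2: embed $X$ in one Euclidean space.} I would construct an injective PA-map $\Phi : X \to \mathbb{R}^M$ with definable image. The first attempt is to extend each $\iota_i$ to a global PA-function $\tilde\iota_i$ on $X$ and put $\Phi = (\tilde\iota_1, \dots, \tilde\iota_N, \rho_1, \dots, \rho_N)$. Here $\rho_i$ is a PA-function vanishing exactly on $X_i$, for example a definable distance-type function. The $\rho_i$ detect which $X_i$ a point lies in, and the $\tilde\iota_i$ separate points within $X_i$, so $\Phi$ is injective. Its image is $\bigcup_i \Phi(X_i)$. Each $\Phi(X_i)$ is the graph of a definable map over $\iota_i(X_i)$, by condition (1) of Definition \ref{piecewise-algebraic space}, so the image is compact and definable.

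$\Phi$ is then a continuous injection from a compact space to a Hausdorff one, hence a homeomorphism onto its image. It is a bijective PA-map, so by Remark \ref{bijective PA-maps} (or Proposition \ref{prop:PA-map}) it is an isomorphism of PA-spaces. Triangulating $\Phi(X)$ with the sets $\Phi(X_i)$ as the $A_k$ then gives a triangulation of $X$ compatible with the defining cover.

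\textbf{Main obstacle: extending the charts.} The hard part is building the extensions $\tilde\iota_i$ and the functions $\rho_i$ as PA-functions on $X$, since a priori $X$ is only glued abstractly. If the extension in Step 2 resists, I would avoid it by induction on $N$. Suppose $Y = X_1 \cup \dots \cup X_{k-1}$ has already been realized as a compact definable set. Then $Y \cup X_k$ is obtained by gluing $X_k$ to $Y$ along the compact definable set $Y \cap X_k$, via a definable identification given by condition (1) of Definition \ref{piecewise-algebraic space}. I then realize the pushout concretely in $\mathbb{R}^{a} \times \mathbb{R}^{b} \times \mathbb{R}$ as follows.

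Put $Y$ at height $0$ and $X_k$ at height $1$. Join the two copies of the common part $Y \cap X_k$ by the cylinder of segments between corresponding points. This is definable, since it is the image of $(Y \cap X_k)\times[0,1]$ under a definable map. The union of $Y$, the cylinder, and $X_k$ is a compact definable set that deformation-retracts onto a model of $Y \cup X_k$.

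A mapping-cylinder construction of this kind only produces something homotopy equivalent to $Y \cup X_k$, not homeomorphic. So in the end I would instead apply the triangulation theorem to $Y$ and $X_k$ separately, each compatible with $Y \cap X_k$. I would then use the relative triangulation theorem to make the two induced triangulations of $Y\cap X_k$ agree after subdivision, via a common refinement of the definable homeomorphism between them. With that, the simplicial complexes glue along a common subcomplex, and the result triangulates $Y \cup X_k$.
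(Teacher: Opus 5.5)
Your eventual route (finitely many charts, induction on their number, triangulate $Y$ and $X_k$ separately compatibly with $Y\cap X_k$, subdivide until the two triangulations of $Y\cap X_k$ agree, glue) is the paper's proof. However, the step that carries all the content does not work as you state it.

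The triangulation theorem only gives a triangulation of $X_k$ whose open simplices refine, \emph{as sets}, the transported open simplices coming from $Y$. The comparison map between the two triangulations of $Y\cap X_k$ is a definable homeomorphism, and nothing in the theorem makes it linear on simplices. A simplicial isomorphism between linear subdivisions is piecewise linear. So no subdivision, and no ``common refinement'' in the sense of the triangulation theorem, turns a non-PL comparison map into one, and there is no common subcomplex to glue along. Closing this needs an extra input, for example:
\begin{itemize}
\item replacing the comparison map by a PL homeomorphism and extending that correction over all of $X_k$, or
\item a relative triangulation theorem that extends a prescribed triangulation of a closed subset.
\end{itemize}
The paper's step of assuming that the refinement $\Sigma_{X_2}|_Y\to\Sigma_{X_1}|_Y$ is a sequence of barycentric subdivisions tacitly makes the same assumption, so following it does not repair the gap.

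The way out is your abandoned Step 2. The obstacle you flagged disappears because there are only finitely many closed charts.

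To build $\tilde\iota_i$, list the charts with $X_i$ first and put $\tilde\iota_i=\iota_i$ on $X_i$. At the stage of $X_j$, the values already assigned on $X_j\cap(\text{earlier charts})$, read in the chart $\iota_j$, form a continuous definable map on a compact definable set. To see this, glue the finitely many pieces using condition (1) of Definition \ref{piecewise-algebraic space}. The semialgebraic Tietze extension theorem (see \cite{Bochnak}) then extends this map to $\iota_j(X_j)$. The result is continuous on $X$ because the cover is finite and closed, and it lies in $\mathcal{O}_X(X)$ by condition (2).

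For $\rho_i$, run the same induction starting from $0$ on $X_i$. At each stage use $|\tilde g|+\delta_A$, where $\tilde g$ is a Tietze extension of the values on the already-treated part $A$ and $\delta_A$ is the Euclidean distance to $A$ (take the constant $1$ if $A$ is empty). This is positive off $A$, so $\rho_i^{-1}(0)=X_i$.

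With these functions, your injectivity, compactness and definability arguments go through. The definability of each $\Phi\circ\iota_j^{-1}$ now comes from the construction, not from condition (1) alone. One application of the triangulation theorem to $\Phi(X)$, with distinguished subsets $\Phi(X_j)$, then finishes the proof.

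This route first proves directly that a compact PA-space is isomorphic to a compact definable subset of a Euclidean space. The paper instead deduces that fact afterwards \emph{from} triangulability. Your route reverses the order and avoids the gluing problem entirely, and the mapping-cylinder detour becomes unnecessary.
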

\begin{proof} Let $X_i$ be a defining cover of $X$. Since they are locally finite, it has to be a finite cover; denote them by $X_1, \dots, X_m$. We shall prove the proposition by induction on $m$. The basic case $m = 1$ is clear by \cite[Th\'eor\`em 9.2.1.]{Bochnak}. Next, by inductive hypothesis, $X' = \bigcup_{i=1}^{m-1} X_i$ has a triangulation; in particular, $X'$, the geometrization of the triangulation, is embedded into a Euclidean space as a compact definable subset. Hence, $X'$ can be used among a defining cover of $X$ and it is thus enough to prove the case $m = 2$.

Let $Y = X_1 \cap X_2$. By \cite[Th\'eor\`em 9.2.1.]{Bochnak}, we find a triangulation $\Sigma_{X_1}$ on $X_1$ that restricts to triangulation on $Y$. Then, by the same theorem, we find $\Sigma_{X_2}$ that restricts to triangulations on $Y$ such that $\Sigma_{X_2}|_Y$ refines $\Sigma_{X_1}|_Y$. Refining $\Sigma_{X_2}$, we can assume the refinement $\Sigma_{X_2}|_Y \to \Sigma_{X_1}|_Y$ is a sequence of barycentric subdivisions (i.e., refinements obtained by inserting new vertices). If $\Sigma_{X_1}|_Y(w) \to \Sigma_{X_1}|_Y$ is a barycentric subdivision given by a new vertex $w$, then the subdivision $\Sigma_{X_1}(w) \to \Sigma_{X_1}$ restricts to $\Sigma_{X_1}|_Y(w) \to \Sigma_{X_1}|_Y$. Hence, we can find a refinement $\Sigma'_{X_1} \to \Sigma_{X_1}$ that restricts to the refinement $\Sigma_{X_2}|_Y \to \Sigma_{X_1}|_Y$. Then $\Sigma_X = \Sigma'_{X_1} \cup \Sigma_{X_2}$ is a triangulation of $X$.
\end{proof}


%

Just as for $\mathbb{R}^n$, given a PA-space $X$, let $\mathfrak{F}$ be the smallest set that contains (1) $\{ x \in X \mid f(x) > 0 \}$, $f$ real-valued functions in $\mathcal{O}(X)$ and (2) it is closed under finite union, finite intersection and complement. Then, by \emph{definable} (or \emph{constructible}) subsets of $X$, we shall mean the members of this set $\mathfrak{F}$. The next corollary implies this definition coincides with the previous one in the compact case.

\begin{corollary}[\cite{Kontsevich} just after \S 8.1. Definition 20.] If $X' \subset X$ is a compact definable subset, then $X'$ is isomorphic to a (compact) definable subset of $\mathbb{R}^n$.

In particular, a compact PA-space is isomorphic to a definable subset of a Euclidean space.
\end{corollary}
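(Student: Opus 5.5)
The plan is to use the preceding proposition (a compact PA-space can be triangulated), applied to a compact PA-subspace containing $X'$, and then read $X'$ off inside the geometric realization of the triangulation.

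\emph{Step 1: reduce to a compact PA-space $K$.} Since $X'$ is compact and the defining cover $\{X_i\}$ is locally finite, only finitely many $X_i$ meet $X'$, say $X_1,\dots,X_m$. Put $K = X_1\cup\dots\cup X_m$. This is a compact PA-space with defining cover $X_1,\dots,X_m$: the charts $\iota_i$ and the compatibility conditions (1) are inherited from $X$, and we take $\mathcal{O}_K$ to be given by condition (2).

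\emph{Step 2: embed $K$.} By the preceding proposition, $K$ admits a triangulation, and we use the geometric realization of a finite simplicial complex in some $\mathbb{R}^n$. This gives a bijection $\phi : K \to |\Sigma| \subset \mathbb{R}^n$. The construction in that proof makes each restriction $\phi|_{X_i}$ semialgebraic: Bochnak's triangulations are semialgebraic homeomorphisms, and the gluing is by barycentric refinements. So $\phi$ is a PA-map. Its inverse has the same graph, so by Remark \ref{bijective PA-maps} $\phi$ is an isomorphism. This also proves the ``in particular'' clause for a compact PA-space.

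\emph{Step 3: the image of $X'$ is definable.} It remains to show $\phi(X')$ is a definable subset of $\mathbb{R}^n$.
\begin{itemize}
\item By definition, $X'$ is a finite Boolean combination of sets $\{f>0\}$ with $f$ real-valued in $\mathcal{O}(X)$.
\item Intersecting with $K$, we get $X' = \bigcup_{i\le m}(X'\cap X_i)$.
\item On each $X_i$, $f|_{X_i}\circ\iota_i^{-1}$ has definable graph over the compact definable set $\iota_i(X_i)$. Hence $\iota_i(\{f>0\}\cap X_i)$ is definable, being the projection of a semialgebraic set (Tarski--Seidenberg). The Boolean combination is therefore definable in $\iota_i(X_i)$.
\item Transport each piece by the semialgebraic map $\phi\circ\iota_i^{-1}$. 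Images of semialgebraic sets under semialgebraic maps are semialgebraic, again by Tarski--Seidenberg.
\item A finite union of definable sets is definable, so $\phi(X')$ is definable, and it is compact.
\end{itemize}

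Finally, $\phi|_{X'}$ is an isomorphism of $X'$, with its induced structure, onto $\phi(X')$ with its $\mathcal{O}$ from the Euclidean definition. Sections on both sides are exactly the functions whose restrictions to compact definable pieces have definable graphs, and $\phi$ preserves definability of graphs.

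\emph{Main obstacle.} The delicate point is Step 2: checking that the triangulation from the previous proof really gives a \emph{PA} homeomorphism $K\to|\Sigma|$, not merely a topological one. I would check this by tracking that each step there (Bochnak's theorem, barycentric subdivision, gluing along $Y$) composes semialgebraic maps on each chart. A secondary subtlety is that the structure sheaf of $K$ must agree with the restriction of $\mathcal{O}_X$, which follows from condition (2) being local on the $X_i$.
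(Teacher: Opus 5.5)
Your proposal is correct and follows the paper's route. The paper states this corollary without a separate proof, as a consequence of the preceding triangulation proposition; the inductive step of that proposition already treats the realization of a (semialgebraic) triangulation as a compact definable subset of Euclidean space, which is exactly your Step 2. Your Step 3 (Tarski--Seidenberg on each chart, then transport by $\phi\circ\iota_i^{-1}$) makes explicit the definability of the image of $X'$, which the paper leaves implicit.
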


\begin{corollary} A compact PA-space has a unique pl-space structure.
\end{corollary}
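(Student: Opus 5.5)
Existence comes from the preceding corollary together with the triangulation proposition. By the corollary, a compact PA-space $X$ is isomorphic, as a PA-space (Proposition \ref{prop:PA-map}), to a compact definable subset $K \subset \mathbb{R}^n$. By \cite[Th\'eor\`em 9.2.1.]{Bochnak}, $K$ admits a semialgebraic triangulation $\Phi : |\Sigma| \to K$, where $\Phi$ is a definable homeomorphism from a finite simplicial complex. Composing gives a triangulation of $X$, and the associated pl-space structure \cite[Ch. I., \S 1.3.]{Borel} is the class of triangulations pl-compatible with it.

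For uniqueness, I would first pin down the meaning: any two triangulations $\Phi_1 : |\Sigma_1| \to X$ and $\Phi_2 : |\Sigma_2| \to X$ that are PA, i.e.\ definable homeomorphisms with respect to $\mathcal{O}_X$, define the same pl-structure. So I must show that $h = \Phi_2^{-1} \circ \Phi_1 : |\Sigma_1| \to |\Sigma_2|$ is piecewise-linear after subdivision. By Remark \ref{bijective PA-maps}, $h$ is a definable homeomorphism between compact polyhedra, but it need not be piecewise-linear (e.g.\ $t \mapsto t^2$ on $[0,1]$). Uniqueness therefore holds only up to pl-homeomorphism, and the claim to prove is this: every definable homeomorphism of compact polyhedra is isotopic, or at least pl-equivalent, to a pl-homeomorphism. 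It then suffices to show that the pl-spaces $|\Sigma_1|$ and $|\Sigma_2|$ are pl-homeomorphic compatibly with $X$.

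The plan for this step is as follows. Triangulate the graph $\Gamma_h \subset |\Sigma_1| \times |\Sigma_2|$, a compact definable set, using the relative version of \cite[Th\'eor\`em 9.2.1.]{Bochnak}: choose a triangulation $\Psi : |T| \to \Gamma_h$ compatible with the finitely many definable subsets $\Gamma_h \cap (\sigma \times \tau)$ for simplices $\sigma \in \Sigma_1$ and $\tau \in \Sigma_2$. The projections $p_1 \circ \Psi$ and $p_2 \circ \Psi$ are then definable homeomorphisms onto $|\Sigma_1|$ and $|\Sigma_2|$ that send each simplex of $T$ into a single simplex of $\Sigma_1$ (respectively $\Sigma_2$). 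The key lemma is that a definable homeomorphism $g : |T| \to |\Sigma|$ that maps each open simplex of $T$ into an open simplex of $\Sigma$ is pl-homeomorphic to the simplicial map sending each vertex $v$ to $g(v)$, after a suitable subdivision. I would prove this by the standard Alexander-trick or straight-line homotopy argument, inducting on skeleta. On each simplex the linear interpolation of $g$ on vertices lands in the same simplex of $\Sigma$ and is a homeomorphism, since the images of the vertices of a simplex of $T$ are affinely independent after subdivision. Composing the two resulting pl-homeomorphisms $|T| \cong |\Sigma_1|$ and $|T| \cong |\Sigma_2|$ yields the required pl-homeomorphism.

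The main obstacle is this last lemma: guaranteeing that vertex images are in general position, so that the linearization is injective. If the direct argument fails, my fallback is the Hauptvermutung for semialgebraic sets, namely the uniqueness part of the semialgebraic triangulation theorem (Shiota--Yokoi), which states precisely that two semialgebraic triangulations of a compact semialgebraic set are pl-equivalent. I would cite that result for this step.
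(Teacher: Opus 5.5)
Your existence argument is the paper's (the preceding corollary together with \cite[Th\'eor\`em 9.2.1.]{Bochnak}). Your fallback for uniqueness, citing the semialgebraic Hauptvermutung, is the paper's entire proof: it consists of the single citation \cite{Shiota1984}. With that citation your argument is complete. Your remark that ``unique'' can only mean unique up to pl-homeomorphism is correct and sharper than the paper's wording: $t \mapsto t^2$ on $[0,1]$ shows two PA-triangulations need not be pl-compatible.

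The direct argument you put first, however, has a genuine gap, because the key lemma is false as stated. Here is a counterexample.
\begin{itemize}
\item Let $\Sigma$ be a $2$-simplex $[a,b,c]$. Let $T$ consist of $[u,v,w]$, $[a,u,c]$, $[u,v,c]$, $[v,b,c]$ and their faces.
\item Take a homeomorphism $g : |T| \to [a,b,c]$, which can even be chosen piecewise-linear. It fixes $a,b,c$ and sends $u,w,v$, in this order, into the interior of the edge $[a,b]$.
\item It sends $[u,w]$ and $[w,v]$ onto segments of that edge, and sends the open edge between $u$ and $v$ onto an arc through the interior of $[a,b,c]$. Thus $[u,v,w]$ fills the region between this arc and the segment $[g(u),g(v)]$.
\end{itemize}
Every open simplex of $T$ lands in an open simplex of $\Sigma$. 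Yet $g(u), g(w), g(v)$ are collinear, so the vertex linearization collapses $[u,v,w]$ to a segment. Your sketch does not say which subdivision restores affine independence of vertex images. Even simplexwise non-degeneracy would not by itself give global injectivity.

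More fundamentally, the detour through $\Gamma_h$ does not reduce the difficulty. Take an arbitrary definable homeomorphism $h : |\Sigma_1| \to |\Sigma_2|$, and triangulate $|\Sigma_1|$ compatibly with all $\mathring{\sigma}$ ($\sigma \in \Sigma_1$) and all $h^{-1}(\mathring{\tau})$ ($\tau \in \Sigma_2$). This already produces maps of exactly the type your lemma treats, both for $|\Sigma_1|$ and for $|\Sigma_2|$. So any version of the lemma strong enough to yield pl-homeomorphisms contains the full semialgebraic Hauptvermutung. That step is the whole content of the theorem, not a routine straight-line-homotopy or skeleton induction. The citation should therefore replace the direct route rather than serve as a backup to it.
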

\begin{proof} The uniqueness is by the Hauptvermutung \cite{Shiota1984}.
\end{proof}

\begin{corollary} Each open definable subset $U$ of a PA-space with compact closure can be triangulated by relatively-open simplexes.
\end{corollary}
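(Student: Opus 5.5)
The plan is to reduce to the triangulation theorem for compact definable subsets of Euclidean space, \cite[Th\'eor\`em 9.2.1.]{Bochnak}, applied compatibly with a finite family of definable subsets. Let $U \subset X$ be open and definable with $\overline{U}$ compact. Since $\overline{U}$ is compact and the defining cover is locally finite, $\overline{U}$ meets only finitely many members of the cover. Hence $\overline{U}$ lies in a compact PA-subspace $K = X_{i_1} \cup \dots \cup X_{i_m}$, which is a compact definable subset of $X$.

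By the preceding corollary, $K$ is isomorphic to a compact definable subset of some $\mathbb{R}^n$. We fix such an isomorphism $\iota : K \to \iota(K) \subset \mathbb{R}^n$. Under $\iota$, the sets $U$, $\overline{U}$ and $\partial U = \overline{U} - U$ become definable subsets of $\mathbb{R}^n$. For $U$ this holds because the functions $f \in \mathcal{O}(X)$ cutting out $U$ restrict to sections of $\mathcal{O}_K$, which are definable on the compact set $K$. The closure and the frontier of a semialgebraic set are again semialgebraic, so $\iota(\overline{U})$ and $\iota(\partial U)$ are definable as well.

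We then apply \cite[Th\'eor\`em 9.2.1.]{Bochnak} to the compact definable set $\iota(\overline{U})$, compatibly with the finite family of definable subsets $\{\iota(U), \iota(\partial U)\}$. This yields a finite simplicial complex $\Sigma$ and a definable homeomorphism $\phi : |\Sigma| \to \iota(\overline{U})$ such that each of $\iota(U)$ and $\iota(\partial U)$ is a union of images of open simplices. Since $\partial U$ is closed, it is a union of images of closed simplices, so it is the realization of a subcomplex $\Sigma_0 \subset \Sigma$. Then $U$ is the disjoint union of the images under $\iota^{-1}\circ\phi$ of the relatively open simplices $\sigma^\circ$ with $\sigma \in \Sigma - \Sigma_0$. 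This is the desired triangulation by relatively-open simplexes.

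The main obstacle is checking that definability of $U$ in the intrinsic sense of $X$, meaning Boolean combinations of sets $\{f>0\}$ with $f \in \mathcal{O}(X)$, transfers to semialgebraic definability of $\iota(U)$ in the chosen embedding of $K$. The $f$'s are only known to have definable graphs over compact definable pieces. So we have to restrict each $f$ to $K$, use property (2) of Definition \ref{piecewise-algebraic space} on each $X_{i_j}$, and use that $\iota|_{X_{i_j}}$ is compatible with the defining charts; this is the content of the preceding corollary.
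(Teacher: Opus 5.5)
Your argument is correct, but it takes a different route from the paper. The paper uses only that $\overline{U}$ is definable. Then $\overline{U}$ is a compact PA-space, so it carries a pl-structure by the preceding corollary, and the paper quotes the general fact \cite[Ch. I., \S 1.3.]{Borel} that an open subset of a pl-space inherits a pl-structure. That yields a genuine locally finite simplicial triangulation of $U$ by closed simplices. It uses nothing about $U$ beyond openness, but the triangulation is in general infinite, and it is not literally a triangulation ``by relatively-open simplexes''.

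You instead use the compatibility clause of \cite[Th\'eor\`em 9.2.1.]{Bochnak}, which genuinely needs $U$ itself, not just $\overline{U}$, to be definable. What this buys is a \emph{finite} partition of $U$ into images of open simplices under a single semialgebraic homeomorphism. That is exactly a triangulation by relatively-open simplexes, and it stays inside the PA category. The price is that $\Sigma - \Sigma_0$ is not a subcomplex, so your decomposition is not a simplicial complex structure on $U$ itself. If one needs that, for instance for PL arguments, one must subdivide toward $\partial U$ as in the paper's route.

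Your proposal also makes explicit how intrinsic definability in $X$ transfers to semialgebraicity in a chart. The paper passes over this with ``Note $\overline{U}$ is also definable.''

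One small point: you do not need $K$ to be a definable subset of $X$. It suffices that $K$, with the cover $X_{i_1}, \dots, X_{i_m}$, is itself a compact PA-space. Then the ``in particular'' clause of the preceding corollary applies, and $f|_K \in \mathcal{O}_K(K)$ follows directly from property (2).
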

\begin{proof} Note $\overline{U}$ is also definable. Thus, by the preceding corollary, $\overline{U}$ has a pl-space structure. In general, an open subset of a pl-space has an induced pl-space structure \cite[Ch. I., \S 1.3.]{Borel}.
\end{proof}

\begin{corollary}\label{locally contractible} A PA-space is locally contractible in the sense: at each point, there is a neighborhood base consisting of contractible open subsets.
\end{corollary}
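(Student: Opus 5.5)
Since local contractibility is a local property, we may reduce to the compact case. Given a point $x$ of the PA-space $X$ and an open neighborhood $W$ of $x$, we first choose, using local finiteness of a defining cover $X_i$, a neighborhood $V$ of $x$ meeting only finitely many $X_i$. The union $K$ of those finitely many $X_i$ containing $x$ is a compact PA-space containing a neighborhood of $x$; indeed, its complement near $x$ is covered by the finitely many remaining $X_j$'s, which are closed and do not contain $x$. By the corollary above, $K$ is isomorphic to a compact definable subset of some $\mathbb{R}^n$. By the triangulation proposition (or \cite[Th\'eor\`em 9.2.1.]{Bochnak}) it admits a finite simplicial triangulation $\Sigma$, which we may take with $x$ a vertex.

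Next we produce contractible neighborhoods as open stars. For a triangulation $\Sigma$ with $x$ a vertex, the open star $\operatorname{st}(x,\Sigma)$, the union of the relative interiors of simplices containing $x$, is an open neighborhood of $x$ in $K$. It is contractible by the straight-line homotopy $(y,t)\mapsto (1-t)y+tx$ in barycentric coordinates: each point of the open star lies in the relative interior of a simplex $\sigma \ni x$, and the segment to $x$ stays inside the relative interiors of faces of $\sigma$ containing $x$. Continuity is clear since the homotopy is affine on each closed simplex.

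To obtain a neighborhood base, we iterate barycentric subdivision. Since $K$ is compact and the triangulation is by linear simplices in the Euclidean embedding, the mesh of $\Sigma^{(k)}$ tends to $0$. Hence for large $k$ the open star of $x$ lies inside $W \cap V$ and inside the interior of $K$ in $X$, and so it is open in $X$. These stars, for $k$ large, form the required base. This uses that the homeomorphism $|\Sigma| \to K$ is a definable homeomorphism onto the embedded set, so that mesh in the simplicial metric controls the Euclidean diameter by uniform continuity on the compact set.

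The main obstacle is the first step: checking that the open star in $K$ is really open in $X$, i.e.\ that $x$ lies in the topological interior of $K$ in $X$. This requires the local finiteness argument: the finitely many $X_j$ not containing $x$ are compact, hence closed, so the complement of their union is an open neighborhood of $x$ contained in $K$.
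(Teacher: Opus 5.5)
Your argument is correct and follows essentially the same route as the paper: triangulate a compact (resp.\ relatively compact) neighborhood of $x$ and read off local contractibility from the triangulation. The differences are in the details. You triangulate the union $K$ of the defining-cover pieces through $x$, using local finiteness to show $x$ lies in the interior of $K$ in $X$, whereas the paper triangulates a relatively compact definable open set via its induced pl-structure. You also make explicit the step the paper leaves implicit: the open stars of the vertex $x$ under iterated barycentric subdivision form a base of contractible open neighborhoods.
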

\begin{proof} Let $X$ be a PA-space and $x$ a point in $X$. Since $X$ is locally compact, $x$ has a neighborhood $U$ with compact closure. Since $U$ is a union of definable subsets, shrinking $U$, we can assume $U$ is definable. Then $U$ is triangulated by the preceding corollary and so locally contractible.
\end{proof}


Recall that, given a sheaf $F$ on a topological space $X$, $F$ can be identified with the sheafification of $F$ as a presheaf. Explicitly, that means: for each open subset $U$, $F(U)$ is the set of all continuous sections
$$U \to E(F)$$
to the topological space $E(F)$ called the \emph{total space} of $F$ or more historically \emph{espace \'etal\'e} of the presheaf $F$ \cite[Ch. II, Exercise 1.13.]{Hart}. Viewing $F$ this way, we can consider a section of $F$ over a closed subset (in fact any subset) of $X$. A sheaf is then called \emph{soft} if each section over a closed subset extends to a global section \cite[Ch. II, \S 3.4.]{Godement}. For example, a flasque sheaf on a paracompact space is soft [loc. cit.].

We note:

\begin{remark}\label{section extends} If $X$ is a PA-space and $F$ a sheaf on it, then every section of $F$ over a subset $S \subset X$ extends to a section over a neighborhood of $S$, by \cite[Ch. II, Th\'eor\`em 3.3.1.]{Godement}.

(Some authors define a soft sheaf to be a sheaf such that for each section $s$ over a neighborhood of a closed subset $A$, the restriction $s|_A$ extends to a global section. For PA-spaces, the definition \`a la Godement thus coincides with that definition.)
\end{remark}

The next result is a key for constructing a partition of unity.

\begin{lemma}[bump function]\label{lem:nontrivial func} For each point $x$ in a PA-space $X$ and a neighborhood $U$ of $x$, there exists a nonnegative function $\psi$ in $\mathcal{O}(X)$ such that $\supp(\psi)$ is a compact subset of $U$ and $\psi(x) > 0$.
\end{lemma}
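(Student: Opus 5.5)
Fix a defining cover $\{X_i, \iota_i\}$ of $X$ as in Definition \ref{piecewise-algebraic space}. The idea is to build the bump function inside a single chart from a polynomial, namely a squared distance in $\mathbb{R}^{n_i}$, and then extend it by zero. The only real work is checking that this extension is a global section of $\mathcal{O}_X$. By local finiteness, there is a neighborhood $W$ of $x$ meeting only finitely many $X_i$, say $X_1, \dots, X_m$. Shrinking $W$, we may assume $W$ misses every $X_i$ not containing $x$ (each such $X_i$ is closed, being compact in a Hausdorff space). So $x \in X_i$ for every $i = 1, \dots, m$. By local compactness, we may also shrink $W$ so that $\overline{W}$ is compact and $\overline{W} \subset U$.

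Consider the chart $X_1$ with $p = \iota_1(x)$. Since $\iota_1$ is a homeomorphism onto its image, the set $W \cap X_1$ is relatively open in $X_1$, so we can choose $r > 0$ with $\iota_1^{-1}(B(p, 2r)) \subset W \cap X_1$. On $X_1$ define
$$\psi_1(y) = \max\bigl(r^2 - |\iota_1(y) - p|^2, \, 0\bigr).$$
This is continuous and its graph over $\iota_1(X_1)$ is definable, since it is built from a polynomial and $\max$. However, its support lies in $X_1$, which need not be a neighborhood of $x$ in $X$. For this reason we do not use $\psi_1$ alone. Instead, for each $i \le m$, we pick $r_i > 0$ in the same way using $\iota_i$ and $p_i = \iota_i(x)$, and set
$$\psi = \sum_{i=1}^m \tilde\psi_i,$$
where $\tilde\psi_i$ equals $\max(r_i^2 - |\iota_i - p_i|^2, 0)$ on $X_i$ and $0$ off $X_i$. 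Then $\psi(x) = \sum r_i^2 > 0$, $\psi \ge 0$, and $\supp \psi \subset \overline{W}$, which is compact and contained in $U$.

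The main obstacle is verifying that each $\tilde\psi_i$ is a section of $\mathcal{O}_X$, i.e., by (2) of the definition, that $\tilde\psi_i|_{X_j}$ lies in $\mathcal{O}_{X_j}$ for every $j$.

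\textbf{Continuity.} The function $\tilde\psi_i$ vanishes on the closed set $X_i \setminus \iota_i^{-1}(B(p_i, r_i))$. The set $\iota_i^{-1}(\overline{B(p_i, r_i)})$ is compact and contained in $W$. Hence $\tilde\psi_i$ is continuous on $X$ provided $\iota_i^{-1}(B(p_i, r_i))$ is open in $X$, and not merely in $X_i$. This can fail. To fix it, I would note that the union of the $X_j$, $j \le m$, covers $W$. I would then pass to $\psi$ directly and check continuity at boundary points using the pasting lemma for the finite closed cover $\{X_j \cap \overline{W}\}$: a function is continuous if its restriction to each closed piece is continuous. So it suffices that $\tilde\psi_i|_{X_j}$ is continuous. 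This holds because on $X_i \cap X_j$ it agrees with a continuous function, and on $X_j \setminus X_i$ it is $0$, provided $\tilde\psi_i \to 0$ at points of $X_j \cap \partial X_i$. That in turn requires $\tilde\psi_i$ to vanish near the frontier of $X_i$ in $X$.

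To guarantee this, I would choose $r_i$ so that $\iota_i^{-1}(B(p_i, 2r_i))$ is contained in the interior of $X_i$ in $X$ whenever $x$ lies in that interior. If $x$ lies on the frontier of every $X_i$, I would instead use a single combined function $\max_i \tilde\psi_i$, or replace the cover by the triangulated compact neighborhood $\overline{W} \cup \bigcup_i X_i$. This set is compact and, by the corollary above, isomorphic to a definable subset of $\mathbb{R}^n$, so it can be used as a single chart $\iota$ around $x$ whose interior contains $x$. Hence I would actually start by replacing the cover locally with one chart $K$, a compact definable neighborhood of $x$ obtained from that corollary. Then $\psi = \max(r^2 - |\iota - \iota(x)|^2, 0)$, extended by zero, is continuous, since its support lies in the interior of $K$.

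\textbf{Definability.} On each $X_j$, the restriction $\psi|_{X_j}$ is, on the definable set $\iota_j(X_j \cap K)$, the composite of a polynomial with the transition map $\iota \circ \iota_j^{-1}$, which is definable by (1). Off that set it is zero. Therefore its graph over any compact definable subset is definable, and $\psi \in \mathcal{O}(X)$.
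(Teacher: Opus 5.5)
Your final argument is correct, but it takes a different route from the paper's. The paper quotes \cite{Bochnak} for a compact definable subset of $\mathbb{R}^n$. In general it picks a bump $\varphi_i\in\mathcal{O}_{X_i}(X_i)$ on each of the finitely many charts $X_i\ni x$, extends each to a section $\psi_i$ of $\mathcal{O}_X$ over a neighborhood of $X_i$ using Remark \ref{section extends}, and takes $\prod_i\psi_i$.

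You instead merge the charts through $x$ into one compact chart $K=\bigcup_{i\le m}X_i$, which contains $W$ and so is a neighborhood of $x$. The corollary that a compact PA-space is isomorphic to a definable subset of $\mathbb{R}^N$ gives a single embedding $\iota$. You then write down the explicit bump $\max(r^2-|\iota-\iota(x)|^2,0)$, extended by zero.

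Your route costs the triangulation-based corollary, but everything becomes transparent. Continuity is automatic once $r$ is chosen with $\iota^{-1}(\overline{B}(\iota(x),r))\subset W\cap\operatorname{int}_X K$. Membership in $\mathcal{O}(X)$ then reduces to definability of the transition maps $\iota\circ\iota_j^{-1}$.

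It also avoids the delicate step in the paper's argument. An element of $\mathcal{O}_{X_i}(X_i)$ is not automatically a section of the sheaf $\mathcal{O}_X$ over $X_i$; the paper's own remark after Definition \ref{piecewise-algebraic space} warns that $\mathcal{O}_X|_{X_i}\not\simeq\mathcal{O}_{X_i}$. So the extension step implicitly needs a local definable Tietze-type extension. In addition, $\prod_i\psi_i$ only lives on the intersection of the domains of the $\psi_i$ and needs a further cut-off. The paper's route is shorter and avoids triangulations, at the price of that extension step.

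When you write this up, start directly with the single chart $K$ and drop the preliminary attempts, since both are genuinely discontinuous in general:
\begin{itemize}
\item $\sum_i\tilde\psi_i$ fails already for $X=[-1,1]$, $X_1=[-1,0]$, $X_2=[0,1]$, $x=0$.
\item $\max_i\tilde\psi_i$ fails whenever two charts sharing a frontier measure distance differently along it.
\end{itemize}

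Two smaller fixes in the definability paragraph. For $j\le m$, definability of $\iota\circ\iota_j^{-1}$ comes from $\iota$ being a PA-isomorphism $K\cong\iota(K)$, not from condition (1) itself. For charts $X_j$ not meeting $W$, you simply have $\psi|_{X_j}\equiv 0$.
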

\begin{proof} If $X \subset \mathbb{R}^n$ is a compact definable subset, then the assertion holds by \cite[Proposition 2.7.4.]{Bochnak}.

In general, shrinking $U$, assume $\overline{U}$ is compact. Among the members of some defining cover of $X$, there are at most finitely many $X_i$'s containing $x$. Choose $\varphi_i$ in $\mathcal{O}_{X_i}(X_i)$ whose support lies in $U \cap X_i$ and such that $\varphi_i(x) > 0$. By Remark \ref{section extends}, $\varphi_i$ extends to a section $\psi_i$ of $\mathcal{O}_X$ over some neighborhood of $X_i$. Then the function $\psi = \prod_i \psi_i$ has the required property, since $\supp(\psi) \subset \cup_i (U \cap X_i) = U$.
\end{proof}

The existence of a partition of unity is now a formal consequence.

\begin{prop}[partition of unity]\label{partition of unity} Let $X$ be a PA-space and $U_i$ an open cover of $X$.

Then there exist nonnegative functions $\lambda_i$ in $\mathcal{O}_X(X)$ such that $\supp(\lambda_i) \subset U_i$, the family $\{ \supp(\lambda_i) \mid i \}$ is locally finite and $1 = \sum_i \lambda_i$.

In particular, given a closed subset $A \subset X$ and a neighborhood $U$ of $A$, there exists a function $\psi$ in $\mathcal{O}_X(X)$, called a cut-off function, such that $\psi = 1$ on $A$ and $\supp(\psi) \subset U$.
\end{prop}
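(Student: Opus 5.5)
We follow the classical construction of a partition of unity on a paracompact, locally compact space, using Lemma \ref{lem:nontrivial func} in place of smooth bump functions. By Proposition \ref{prop:paracompact} and local compactness, we may first refine $\{U_i\}$ to a locally finite open cover $\{V_j\}$ in which each $\overline{V_j}$ is compact and contained in some $U_{i(j)}$. Concretely, we cover each member $X_k$ of a defining cover by finitely many relatively compact open sets, each lying inside some $U_i$. Local finiteness then comes from the local finiteness of $\{X_k\}$, as in the proof of Proposition \ref{prop:paracompact}.

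\textbf{Construction of the functions.} Next we shrink: by paracompactness (which gives normality), pick open $W_j$ with $\overline{W_j} \subset V_j$ and $\bigcup_j W_j = X$. For each $j$ and each $x \in \overline{W_j}$, Lemma \ref{lem:nontrivial func} gives a nonnegative $\psi_x \in \mathcal{O}(X)$ with compact support in $V_j$ and $\psi_x(x)>0$. Since $\overline{W_j}$ is compact, finitely many of the open sets $\{\psi_x>0\}$ cover it. Let $\phi_j$ be the sum of these finitely many functions. Then $\phi_j \ge 0$, $\phi_j>0$ on $\overline{W_j}$, and $\supp(\phi_j) \subset V_j$ is compact. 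The family $\{\supp \phi_j\}$ is locally finite because $\{V_j\}$ is. Hence $\phi = \sum_j \phi_j$ is a locally finite sum, so it lies in $\mathcal{O}_X(X)$, and it is strictly positive everywhere. Since $\mathcal{O}_X$ is closed under $t\mapsto 1/t$ away from $0$ (the local-ring argument given earlier), $1/\phi \in \mathcal{O}_X(X)$. Then set $\mu_j = \phi_j/\phi$ and $\lambda_i = \sum_{i(j)=i} \mu_j$. Each $\lambda_i$ is again a locally finite sum, so it lies in $\mathcal{O}_X(X)$. Moreover, $\supp\lambda_i$ is the union of the locally finite closed family $\{\supp\mu_j : i(j)=i\}$, so it is closed, contained in $U_i$, and the family $\{\supp\lambda_i\}$ is locally finite. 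Finally, $\sum_i\lambda_i=1$.

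\textbf{The step needing care.} The main point to check is that locally finite sums, products and quotients of sections stay in $\mathcal{O}_X$. This is where the definability conditions enter. On each compact definable set $K$ in a chart $X_k$, only finitely many summands are nonzero. The graph of a finite sum or product of definable continuous functions is definable, since it is a projection of a definable set and definable sets are stable under projection by Tarski--Seidenberg. Then condition (2) of Definition \ref{piecewise-algebraic space} gives membership in $\mathcal{O}_X(X)$.

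\textbf{The cut-off function.} For the last claim, apply the result to the cover $\{U, X\setminus A\}$ to get $\lambda_U+\lambda_{X\setminus A}=1$ with $\supp\lambda_{X\setminus A}\subset X\setminus A$. Then $\psi=\lambda_U$ equals $1$ on $A$ and has $\supp\psi\subset U$.
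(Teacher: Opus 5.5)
Your proof is correct and follows essentially the same route as the paper's: a locally finite refinement with compact closures, shrinking, finitely many bump functions from Lemma \ref{lem:nontrivial func} for each member, normalization by the total sum, and regrouping via a choice function (which the paper does as a preliminary reduction); you also check two points the paper leaves implicit, namely that locally finite sums and reciprocals of positive sections stay in $\mathcal{O}_X$, and how the cut-off function follows from the cover $\{U, X \setminus A\}$. One small caveat: your ``concretely'' description of the refinement does not by itself give local finiteness, since relatively compact open sets chosen to cover different $X_k$ may all pass through a common point, but the refinement you need follows anyway from paracompactness and local compactness, as you first state.
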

\begin{proof} The proof is by essentially repeating the one in differential topology; e.g., \cite[Theorem 2.1.]{Hirsch}.

First note that if we can find a partition of unity $\{\mu_v\}_{v\in \mathcal{V}}$ for a refinement $\mathcal{V}$ of the given cover $\mathcal{U}$, then we can find a partition of unity of covering $\mathcal{U}$. Take $\{\mu_v\}_{v \in \mathcal{V}}$ to be a partition of unity for the refined covering $\mathcal{V}$, pick a choice function $c : \mathcal{V} \rightarrow \mathcal{U}$ and define $J_u = \{v : c(v) = u \}$. Then $\lambda_u = \sum_{v \in c^{-1}(u)} \mu_v$ are a partition of unity clearly. Hence, since $X$ is paracompact by Proposition \ref{prop:paracompact}, replacing the given open cover $\mathcal{U}$ by a locally finite refinement, we assume the cover $\mathcal{U}$ is locally finite. Since $X$ is locally compact, we can also assume each $U \in \mathcal{U}$ has compact closure.

Since $X$ is paracompact, we can then find open subsets $V_i \subset U_i$ such that (1) the $V_i$'s are an open cover of $X$ and (2) $\overline{V_i} \subset U_i$ (e.g., \cite[Theorems A.1., A.2.]{Hirsch}). Note each $\overline{V_i}$ is compact.

Now, for each $i$, by the preceding Lemma \ref{lem:nontrivial func}, we can find finitely many $\psi_{i, j}$'s such that $\{ \psi_{i, j} > 0 \}$ covers $\overline{V_i}$ and $\supp(\psi_{i, j}) \subset U_i$. Then the functions $\lambda_i = (\sum_j \psi_{i, j}) / (\sum_{i, j} \psi_{i, j})$ are the required partition of unity.

\end{proof}

\begin{corollary}\label{Omega soft} Every $\mathcal{O}_X$-module is soft.
\end{corollary}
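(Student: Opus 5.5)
The plan is to follow the classical argument that a module over a soft sheaf of rings admitting partitions of unity is soft (cf. \cite[Ch. II, \S 3.7]{Godement}), using Proposition \ref{partition of unity}. Let $F$ be an $\mathcal{O}_X$-module, $A \subset X$ closed, and $s$ a section of $F$ over $A$. The goal is to construct a global section $t \in F(X)$ with $t|_A = s$.

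\textbf{Step 1 (local extensions).} By Remark \ref{section extends}, $s$ extends to a section over some open neighborhood $W$ of $A$. I will keep this $W$ fixed, but first cover $A$ by open sets on which $s$ is represented by sections of $F$. For each $x \in A$ choose an open $U_x \subset W$ and $s_x \in F(U_x)$ with $s_x|_{U_x \cap A} = s|_{U_x \cap A}$; for instance, restrict the extension over $W$. Adjoining $U_0 = X - A$ with $s_0 = 0$ gives an open cover $\{U_i\}$ of $X$ with sections $s_i \in F(U_i)$. These agree on $A \cap U_i \cap U_j$, since $U_0$ does not meet $A$.

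\textbf{Step 2 (gluing by partition of unity).} Take a partition of unity $\{\lambda_i\}$ in $\mathcal{O}_X(X)$ subordinate to $\{U_i\}$ with locally finite supports, as in Proposition \ref{partition of unity}. Each $\lambda_i s_i$ lies in $F(U_i)$ and vanishes on the open set $U_i - \supp(\lambda_i)$. It therefore glues with $0$ on $X - \supp(\lambda_i)$ to a global section $t_i \in F(X)$; here I use that $\supp(\lambda_i) \subset U_i$ is closed in $X$. By local finiteness, $t = \sum_i t_i$ is a well-defined global section, computed locally as a finite sum.

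\textbf{Step 3 (checking $t|_A = s$).} Check this on stalks at $a \in A$. Only finitely many $\lambda_i$ are nonzero near $a$, and for each such $i$ with $a \in \supp(\lambda_i)$ we have $a \in U_i$. The germs $(s_i)_a$ all equal $s_a$, since $s_i$ and $s$ agree on $U_i \cap A$ and sections over $A$ are continuous maps into the espace \'etal\'e. Hence $t_a = \sum_i \lambda_i(a)\, s_a = s_a$.

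The subtle point is exactly this stalk computation. Agreement of the $s_i$ on $A$ is only agreement of germs at points of $A$, and $\lambda_i(a)$ must be read as the image of the germ of $\lambda_i$ in the residue field acting on the germ. The argument therefore needs $t_a = \sum_i (\lambda_i)_a (s_i)_a$ with all $(s_i)_a$ equal to $s_a$, so that $t_a = \bigl(\sum_i \lambda_i\bigr)_a s_a = s_a$. This holds because the germs $(s_i)_a$ literally coincide in $F_a$, and indices with $a \notin \supp(\lambda_i)$ contribute zero germs. The remaining points, that the sum is well defined and each $t_i$ is a global section, are routine applications of the sheaf axiom and local finiteness.
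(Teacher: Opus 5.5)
Your argument is correct and is essentially the paper's proof: every $s_i$ is a restriction of the single extension $\tilde s$ over $W$ from Remark \ref{section extends}, so your glued section is $t=(1-\lambda_0)\tilde s$ extended by zero, and $1-\lambda_0$ is exactly a cut-off function equal to $1$ near $A$ with support in $W$, which is what the paper multiplies $\tilde s$ by (via Proposition \ref{partition of unity}). One small slip: the residue field does not act on the stalk $F_a$, so drop that phrasing; what you need, and then correctly use, is $(\lambda_i s_i)_a=(\lambda_i)_a s_a$ together with $\bigl(\sum_i\lambda_i\bigr)_a=1$ in $\mathcal{O}_{X,a}$.
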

\begin{proof} Let $s$ be a section of an $\mathcal{O}_X$-module $F$ over a closed subset $A$. By Remark \ref{section extends}, $s = t|_A$ for some section $t$ over a neighborhood $U$ of $A$. Then $\psi t$ is a global section that extends $s$ when $\psi$ is in $\mathcal{O}(X)$ such that $\supp(\psi) \subset U$ and $\psi = 1$ on $A$.
\end{proof}

The next lemma is well-known for topological manifolds (see e.g., \cite[\S 5.31.]{Warner}).

\begin{lemma}\label{singular cohomology} If $X$ is a PA-space, then the singular cohomology of $X$ with coefficients in $\mathbb{C}$ is the same as the sheaf cohomology of $X$ with values in the constant sheaf $\mathbb C$.
\end{lemma}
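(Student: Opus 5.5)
We follow the classical argument for manifolds (as in \cite[\S 5.31.]{Warner}). Consider the complex of sheaves $\mathcal{S}^\bullet$ on $X$ obtained by sheafifying the presheaves $U \mapsto S^q(U; \mathbb{C})$ of singular cochains. The plan is to show three things: that $\mathcal{S}^\bullet$ is a resolution of the constant sheaf $\mathbb{C}$, that each $\mathcal{S}^q$ is soft (hence acyclic on the paracompact space $X$), and that the natural map $S^\bullet(X;\mathbb{C}) \to \Gamma(X, \mathcal{S}^\bullet)$ is a quasi-isomorphism. Together these give
$$\operatorname{H}^*(X;\mathbb{C}) \cong \operatorname{H}^*(\Gamma(X, \mathcal{S}^\bullet)) \cong \operatorname{H}^*(X; \mathbb{C}).$$

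\textbf{Exactness of the resolution.} This is checked on stalks. By Corollary \ref{locally contractible}, every point $x$ has a neighborhood base of contractible open sets $U$. For such $U$, the singular cohomology is $\mathbb{C}$ in degree $0$ and zero in positive degrees. Passing to the filtered colimit over such $U$, the stalk complex $\mathbb{C} \to \mathcal{S}^0_x \to \mathcal{S}^1_x \to \cdots$ is exact, since filtered colimits are exact.

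\textbf{Softness.} Each $\mathcal{S}^q$ is a module over the sheaf of $\mathbb{Z}$-valued (or $\mathbb{C}$-valued) locally constant functions, but the cleaner route is the following. Each $\mathcal{S}^q$ is a quotient of the presheaf $S^q$, and for any subset $A$ a cochain on the singular simplices of $A$ extends by zero to a cochain on $X$. So the presheaf $S^q$ has surjective restriction maps. For a paracompact space (Proposition \ref{prop:paracompact}), the sheafification of such a presheaf is soft; this is the standard argument of Godement/Warner. Alternatively, one can observe that $\mathcal{S}^q$ is a module over the sheaf of all continuous functions acting by "multiplication at the first vertex" and invoke the partition-of-unity style argument. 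Since soft sheaves on paracompact spaces are acyclic, $\Gamma(X,\mathcal{S}^\bullet)$ computes sheaf cohomology with coefficients in $\mathbb{C}$.

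\textbf{Main obstacle: comparing with global cochains.} The remaining step, which we expect to be the main obstacle, is showing that $S^\bullet(X) \to \Gamma(X,\mathcal{S}^\bullet)$ is a quasi-isomorphism. This map is surjective (by paracompactness, a compatible family of local cochains glues), and its kernel $S^\bullet_0(X)$ consists of cochains that vanish on all $\mathcal{U}$-small simplices for some open cover $\mathcal{U}$. We need $S^\bullet_0(X)$ to be acyclic. For this we use the theorem on $\mathcal{U}$-small chains via barycentric subdivision: the inclusion $S^{\mathcal{U}}_\bullet(X) \to S_\bullet(X)$ is a chain homotopy equivalence. Dually, $S^\bullet(X) \to S^\bullet_{\mathcal{U}}(X)$ is a quasi-isomorphism for every cover $\mathcal{U}$. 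Taking the colimit over covers, $S^\bullet(X) \to \varinjlim_{\mathcal{U}} S^\bullet_{\mathcal{U}}(X) = \Gamma(X,\mathcal{S}^\bullet)$ is a quasi-isomorphism. This step needs care only in identifying the colimit with global sections of the sheafification, and that identification is exactly where paracompactness is used.
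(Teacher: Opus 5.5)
Your proposal is correct and follows the paper's proof essentially step for step. The paper also sheafifies the singular cochains, cites Godement both for softness and for surjectivity of $CS^p(X) \to \Gamma(X,\mathcal{S}^p)$, invokes the $\mathcal{U}$-small-simplices computation of Warner \S 5.32 to show the kernel $K^\bullet$ is acyclic, and uses local contractibility (Corollary~\ref{locally contractible}) with homotopy invariance to show that $\mathcal{S}^\bullet$ resolves $\mathbb{C}$; your colimit-over-covers version of the comparison step is just that Warner calculation written out.
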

\begin{proof} First we have some general discussion. We know that a soft sheaf $F$ is $\Gamma(X, -)$-acyclic; i.e., $\operatorname{H}^i(X, F) = 0, i > 0$ \cite[Ch. II, Th\'eor\`em 4.4.3. (b)]{Godement}. Also, it is known and is easy to see that sheaf cohomology can be computed using a resolution by $\Gamma(X, -)$-acyclic sheaves instead of an injective resolution \cite[Ch. III, Remark 2.5.1.]{Hart}.

Let $S_p(X)$ be the free abelian group generated by $\Map(\Delta^p, X)$, the space of continuous $p$-simplexes, and $CS^p(X) = \Hom_{\mathbb{Z}}(S_p(X), \mathbb{C})$. They come with the boundary and coboundary operators respectively. The singular cohomology of $X$ is, by definition, the cohomology of the cochain complex $CS(X)$.

We shall now follow \cite[Exemple 3.9.1.]{Godement}. Let $\mathcal{S}^p$ be the sheafification of $U \mapsto CS^p(U)$, called the sheaf of localized singular $p$-cochains. It is a soft sheaf according to loc. cit. Also, by loc. cit., the natural map $CS^p(X) \to \Gamma(X, \mathcal{S}^p)$ is surjective and thus we can write:
$$0 \to K^p \to CS^p(X) \to \Gamma(X, \mathcal{S}^p) \to 0.$$
Now, by a calculation in \cite[\S 5.32.]{Warner}, we see that $\operatorname{H}^p(K^{\bullet})$ is zero for all $p$ and, consequently, the singular cohomology of $X$ is the cohomology of the complex $\Gamma(X, \mathcal{S}^{\bullet})$.

It remains to show $\mathcal{S}^{\bullet}$ resolves $\mathbb{C}$. By Proposition \ref{locally contractible}, $X$ is locally contractible. By the homotopy invariance of singular cohomology (\cite[Ch 18., \S 1.]{May}), it follows that higher singular cohomology of some neighborhood of each point in $X$ vanishes. It thus tells the exactness except $\ker(d : \mathcal{S}^0 \to \mathcal{S}^1) = \mathbb{C}$. But that latter can be seen directly.
\end{proof}

Finally, we record some results on PA-homotopies. First, we note the following lemma. Recall that a map $A \hookrightarrow X$ is a \emph{cofibration} if given $f : X \to Z$, each homotopy $g_t : A \to Z$ such that $g_0 = f|_A$ extends to a homotopy $h_t : X \to Z$ such that $h_0 = f$.

By a \emph{reasonable topological space}, we mean a compactly generated weak Hausdorff space (\cite[Ch. 5.]{May}).

\begin{lemma}\label{compact subspace cofibration} We have:
\begin{enumerate}
\item A cofibration $i: A \hookrightarrow X$ is a homotopy equivalence if and only if there is a homotopy $f_t : X \to X$ such that $f_1$ is the identity, $f_t \circ i$ is the identity and $f_0 = i \circ r$ for some $r : X \to A$ with $r \circ i = \operatorname{id}$; i.e., $r$ is a strong deformation retraction.
\item If $K \subset X$ is a compact subspace of a PA-space, then $K \hookrightarrow X$ is a cofibration; in fact,  each neighborhood of $K$ contains a smaller neighborhood $K_{\epsilon}$ such that $K \hookrightarrow K_{\epsilon}$ is both a cofibration and a homotopy equivalence.
\end{enumerate}

Both of the above statements hold in the category of reasonable topological spaces as well as the category of PA-spaces.
\end{lemma}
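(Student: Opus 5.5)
For (i), I will use the standard characterization of cofibrations via the homotopy extension property for the mapping cylinder. Suppose first that $i$ is a cofibration and a homotopy equivalence, with homotopy inverse $r' : X \to A$. Since $i$ is a cofibration, I will follow the classical argument (as in \cite[Ch. 6]{May}) in three moves. First, use the homotopy $r' \circ i \simeq \operatorname{id}_A$ and the homotopy extension property to replace $r'$ by a homotopic map $r$ with $r \circ i = \operatorname{id}_A$ on the nose. Next, observe that $i \circ r \simeq \operatorname{id}_X$, by some homotopy $H$. Finally, correct $H$ to a homotopy rel $A$, as follows. The restriction $H|_A$ is a loop-like homotopy from $i$ to $i$. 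Apply the homotopy extension property on $X \times I \supset X \times \partial I \cup A \times I$, which is again a cofibration since $i$ is, to produce $f_t$ with $f_t \circ i = i$, $f_0 = i \circ r$ and $f_1 = \operatorname{id}$.

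The converse direction of (i) is immediate: a strong deformation retraction exhibits $r$ as a homotopy inverse of $i$. In the PA category, the same argument will work verbatim, provided the homotopy extension property is read as PA-homotopies (PA-maps $X \times [0,1] \to Z$). The only additional input needed is that the maps built by pasting on closed pieces remain PA-maps. This holds because being PA is local on a compact cover, so gluing along the closed cover $X \times \{0\} \cup A \times I$ is fine.

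For (ii), I will reduce to the triangulated compact case. Let $K \subset X$ be compact and $U$ a neighborhood. Since $X$ is locally compact and covered locally finitely by compact definable pieces, I will choose an open definable $V$ with $K \subset V \subset \overline{V} \subset U$ and $\overline{V}$ compact. By the corollaries above, $\overline{V}$ is a compact definable set, hence triangulable, with a unique pl-structure.

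The main obstacle is that $K$ itself need not be definable, so it is not a subcomplex. I would first handle the case where $K$ is definable, since the paper mostly needs compact definable $K$. In that case I triangulate $\overline{V}$ compatibly with $K$ (using \cite[Th\'eor\`em 9.2.1.]{Bochnak}), pass to a second barycentric subdivision, and let $K_\epsilon$ be the closed simplicial (regular) neighborhood of $K$ inside $V$. Standard pl-topology then gives that $K_\epsilon$ collapses simplicially onto $K$. This collapse furnishes a PA strong deformation retraction, since it is piecewise linear on simplices and hence definable. It also shows that $K \hookrightarrow K_\epsilon$, being a subcomplex inclusion, is a cofibration by the simplicial retraction $K_\epsilon \times I \to K_\epsilon \times 0 \cup K \times I$. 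Composing with the cofibration $K_\epsilon \hookrightarrow X$, obtained from a collar of the frontier of the regular neighborhood and a cut-off function from Proposition \ref{partition of unity}, gives that $K \hookrightarrow X$ is a cofibration.

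For non-definable $K$, I would first try to replace $K$ by a compact definable $K' \supset K$ inside the given neighborhood. If the statement genuinely concerns arbitrary compact $K$, I expect to have to restrict it to definable $K$, since a non-definable compact set such as a Cantor set is not a cofibration in the PA sense.
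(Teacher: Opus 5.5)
Your part (ii) is essentially the paper's argument, but part (i) has a genuine gap in its last move, and your caveat in (ii) is correct.

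\textbf{The gap in (i).} The homotopy extension property for the pair $(X\times I,\,X\times\partial I\cup A\times I)$ does not produce $f_t$ on its own. It only lets you deform $H$ along a prescribed deformation of its restriction to $X\times\partial I\cup A\times I$. To end with a homotopy that is stationary on $A$, you must supply a homotopy rel endpoints, in the space of maps $A\to X$, from the loop $\gamma_t=H_t\circ i$ to the constant loop at $i$. For an arbitrary $H$ no such homotopy exists.

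Here is an example. Take $A=S^1\times 0\subset X=S^1\times I$, with $r$ the projection and $H_t(\theta,s)=(\theta+2\pi t,\,ts)$. Then $\gamma$ is the full rotation loop. It is essential in $\Map(S^1,X)\simeq\Map(S^1,S^1)$, since evaluation at a point sends it to a generator of $\pi_1(S^1)$. So your last step cannot be carried out for this $H$.

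The missing idea is the one handled in \cite[Ch.~6, \S 5]{May}, which the paper simply cites. First replace $H$ by the concatenation, parametrized by $t\in[0,2]$, of $t\mapsto H_{1-t}\circ i\circ r$ followed by $H$.
\begin{itemize}
\item This concatenation still runs from $i\circ r$ to $\operatorname{id}_X$.
\item Because $r\circ i=\operatorname{id}_A$, its restriction to $A$ is $\bar\gamma\ast\gamma$.
\item $\bar\gamma\ast\gamma$ is contracted rel endpoints by $(s,t)\mapsto\gamma_{\max(|1-t|,\,s)}$.
\end{itemize}
Only with this contraction as input does the homotopy extension property for the pair yield $f_t$ with $f_0=i\circ r$, $f_1=\operatorname{id}_X$ and $f_t\circ i=i$. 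Concatenation and $\max$ preserve PA-maps, so your remark about the PA category survives once this step is repaired.

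\textbf{Part (ii).} This follows the same route as the paper: triangulate $K$ compatibly with a relatively compact neighbourhood and use that a subcomplex inclusion is a cofibration. Your regular-neighbourhood collapse supplies the homotopy equivalence $K\hookrightarrow K_\epsilon$, which the paper leaves implicit.

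Your restriction to definable $K$ is correct and is tacitly assumed by the paper, whose proof begins by triangulating $K$. The Cantor set $C\subset[0,1]$ is a counterexample even topologically. Each path component of a neighbourhood $N$ of $C$ that meets $C$ contains an open interval, so there are only countably many of them. Since $C$ has uncountably many path components, $C\hookrightarrow N$ is never a homotopy equivalence.

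Your detour through the cofibration $K_\epsilon\hookrightarrow X$ just re-poses the same question for $K_\epsilon$. It is simpler to take NDR data $(u,h)$ for the subcomplex $K$ of $\overline V$ supported in a fine simplicial neighbourhood inside $V$. You can then extend it to $X$ by $u=1$ and $h_t=\operatorname{id}$.
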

\begin{proof} (i) holds by \cite[Ch 6., \S 5.]{May}. Indeed, by the loc. cit., it is a homotopy equivalence under $A$, meaning $i \circ r \sim \operatorname{id}_X$ and $r \circ i \sim \operatorname{id}_A$ in such a way the homotopies here are the identity on $A$. Note $r \circ i = \operatorname{id}_A$.

(ii) is clear because of the existence of a triangulation. Precisely, we choose a triangulation on $K$ and also a compatible triangulation on a relatively compact neighborhood $U$ of $K$. For a CW complex, we know the inclusion of a subcomplex is a cofibration and the same is also valid for simplicial complexes. (Note the proof of the theorem in \cite[Ch 6., \S 4.]{May} may not go through with PA-maps, but here we do not use that result.)
\end{proof}

In passing, we mention

\begin{conjecture} For each closed subspace $Y \subset X$, the natural inclusion $Y \hookrightarrow X$ is a cofibration, both in the category of reasonable topological spaces and in the category of PA-spaces.
\end{conjecture}

A tricky issue here is that we are not assuming PA-spaces are of finite type, meaning they admit finite open covers consisting of those isomorphic to closed subspaces in Euclidean spaces. For PA-spaces of finite type (e.g., a complex algebraic variety), the conjecture is likely clear; e.g., by \cite{Murata_Bossinger25}. Our guess for a proof of the general case is to embed a PA-space into a Hilbert space.

Next, given an open cover $\alpha$ of a topological space $Y$, we say two continuous maps $f, g : X \to Y$ are \emph{$\alpha$-near} if $\{ f^{-1}(U) \cap g^{-1}(U) \mid U \in \alpha \}$ covers $X$. Also, an \emph{$\alpha$-homotopy} $h_t : X \to Y$ is a homotopy such that $\{ \cap_{t \in I} h_t^{-1}(U) \mid U \in \alpha \}$ covers $X$.

\begin{lemma}[cf. \cite{Hanner} Theorem 4.1.]\label{approximate homotopy} Given a compact PA-space $Y$ and an open cover $\alpha$ of $Y$, there exists a refinement $\beta$ of it with the following property: for two continuous maps $f, g : X \to Y$ from a compact $PA$-space that are $\beta$-near and $A \subset X$ a closed subspace, each $\beta$-homotopy $f|_A \sim g|_A$ extends to an $\alpha$-homotopy $f \sim g$.

Moreover, the above also holds in the category of PA-spaces.
\end{lemma}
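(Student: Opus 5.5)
The strategy is to reduce the statement to Hanner's theorem \cite[Theorem 4.1.]{Hanner} for ANRs, and then redo the construction inside the PA-category for the last sentence. A compact PA-space $Y$ is isomorphic to a compact definable subset of some $\mathbb{R}^n$ and can be triangulated by the propositions above. Hence $Y$ is a finite polyhedron and in particular a compact metric ANR. Hanner's theorem holds for ANRs: given $\alpha$, there is a refinement $\beta$ such that $\beta$-near maps into $Y$ are $\alpha$-homotopic, relative to a prescribed $\beta$-homotopy on a closed subset $A$, provided $(X, A)$ has the homotopy extension property. That proviso must be checked for the compact PA-space $X$. I would try to get it from Lemma \ref{compact subspace cofibration}(ii), since $A$ is compact, being closed in the compact $X$. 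Strictly, that lemma gives a cofibration for compact $A$ together with a neighborhood $A_\epsilon$ deformation retracting onto $A$. For a non-definable closed $A$ this may fall short of a literal subcomplex, so at this point I would rely on the ANR version: $A$ closed in a metric space is enough for Borsuk's homotopy extension theorem into ANRs, and that is what Hanner uses.

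For concreteness, and for the PA version, I would make $\beta$ explicit. Fix a triangulation of $Y \subset \mathbb{R}^n$ and a definable retraction $r : W \to Y$ of a neighborhood $W$; simplicial polyhedra admit these, for instance via a regular neighborhood. By Lebesgue's lemma, choose $\delta > 0$ with three properties: the segment between any two points of $Y$ at distance $< \delta$ lies in $W$; $r$ sends each such segment, and each $\delta$-ball around it, into a single member of $\alpha$; and the same holds with $\alpha$ replaced by an auxiliary star-refinement. Take $\beta$ to be the cover of $Y$ by open $\delta/3$-balls, possibly shrunk further. If $f, g$ are $\beta$-near, the straight-line homotopy $r((1-t)f + tg)$ is an $\alpha$-homotopy, and it is a PA-map when $f, g$ are PA-maps.

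For the relative statement, let $k_t$ be the given $\beta$-homotopy on $A$. Define a map on $X \times \{0\} \cup A \times I \cup X \times \{1\}$ by $f$, $k_t$ and $g$. By the homotopy extension property (Lemma \ref{compact subspace cofibration}(ii), with PA cofibrations available because of triangulations), extend it to a map $H$ on $N \times I \cup X \times \{0,1\}$, where $N$ is a neighborhood of $A$. On $N$ the tracks stay $\beta$-small if $N$ is shrunk, by continuity and compactness. Then glue $H$ with the straight-line homotopy using a cut-off function $\psi$ from Proposition \ref{partition of unity}, with $\psi = 1$ on $A$ and $\supp \psi \subset N$. Concretely, compose $r$ with the convex combination of $H$ and the linear homotopy weighted by $\psi(x)$. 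Every point in the construction stays within $O(\delta)$ of $f(x)$, so the choice of $\delta$ makes the result an $\alpha$-homotopy. Its endpoints are $f$ and $g$, since both homotopies agree with $f$ at $t = 0$ and with $g$ at $t = 1$, and it equals $k_t$ on $A$.

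The main obstacle is the relative extension step: producing the extension $H$ near $A$ with small tracks when $A$ is an arbitrary closed subset rather than a definable one. In the PA-category I would first try to treat $A$ as definable, or reduce to that case by replacing $A$ with a definable compact neighborhood $K_\epsilon$ from Lemma \ref{compact subspace cofibration}(ii) and pushing $k_t$ forward through the strong deformation retraction of Lemma \ref{compact subspace cofibration}(i). For the topological case I would simply invoke Hanner's theorem for ANRs.
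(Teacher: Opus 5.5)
Your proposal is correct, but for the PA part it follows a different route from the paper. The paper constructs nothing explicitly. It asserts that Hanner's proof goes through for PA-maps and checks only one claim: that $h_t$ extends to a homotopy $f|_U \sim g|_U$ on a neighborhood $U$ of $A$. It gets this by concatenating $f|_U \sim f|_A\circ r$, then $h_t\circ r$, then $g|_A\circ r \sim g|_U$, where $r : U \to A$ is the strong deformation retraction of Lemma \ref{compact subspace cofibration}.

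You use Hanner only for the topological statement. That is legitimate: a compact PA-space is a finite polyhedron, hence an ANR, and Borsuk's homotopy extension into ANRs needs only $A$ closed. For the PA case you give a self-contained construction: a definable retraction $r : W \to Y$, the straight-line homotopy $L$, and the cut-off gluing $r(\psi H + (1-\psi)L)$.

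The extension near $A$ is the ingredient the two arguments share, and it is easier than your ``main obstacle'' suggests. The literal homotopy extension property of $A\hookrightarrow X$ only matches $f$ at $t=0$, not $g$ at $t=1$. But since you compose with $r$ at the end, $H$ only has to be $\mathbb{R}^n$-valued on the closed set $X\times\partial I\cup A\times I$. The (semialgebraic) Tietze extension theorem therefore gives $H$ for any closed $A$, definable in the PA case. The tube lemma then yields $N$ on which $|H(x,t)-f(x)|<\delta$.

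What your route buys:
\begin{enumerate}
\item explicit control of track sizes, which the paper never checks (its deformation onto $A$ must be made $\alpha$-small by shrinking $U$);
\item an exact extension of $k_t$ on $A$, whereas the paper's concatenation restricts on $A$ only to a reparametrization of $h_t$.
\end{enumerate}
The paper's route is shorter and needs no ambient retraction. In exchange, it leaves the reader to re-verify Hanner's proof in the PA category.
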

\begin{proof} The only question is whether the original proof of the theorem cited above goes through with PA-maps. But that is clear except perhaps the following claim:

\textbf{Claim}: A homotopy $h_t : f|_A \sim g|_A$ extends to a homotopy $f|_U \sim g|_U$ for some neighborhood $U$ of $A$.

Here, the claim follows from the preceding Lemma \ref{compact subspace cofibration}. Indeed, let $r : U \to A$ be a \emph{strong} deformation retract of $i : A \hookrightarrow U$ for some neighborhood $U$ of $A$. Then we have:
$$f|_U \sim f|_U \circ i \circ r = f|_A \circ r \overset{h_t \circ r}\sim g|_A \circ r \sim g|_U$$
where $f|_U \sim f|_U \circ i \circ r$ is the identity on $A$ because of the strong-ness of $r$. Hence, it is an extension of $h_t$.
\end{proof}

The next proposition confirms our intuitive expectations on connectedness.

\begin{prop} For PA-spaces $X, Y$, each connected component of $\Map(X, Y)$ is path-connected with respect to the Whitney topology, a topology where the basic open subsets are
$$\{ f \mid \Gamma_f \subset U \}$$
for open subsets $U \subset X \times Y$ and $\Gamma_f$ the graph of $f$.

Also, a path between PA-maps can be taken to be a PA-path.
\end{prop}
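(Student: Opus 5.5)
The plan is to show that every connected component $\mathcal{C}$ of $\Map(X, Y)$ equals the path component of any of its points. I will do this by proving that the path component of each $f$ is \emph{open} in the Whitney topology. Once this holds, the path components partition $\Map(X,Y)$ into disjoint open sets. Each is then also closed, its complement being a union of open path components. A connected component, being connected, therefore lies in a single path component, and conversely each path component is connected. So the two notions coincide.

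The main step is local: for $f \in \Map(X,Y)$, find a Whitney neighborhood $\mathcal{W}$ of $f$ such that every $g \in \mathcal{W}$ is joined to $f$ by a path $t \mapsto h_t$ that is continuous into $\Map(X,Y)$ with the Whitney topology. I will use the approximate homotopy extension of Lemma \ref{approximate homotopy}. Choose a locally finite open cover $\alpha$ of $Y$ by contractible open sets of compact closure, using Corollary \ref{locally contractible} and Proposition \ref{prop:paracompact}, and let $\beta$ be the refinement given by the lemma. The set $U_\beta = \bigcup_{V \in \beta'} f^{-1}(V) \times V$ is an open neighborhood of $\Gamma_f$ in $X \times Y$, where $\beta'$ is a suitable star-refinement of $\beta$. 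If $\Gamma_g \subset U_\beta$, then $f$ and $g$ are $\beta$-near, and the lemma, applied with $A = \emptyset$ (or compact piece by compact piece in the noncompact case), yields an $\alpha$-homotopy $h_t$ from $f$ to $g$. Because the members of $\alpha$ can be taken arbitrarily small, the tracks $\{h_t(x)\}$ stay in a prescribed neighborhood of $f(x)$ and $g(x)$.

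I expect the main obstacle to be \emph{continuity of $t \mapsto h_t$ in the Whitney topology}, which is much finer than the compact-open topology when $X$ is noncompact. The plan is to reparametrize. Given a basic neighborhood $\{\Gamma_{h_{t_0}} \subset U\}$, uniform continuity of $h$ on $K \times I$ for compact $K$ gives a $\delta_K$ that works on $K$. A locally finite exhaustion $X = \bigcup K_n$ together with a partition of unity (Proposition \ref{partition of unity}) then lets me define a homotopy $H(x,t) = h(x, \tau(x,t))$. Here $\tau$ is built from the partition of unity so that near $t_0$ the path moves uniformly slowly on all of $X$; the same construction also ensures that every path component is open.

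For the final assertion, I will replace $h$ by a PA-homotopy when $f$ and $g$ are PA-maps. The tool is the PA version of Lemma \ref{approximate homotopy} stated there, together with the fact that the homotopies in the proof of Lemma \ref{compact subspace cofibration} come from triangulations and strong deformation retractions, and hence are PA. It remains to check that the functions $\tau$ above can be chosen in $\mathcal{O}_{X \times I}$, which holds because the partition of unity consists of sections of $\mathcal{O}_X$.
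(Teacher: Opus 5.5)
Your key local step is that every $g$ in some Whitney neighborhood of $f$ is joined to $f$ by a Whitney-continuous path. This is false as soon as $X$ is noncompact, and no reparametrization $\tau$ can fix it.

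In the Whitney topology as defined here, a path $t\mapsto H_t$ is continuous at $t_0$ only if there are a neighborhood $J$ of $t_0$ and a compact $K\subset X$ with $H_t=H_{t_0}$ on $X\setminus K$ for all $t\in J$. Otherwise you get $t_n\to t_0$ and a closed discrete sequence $x_n$ with $H_{t_n}(x_n)\neq H_{t_0}(x_n)$. Then $U=(X\times Y)\setminus\{(x_n,H_{t_n}(x_n))\}_n$ is an open neighborhood of $\Gamma_{H_{t_0}}$ with $\Gamma_{H_{t_n}}\not\subset U$ for every $n$. By compactness of $[0,1]$, a Whitney path from $f$ to $g$ therefore forces $g=f$ outside a compact set.

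But every Whitney neighborhood of $f$ contains maps that differ from $f$ everywhere. For $X=Y=\mathbb{R}$ and $f=0$, each neighborhood contains $\{g : |g|<\epsilon\}$ for some continuous $\epsilon>0$, and $g=\epsilon/2$ lies in it. So path components are not open when $X$ is noncompact. Making the path move ``uniformly slowly'' does not help, because Whitney continuity requires it not to move at all near infinity.

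The paper's proof follows the same plan and has the same defect: it asserts that the $\alpha$-neighborhood $E_f$ lies in the path component, but only constructs a homotopy $X\times I\to Y$. Both arguments are fine for compact $X$, where the Whitney and compact-open topologies coincide and an $\alpha$-homotopy is a Whitney path.

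For noncompact $X$ you need a different organization.
\begin{enumerate}
\item Show that the connected component of $f$ lies in $\mathcal{S}_f=\{g : g=f \text{ outside a compact set}\}$. Fix a metric $d$ on $Y$. Each set $\{h : \sup_x \psi(x)\,d(h(x),f(x))<\infty\}$ with $\psi>0$ continuous is Whitney-clopen. Choosing $\psi$ large along a closed discrete set where $g\neq f$ separates any $g\notin\mathcal{S}_f$ from $f$; this is the argument showing the box product $\mathbb{R}^{\mathbb{N}}$ is disconnected.
\item Show that path components are open relative to $\mathcal{S}_f$. Nearby $g,k\in\mathcal{S}_f$ agree outside a compact set. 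Apply Lemma~\ref{approximate homotopy} on a compact neighborhood $L$ of that set, with $A=L\setminus\operatorname{int}(L_0)$ carrying the constant homotopy, where $L_0\subset\operatorname{int}(L)$ is compact and $g=k$ off $L_0$. This gives a homotopy stationary off $L$, which is Whitney-continuous by the tube lemma.
\item Conclude: the component of $f$ is connected and contained in $\mathcal{S}_f$, so it lies in one relatively clopen path component of $\mathcal{S}_f$.
\end{enumerate}

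For the PA assertion, making each short homotopy PA is not sufficient on its own. Two PA-maps in one component are a priori joined only through non-PA maps. You also need density of PA-maps (relative to $\mathcal{S}_f$) so that you can chain PA-maps $f=f_0,f_1,\dots,f_k=g$ with consecutive ones $\beta$-near. The paper addresses this step with its approximation-and-connectedness argument for $P\cap\Map_{PA}(X,Y)$.
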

\begin{proof} Let $P$ be a path-component of $\Map(X, Y)$. We claim $P$ is open and closed. Let $f$ be in $P$. We can assume $Y$ is connected. Thus, we have a sequence of compact subspaces $Y_1 \subset Y_2 \subset \cdots$ with union $Y$. Inductively, choose families $\alpha_i$ of open subsets of $Y$ such that $\alpha_i$ is an open cover of $Y_i$ and $\alpha_{i+1}|_{Y_i}$ is a refinement of $\alpha_i|_{Y_i}$ given by Lemma \ref{approximate homotopy}. Let $E_f$ be an $\alpha$-neighborhood of $f$ (the set of all maps $\alpha$-near to $f$). It is clearly open in the Whitney topology. We claim $E_f$ lies in $P$. Let $g$ be in $E_f$. If $h_i : X_i = f^{-1}(Y_i) \to Y_i$ is an $\alpha_i$-homotopy, then, since $f, g$ are $\alpha$-near, $f|_{X_i}, g|_{X_i}$ are $\alpha_i$-near. Thus, $h_i$ extends to an $\alpha_{i+1}$-homotopy $f|_{X_{i+1}}, g|_{X_{i+1}}$. Then define $h_t : X \to Y$ by $h_t|_{X_i} = h_{i + 1, t}$. Then $h$ is continuous since $X \times I$ is compactly generated; thus, the proof of the claim is complete.

To the see the last assertion, let $P' = P \cap \Map_{PA}(X, Y).$ We claim $P'$ lies in a path-component in $\Map_{PA}(X, Y)$. The first part of the proof goes through with PA-maps; thus, the connected components of $\Map_{PA}(X, Y)$ are the same as the path-components. Hence, it is enough to show $P'$ is connected; i.e., each continuous map $d : P' \to \{ 0, 1 \}$ is constant.

Suppose $d$ is not constant. Then by the definition of $P'$, we can find a continuous path $p$ lying in $P$ from some point in $d^{-1}(0)$ to some point $d^{-1}(1)$. For each $t$, $p(t)$ can be approximated by a PA-map; i.e., $p(t)$ is close to $P'$. Thus, we can find $f, g$ in $d^{-1}(0), d^{-1}(1)$ that can be taken to be arbitrary close. Then by the first part, there is a path $q : I \to P'$ with $q(0), q(1) = f, g$. Then $d \circ q$ is constant, a contradiction.
\end{proof}


Finally, we get:

\begin{corollary}\label{col:PA-homotopy} Let $f, g$ be PA-maps with the same domain and codomain. If there is a homotopy $h_t$ from $f$ to $g$ such that $t \mapsto h_t$ is continuous with respect to the Whitney topology, then there is a PA-homotopy from $f$ to $g$.
\end{corollary}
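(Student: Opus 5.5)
The plan is to derive the corollary from the preceding proposition, in particular its last assertion that a path between PA-maps can be taken to be a PA-path. The given homotopy $h_t$, with $t \mapsto h_t$ continuous in the Whitney topology, is exactly a continuous path $p : [0,1] \to \Map(X, Y)$ with $p(0) = f$ and $p(1) = g$. Hence $f$ and $g$ lie in the same path-component $P$ of $\Map(X, Y)$, and both lie in $P' = P \cap \Map_{PA}(X, Y)$.

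The second part of the proof of the preceding proposition shows that $P'$ is connected, and that connected components of $\Map_{PA}(X, Y)$ coincide with its path-components. Therefore there is a continuous path $q : [0,1] \to \Map_{PA}(X,Y)$ with $q(0) = f$ and $q(1) = g$. I will take $q$ to be a concatenation of finitely many of the local homotopies produced in the first part of that proof. Those homotopies come from Lemma \ref{approximate homotopy} applied in the PA category, via the strong deformation retractions of Lemma \ref{compact subspace cofibration}, so each one is a PA-homotopy $X \times [0,1] \to Y$.

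The remaining point is to upgrade the path $q$ to a single PA-map $H : X \times [0,1] \to Y$. For this I use compactness of $[0,1]$: the open neighbourhoods $E_{q(t)}$ from the proof of the proposition cover the image of $q$, so finitely many suffice. Consecutive maps $q(t_k)$ and $q(t_{k+1})$ are then $\alpha$-near and joined by a PA-homotopy $H_k$. I reparametrize each $H_k$ affinely onto $[t_k, t_{k+1}]$, which preserves being PA, and glue along the slices $X \times \{t_k\}$. The glued map is PA because a map whose restriction to each member of a finite closed cover by PA-subspaces is PA is itself PA; this is condition (2) of Definition \ref{piecewise-algebraic space}, since the graph is obtained by gluing definable pieces.

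I expect the main obstacle to be the case of noncompact $X$. There the homotopy built in the proof of the proposition is defined by exhausting $X$ by $X_i = f^{-1}(Y_i)$, which produces infinitely many pieces. This is still fine locally: every compact definable subset of $X \times [0,1]$ meets only finitely many pieces, so PA-ness can be checked on a locally finite compact cover. I will make this local finiteness explicit. If it fails, my fallback is to apply the approximation argument from the proof of the proposition directly to $h$, viewed as a map $X \times [0,1] \to Y$, and replace it by a nearby PA-map agreeing with $f$ and $g$ at the ends, using the relative form of Lemma \ref{approximate homotopy} with $A = X \times \{0,1\}$.
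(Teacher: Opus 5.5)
Your proposal is correct and follows the paper, which states this corollary as an immediate consequence of the last assertion of the preceding proposition. Your finite concatenation over a partition of $[0,1]$, glued along the slices $X \times \{t_k\}$, makes explicit the passage from a Whitney-continuous path in $\Map_{PA}(X, Y)$ to a single PA-map $X \times [0,1] \to Y$, which the paper leaves implicit in the phrase ``PA-path''. One small repair is needed: $\alpha$-nearness is not transitive, so choose the partition via a Lebesgue number so that each $q([t_k, t_{k+1}])$ lies in a single $E_{q(s_k)}$, and route the homotopy from $q(t_k)$ to $q(t_{k+1})$ through $q(s_k)$.
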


\subsection{Differential forms}
We now turn to some differential topology. We first need:

\begin{lemma}\label{generic smoothness} Let $X \subset \mathbb{R}^n$ be a subset and $f$ a function in $\mathcal{O}(X)$; i.e., it is continuous and definable on each compact definable subset.

Then there exists an open dense definable subset $U$ of $X$ such that $f$ is smooth on $U$; in fact, real-analytic on $U$.
\end{lemma}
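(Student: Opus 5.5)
We reduce to the compact definable case and then use the cell decomposition of semialgebraic maps. Write $f = u + iv$ with $u, v$ real-valued; since a finite intersection of open dense definable subsets of $X$ is again open dense and definable, it suffices to treat a real-valued $f$. Moreover, $X \subset \mathbb{R}^n$ is covered by the sets $X \cap \overline{B}_k$ (with $\overline{B}_k$ closed balls of radius $k$), and on each such set $f$ is definable, i.e.\ its graph is a semialgebraic subset of $\mathbb{R}^{n+1}$. So we first prove the statement for $f|_{X \cap \overline{B}_k}$ on a compact definable set. We then take $U$ to be the union of the interiors (relative to $X$) of the sets $U_k \cap B_k$, arranging the choices compatibly, for instance by taking $U_{k+1} \cap B_k = U_k \cap B_k$ after intersecting. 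If one insists that $X$ itself be definable, then the graph of $f$ is globally definable after a semialgebraic homeomorphism of $\mathbb{R}^n$ onto the open ball, and the whole argument can be done at once.

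For the compact definable step, we invoke a cylindrical (or stratified) decomposition adapted to a semialgebraic function (Bochnak--Coste--Roy, Prop.\ 9.1.8 / Thm.\ 9.1.2 with Nash strata). It gives a finite partition of $K = X \cap \overline{B}_k$ into Nash submanifolds $S_1, \dots, S_r$ of $\mathbb{R}^n$ such that each $f|_{S_j}$ is a Nash function, i.e.\ semialgebraic and real-analytic, and the partition satisfies the frontier condition. We let $U$ be the union of those strata $S_j$ that are open in $K$. This $U$ is definable. Each point of $U$ has a neighborhood lying in a single $S_j$, so $f$ is analytic on a neighborhood of it in $U$.

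It remains to show $U$ is dense in $K$; we expect this to be the main obstacle, since $X$ need not be pure-dimensional. First we refine the stratification so that it is compatible with the local dimension function $x \mapsto \dim_x K$, which is semialgebraic. Then, for $x \in K$, consider the strata meeting every neighborhood of $x$ and pick one, $S$, of maximal dimension among those whose closure contains $x$. By the frontier condition and maximality, no stratum of higher dimension has $S$ in its closure, so $S$ is open in $K$ near its points. Hence $x \in \overline{U}$. To make the maximality argument rigorous, we argue via dimension: the set $K \setminus \overline{U}$ is open and definable, and if it were nonempty it would contain a point where some stratum of top local dimension is open, a contradiction.

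In the non-compact case, density and openness are local properties and the cover $\{B_k\}$ is locally finite. So gluing the $U_k$, intersecting them on overlaps and adding interiors, yields an open dense subset on which $f$ is real-analytic. This subset is definable in the paper's sense because its trace on each compact definable piece is semialgebraic.
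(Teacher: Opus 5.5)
Your argument is correct in its essential case ($K$ compact definable), but it follows a different route from the paper. The paper reduces to a cube, gets $n=1$ from the smooth monotonicity theorem, and for general $n$ takes the points where $f$ is analytic in each variable separately, citing Hartogs to get joint analyticity. You instead take a finite Nash stratification of $K$ satisfying the frontier condition, with $f$ Nash on each stratum. You let $U$ be the union of the strata open in $K$, and you get density from the frontier condition together with $\dim(\overline{T}\setminus T)<\dim T$. Your first maximality argument already proves density rigorously, so the refinement by local dimension and the second argument are superfluous.

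The paper's route needs only a one-variable result, but yours is more robust in two ways. First, it handles lower-dimensional and non-pure-dimensional $X$ directly, which the paper's ``without loss of generality'' reduction to a cube leaves unaddressed. Second, it avoids the separate-to-joint analyticity step, which fails for real-analytic functions. Hartogs' theorem is about holomorphic functions, and $x^3/(x^2+y^2)$, extended by $0$, is continuous, semialgebraic and separately analytic at every point, yet not differentiable at the origin.

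Your non-compact bookkeeping needs small repairs:
\begin{itemize}
\item The nested balls $B_k$ do not form a locally finite family. Use instead, e.g., $U=\bigcup_k U_k\cap(B_k\setminus\overline{B}_{k-2})$, which is open and dense in $X$ and meets each compact set in finitely many semialgebraic pieces.
\item $X\cap\overline{B}_k$ is compact only if $X$ is closed. Otherwise, exhaust $X$ by compact definable subsets.
\item Your aside that the graph of $f$ becomes globally definable after mapping $\mathbb{R}^n$ onto the open ball is false. Membership $f\in\mathcal{O}(X)$ only gives definability over compact definable subsets; the distance from $x$ to $\mathbb{Z}$ on $\mathbb{R}$ is an example. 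You do not need that shortcut anyway.
\end{itemize}
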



\begin{proof} (This lemma is likely known but we could not find a source stating exactly the assertion instead of a general discussion of a construction of a stratification, from which the lemma should follow.)

Without loss of generality, we can assume $X = [0, 1]^n \subset \mathbb{R}^n$ is a unit cube. First assume $n = 1$. Then the assertion is known; see \cite[\S 2. Remark after the smooth monotonicity theorem]{van1999minimal}



For the general case, let $Y$ be the set of all points, at each of which $f$ is real-analytic in each variable. Then $Y$ is dense and definable by the first part; indeed, Hartogs's theorem implies that $f$ is real-analytic on a neighborhood of $y$ if it is so in each variable. The same argument also implies that $Y$ is open.
\end{proof}

\begin{remark}[regular locus] If $X$ is a compact PA-space contained in $\mathbb{R}^n$, then let $Z$ be the Zariski closure of $X$ in $\mathbb{R}^n$. Then we have the regular locus $Z_{reg}$ on $Z$ in the usual way (i.e., the set of points at each of which the local ring is regular).

Explicitly, $Z = V(I(X))$ where $I(X) \subset \mathbb{R}[x_1, \dots, x_n]$ is the ideal of all polynomials vanishing on $X$. Then $I(X)$ is generated by some finitely many $f_1, \dots, f_r$ and $Z = f^{-1}(0)$ for $f = (f_1, \dots, f_r) : \mathbb{R}^n \to \mathbb{R}^r$. By the Jacobian criterion, $Z_{reg}$ is exactly the locus where $df$ is surjective (the rank of it is $r$). So, by the inverse function theorem, $Z_{reg}$ is a manifold-without-boundary.

Then let
$$X_{reg} = Z_{reg} \cap \operatorname{int}_Z(X)$$
where $\operatorname{int}_Z(X)$ is the interior of $X$ in $Z$, which is nonempty since $\dim X= \dim Z$. Then $X_{reg}$ is a submanifold of $Z_{reg}$ without boundary; in particular, a manifold-without-boundary. Also, it is open and dense in $X$ since $Z_{reg}$ is dense in $Z$. Cf. Example \ref{cotangent space example}.

If $X$ is a PA-space in general with a defining cover $\iota_i : X_i \hookrightarrow \mathbb{R}^{n_i}$, then we let $X_{reg}$ be the union of all open definable subsets $U$ such that $U \subset \bigcup_i X_{i, \, reg}$ and $\iota_i \circ \iota_j^{-1}$ is smooth on $X_{i, \, reg} \cap X_{j, \, reg} \cap U$ for each pair $i, j$. Then, by Lemma \ref{generic smoothness}, $X_{reg}$ is open, dense and definable. It is called the \emph{regular locus} on $X$.

We note that the definition of the regular locus $X_{reg}$ depends on the embeddings $\iota_i$'s; in other words, two isomorphic PA-spaces might have different regular loci.

In practice, we shall usually simply fix some regular locus; Lemma \ref{O embed smooth} below implies that a choice of a regular locus does not matter much.
\end{remark}

With the above remark, we can now prove a more general version of Lemma \ref{generic smoothness}:

\begin{lemma}[generic smoothness]\label{O embed smooth} For each PA-space $X$,
$$\mathcal{O}(X) \hookrightarrow \varinjlim_{U \subset X_{reg}} \mathcal{C}^{\infty}(U), \, f \mapsto f|_U$$
where $U$ is an open dense definable subset.

Also, the injective limit on the right is independent of a choice of the regular locus $X_{reg}$.
\end{lemma}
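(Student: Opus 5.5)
We must show three things: the map $f \mapsto [f|_U]$ is well defined, it is injective, and the direct limit on the right does not depend on the chosen regular locus.

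\emph{Well-definedness.} Fix a defining cover $\iota_i : X_i \hookrightarrow \mathbb{R}^{n_i}$. Each $X_i$ is compact, so only finitely many $X_i$ meet any relatively compact open set. By property (2) of Definition \ref{piecewise-algebraic space}, each restriction $f|_{X_i}$ is a section of $\mathcal{O}_{X_i}$. Lemma \ref{generic smoothness} then gives an open dense definable $V_i \subset X_i$ on which $f \circ \iota_i^{-1}$ is smooth (indeed real-analytic).

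We intersect with $X_{i,reg}$ and with the interior of $X_i$ in $X$, and set
$$U = X_{reg} \cap \bigcup_i \bigl(\operatorname{int}(X_i)\cap V_i\bigr) \setminus \bigcup_i \partial X_i .$$
Here $\partial X_i$ is the topological boundary of $X_i$ in $X$. This set is closed, nowhere dense and definable, since $X_i$ is definable in $X$. By local finiteness of the cover, the union of the $\partial X_i$ is still closed and nowhere dense. A finite intersection of open dense sets is open dense, so locally $U$ is open dense, and local finiteness upgrades this to $U$ being open dense and definable globally.

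On $U$, each point lies in the interior of some $X_i$ where $f$ is smooth in the chart $\iota_i$. The charts are compatible on $X_{reg}$ by the definition of the regular locus. Hence $f|_U \in \mathcal{C}^\infty(U)$, and the map is well defined; its compatibility with further shrinking of $U$ is automatic.

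\emph{Injectivity.} Suppose $f|_U = g|_U$, or more precisely $f = g$ on some smaller open dense $U'$. Since $f$ and $g$ are continuous and $U'$ is dense, $f = g$ on all of $X$. (Linearity of the map is clear.)

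\emph{Independence of the regular locus.} Let $X_{reg}$ and $X'_{reg}$ be regular loci coming from two choices of embeddings. We first show that $X_{reg} \cap X'_{reg}$ is open dense definable, and that the notions of smoothness from the two chart systems agree on some open dense definable $W$ inside it.

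For this, consider the transition maps $\iota_i \circ \iota_j'^{-1}$ between the two chart systems. Their components are in $\mathcal{O}$ by the PA-structure, since both are defining covers of the same ringed space. Applying Lemma \ref{generic smoothness} componentwise shows that these maps, and their inverses, are smooth off a closed nowhere dense definable set, again using local finiteness. Removing that set gives $W$.

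Every open dense definable $U \subset X_{reg}$ then contains the open dense definable set $U \cap W$, and symmetrically for $X'_{reg}$. So the two index systems are mutually cofinal. Since a smooth function on $U$ restricts to a smooth function on $U \cap W$ in either structure, the two direct limits are canonically isomorphic via restriction.

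\emph{Main obstacle.} The delicate step is the global bookkeeping. A locally finite but infinite union of the nowhere dense sets $\partial X_i$ and non-smooth loci must remain closed and nowhere dense, and the result must be definable in the sense of $\mathfrak{F}$. I would handle this by working on each compact $X_i$, where the set is a finite union and definability is classical, and then gluing using local finiteness. Closedness and nowhere-denseness are local properties, so they pass to the union; definability I would treat locally on the compact pieces.
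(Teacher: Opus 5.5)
Your proposal is correct and follows the paper's route: Lemma \ref{generic smoothness} chart by chart for well-definedness, continuity plus density of $X_{reg}$ for injectivity, and cofinality for independence. It is simply much more detailed, and your point that independence also needs the two chart systems to induce the same smooth structure on a common open dense set (via generic smoothness of the transition maps) spells out what the paper's ``because of denseness'' leaves implicit. One small fix: in your displayed definition of $U$, replace $V_i$ by $V_i \cap X_{i,reg}$ as your prose intends, because the chart compatibility built into the definition of $X_{reg}$ only holds on the sets $X_{i,reg} \cap X_{j,reg}$.
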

\begin{proof} Since $X_{reg}$ is dense, this is immediate after Lemma \ref{generic smoothness}. Also, the limit is independent because of denseness.
\end{proof}

The lemma motivates us to define the sheaf $\Omega^p_{reg}$ on $X$ by
$$\Omega^p_{reg}(U) = \varinjlim_{V \subset X_{reg}} \Omega^p_{\mathcal{C}^{\infty}}(U \cap V)$$
for each open subset $U \subset X$. That is, $\Omega^p_{reg}$ is the sheaf of generically-smooth $p$-forms on $X$.

Then $\mathcal{O} \subset \Omega^0_{reg}$ by Lemma \ref{O embed smooth} and then we have
$$d : \mathcal{O} \to \Omega^1_{reg}$$
as the restriction of the usual exterior derivative $d : \Omega^0_{reg} \to \Omega^1_{reg}$. Let
$$\Omega^1_x = \mathcal{O}_x \im(d : \mathcal{O}_x \to \Omega^1_{reg, \, x}),$$
that is, the $\mathcal{O}_x$-module generated by $d(\mathcal{O}_x)$, and $\Omega^p_x = \wedge^p \Omega^1_x$. Then define $\Omega^p_X$ to be the sheaf whose stalks are exactly $\Omega^p_x$. Explicitly, a section $\omega$ in $\Omega^p_X(U)$ is such that it is locally a $\mathbb{Z}$-linear combination of sections of the form
$$x \mapsto f_{0, x} \, d(f_{1, x}) \wedge \cdots \wedge d(f_{p, x})$$
for $f_0, \dots, f_p$ local sections of $\mathcal{O}_X$ around $x$, where the subscript $-_x$ means taking the germ at $x$.

Finally, we define
$$d : \Omega^p_X \to \Omega_X^{p+1}$$
by the requirements
\begin{enumerate}[label=(\arabic*)]
\item For $p = 0$, this $d$ is the previous $d$.
\item $d(f_0 \, df_1 \wedge \cdots \wedge df_p) = df_0 \wedge df_1 \wedge \cdots \wedge df_p.$
\end{enumerate}

A key fact is that it commutes with pull-backs:

\begin{prop} Let $f : X \to Y$ be a PA-space morphism. Then we have the pull-back $f^* : \Omega_Y \to f_* \Omega_X$ that agrees with $f^* : \mathcal{O}_Y \to f_* \mathcal{O}_X$ and that commutes with $d : \Omega_X \to \Omega_X$.
\end{prop}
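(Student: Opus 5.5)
The plan is to define $f^*$ on local generators and check it is well defined by working on a dense open set where all the functions involved are smooth. On $\mathcal{O}$ we must take $f^*g = g \circ f$, which is forced by Proposition \ref{prop:PA-map}. On $p$-forms, for a local section $\omega = \sum g_0\, dg_1 \wedge \cdots \wedge dg_p$ of $\Omega^p_Y$ near $f(x)$, we set
$$f^*\omega = \sum (g_0\circ f)\, d(g_1\circ f)\wedge \cdots \wedge d(g_p\circ f).$$
The right-hand side is a section of $\Omega^p_X$ near $x$, since each $g_i \circ f$ lies in $\mathcal{O}_X$. Granting well-definedness, the rest is formal. Compatibility with $d$ holds on generators by requirement (2) in the definition of $d$:
$$d(f^*(g_0\, dg_1 \wedge \cdots)) = d(g_0\circ f)\wedge d(g_1\circ f)\wedge\cdots = f^*(dg_0\wedge dg_1\wedge\cdots).$$
It extends by $\mathbb{Z}$-linearity. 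Agreement with $f^*$ on $\mathcal{O}_Y$ holds by construction, and sheaf compatibility holds because the definition is local.

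The main obstacle is well-definedness. We must show that if $\sum g_0\, dg_1\wedge\cdots\wedge dg_p = 0$ in $\Omega^p_{reg,\,y}$, then the pulled-back expression vanishes in $\Omega^p_{reg,\,x}$. Since $\Omega_{reg}$ is defined as a limit over dense open definable subsets, it suffices to check vanishing on some dense open definable subset of a neighborhood of $x$ in $X_{reg}$. The difficulty is that $f$ need not map a dense open subset of $X$ into $Y_{reg}$; for example, $f$ could collapse $X$ into the singular locus of $Y$. There the vanishing of the original form in $\Omega_{reg}$ gives no information directly.

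To handle this, I would work in a compact chart: $Y_j \subset \mathbb{R}^m$ and $X_i \subset \mathbb{R}^n$, with $f(X_i)\subset Y_j$ after shrinking. I would then stratify the definable set $Y_j$ into finitely many definable smooth submanifolds $S_k$ such that each $g_i$ restricted to $S_k$ is smooth (real-analytic). This uses Lemma \ref{generic smoothness} applied inductively on dimension to the closed definable complement. Next I would show that the relation $\sum g_0\, dg_1 \wedge\cdots = 0$ on the top stratum implies that the same relation holds for the restricted forms on every stratum $S_k$. I would try to prove this by approaching $S_k$ from the top stratum using the Whitney-type (a)-regularity available for definable stratifications: limits of tangent spaces contain $TS_k$, so restricted forms are limits of top-dimensional ones.

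Finally, $X_i$ is the finite union of the definable sets $f^{-1}(S_k)$, so some of them have nonempty interior and together they cover a dense open definable set. On the interior of each, again applying Lemma \ref{generic smoothness} to shrink, $f$ is a smooth map into the manifold $S_k$. There the pulled-back form equals the classical pullback of the vanishing form on $S_k$, by the ordinary chain rule. Hence it vanishes on a dense open subset, so it is zero in $\Omega^p_{reg,\,x}$, and $f^*$ is well defined. If the Whitney-limit step proves troublesome, the fallback is to enlarge the generators by allowing $g_i$ to be extended to ambient definable functions via Remark \ref{section extends}, and to argue through the graph of $f$ in $X\times Y$.
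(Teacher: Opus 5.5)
\textbf{There is a gap.} Your overall reduction is viable, but the step that carries the relation from the open strata down to a lower stratum $S_k$ is not proved, and Whitney (a) cannot prove it.

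\textbf{Comparison with the paper.} Where $f$ maps a dense open set into $Y_{reg}$, your argument reduces to the paper's. The paper takes $f$ smooth on a dense open $U\subset X_{reg}\cap f^{-1}(Y_{reg})$. It concludes that $f^*$ is already a well-defined map $\Omega_{Y,reg}\to f_*\Omega_{X,reg}$, and uses commutation with $d$ (the chain rule) to see that $f^*$ carries $\Omega_Y$ into $\Omega_X$. This presupposes that $f^{-1}(Y_{reg})$ is dense in $X$. That fails for $f:[0,1]\to Y=[0,1]^2$, $t\mapsto(t,0)$, where $Y_{reg}=(0,1)^2$. The paper does not treat this case, so you are right that more is needed.

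Your reduction for that case is sound. You stratify so that each $g_i|_{S_k}$ is smooth, use that the interiors of the $f^{-1}(S_k)$ cover a dense open subset of $X$, and apply the chain rule there. Since $\sum g_0|_{S_k}\,d(g_1|_{S_k})\wedge\cdots$ is then smooth on $S_k$, it is enough that it vanish on a dense subset of $S_k$.

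\textbf{The missing step.} Whitney (a) controls limits of tangent planes, not the differentials of the $g_i$. The $g_i$ are merely continuous up to $S_k$, and their differentials typically blow up there. In the example, $g=\sqrt b$ has $dg=db/(2\sqrt b)$ on $Y_{reg}$, while $g|_{\{b=0\}}=0$. So the forms on the open strata need not converge, and the relation there may hold by cancellation among unbounded terms. Hence ``restricted forms are limits of top-dimensional ones'' is unjustified. Evaluating on tangent vectors $v_j\to v\in T_xS_k$ does not rescue it by itself. With $v_j=(1,\epsilon_j)$ one gets $dg(v_j)=\epsilon_j/(2\sqrt{b_j})$, which can tend to anything, whereas $d(g|_{S_k})(v)=0$.

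What you need is a lemma of the following kind. At generic $x\in S_k$, and along suitably chosen approaching directions, each $dg_i$ converges to $d(g_i|_{S_k})$. Suitable directions are, for instance, the $S_k$-directions of a generic definable trivialization of a neighbourhood of $S_k$ in $Y$ along $S_k$. Given that lemma, the relation passes to $S_k$ term by term after evaluating on $p$ tangent vectors.

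This interchange of limit and derivative genuinely uses definability: $\tau\sin(s/\tau)\to 0$ uniformly while its $s$-derivative does not converge. In the model case, take $G$ continuous and definable on $S'\times[0,\epsilon)$ and smooth for $\tau>0$. Then:
\begin{itemize}
\item $\lim_{\tau\to 0^+}\partial_s G(s,\tau)$ exists by monotonicity;
\item the convergence is locally uniform off a lower-dimensional subset of $S'$;
\item the limit cannot be infinite, because $G$ is bounded;
\item integrating in $s$ identifies the limit with $\partial_s G(s,0)$.
\end{itemize}

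Your fallback through ambient extensions and the graph of $f$ does not touch this issue. The relation in $\Omega_Y$ only constrains tangential components along $Y_{reg}$, and ambient extensions of the $g_i$ are no smoother.
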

\begin{proof} By Lemma \ref{O embed smooth}, we have that $f : U \to Y_{reg}$ is smooth on some open dense subset $U \subset X_{reg} \cap f^{-1}(Y_{reg})$. Thus, we get
$$f^* : \Omega_{Y, \, reg} \to f_* \Omega_{X, \, reg}.$$
We shall check this restricts to $f^* : \Omega_Y \to f_* \Omega_X$. Such checking can be done on the stalk-level; that is, we have to check
$$f^* : \Omega^p_{Y, y} \to \Omega^p_{X, x}$$
is well-defined and commutes with $d$. First, note $f^*$ commutes with $d$ since it does so for $d$ on $\Omega_{X, reg}$ and $\Omega_{Y, reg}$. This in turns implies that the above map is well-defined.
\end{proof}


\subsection{Phase spaces}

Now, we want to define the phase space to a PA-space (i.e., the cotangent bundle in the smooth case). That notion is used in the discussion of differential operators or Fourier transformations. A usual way is to use a cotangent space but we shall now explain why that approach does not work.

Given a ring homomorphism $A \to R$, we recall that the module of K\"ahler differentials
$$(\Omega_{R/A}, d : R \to \Omega_{R/A})$$ is characterized by the universal property: each $A$-derivation $R \to M$ to a $R$-module $M$ factors through an $R$-linear map $\Omega_{R/A} \to M$.

In particular, with $R = \mathcal{O}_x$ and $A = k(x)$, the earlier $d : \mathcal{O}_x \to \Omega^1_x$ factors as
$$\mathcal{O}_x \to \Omega_{\mathcal{O}_x/k(x)} \overset{\eta}\to \Omega^1_x.$$
Here, we note that $\eta$ is surjective; indeed, the image of $\eta$ contains the generators but since the image is an $\mathcal{O}_x$-module, the image coincides with the whole module.

We have: with $k(x) = \mathcal{O}_x/\mathfrak{m}_x$,
$$\mathfrak{m}_x/\mathfrak{m}_x^2 \to \Omega_{\mathcal{O}_x/\mathbb{C}} \otimes k(x) \to \Omega_{k(x)/\mathbb{C}} \to 0$$
where the third term is zero since $k(x) = \mathbb{C}$. Thus, composing two surjections, we have the surjection
$$\mathfrak{m}_x/\mathfrak{m}_x^2 \to \Omega^1_x \otimes k(x).$$

On the other hand,

\begin{lemma} For each $x$ in $X$, we have: $\mathfrak{m}_{x}/\mathfrak{m}_{x}^2 = 0$.
\end{lemma}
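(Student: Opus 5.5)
We must show every germ $f \in \mathfrak{m}_x$ lies in $\mathfrak{m}_x^2$. The idea is a square-root trick: a continuous definable function vanishing at $x$ can be written as a product of two such functions, because $\mathcal{O}_x$ only requires continuity and definability, not smoothness.

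\textbf{Step 1 (real-valued factorization).} Write $f = u + iv$ with $u, v$ real-valued germs in $\mathfrak{m}_x$. It suffices to treat a real-valued $u \in \mathfrak{m}_x$. Set
$$g = \operatorname{sgn}(u)\,|u|^{1/2}, \qquad h = |u|^{1/2}.$$
Then $u = g h$ and both $g$ and $h$ vanish at $x$.

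\textbf{Step 2 (membership in $\mathcal{O}_x$).} Both $g$ and $h$ are continuous, since $t \mapsto |t|^{1/2}$ and $t \mapsto \operatorname{sgn}(t)|t|^{1/2}$ are continuous on $\mathbb{R}$. They are also definable: these two functions have semialgebraic graphs, for instance $\{(t,s) : s^2 = |t|,\ s \ge 0\}$ for the first. Since a composition of definable maps is definable, the graph of $g$ or $h$ over a compact definable set $K$ is definable whenever the graph of $u$ over $K$ is. This follows from Tarski--Seidenberg, because the graph of $g \circ u$ over $K$ is the projection of a definable set. On a PA-space the check is done on each member $X_i$ of a defining cover via Definition \ref{piecewise-algebraic space}(2), so it reduces to the Euclidean case. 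Hence $g, h \in \mathfrak{m}_x$.

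\textbf{Step 3 (conclusion).} We get $u = gh \in \mathfrak{m}_x^2$, and likewise $v \in \mathfrak{m}_x^2$. Therefore $f = u + iv \in \mathfrak{m}_x^2$, which gives $\mathfrak{m}_x = \mathfrak{m}_x^2$.

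The only step with real content is Step 2, namely that composing a section of $\mathcal{O}$ with a continuous semialgebraic function $\mathbb{R} \to \mathbb{R}$ stays in $\mathcal{O}$. This is a routine application of the closure of definable sets under projection (Tarski--Seidenberg) together with the closed graph remark made earlier in the paper. I do not expect any real obstacle beyond this.

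As a corollary, the surjection $\mathfrak{m}_x/\mathfrak{m}_x^2 \to \Omega^1_x \otimes k(x)$ shows that $\Omega^1_x \otimes k(x) = 0$. This is presumably the point the paper wants to make about why cotangent spaces fail.
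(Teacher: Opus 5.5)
Your proposal is correct and follows the paper's argument: the paper also reduces to a real-valued germ vanishing at $x$ and factors it through a square root, $f = \sqrt{f}\,\sqrt{f}$, using that $\mathcal{O}_x$ requires only continuity and definability, not smoothness. The only difference is cosmetic: the paper first splits $f$ into its positive and negative parts $(f \pm |f|)/2$ and treats the case $f \ge 0$, whereas your signed square root $\operatorname{sgn}(u)|u|^{1/2}\cdot|u|^{1/2}$ handles both signs with a single factorization.
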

\begin{proof} Assume $X$ is contained in $\mathbb{R}^n$, is compact and passes through the origin as well as $x = 0$. Given a class $[f]$ in $\mathfrak{m}_{0}/\mathfrak{m}_{0}^2$ represented by some real-valued $f$ in $\mathcal{O}_{X}(U)$, $U$ a neighborhood of $0$, we write
$$f = f_+ - f_-$$
where $f_+(x) = (f + |f|)/2$ and $f_{-}(x) = (f - |f|)/2$. Then $f_+, f_-$ are both in $\mathcal{O}_{X}(U)$ and in fact are in $\mathfrak{m}_{0}$. Thus, without loss of generality, we can assume $f \ge 0$. Then we have
$$f = \sqrt{f}\sqrt{f}.$$
Here, $\sqrt{f}$ is in $\mathcal{O}_0$; then in $\mathfrak{m}_0$. Thus, $f$ is in $\mathfrak{m}_0^2$; i.e., $[f] = 0$.
\end{proof}

\begin{remark}\label{cotangent space zero} In particular, $$\Omega^1_x \otimes k(x) = 0$$ for every $x$ in $X$. Note this \emph{does not mean} $\Omega^1_x = 0$ since Nakayama's lemma applies to a finitely generated module.
\end{remark}

Because of the above results, we shall construct the phase space to a PA-space somehow differently than simply using a Zariski cotangent space $\mathfrak{m}_x/\mathfrak{m}_x^2 = 0$. To guess a workable definition, first we consider the following simple example:

\begin{example}\label{cotangent space example} Let $X = V(xy) \cap [-1, 1]^2 \subset \mathbb{R}^2$ where $V(xy)$ is the zero locus of $xy$. Then the cotangent space $V_p$ to $X$ at a point $p$ should be the real vector space spanned by $(dx|_X)_p, (dy|_X)_p$, where the subscript $-_p$ means taking a germ not the restriction. (The restriction of any higher form to a point is zero for dimension reason.)

Thus, at a point $p \ne 0$, $\dim(V_p) = 1$ while at $p = 0$, $\dim(V_0) = 2$. Note if $p$ is in $X_{reg} = V(xy) \cap (-1, 1)^2 - 0$, then $V_p$ is the same as the cotangent space at $p$ to the manifold $X_{reg}$.

The Zariski closure $Z$ of $X$ is $V(xy)$ since clearly $I(X)$ is generated by $xy$. Thus, the above $V_p$ is the same thing as $\Omega_{Z/\mathbb{R}, \, p} \otimes k(p)$ where $\Omega_{Z/\mathbb{R}, \, p}$ is the cotangent sheaf to the variety $Z$ over $\mathbb{R}$ in algebraic geometry.

Let
$$\widetilde{X} = \{ (p, \xi) \mid p \in X, \, \xi \in V_p \}.$$

In the usual way, we identify $V_p$ with the tangent space at $p$ to $Z$. Then $p \times V_p \subset X \times \mathbb{R}^m \simeq X \times (\mathbb{R}^m)^*$ and so
$$\widetilde{X} \subset X \times (\mathbb{R}^m)^*.$$
\end{example}

We now simply generalize the above example:

\begin{construction}[phase space]\label{phase space construction} Given a compact PA-space $X \subset \mathbb{R}^m$, let $Z$ be the Zariski closure of $X$ in $\mathbb{R}^m$ and then for each point $p$ in $Z$, let $V_p = \Omega_{Z/\mathbb{R}, \, p} \otimes k(p)$ where $\Omega_{Z/\mathbb{R}}$ is the cotangent sheaf to $Z$ over $\mathbb{R}$. Explicitly, it is the $\mathbb{R}$-span of $(dx_1)|_{Z, \, p}, \dots, (dx_m)|_{Z, \, p}$ where $x_j$ denote the standard coordinates.

Then let
$$\widetilde{Z} = \operatorname{Spec}_Z(\operatorname{Sym}_{\mathcal{O}_Z} \Omega_{Z/\mathbb{R}}^{\vee})$$
where $-^{\vee}$ means the dual sheaf.

Note each fiber of $\widetilde{Z} \to Z$ is $\Omega_{Z/\mathbb{R}} \otimes k(p) = V_p$. Finally, we let $$\widetilde{X} = \widetilde{Z}|_X,$$
which we call the \emph{phase space} to $X$.

Note that $\Omega_{Z/\mathbb{R}}$ is globally finitely generated by $dx_1|_Z, \dots, dx_m|_Z$; i.e., we have the surjection $\mathcal{O}_Z[t_1, \dots, t_m] \to \operatorname{Sym}_{\mathcal{O}_Z}\Omega_{Z/\mathbb{R}}$, which gives
$$\operatorname{Spec}_Z(\operatorname{Sym}_{\mathcal{O}_Z} \Omega_{Z/\mathbb{R}}) \subset Z \times \mathbb{R}^n.$$

Suppose we are given a metric on $X_{reg}$ so that we can identify the cotangent spaces with the tangent spaces. Then the above gives:
$$\widetilde{X_{reg}} \hookrightarrow X_{reg} \times (\mathbb{R}^m)^*.$$
\end{construction}

\section{The sheaf of PA-forms}\label{sec:PA-forms}

The notion of PA-forms was introduced by \cite{Kontsevich}. Some aspects of the theory are detailed in \cite{Hardt}; in particular, a proof of the Poincar\'e lemma in the compact case. Here, we also give a version of the theory, which is somewhat more general.

\

Recall from Calculus \cite[Ch. 4.]{Spivak}: if $\omega$ is a differential $p$-form on a manifold $M$, $J = [0,1]^p$ a $p$-cube and $c : J \to M$ a continuous map, then we define
$$\int_c \omega := \int_J c^* \omega.$$
Note $c$ need not be smooth (since we can still write $c^* \omega = f \, dx_1 \wedge \cdots \wedge dx_p$ for some continuous function $f$ and then integrate this $f$. See also \cite[§ 4.8.]{Warner}.)

The above then extends by linearity to any $\mathbb{Z}$-linear combination $c = \sum n_i c_i$; i.e., $\int_c \omega = \sum n_i \int_{c_i} \omega$. This is an integration over a chain. A contour integral is a basic example.

A key fact in the above formalism is Stokes' formula, which fails without some degree of smoothness (continuity alone is not enough). Thus, first we introduce some smooth concepts. A \emph{smooth PA-manifold-with-boundary} or just a \emph{PA-manifold of dimension $n$} is a PA-space with the property that it has an open cover $\mathcal{U}$ together with open immersions that are PA-maps
$$\varphi_U : U \hookrightarrow \mathbb{R}^{n-1} \times \mathbb{R}_{\ge 0}$$
for $U \in \mathcal{U}$ such that each $\varphi_U|_{U \cap V} \circ \varphi_V^{-1}|_{\varphi_V(U \cap V)}$ is a smooth map. The \emph{orientation} on it and \emph{boundary} of it are defined just as in differential topology (e.g., \cite[Ch 5.]{Spivak}). If a PA-manifold $M$ is oriented, then the boundary of $M$ has induced orientation; namely, the one we get by requiring Stokes' formula hold \cite[Ch. 1., \S 3.]{DifferentialForms}.

\begin{remark}[product of PA-manifolds]\label{product of PA-manifolds} If $M, N$ are PA-manifolds, then, \emph{a priori}, there is an issue on a smooth structure on $M \times N$; i.e., $M \times N$ is only a manifold-with-corner. To mitigate this issue, we shall use \emph{straightening the corner} \cite[Proposition 2.6.2.]{Wall}, which says that there is a homeomorphism from $M \times N$ to a smooth manifold that is a diffeomorphism away from the corner and the latter manifold is unique up to diffeomorphisms. Examining the proof, we see that the PA-structure is preserved (the proof uses a collar and sometimes a collar is constructed by solving differential equations. But there is a more concrete construction of a collar; e.g., \cite[Ch. 4., § 6.]{Hirsch}). Therefore, through this procedure, we shall view $M \times N$ as a PA-manifold.
\end{remark}

Let $M$ be a compact oriented PA-manifold $M$ of dimension $p$. By the usual differential topology, we have the integration of continuous $p$-forms:
$$\int_M : \Omega^p_{\mathcal{C}^0(M)} \to \mathbb{C}.$$
Note, here, the smoothness is not needed and the continuity is enough; see \cite[§ 4.8.]{Warner}.

By a \emph{PA-chain of dimension $p$} over a commutative ring $\Lambda$, we mean a formal $\Lambda$-linear combination of PA-maps $c : M \to X$ where $M$ is an oriented compact PA-manifold of dimension $p$. Since the pull-back $c^* : \Omega(X) \to \Omega(M), \, c^*(\omega) = \omega \circ c$ is well-defined, the pairing $$(c, \omega) \mapsto \int_c \omega$$
is defined as before.

We say a PA-chain $c = \sum a_i c_i$ of dimension $p$ is \emph{degenerate} on an open subset $U \subset X$ if $\sum a_i \int_{c_i^{-1}(U)_{reg}} c_i^*\omega = 0$ for each $\omega$ in $\Omega^p(U)$. The \emph{support} of $c$ denoted by $\supp(c)$ is then defined to the complement of the union of open subsets $U$ on which $c$ is degenerate.

Finally, we let $C_p(X; \Lambda)$ be the group of \emph{nondegenerate} PA-chains of dimension $p$; i.e., it is the quotient of the group of all PA-chains modulo all degenerate PA-chains. The \emph{boundary operator} $\partial$ on $C(X; \Lambda)$ is defined by, for $c : M \to X$, taking $\partial c$ to be the restriction of $c$ to the boundary of $M$. Since the boundary of the boundary of a topological manifold is empty, we have $\partial^2 = 0$ and $(C(X; \Lambda), \partial)$ is a chain complex. Write $C(X)$ for $C(X; \mathbb{C})$.

\begin{lemma}[cf. \cite{Kontsevich} \S 8.2. Remark 9. (a)]\label{chain decomposes} If $Y_1,\,Y_2 \subset X$ are closed subsets with union $X$, then we have the Mayer-Vietoris sequence
$$0 \to C(Y_1 \cap Y_2) \to \oplus_{i=1}^2 C(Y_i) \to C(X) \to 0.$$
Moreover, it is split exact with sections given by a natural linear map
$$C_p(X) \to C_p(Y), \, c \mapsto c_Y$$
for each closed subset $Y$.
\end{lemma}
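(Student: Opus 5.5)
The plan is to build the map $c \mapsto c_Y$ first and then get the Mayer--Vietoris sequence from it. For a PA-chain $c = \sum a_i c_i$ with $c_i : M_i \to X$, the set $c_i^{-1}(Y)$ is a closed definable subset of $M_i$. We cut $M_i$ along a triangulation of the pair $(M_i, c_i^{-1}(Y))$; such a triangulation exists by \cite[Th\'eor\`em 9.2.1.]{Bochnak}, using the corollary that compact PA-spaces embed into Euclidean spaces. This writes $c_i$, modulo degenerate chains, as a sum $\sum_\sigma c_i|_{\sigma}$ over top-dimensional simplexes. We then set $c_Y = \sum_i a_i \sum_{\sigma \subset c_i^{-1}(Y)} c_i|_\sigma$. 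Top simplexes are PA-manifolds with corners, so we use straightening the corner (Remark \ref{product of PA-manifolds}) to regard them as PA-manifolds. Lower-dimensional simplexes carry no $p$-dimensional integral, so they only affect the chain by a degenerate term.

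Next we check that $c_Y$ is well defined in the quotient by degenerate chains. The key formula is
$$\int_{c_Y} \omega = \sum_i a_i \int_{(c_i^{-1}(\operatorname{int} Y))_{reg}} c_i^*\omega,$$
up to a boundary set of measure zero. This formula needs some care. I would first show that $c_Y$ depends only on the function $\omega \mapsto \sum_i a_i \int_{c_i^{-1}(Y)} c_i^*\omega$ on forms. This function is determined by the class of $c$, because any two triangulations have a common refinement and the sets $\partial c_i^{-1}(Y)$ are definable of dimension $<p$, hence null. So if $c$ is degenerate on every open $U$, then $c_Y$ is degenerate as well. I expect this to be the main obstacle: degeneracy is defined by testing against forms on open sets $U$, whereas $Y$ is closed, so the restriction to $Y$ has to be justified. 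My first attempt would approximate the indicator of $Y$ by cut-off functions $\psi$ from Proposition \ref{partition of unity} and apply dominated convergence to $\int_c \psi\omega$. The dimension bound on $\partial c_i^{-1}(Y)$ is what makes the limit equal $\int_{c_Y}\omega$. From the construction, $c \mapsto c_Y$ is linear and natural, and $(c_Y)_{Y'} = c_{Y \cap Y'}$.

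For the sequence itself, the maps are $C(Y_1 \cap Y_2) \to \oplus C(Y_i)$, $e \mapsto (e, -e)$, and $\oplus C(Y_i) \to C(X)$, $(a,b) \mapsto a + b$, both by pushforward along inclusions. The composite is zero. For surjectivity, note that $c - c_{Y_1}$ is represented by the simplexes lying in $\overline{X - Y_1} \subset Y_2$, so $c = c_{Y_1} + (c - c_{Y_1})$ with $c - c_{Y_1} \in C(Y_2)$. This also gives the splitting $c \mapsto (c_{Y_1}, c - c_{Y_1})$. For injectivity of the first map, pushforward $C(Y) \to C(X)$ is injective because $(\iota_* e)_Y = e$. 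This identity holds since a chain supported in $Y$ is unchanged by restricting to $Y$.

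Exactness in the middle goes as follows. Suppose $a + b = 0$ in $C(X)$ with $a \in C(Y_1)$ and $b \in C(Y_2)$. Applying $(-)_{Y_1}$ gives $a + b_{Y_1 \cap Y_2} = 0$, using $(b)_{Y_1} = b_{Y_1\cap Y_2}$ since $b$ lives on $Y_2$. Hence $a = -b_{Y_1\cap Y_2}$ comes from $C(Y_1 \cap Y_2)$. Symmetrically $b = -a_{Y_1 \cap Y_2} = b_{Y_1 \cap Y_2}$, which completes the check of exactness.
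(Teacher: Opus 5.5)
Your proof is correct in outline, with one fixable gap, and it takes a different route from the paper's.

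\textbf{Comparison with the paper.} The paper works on the target. After replacing $X$ by $c(M)$, it chooses a stratification of $X$, compatible with $Y$, over which $c$ is stratum-wise trivial, and triangulates $Y$ compatibly. It then sets $c_Y=\sum_{\sigma\in\Sigma_Y}\pm\,\sigma\times c^{-1}(x_\sigma)$, using $c^{-1}(|\sigma|)\simeq|\sigma|\times c^{-1}(x_\sigma)$. Exactness and the splitting are declared clear, and independence of choices is attributed to the Mayer--Vietoris property. You work on the source instead: you triangulate each $M_i$ compatibly with $c_i^{-1}(Y)$, so $c_Y$ is literally $c$ cut down to $c^{-1}(Y)$. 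This needs only the triangulation theorem and the nullity of definable sets of dimension $<p$. It avoids stratified triviality and the sign bookkeeping for the product pieces. It also yields the intrinsic formula $\int_{c_Y}\omega=\sum_i a_i\int_{c_i^{-1}(Y)}c_i^*\omega$. From that formula, independence of the triangulation, $(c_Y)_{Y'}=c_{Y\cap Y'}$ and $(\iota_*e)_Y=e$ follow at once. Your derivation of surjectivity, of the splitting $c\mapsto(c_{Y_1},c-c_{Y_1})$, and of exactness in the middle from these identities is correct and more explicit than the paper's.

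\textbf{Two points to repair.}

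First, the displayed key formula with $c_i^{-1}(\operatorname{int}Y)$ is wrong as written. Take $Y=Y_1\cap Y_2$ to be the line along which two planes meet. Then $\operatorname{int}Y=\emptyset$, yet $c_Y=c$ for a $1$-chain lying on that line. What you need, and later use, is the interior of $c_i^{-1}(Y)$ in $M_i$. That set differs from $c_i^{-1}(Y)$ by a definable set of dimension $<p$.

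Second, the cut-off argument only shows that if $c=0$ in $C_p(X)$, then $\int_{c_Y}\omega=0$ for all $\omega\in\Omega^p(X)$. That is, it shows $c_Y$ vanishes in $C_p(X)$. But $C_p(Y)$ is the quotient by chains degenerate against $\Omega^p(Y)$. So you must also know that every form on $Y$ is a restriction of a form on $X$. This is true, by the following steps:
\begin{itemize}
\item Using a partition of unity on $Y$ and cut-offs, reduce to $\eta=f_0\,df_1\wedge\cdots\wedge df_p$ with $f_j\in\mathcal{O}(Y)$.
\item Extend the $f_j$ to $\mathcal{O}(X)$ by the semialgebraic Tietze extension theorem.
\item Use that $d$ commutes with pull-back along $Y\hookrightarrow X$.
\end{itemize}
The same fact is silently needed for your injectivity of $C(Y)\to C(X)$. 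The identity $(\iota_*e)_Y=e$ gives injectivity only once $(\cdot)_Y$ is known to be well defined with values in $C(Y)$. With this step added, your argument is complete. The paper's proof does not address this point either.
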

\begin{proof} Let $c : M \to X$ be in $C(X)$. Replacing $X$ by the image $c(M)$, we assume $c$ is surjective. Note $X$ is then compact and $c$ is a proper map.


Choose a finite stratification $\Sigma$ on $X$ such that $c$ is stratum-wise trivial and $\Sigma$ restricts to a stratification on $Y$. We can assume $Y$ has a triangulation $\Sigma_Y$ and each stratum on $Y$ is the union of simplexes in $\Sigma_Y$. Then, for each $\sigma \in \Sigma_Y$, by triviality of $c$ when restricted to strata, we have $c^{-1}(|\sigma|) \simeq |\sigma| \times c^{-1}(x_\sigma)$ (for an arbitrary choice of $x_\sigma \in \sigma$) and so, replacing $\sigma$ by $-\sigma$ if needed, the integration over $\sigma \times c^{-1}(x_\sigma)$ is the same as the integration over $c^{-1}(|\sigma|)$. We thus let $c_Y = \sum_{\sigma \in \Sigma_Y} \sigma \times c^{-1}(x_\sigma).$ It is clear that $c_Y$ then has the required property. Finally, the Mayer-Vietoris property implies the definition of $c_Y$ is independent of the choices involved in the construction.
\end{proof}

\begin{remark} In the lemma, we only consider the restriction to a \emph{closed} subspace since the restriction of a chain to an open subset would not have compact support in general. Probably for this reason, \cite[\S 8.2.]{Kontsevich} also defines the sheaf $C^{closed}$ of locally finite PA-chains with closed support (also known as the Borel--Moore sheaf), where ``closed support" means not necessarily compact support but only closed. For the definition of a locally finite chain, see \cite[Ch. I, \S 2.1.]{Borel}. In this paper, we do not need $C^{closed}$.
\end{remark}

The fundamental fact on PA-chains is the following:

\begin{lemma}[Stokes' formula]\label{Stokes} For $c$ in $C_p(X)$ and $\omega$ in $\Omega^{p-1}(U)$ for some open subset $U$ of $X$ containing $\supp(c)$, we have:
$$\int_c d\omega = \int_{\partial c} \omega.$$
\end{lemma}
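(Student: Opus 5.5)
By linearity it is enough to prove the formula for a single PA-chain $c : M \to X$ with coefficient $1$, where $M$ is a compact oriented PA-manifold of dimension $p$. The plan is to pull everything back to $M$: by definition $\int_c d\omega = \int_M c^*(d\omega)$ and $\int_{\partial c}\omega = \int_{\partial M} c^*\omega$. The Proposition on pull-backs of forms gives $c^* d\omega = d(c^*\omega)$ on $c^{-1}(U)$, which is an open neighborhood of $M$ after shrinking $U$ to contain the image. So the claim reduces to Stokes' formula $\int_M d\eta = \int_{\partial M}\eta$ for $\eta = c^*\omega \in \Omega^{p-1}_M(M)$.

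The difficulty is that $\eta$ is not smooth on $M$. It lies in $\Omega_M$, so it is locally a finite sum of terms $f_0\, df_1\wedge\cdots\wedge df_{p-1}$ with $f_i \in \mathcal{O}_M$. By Lemma \ref{O embed smooth} these terms are smooth only on an open dense definable subset $V \subset M_{reg}$. Its complement $S = M - V$ is a closed definable set of dimension $\le p-1$.

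I would handle this by excision with definable tubular neighborhoods. Using partitions of unity (Proposition \ref{partition of unity}), we may localize to a coordinate chart $\mathbb{R}^{p-1}\times\mathbb{R}_{\ge 0}$; the cut-off functions lie in $\mathcal{O}_M$, so the localized forms are still of the same type. Next choose a definable function $\rho \ge 0$ with $\rho^{-1}(0) = S$, for instance the distance to $S$, which is semialgebraic. For generic small $\epsilon$, the set $M_\epsilon = \{\rho \ge \epsilon\}$ is a compact manifold with corners on which $\eta$ is smooth. This uses Hardt triviality, or Sard's theorem for definable maps, to make $\{\rho = \epsilon\}$ a smooth hypersurface transverse to $\partial M$. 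Classical Stokes, after straightening the corner (Remark \ref{product of PA-manifolds}), then gives
$$\int_{M_\epsilon} d\eta = \int_{\partial M \cap M_\epsilon}\eta + \int_{\{\rho=\epsilon\}}\eta .$$
The formula follows by letting $\epsilon \to 0$.

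The main obstacle is justifying this limit. Two things are needed.
\begin{enumerate}
\item Integrability of $d\eta$ and $\eta|_{\partial M}$, so that dominated convergence applies to the left side and to the first boundary term.
\item Vanishing of $\lim_{\epsilon\to0}\int_{\{\rho=\epsilon\}}\eta$.
\end{enumerate}
For both I would rely on o-minimal volume and derivative bounds. The coefficients $f_i$ are continuous, hence bounded on the compact set $M$. Their derivatives are definable, and the curve-selection and {\L}ojasiewicz inequalities bound the sizes of $df_i$ near $S$. More robustly, one can write $df_1\wedge\cdots\wedge df_{p-1}$ restricted to a level set as the pull-back of $dy_1\wedge\cdots\wedge dy_{p-1}$ under the definable map $F=(f_1,\dots,f_{p-1})$. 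Then $\int_{\{\rho=\epsilon\}} |f_0\, dF|$ is bounded by $\sup|f_0|$ times the volume of $F(\{\rho=\epsilon\})$ counted with multiplicity. The multiplicity is uniformly bounded by definable finiteness (Hardt), and the image $F(\{\rho = \epsilon\})$ shrinks to $F(S)$. The set $F(S)$ has $(p-1)$-dimensional measure zero when $\dim S\le p-2$.

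The codimension-one part of $S$ must be treated separately. There $\eta$ extends continuously from both sides, and the two opposite-oriented contributions cancel in the limit. I expect this cancellation, together with the uniform multiplicity bound, to be the technical core. I would try first to reduce to it by triangulating $M$ compatibly with $S$, so that the codimension-one part consists of interior faces of top simplices on which the $f_i$ are smooth up to the boundary.
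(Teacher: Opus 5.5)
Your reductions (pull back to $M$, localize, excise $\{\rho<\epsilon\}$) are fine. The area-formula bound you call ``more robust'' does give integrability of $d\eta$: with $G=(f_0,\dots,f_{p-1})$ one gets $\int|d\eta|\le N\,\vol(G(M))$, whereas {\L}ojasiewicz-type bounds $|df_i|\lesssim\operatorname{dist}(\cdot,S)^{-k}$ would not suffice. The same bound disposes of the part of $S$ of dimension $\le p-2$. The gap is the step you defer, the codimension-one part $C$ of $S$, which is the heart of the lemma. Neither of your two suggestions for it works as stated. First, $\eta$ does \emph{not} in general extend continuously to $C$ from either side: for $\eta=d\sqrt{|y|}$ near $C=\{y=0\}$ in a chart of $\mathbb{R}^2$, the coefficient $\operatorname{sgn}(y)/(2\sqrt{|y|})$ of $dy$ is unbounded on both sides. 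Second, a triangulation merely compatible with $S$ does not make the $f_i$ smooth, or even Lipschitz, up to the faces of the top simplices; $\sqrt{|y|}$ stays non-Lipschitz at a face lying in $\{y=0\}$. The same phenomenon also occurs along $\partial M$, which you do not address. For instance, $\eta=d\sqrt{x_2}$ on $\mathbb{R}\times\mathbb{R}_{\ge 0}$ is smooth at no boundary point. There the sheet of $\{\rho=\epsilon\}$ running along $\partial M$ must converge to $\int_{\partial M}\eta$ rather than cancel. Your splitting into $\int_{\partial M\cap M_\epsilon}\eta$ and $\int_{\{\rho=\epsilon\}}\eta$ tacitly assumes $\eta$ is smooth at generic points of $\partial M$.

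What actually passes to $C$ (or to $\partial M$) is not $\eta$ but the continuous functions $f_i$. With $F=(f_1,\dots,f_{p-1})$, the restriction of $f_0\,dF$ to a hypersurface $\Sigma$ depends only on the $f_i|_{\Sigma}$, and $\int_\Sigma f_0\,dF=\int_{\mathbb{R}^{p-1}}\sum_{x\in F^{-1}(y)\cap\Sigma}\pm f_0(x)\,dy$. To finish the excision route you need an argument of roughly this kind:
\begin{enumerate}
\item Identify each sheet $\Sigma^{\pm}_\epsilon$ with $C$ by a collar, so that $F|_{\Sigma^\pm_\epsilon}\to F|_C$ uniformly.
\item For a.e.\ $y$, use homotopy invariance of the local degree at the finitely many points of $C\cap F^{-1}(y)$, together with continuity of $f_0$ and your uniform multiplicity bound $N$. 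This gives pointwise convergence of the integrand to that of $\pm\int_C f_0\,dF$.
\item Dominate by $N\sup|f_0|$ times the indicator of a bounded set.
\end{enumerate}
Alternatively, your triangulation idea can be rescued. Triangulate the \emph{graph} of $(f_0,\dots,f_{p-1})$ by a triangulation that is $C^1$ on every closed simplex (as in Ohmoto--Shiota's $C^1$-triangulation theorem), and pull back along the triangulating homeomorphism, which need not be a diffeomorphism. The pulled-back $f_i$ are then $C^1$ on closed simplices, Stokes holds simplexwise by $C^1$ approximation, and interior faces cancel for the reason just given.

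For comparison, the paper does not excise $S$ at all. It mollifies in a half-space chart, applies classical Stokes to $\omega*\varphi_\epsilon$ on $\mathbb{R}^{p-1}\times[\delta,\infty)$, lets $\epsilon\to 0$ and then $\delta\to 0$ by dominated convergence, and globalizes with a partition of unity. Its step $d(\omega*\varphi_\epsilon)=(d\omega)*\varphi_\epsilon$ amounts to saying that the distributional $d\omega$ carries no singular part. That is the same fact your codimension-one cancellation has to supply.
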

\begin{proof} By linearity, we only need to check the formula on a compact oriented PA-manifold $M$ for $c : M \to X$. Thus, we shall assume $X = M$ and $c$ = the identity and show that $\int_M d\omega = \int_{\partial M} \omega$.

First we shall prove the special case when $M = \mathbb{R}^p \times [0, \infty)$. Let $\varphi$ be in $\mathcal{C}_c^{\infty}(B(0, 1))$ such that $\int \varphi \, dx = 1$ and then let $\varphi_{\epsilon}(x) = \epsilon^{-p} \varphi(x/\epsilon), \epsilon > 0$. For any continuous forms $\eta$ on $\mathbb{R}^p$, we can define the convolution $\eta * \varphi_{\epsilon}$ coefficient-wise. Then we have: for all $x$ in $M$,
$$\eta(x) = \lim_{\epsilon \to 0} (\eta * \varphi_{\epsilon})(x)$$
since, if $f$ is a coefficient in $\eta$,
$$|(f * \varphi_{\epsilon})(x) - f(x)| \le \sup_{|y - x| < \epsilon} |f(y) - f(x)| \sup |\varphi| \operatorname{vol}(B(x, 1)) \overset{\epsilon \to 0} \to 0$$
by continuity. (If $f$ is not continuous, we can use Lebesgue's differentiation theorem.)

Let $M_{\delta} = \mathbb{R}^{p - 1} \times [\delta, \infty), \, \delta > 0$. We shall show 
$$\int_{M_{\delta}} d\omega = \int_{\partial M_{\delta}} \omega.$$
Modifying $\omega$ outside $M_{\delta}$, without loss of generality, we shall assume $\omega$ is defined on the whole $\mathbb{R}^p$.
Then, by the usual Stokes' formula (or even just by a direct computation), we have:
$$\int_{M_{\delta}} d \omega * \varphi_{\epsilon} = \int_M d(\omega * \varphi_{\epsilon}) = \int_{\partial M_{\delta}} \omega * \varphi_{\epsilon}.$$
By Lebesgue's dominated convergence theorem, letting $\epsilon \to 0$ gives $\int_{M_{\delta}} d\omega = \int_{\partial M_{\delta}} \omega$ and then letting $\delta \to 0$ gives $\int_M d\omega = \int_{\partial M} \omega$. This completes the proof of the special case.

Finally, the general case follows by means of a partition of unity.
\end{proof}

\begin{warning} Without the continuity on $\omega$, Stokes' formula may fail, since for instance, $\omega$ may be modified to be zero on the boundary.
\end{warning}

We have the $L^1$-norm $\| \cdot \|_1$ on $\Omega^p(M)$ as follows. If $\omega$ is in $\Omega^p(M)$, then we write $\omega = f \, \mu$ where $\mu$ is the fixed volume form on $M$ and $f$ a continuous function on $M$. In local coordinates $x_j$, $\mu$ is a positive smooth function times $d x_1 \wedge \cdots \wedge dx_p$. Thus, if we let $\| \omega \|_1 = \int_M |f| \, \mu$, then it is non-negative and finite; i.e., is a norm. It is independent of a choice of a volume form (only depends on an orientation or an equivalence class of volume forms).

We then let $\Omega^p_{L^1}(M)$ be the completion of $\Omega^p(M)$ with respect to this norm. Since $\int_M : \Omega^p(M) \to \mathbb{C}$ is continuous; thus, by linearity, uniformly continuous with respect to $\| \cdot \|_1$, we have the unique continuous linear extension
$$\int_M : \Omega^p_{L^1}(M) \to \mathbb{C}.$$

We shall now define an analog of the space of locally integrable functions.

\begin{definition}\label{chain-wise integrable} By a \emph{set-theoretic $p$-form}, we mean a map $\omega : X \to E(\Omega_X^p)$ such that $X \overset{\omega}\to E(\Omega_X^p) \to X$ is the identity, where $E(\Omega_X^p)$ is the total space (espace \'etal\'e) of $\Omega_X^p$.

We say a set-theoretic $p$-form $\omega$ is \emph{chain-wise integrable} if $c^* \omega$ is integrable for each $c$ in $C_p(X)$. If $\omega$ is a chain-wise integrable $p$-form, then it determines a linear functional on $C_p(X)':= \Hom(C_p(X), \mathbb{C})$ by
$$\langle \omega, c \rangle = \int_c \omega.$$
We then say $\omega, \omega'$ are \emph{weakly equivalent} if $\langle \omega, c \rangle = \langle \omega', c \rangle$ for all $c$ in $C_p(X)$.

Then we let $\Omega^p_{chain-L^1}(X)$ be the space of all weak-equivalence classes of chain-wise integrable $p$-forms.
\end{definition}

For example, every continuous $p$-form is chain-wise integrable and can be viewed as an element of $\Omega^p_{chain-L^1}(X)$, since, by Lebesgue's differentiation theorem, two weakly-equivalent continuous forms are the same.

We have:
$$\Omega^p_{chain-L^1}(X) \hookrightarrow C_p(X)'$$
given by
$$\omega \mapsto \left(c \mapsto \int_c \omega \right).$$ The differential on $C_p(X)'$ is defined as the transpose of $\partial$; i.e., $\langle d \omega, c \rangle = \langle \omega, \partial c \rangle$. By Stokes' formula (Lemma \ref{Stokes}), this differential agrees with the differential $d$ on $\Omega(X)$.

For a PA-space $X$, we give $C_r(X)$ the initial topology; i.e., the coarsest topology such that
\begin{enumerate}[label=(\alph*)]
    \item $c \mapsto \int_c \omega$ is continuous for every $\omega$ in $\Omega^r(X)$,
    \item if $c_i$ denotes a defining basis element on $C(X)$, then $c_i^* : C(X) \to \mathbb{Z}$ is continuous.
\end{enumerate}

We now introduce a family version of a chain:

\begin{definition}\label{r-family} By a \emph{continuous $r$-family of chains in fibers over $X$} or just an \emph{$r$-family over $X$}, we mean a continuous map
$$\gamma : X \to C_r(Y)$$
for some proper PA-map $\pi : Y \to X$, which by proper we mean the pre-images of compact sets are compact, such that 
\begin{enumerate}[label=(\arabic*)]
    \item $\supp(\gamma(x)) \subset \pi^{-1}(x)$, and
    \item it is in the image of
    \begin{align*}
    \bigoplus_M \mathbb{Z}\Map_{\mathrm{PA}}(X \times M, Y) &\hookrightarrow \Map(X, C_r(Y)) \\
    \sum n_i \widetilde{\gamma_i} &\mapsto (x \mapsto \sum n_i \widetilde{\gamma_i}(x, \cdot)).
    \end{align*}
\end{enumerate}

The \emph{boundary} $\partial \gamma$ of $\gamma$ is the $(r - 1)$-family defined as
$$X \overset{\gamma}\to C_r(Y) \overset{\partial}\to C_{r-1}(Y).$$
Note $\partial$ here is continuous since $\langle \omega, \partial c \rangle = \langle d\omega, c \rangle$ and the latter is continuous in $c$.

Also, if $f : X' \to X$ is a PA-map, and $Y'= Y \times_f X'$, then we define the pullback $f^* \gamma$ by using $\Map(X \times M, Y) \overset{f^*}\to \Map(X' \times M, Y')$.
\end{definition}

The condition (2) is needed to show a PA-form is generically smooth later (Proposition \ref{PA approximation}).

The integration over a chain then generalizes to the integration over chains in fibers (which also generalizes integration along fibers), in the following sense:

\begin{lemma}[integration along chains in fibers]\label{integration along fibers} For an $r$-family $\gamma$ with proper $\pi : Y \to X$, there is a unique linear operator
$$\int_{\gamma} : \Omega^{p}_{chain-L^1}(Y) \to \Omega^{p - r}_{chain-L^1}(X)$$
(by convention, $\Omega_{chain-L^1}^q = 0$ for negative $q$)
such that
\begin{enumerate}[label=(\arabic*)]
\item For each $\eta$ in $\Omega^{r}_{chain-L^1}(Y)$,
$\displaystyle \int_{\gamma} \eta$ is $$\displaystyle x \mapsto \int_{\gamma(x)} \eta|_{\pi^{-1}(x)}.$$
\item For each subspace $A \subset X$,
$\left. \left( \int_{\gamma} \omega \right) \right \vert_A = \int_{\gamma|_A} \omega|_{\pi^{-1}(A)}.$
\item (projection formula) for a continuous form $\eta$ on $Y$ and a continuous form $\sigma$ on $X$,
$$\int \eta \wedge \pi^* \sigma = \left(\int_{\gamma} \eta \right) \wedge \sigma.$$
In other words, $\int_{\gamma}$ is linear with respect to the right $\Omega_{\mathcal{C}^0}(X)$-module structures.
\end{enumerate}

Moreover, the above operator has the following properties: if $f : Y' \to Y$ is proper, then $$\int_{f_* \gamma} \omega = \int_{\gamma} f^*\omega.$$
Similarly, if $f : X' \to X$ is a PA-map, then
$$f^* \int_{\gamma} \omega = \int_{f^* \gamma} f^* \omega$$
where $f^* \omega$ is the pull-back along $Y' = Y \times_X X' \to Y$.
\end{lemma}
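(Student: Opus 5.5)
The operator will be defined by duality with chains, since elements of $\Omega^{p-r}_{chain-L^1}(X)$ are determined by their pairings with $C_{p-r}(X)$. Given $c : N \to X$ in $C_{p-r}(X)$, with $N$ a compact oriented PA-manifold, and $\gamma$ given as in Definition \ref{r-family} by $\sum n_i \widetilde{\gamma_i}$ with $\widetilde{\gamma_i} : X \times M_i \to Y$, I form the $p$-chain
$$\gamma_* c := \sum_i n_i \, \widetilde{\gamma_i} \circ (c \times \operatorname{id}_{M_i}) : N \times M_i \to Y,$$
where $N \times M_i$ is regarded as a PA-manifold via Remark \ref{product of PA-manifolds}. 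I then set
$$\left\langle \int_\gamma \omega, c \right\rangle := \langle \omega, \gamma_* c \rangle,$$
possibly with a sign $(-1)^{r(p-r)}$ fixed by the orientation convention on $N \times M_i$ so that (3) holds. The map $c \mapsto \gamma_* c$ is linear and sends degenerate chains to degenerate chains. To see the latter, restrict along the regular loci: degeneracy of $c$ on $U$ means its pushforward measure-theoretic current vanishes there, and Fubini on $N \times M_i$ transfers this. So the operation descends to $C_{p-r}(X)$.

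Next I check that $\int_\gamma \omega$ is represented by a set-theoretic form that is chain-wise integrable. On the regular locus the representative is obtained by the usual fiberwise formula: pointwise, integrate the contraction of $\widetilde{\gamma_i}^*\omega$ along $M_i$. Fubini--Tonelli applied to the integrable form $(\gamma_*c)^*\omega$ on $N\times M_i$ shows that this fiber integral exists for almost every point of $N$, and that its integral over $N$ equals $\langle \omega, \gamma_* c\rangle$. Hence the pairing defined above is realized by an honest chain-wise integrable form. Property (1) is the case $p=r$, where $c$ is a point and $\gamma_* c = \gamma(x)$. Property (2) holds because the construction of $\gamma_* c$ only involves $\gamma$ over $c(N) \subset A$.

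For the projection formula (3), I compute $\widetilde{\gamma_i}^*(\eta\wedge\pi^*\sigma)$. Since $\pi\circ\widetilde{\gamma_i}$ is the projection $X\times M_i\to X$ (by condition (1) on supports), $\pi^*\sigma$ pulls back to $\mathrm{pr}_X^*\sigma$, and this factor comes out of the fiber integral. Uniqueness follows because conditions (1)--(3) determine the pairing against every chain: by Lemma \ref{chain decomposes}, a chain $c$ can be cut into small pieces, and on each piece a weak-equivalence class is determined by its values against products of point-evaluations and continuous forms.

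The functorialities follow from the definition. For proper $f : Y' \to Y$ we have $(f_*\gamma)_* c = f_*(\gamma_* c)$. For base change along $f : X' \to X$ we have $(f^*\gamma)_* c' = $ the lift of $\gamma_*(f_* c')$ to $Y'$. The main obstacle I expect is making uniqueness rigorous, that is, showing that (1)--(3) pin down the weak-equivalence class for arbitrary chain-wise integrable $\omega$, not just continuous ones. The natural first attempt is density of continuous forms in chain-$L^1$ together with continuity of $\omega\mapsto\int_\gamma\omega$. If that density fails in general, I would instead read uniqueness as uniqueness of the extension, determined by (1) together with the duality formula above.
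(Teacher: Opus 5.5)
Your duality definition, $\langle\int_\gamma\omega,c\rangle=(-1)^{r(p-r)}\langle\omega,\gamma_*c\rangle$, is a genuinely different route from the paper's. With $N\times M_i$ oriented as a product, that sign is indeed the one that gives (3).

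The paper's route is this:
\begin{itemize}
\item it replaces $Y$ by the closure $Z$ of $\bigcup_x\supp(\gamma(x))$;
\item it takes Whitney stratifications making $\pi$ a proper submersion over each stratum $A$;
\item it trivializes $\pi^{-1}(A)\simeq A\times F$ by Ehresmann and writes $\omega$ there as a combination of forms $\eta\wedge\pi^*\sigma$;
\item it lets (2), (3), (1) dictate the answer stratum by stratum, and gets chain-wise integrability by splitting a chain over the strata.
\end{itemize}

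Your version has real advantages. The pairing is well defined by construction. The two functorialities in the ``Moreover'' part, which the paper never proves, reduce to $(f_*\gamma)_*c=f_*(\gamma_*c)$ and $g_*((f^*\gamma)_*c')=\gamma_*(f_*c')$, with $g:Y\times_X X'\to Y$ the projection. Your formula is also exactly the Fubini identity used later in Lemma \ref{the integration of a PA-form}.

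\textbf{Existence of a representative.} The gap is the step ``Hence the pairing defined above is realized by an honest chain-wise integrable form.'' Your Fubini--Tonelli argument works chain by chain: for a fixed $c:N\to X$ it gives an a.e.\ defined density on $N$ with the right total integral. An element of $\Omega^{p-r}_{chain-L^1}(X)$, however, is one set-theoretic form on $X$ that must serve all chains at once. You specify it only on the regular locus, and in fact only on an open dense set where the $\widetilde{\gamma_i}$ are differentiable. A $(p-r)$-chain can lie entirely in the complement, e.g.\ a curve in $X-X_{reg}$ or along a crease of some $\widetilde{\gamma_i}$. For such chains nothing identifies the density on $N$ with $c^*$ of your form.

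What is missing is the stratification. You need strata $A$ over which the fibre integral is defined at \emph{every} point of $A$. Then you split $c$ as $\sum_A c_A$ and run your Fubini argument on each $c^{-1}(A)\times M_i$.

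A related point: condition (1) of Definition \ref{r-family} constrains only the support of the combination $\gamma(x)$ modulo degenerate chains, not each $\widetilde{\gamma_i}$. So the claim that $\pi\circ\widetilde{\gamma_i}$ is the projection, used in your proof of (3), needs the same degeneracy-transfer argument as your well-definedness step.

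\textbf{Uniqueness.} This lacks the same ingredient. Condition (1) only sees forms of degree $r$, and (3) only pulls out factors $\pi^*\sigma$. To pin down $\int_\gamma\omega$ for $p>r$ you must write $\omega$, locally over a stratum, as $\sum_j\eta_j\wedge\pi^*\sigma_j$ with $\eta_j$ of fibre type and degree $r$. That needs $\pi$ to be locally a product over the stratum and the fibres to be $r$-dimensional.

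This is why the paper first passes to $Z$. Take $X=[0,1]$, $Y=X\times[-1,2]^2$ with fibre coordinates $f_1,f_2$, $r=1$ and $\gamma(a)=[0,1]\times\{a\}$. Then $\int_\gamma df_1\wedge df_2=\pm da\neq0$, a value that (1) and (3) cannot reach on $Y$ itself. On $Z$ the form becomes $ds\wedge da$, which is of the required type.

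Cutting $c$ into small pieces via Lemma \ref{chain decomposes} acts on the base and produces none of this. Your remaining worry, passing from continuous to chain-wise integrable $\omega$, is legitimate. The paper does not resolve it either: it simply invokes density of continuous forms, which tacitly assumes the operator is continuous.
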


Note: some authors such as Bott--Tu use the left $\Omega(X)$-module structure on $\Omega(Y)$ instead of the right one here; so our definition differs from theirs by a sign.

\begin{proof}[Proof of Lemma \ref{integration along fibers}] Let $Z$ be the closure of $\bigcup_{x \in X} \supp(\gamma(x))$. Replacing $Y$ by $Z$, we shall assume $r$ is the dimension of each fiber $\pi^{-1}(x)$.

Since the problem is local, we can assume $X, Y$ are compact definable subsets of Euclidean spaces. Since $\pi$ factors as $Y \hookrightarrow \Gamma_{\pi} \overset{\mathrm{proj}} \to X$, we can assume $\pi$ is real-algebraic. Then, by \cite[Part 1. Ch 1. \S 1.7. Theorem]{StratifiedMorse}, we can find definable Whitney stratifications on $X, Y$ such that for each stratum $A$ on $X$, the pre-image $\pi^{-1}(A)$ is a union of strata $B$'s and $\pi : B \to A$ is a proper (smooth) submersion. By Ehresmann's lemma (\cite[Ch. 6. § 2. Theorem 2.2.]{Hirsch}), such $\pi : B \to A$ is smoothly locally trivial. We assume each $A$ is so small that $\pi : B \to A$ is trivial; i.e., $\pi$ factors as $B \simeq A \times F \overset{\mathrm{pr}}\to A$. Also, we note that since $\Omega_{\mathcal{C}^0}(M)$ is dense in $\Omega_{L^1}(M)$, it is enough to define the integration for continuous forms. Hence every form $\omega$ on $A$ decomposes into a $\mathbb{Z}$-linear combination of the forms like $\eta \wedge \pi^* \sigma$ where $\eta$ is of type $s \leq r$.

Having done the above reduction, we first prove the uniqueness. By (3), we have:
$$\left. \left( \int_{\gamma} \omega \right) \right \vert_A = \int_{\gamma|_A} \omega|_A.$$
Here, if $\omega = \eta \wedge \pi^* \sigma$ on $A$, then by the projection formula
$$\int_{\gamma|_A} \omega|_A = \left(\int_{\gamma|_A} \eta \right) \sigma$$
where $\int_{\gamma|_A} \eta$ is uniquely given by (2). This proves the uniqueness. The existence also holds since the same formula defines $\int_{\gamma}$.

We shall now show $\int_{\gamma} \omega$ is integrable chain-wise; i.e., $c^* \int_{\gamma} \omega$ is integrable. We can write
$$c^* \int_{\gamma} \omega = \sum_A c_A^* \int_{\gamma|_A} \omega$$
where $c_A : c^{-1}(A) \to A$. When we decompose $\omega$ on $A$, it has integrable coefficients. Hence, each $c_A^* \int_{\gamma|_A} \omega$ is integrable.
\end{proof}

\begin{example}[\cite{Kontsevich} \S 8.4. Example 2.] Let $F = \mathbb{R}$ and $\pi: Y = \mathbb{R} \times F \to X = \mathbb{R}, (x, y) \mapsto x$ be the projection. Define $\gamma : X \to C_0(Y)$ by 
$$\gamma(x) = \{ (x, x) \} + \{ (x, -x) \}$$
if $x > 0$ and $\gamma(x) = 2\{ (x, 0) \}$ if $x \le 0$. For a function $f$ in $\mathcal{O}(Y)$, we have:
$$\left( \int_{\gamma} f \right)(x) = \int_{\gamma(x)} f|_{x \times F} = f(x, x) + f(x, -x),$$
which goes to $2f(0, 0)$ as $x \to 0$ since $f$ is continuous.
\end{example}

We can now define:

\begin{definition}[PA-forms] Let $U$ be an open subset of $X$.

A \emph{PA-form on $U$ of type $p$} is a linear functional $l$ on $C_p(U)$ such that for each compact subset $K$ of $U$, there exist an $r$-family $\gamma: U \to C_r(Y)$ and $\omega$ in $\Omega_{p+r}(Y)$ so that
$$\langle l, c \rangle = \int_c \int_{\gamma} \omega$$
for each $c$ in $C_p(X)$ with $\supp(c) \subset K$.

We then let $\Omega_{X, PA}(U)$ be the set of all PA-forms on $U$ (which will turn out to be a vector space).
\end{definition}

Note we \emph{do not} require the linear functional $l$ above be given by a single integration. This flexibility is needed to ensure $\Omega_{PA}$ is a sheaf.

\begin{remark}\label{weak equivalence PA-form} We note that $\int_{\gamma} \omega$ is continuous on each stratum $A$ of $X$ when $X$ is given a stratification as in the proof of Lemma \ref{integration along fibers}; i.e., it is in $\Omega_{\mathcal{C}^0}(A)$. In particular, if $l$ is given by $\int_{\gamma} \omega$ with respect to some compact subset $K$ and $l \sim 0$ on $K$; i.e., $\langle l, c \rangle = 0$ for all $c$ in $C(X)$ with $\supp(c) \subset K$, then $\int_{\gamma} \omega = 0$ on $K$.
\end{remark}

The next lemma says in particular that $\Omega_{X, PA}(U)$ is a vector space.

\begin{lemma}\label{Omega_PA a module} The above $\Omega_{X, PA}(U)$ is naturally an $\mathcal{O}_X(U)$-module.
\end{lemma}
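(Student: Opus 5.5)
The plan is to check directly that the set $\Omega_{X, PA}(U)$ of linear functionals on $C_p(U)$ is closed under addition and under multiplication by $f \in \mathcal{O}_X(U)$. The vector-space and module axioms are then inherited from the ambient space $C_p(U)'$, once multiplication by $f$ is defined on functionals. Since the definition of a PA-form is local on compact subsets $K \subset U$, it suffices to fix $K$ and produce, for the sum and for the product, a single pair $(\gamma, \omega)$ representing the new functional on chains supported in $K$.

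\textbf{Sums.} Suppose $l_1$ is represented near $K$ by $(\gamma_1 : U \to C_{r_1}(Y_1), \omega_1)$ and $l_2$ by $(\gamma_2 : U \to C_{r_2}(Y_2), \omega_2)$. If $r_1 = r_2$, take $Y = Y_1 \sqcup Y_2$ with the induced proper map to $U$, and set $\gamma = \gamma_1 + \gamma_2$ and $\omega = \omega_1 \sqcup \omega_2$. Condition (2) of Definition \ref{r-family} is preserved, because the direct sums of the $\Map_{PA}(X \times M, Y_i)$ add.

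If $r_1 \neq r_2$, I first equalize the fiber dimensions. Replace $Y_1$ by $Y_1 \times [0,1]^{s}$, the chain $\gamma_1(x)$ by $\gamma_1(x) \times [0,1]^s$, and $\omega_1$ by $\omega_1 \wedge dt_1 \wedge \cdots \wedge dt_s$. Here I use Remark \ref{product of PA-manifolds} so that the products of the source manifolds $M$ are again PA-manifolds, and Fubini so that the fiber integral is unchanged up to sign. Linearity of $\int_c$ and of $\int_\gamma$ in $\gamma$ then gives $\langle l_1 + l_2, c\rangle = \int_c \int_\gamma \omega$.

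\textbf{Products.} For $f \in \mathcal{O}_X(U)$, define $\langle f l, c\rangle := \int_c f \int_{\gamma}\omega$ on chains with $\supp(c) \subset K$. By the projection formula, Lemma \ref{integration along fibers}(3), with $\sigma = f$ a $0$-form, this equals $\int_c \int_\gamma \omega \wedge \pi^* f$. Moreover $\pi^* f \in \mathcal{O}(Y)$, so $\omega \wedge \pi^* f = (f\circ \pi)\,\omega$ lies in $\Omega_{p+r}(Y)$. This product is only well defined as a functional on $C_p(U)$ if it is independent of the chosen representative $(\gamma, \omega)$. Independence follows from Remark \ref{weak equivalence PA-form}: if two representatives define the same functional on $K$, then the difference of their fiber integrals vanishes on $K$, hence so does $f$ times it.

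Equality of representatives on overlapping compact sets gives compatibility as $K$ varies, by the same remark. So $f l$ is a well-defined element of $\Omega_{X,PA}(U)$, and the module identities $(fg)l = f(gl)$ and $f(l_1+l_2) = fl_1 + fl_2$ follow pointwise from the formula.

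\textbf{Main obstacle.} I expect the hardest step to be the dimension-matching in the sum. One must verify that the padded family $\gamma_1 \times [0,1]^s$ still satisfies condition (2) of Definition \ref{r-family} and continuity into $C_{r}(Y)$ with the initial topology. One must also handle the boundary faces of $[0,1]^s$ correctly, in particular so that orientation signs do not spoil the equality $\int_{\gamma_1\times[0,1]^s}\omega_1\wedge dt = \int_{\gamma_1}\omega_1$. If this becomes messy, I would instead pad by wedging with a pulled-back bump form of total integral $1$ on a cube factor, using the projection formula to factor it out.
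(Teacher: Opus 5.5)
Your proposal is correct and follows the paper's proof. Sums are handled by the disjoint-union family $\gamma_1 \sqcup \gamma_2$ over $Y_1 \sqcup Y_2$, and multiplication by $f$ by $\langle f l, c\rangle = \int_c \int_\gamma (\pi^* f)\,\omega$, with independence of the representative and of $K$ deduced from Remark \ref{weak equivalence PA-form}. Your padding by $[0,1]^s$ to equalize $r_1 \neq r_2$ is a detail the paper leaves implicit (it just writes $\gamma_1 \sqcup \gamma_2 : X \to C(Y_1 \sqcup Y_2)$). The padding works as you describe, since only the orientation sign of the cube needs fixing and its boundary faces play no role here.
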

\begin{proof} We first note that if $l_1, l_2$ are PA-forms on $U$, then $l_1 + l_2$ is a PA-form on $U$. Indeed, that follows from
$$\int_{\gamma_1} \omega_1 + \int_{\gamma_2} \omega_2 = \int_{\gamma_1 \sqcup \gamma_2} (p_1^* \omega_1 + p_2^* \omega_2)$$
where $\gamma_1 \sqcup \gamma_2$ is $X \to C(Y_1 \sqcup Y_2), \, x \mapsto \gamma_1(x) \sqcup \gamma_2(x)$ and $p_i : Y_1 \sqcup Y_2 \to Y_i$ are the retracts of $Y_i \hookrightarrow Y_1 \sqcup Y_2$.

Hence, $\Omega^p_{PA}(U)$ is a vector space.

Next, if $\lambda$ is in $\mathcal{O}(U)$ and $l$ in $\Omega^p_{PA}(U)$ given by $\int_{\gamma} \omega$ with $\pi: Y \to U$ with respect to some fixed compact subset $K$ of $U$, then we define $\lambda \, l$ by
$$\langle \lambda \, l, c \rangle = \int_c \lambda \int_{\gamma} \omega = \int_c \int_{\gamma} (\pi^* \lambda) \omega.$$
To see this is well-defined; i.e., independent of a choice of $K$, suppose $l$ is given by $\int_{\gamma'} \omega'$ with respect to another compact subset $K' \subset U$. Then $\int_c \int_{\gamma} \omega = \int_c \int_{\gamma'} \omega'$ for all chains $c$ lying in $K \cap K'$ implies $\int_{\gamma} \omega$ and $\int_{\gamma'} \omega'$ agree on $K \cap K'$ by Remark \ref{weak equivalence PA-form}. It follows they both agree there after they are multiplied by $\lambda$.
\end{proof}

If $V \subset U$ are open subsets in a PA-space $X$, then the restriction
$$\Omega^p_{PA}(U) \to \Omega^p_{PA}(V)$$
is given by restricting a linear functional $l$ to the subspace $C_p(V) \subset C_p(U)$.

\begin{prop}\label{soft sheaf} $\Omega^p_{PA}$ is a soft sheaf; in particular, a sheaf.
\end{prop}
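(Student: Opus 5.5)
Since Lemma \ref{Omega_PA a module} makes $\Omega^p_{PA}$ a presheaf of $\mathcal{O}_X$-modules (the module structure is evidently compatible with restriction to $C_p(V) \subset C_p(U)$), Corollary \ref{Omega soft} gives softness as soon as $\Omega^p_{PA}$ is a sheaf. The plan is therefore to verify the two sheaf axioms for the presheaf $U \mapsto \Omega^p_{PA}(U)$, where elements are linear functionals on $C_p(U)$ that are locally, over each compact $K$, of the form $c \mapsto \int_c \int_\gamma \omega$.

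\emph{Locality.} Let $U = \bigcup_i U_i$ and $l \in \Omega^p_{PA}(U)$ with $l|_{U_i} = 0$ for all $i$, and fix $c \in C_p(U)$. Since $\supp(c)$ is compact, it meets only finitely many members of a locally finite refinement of the cover. Applying Lemma \ref{chain decomposes} repeatedly to closed pieces $Y_j \subset U_{i_j}$ covering $\supp(c)$ (obtained by shrinking the cover, as in the proof of Proposition \ref{partition of unity}), we write $c$ as a sum $\sum_j c_{Y_j}$ modulo chains supported on overlaps. Each $c_{Y_j}$ lies in $C_p(U_{i_j})$, or at least in an open set where $l$ vanishes, so $\langle l, c\rangle = \sum_j \langle l, c_{Y_j}\rangle = 0$.

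\emph{Gluing.} Given compatible $l_i \in \Omega^p_{PA}(U_i)$, choose a partition of unity $\lambda_i$ subordinate to $\{U_i\}$ with locally finite supports (Proposition \ref{partition of unity}). Define
$$\langle l, c\rangle = \sum_i \langle l_i, (\lambda_i c)\rangle,$$
interpreted as follows. Rather than multiplying chains by functions, the cleaner route is to use the split decomposition $c \mapsto c_Y$ of Lemma \ref{chain decomposes}. Take closed sets $Y_i \subset U_i$ that cover $U$ with locally finite family and triangulated boundaries. Fix a "disjointification": $Z_1 = Y_1$, $Z_2 = \overline{Y_2 - Y_1}$, and so on. Then set $\langle l, c \rangle = \sum_i \langle l_i, c_{Z_i}\rangle$. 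Linearity follows from the naturality of $c \mapsto c_Y$, and the sum is finite by compactness of $\supp(c)$. The value is independent of choices because the $l_i$ agree on overlaps, using the Mayer--Vietoris exactness and the fact that degenerate chains (overlap pieces of dimension $<p$) pair to zero. Restricting $l$ to $U_i$ recovers $l_i$ by locality, which was already proved.

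It remains to check that $l$ is a PA-form. For a compact $K \subset U$, only finitely many $U_{i_1}, \dots, U_{i_m}$ meet $K$. On $K \cap \supp \lambda_{i_k}$, the form $l_{i_k}$ is represented by some $\int_{\gamma_k} \omega_k$. Then on chains supported in $K$ we have $l = \sum_k \lambda_{i_k} l_{i_k}$, which is represented by
$$\sum_k \int_{\gamma_k} (\pi_k^*\lambda_{i_k})\,\omega_k = \int_{\sqcup \gamma_k} \sum_k p_k^*\bigl((\pi_k^*\lambda_{i_k})\omega_k\bigr),$$
exactly as in the proof of Lemma \ref{Omega_PA a module}. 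There is one subtlety: $\gamma_k$ is only defined over $U_{i_k}$, whereas an $r$-family over (a neighborhood of) $K$ is needed. I would handle this by replacing $Y_k$ by $\pi_k^{-1}(\operatorname{supp}\lambda_{i_k} \cap K')$ for a compact neighborhood $K'$, extending $\gamma_k$ by the zero chain outside, and accepting that the form $(\pi_k^*\lambda_{i_k})\omega_k$ vanishes there. This is the step I expect to be the main obstacle. Continuity of the extended family in the initial topology on $C_r$, and membership in the image in condition (2) of Definition \ref{r-family}, must be checked at the boundary of $\supp\lambda_{i_k}$. The likely fix is to push the cutoff into the form rather than the family: keep $\gamma_k$ on a slightly larger closed set and rely on the vanishing of $\pi_k^*\lambda_{i_k}$ near its edge.
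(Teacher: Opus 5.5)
\textbf{Verdict: there are two genuine gaps, both in the gluing step.} Your reduction of softness to Corollary \ref{Omega soft} and your locality argument are fine; the overlap corrections you drop there are chains in some $U_{i_j}$, so $l$ kills them anyway.

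\textbf{First gap: the disjointification double counts.} By Lemma \ref{chain decomposes} one has $c=c_{Z_1}+c_{Z_2}-c_{Z_1\cap Z_2}$, and the correction term is \emph{not} degenerate in general. The set $Z_1\cap Z_2$ has dimension at most $n-1$, not less than $p$. Take $p=0$ and let $c$ be a point of $Z_1\cap Z_2$ lying in no other $Z_i$. Your formula gives $\langle l,c\rangle=2\langle l_1,c\rangle$, so $l|_{U_1}\neq l_1$. The disjointification is harmless only in top degree $p=n$. Appealing to locality does not give $l|_{U_i}=l_i$, because that requires already knowing the two agree on every $U_i\cap U_j$, which is exactly what must be computed. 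Your last paragraph in fact switches to the different functional $\sum_k\lambda_{i_k}l_{i_k}$. Take that as the definition, as the paper does: extend $\lambda_il_i$ to $C_p(U)$ by $\langle\lambda_il_i,c\rangle:=\langle\lambda_il_i,c_Y\rangle$ for a compact definable neighbourhood $Y\subset U_i$ of $\supp\lambda_i\cap\supp c$. Then $\sum_i\lambda_i=1$ together with $l_i=l_j$ on overlaps gives $l|_{U_j}=l_j$ in one line.

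\textbf{Second gap: your fix for the family does not work.} You have isolated exactly the point the paper's own proof passes over. There, shrinking chains only shows that $\lambda_il_i$ is a linear functional on $C_p(U)$. Lemma \ref{Omega_PA a module} is then applied on $V$ as if each $\lambda_il_i|_V$ were already a PA-form on $V$.

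Your truncation fails for these reasons:
\begin{itemize}
\item If $Y_k$ is replaced by $\pi_k^{-1}(\supp\lambda_{i_k}\cap K')$, the fibres over other points are empty. Condition (1) of Definition \ref{r-family} then forces $\gamma(x)=0$ there, so $\gamma$ jumps at the edge.
\item Testing against an $r$-form on $Y$ that does not vanish at the edge shows $\gamma$ is not continuous into $C_r(Y)$. The cutoff in the form makes $\int_\gamma\omega$ continuous, not $\gamma$ itself.
\item Keeping $\gamma_k$ on a slightly larger set cannot help, since $\gamma_k$ is simply undefined at points of $K$ outside $U_{i_k}$.
\end{itemize}

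The missing idea is to \emph{collapse} the family rather than truncate it. Here $K'$ is a compact neighbourhood of $K$. Choose:
\begin{itemize}
\item an open definable $W$ with compact closure $\overline W\subset U_{i_k}$ containing $\supp\lambda_{i_k}\cap K'$, and $(\gamma_k,\omega_k)$ representing $l_{i_k}$ on chains in $\overline W$;
\item a PA-embedding $e:\pi_k^{-1}(\overline W)\hookrightarrow\mathbb{R}^N$, and a closed ball $\overline B$ centred at $0$ containing its image;
\item a cutoff $\chi$ equal to $1$ near $\supp\lambda_{i_k}\cap K'$ with $\supp\chi\subset W$.
\end{itemize}
Put $Y=U\times\overline B$, which is proper over $U$. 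Define $\widetilde\gamma(x,m)=(x,\chi(x)\,e(\widetilde\gamma_k(x,m)))$ for $x\in\overline W$ and $\widetilde\gamma(x,m)=(x,0)$ otherwise. The two formulas agree on $\partial W$, where $\chi=0$. So this is a continuous PA-family satisfying (1) and (2), and it equals $(\pi_k,e)_*\gamma_k$ where $\chi=1$.

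Pair it with $\pi^*(\chi'\lambda_{i_k})\,\widetilde\omega_k$. Here $\widetilde\omega_k$ extends $\omega_k$ off the image of $(\pi_k,e)$: extend the functions in local presentations $f_0\,df_1\wedge\cdots\wedge df_q$ and patch. The function $\chi'$ is a cutoff equal to $1$ on $K$ with support in $\operatorname{int}K'$. By the push-forward property in Lemma \ref{integration along fibers}, this pair represents $\lambda_{i_k}l_{i_k}$ on chains supported in $K$. Summing over $k$ as in Lemma \ref{Omega_PA a module} then finishes the proof.
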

\begin{proof} The nontrivial claim here is that $\Omega^p_{PA}$ is a sheaf (``soft'' is clear by a cut-off argument). We shall verify the sheaf axioms: given a family of open subsets $U_i$ with union $U$, (1) if $l|_{U_i} = 0$, then $l|_U = 0$ and (2) if $l_i$ are in $\Omega^p_{PA}(U_i)$ such that $l_i|_{U_i \cap U_j} = l_j|_{U_i \cap U_j}$, then there is a section $l$ in $\Omega^p_{PA}(U)$ such that $l|_{U_i} = l_i$.
Without loss of generality, we assume the open covering $\{U_i\}_i$ is locally finite.

In proving (1), by linearity, it is enough to show $\langle l, c \rangle = 0$ for each basis element $c$ of $C_p(U)$; i.e., $c : M \to U$. Fix such a $c$. As $\im(c)$ is compact, pick a finite set $I'$ such that $U_i, i \in I$ cover $\im(c)$. Let $1 = \sum_{i \in I'} \lambda_i$ be a finite partition of unity with $\supp(\lambda_i) \subset U_i$. Let $V = \bigcup_{i \in I'} U_i$. Since the support of $c$ is contained in $V$, we have
$$\langle l, c \rangle = \langle l|_V, c \rangle = \sum_{i \in I'} \langle \lambda_i l|_V, c \rangle.$$

Now, by Lemma \ref{chain decomposes}, let $c_i$ be the restriction of $c$ to some definable compact neighborhood of $\supp(\lambda_i)$ in $U_i$. Then
$$\langle \lambda_i l, c \rangle = \int_c \lambda_i \int_{\gamma} \omega = \int_{c_i} \lambda_i \int_{\gamma} \omega = \langle \lambda_i l, c_i \rangle = 0$$
since $\im(c_i) \subset U_i$ and since $l|_{U_i} = 0$.

The proof of (2) is similar. Take a partition of unity $\lambda_i$ for the cover $U_i$. Note $\lambda_i l_i$ can be viewed as an element of $\Omega_{PA}(U)$ since, as above, we can shrink a chain $c$ on $U$ to one lying in $U_i$ without changing the integration. Then we let $l = \sum_i \lambda_i l_i$, which is a well-defined linear functional on $C_p(U)$ since $\sum_i \langle \lambda_i l_i, c \rangle$ is a finite sum for each $c$. We check $l$ has the required form with respect to a given compact subset $K \subset U$. Since $K$ is compact and the cover $U_i$ is locally finite, we can choose a neighborhood $V$ of $K$ in $U$ such that only finitely many $U_i$'s intersect $V$. Since $\Omega_{V, PA}(V)$ is an $\mathcal{O}_V(V)$-module (Lemma \ref{Omega_PA a module}), the finite sum $l|_V = \sum_i \lambda_i l_i|_V$ is in $\Omega_{V, PA}(V)$. Then, with respect to $K$, $l|_V$ is given by some $\int_{\gamma} \omega$. In particular, $l$ has the required form with respect to $K$.
\end{proof}

\begin{remark}\label{support condition} Here is why we cannot get a sheaf if a PA-form is always allowed to be given by a single integration.

Consider an increasing sequence of open subsets $U_1 \subset U_2 \subset \cdots$ and PA $p$-forms $l_i$ on $U_i$ given by $\langle l_i, c \rangle = \int_c \int_{\gamma_i} \omega_i$ such that $l_i|_{U_j} = l_j, \, j \le i$. Now, it could happen that each $\gamma_i$ is an $r_i$-family with $r_i \to \infty$. In that case, there is little or no hope to find a $\gamma$ that is obtained by gluing $\gamma_i$'s in some sense.

One alternative is to consider an $\infty$-family with infinite-dimensional spaces but here we prefer not to pursue that possibility.
\end{remark}

\begin{example} We have $\Omega_X^p(U) \hookrightarrow \Omega^p_{X, PA}(U)$ by
$$\omega \mapsto (c \mapsto \int_c \omega = \int_c \int_{\gamma} \omega)$$
where $\gamma : X \to C(X)$ is given by the identity.
\end{example}

The next log example can be read as a motivation for the definition of a PA-form.

\begin{example}[log]\label{log example} Let $\sigma = dx/x$ be a $1$-form on the open interval $X = (0, \infty)$. If there \emph{were} a $\psi$ on $U$ such that $d \psi = \sigma$, then $\psi = \log$. Now, this $\psi = \log$ is not in $\Omega(X)$ as the graph of the exponential function is not definable. In particular, this means that the Poincar\'e lemma fails for the complex of sheaves $(\Omega_X, d)$.

But $\log$ is in $\Omega_{PA}(X)$. Indeed, if $c: * \to X$ such that $c(*) = x$ in $K = [a, b]$, then we have $$\log(x) = \int_c \log = \int_c \int_1^x \frac{dt}{t} = \int_c \int_{\gamma} \frac{dt}{t}$$
where $\pi : Y = X \times K \to X$ a projection with the coordinate $t$ on $K$ and $\gamma: X \to C(Y), \, \gamma(x) = (x, [1, x])$.

Similarly (cf. \cite[Example 1]{Kontsevich}), we have that $x \log x$ is a PA-form on $X = [0, \infty)$ since
$$x \log x = \int_1^x \frac{x}{t} \, dt = \int_1^x y \, dt$$
with $\pi : \mathbb{R}_{\ge 0} \times [0, b] \to X, \, (y, t) \mapsto yt$.
\end{example}

\begin{example}[non-square-integrable]\label{non-square-integrable} While a PA-form has locally integrable coefficients, those coefficients need not be locally square-integrable. Indeed, consider $\omega = d \sqrt{x}$ on $X = [0, 1]$. Then $\omega = \frac{dx}{\sqrt{x}}$ where $1/\sqrt{x}$ is integrable but not square-integrable.
\end{example}

\begin{lemma}\label{C(U) dense} If $U \subset X$ is an open dense definable subset, then $C_p(U)$ is dense in $C_p(X)$.

(In the above, we cannot drop ``definable'' since, otherwise, the boundary of $U$ may be ugly.)
\end{lemma}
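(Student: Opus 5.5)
Recall that $C_p(X)$ carries the initial topology: a net $c_\alpha \to c$ if and only if $\int_{c_\alpha}\omega \to \int_c \omega$ for every $\omega \in \Omega^p(X)$, and the coefficient functionals $c_i^*$ converge. By linearity it suffices to approximate a single basis element $c : M \to X$, where $M$ is a compact oriented PA-manifold of dimension $p$. The plan is to shrink $M$ to a compact PA-submanifold $M_\epsilon \subset c^{-1}(U)$ that exhausts $c^{-1}(U)$ up to small measure, and to take $c_\epsilon = c|_{M_\epsilon}$.

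\textbf{Step 1 (reduction to a small bad set).} Since $U$ is open, dense and definable, $Z = X - U$ is a closed definable set with $\dim Z < \dim X$. The pullback $c^{-1}(Z)$ is a compact definable subset of $M$. If $\dim c^{-1}(Z) < p$, it has measure zero in $M$. Otherwise $c^{-1}(Z)$ contains an open piece $W$ of $M$, and $c$ maps $W$ into $Z$. I claim that $c|_W$ is then degenerate on top-degree forms generically. The point is that $\int_{c|_W}\omega$ only involves $c^*\omega$, which vanishes at points where $c$ has rank $<p$. So the contribution of $W$ is zero whenever $\dim Z < p$. If $\dim Z \ge p$, this argument fails, and the claim is suspicious unless $p$ is tied to $\dim X$. \textbf{This is the main obstacle.} My first attempt would be a perturbation rather than a restriction: use a triangulation of a compact neighbourhood of $\im(c)$ compatible with $Z$ (as in the triangulation results of \S\ref{sec:PA-spaces}), and push $c$ slightly off $Z$ along a PA-homotopy. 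Such a homotopy exists by a collar/stellar push, since $Z$ is a subcomplex of positive codimension. The difference of the integrals is then controlled by Stokes' formula (Lemma~\ref{Stokes}) applied to the homotopy chain, with $\int_{c_t}\omega - \int_c\omega = \int_{h}d\omega \pm \int_{\partial c\times I}\omega$, and both terms are small when the homotopy is small.

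\textbf{Step 2 (exhaustion).} Assuming we are in the case where $c^{-1}(Z)$ contributes nothing, pick a definable distance-type function $\rho\in\mathcal{O}(M)$ with $\rho^{-1}(0) = c^{-1}(Z)$, for instance the distance to $c^{-1}(Z)$ in an embedding. By generic smoothness (Lemma~\ref{generic smoothness}) and Hardt/Sard-type finiteness of critical values for definable functions, for all small regular values $\epsilon$ the set $M_\epsilon = \{\rho \ge \epsilon\}$ is a compact PA-manifold with boundary contained in $c^{-1}(U)$. Set $c_\epsilon = c|_{M_\epsilon}\in C_p(U)$.

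\textbf{Step 3 (convergence).} For continuous $\omega$ on $X$,
\[
\Bigl|\int_c\omega - \int_{c_\epsilon}\omega\Bigr| \le \sup|c^*\omega/\mu|\cdot \operatorname{vol}\{0<\rho<\epsilon\} \to 0,
\]
by dominated convergence. The coefficient functionals $c_i^*$ are preserved, since $c_\epsilon$ is built from the same defining piece $c$. Hence $c_\epsilon\to c$ in the initial topology. Finally, I would check that the definability of $U$ is exactly what guarantees that $c^{-1}(Z)$ is definable and that $\{\rho=\epsilon\}$ is a tame boundary.
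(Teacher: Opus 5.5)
Your exhaustion argument (Steps 2--3) is sound when the part of $M$ mapped into $Z=X-U$ contributes nothing to the integrals. So it settles the lemma whenever $\dim Z<p$ (e.g.\ $p=\dim X$), modulo routine points: $\rho$ must be $\mathcal{C}^1$ off $c^{-1}(Z)$ for regular values to make sense, and $M_\epsilon$ acquires corners along $\partial M$.

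The case you flag as the main obstacle is an open piece $W\subset M$ with $c(W)\subset Z$, $\dim Z\ge p$, and $\int_{c|_W}\omega\neq 0$. That is where the content of the lemma lies, and your sketch for it rests on a false claim. A homotopy of $c$ pushing all of $c(M)$ off $Z$ need not exist, whatever the codimension of $Z$.

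Here is a counterexample. Take $X$ to be two closed triangles $T_1,T_2$ glued along an edge $E$, and $Z=E$. Then $U=X-E$ is open, dense and definable, with the two components $T_1-E$ and $T_2-E$. Let $c:[0,3]\to X$ be a PA path running in $T_1-E$ on $[0,1)$, along $E$ on $[1,2]$, and in $T_2-E$ on $(2,3]$.
\begin{itemize}
\item For a coordinate differential $\omega$ along $E$ one has $\int_{c|_{[1,2]}}\omega\neq 0$. So no restriction of $c$ to a subset of $c^{-1}(U)$ approximates $c$.
\item No $c_t$ close to $c$ can have $c_t([0,3])\subset U$, since that connected set would meet both components of $U$.
\end{itemize}
For the same reason your two branches do not combine. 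A general chain has both a contributing part inside $Z$ and a part in $U$, and Step 2 explicitly assumes the former is absent.

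The missing idea is to cut $c$ before moving it. Since $C_p(X)$ is taken modulo degenerate chains, $c$ equals the sum of its restrictions to the simplices of a triangulation of $M$. Refine that triangulation so that each open simplex of $M$ maps into a single open simplex of a triangulation of a neighbourhood of $\im(c)$ refining $\{U,X-U\}$.
\begin{itemize}
\item A piece $\tau$ mapping into $Z$ then lands in the closure of a simplex $s\subset Z$.
\item By density of $U$ there is a simplex $s'$ having $s$ as a face, with relative interior in $U$.
\item The straight-line homotopy of $c|_\tau$ towards the barycentre of $s'$ lies in $U$ for $t>0$.
\end{itemize}
Each piece is pushed in its own direction, which is exactly what a single homotopy of $c$ cannot do.

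This is the paper's proof. It triangulates the domain, uses $\int_c\omega=\sum_\sigma\int_\sigma\omega$ to reduce to one $p$-simplex $\sigma$, and pushes a simplex lying in $X-U$ into an adjacent simplex whose interior lies in $U$. The alternative is to prove a general-position statement (deform $c$ so that $c_t^{-1}(Z)$ has dimension $<p$, then exhaust), which is considerably more work.

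Parts of your approach remain useful. Your Stokes estimate is a good way to justify the convergence of the pushed pieces, which the paper only calls clear. Your exhaustion handles pieces whose interior maps into $U$ but whose boundary touches $Z$. Without the decomposition, however, the proof is incomplete.
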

\begin{proof} This lemma is essentially the same as \cite[Propositions 4.3.]{Hardt} and we adapt their proof.

Let $c$ be a chain in $C(X)$. The support of $c$ has non-empty intersection with only finitely many elements of the defining (compact) cover of $X$; call them $X_1, \dots, X_n$. As $X_1 \cup \cdots \cup X_n$ is compact, we can pick a definable open neighborhood $X_1 \cup \cdots \cup X_n \subset V \subset X$ with the property that $\overline{V}$ intersects only finitely many of elements of defining cover, called $X_1,\,X_2,\cdots, \,X_n, \cdots , X_m$. Now $U \cap V \subset V$ is dense and $V$ can be viewed as a definable subset of some Euclidean spaces $\mathbb{R}^n$.




Hence, replacing $X$ by $\bigcup_{i = 1}^m X_i$, we can assume $X \subset \mathbb{R}^n$ is a compact definable subset and $c : A \to X$ where $A \subset \mathbb{R}^n$ is a compact definable subset. Now, by \cite[Th\'eor\`em 9.2.1.]{Bochnak}, $A$ admits a triangulation. So, for every $\omega$ in $\Omega^p(X)$, we have:
$$\int_c \omega = \sum_{\sigma} \int_{\sigma} \omega$$
where $\sigma$ runs over $\Delta^p \overset{\tau}\to A \overset{c}\to X$, $\tau$ a $p$-simplex in the triangulation of $A$. (As a matter of convention, we prefer closed simplicies to open simplicies.) Thus, replacing $c$ by $\sigma$, we can assume $c = \sigma$ is a $p$-simplex.

Now, by \cite[Th\'eor\`em 9.2.1.]{Bochnak}, we can find a triangulation of $X$ that refines the stratification $\{ U, X - U \}$ of $X$ and that contains $\sigma$, up to refining the stratification on $A$. If $\sigma$ lies in $U$, we are done. If $\sigma$ lies in $X - U$, then $\sigma$ is a face of some simplex $\sigma'$ whose relative interior lies in $X - U$. Then, we can find a definable homotopy $h_t : \Delta^p \to X$ such that $h_0 = \sigma$ and $h_t$ has image in $X - U$ for $t > 0$. Clearly, $h_t \to h_0$ as $t \to 0$ in the weak topology on $C_p(U)$.
\end{proof}

The next proposition often allows us to reduce to the smooth case.

\begin{prop}[\cite{Kontsevich} \S 8.4.]\label{PA approximation} We have the natural embedding:
$$\Omega_{PA}(X) \hookrightarrow \varinjlim_U \Omega_{\mathcal{C}^{\infty}}(U)$$
given by $\alpha \mapsto \alpha|_U$, where $U \subset X_{reg}$ are all the open dense definable subsets.

Also, $\Omega_{PA}(X) \subset $ the continuous dual of $C(X)$.
\end{prop}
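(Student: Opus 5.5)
The plan is to build the map locally and then glue. Let $l \in \Omega^p_{PA}(X)$. Take a compact definable subset $K$ whose interior $V$ is relatively compact. By definition there are an $r$-family $\gamma : X \to C_r(Y)$, with proper $\pi : Y \to X$, and a form $\omega \in \Omega^{p+r}(Y)$ such that
$$\langle l, c\rangle = \int_c \int_\gamma \omega \quad\text{for every } c \text{ with } \supp(c) \subset K.$$
As in the proof of Lemma \ref{integration along fibers}, we reduce to compact definable $X, Y$ and real-algebraic $\pi$. We then choose definable Whitney stratifications such that over each stratum $A$ of $X$, $\pi$ restricted to each stratum $B \subset \pi^{-1}(A)$ is a smoothly trivial proper submersion $B \simeq A \times F$.

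The key use of condition (2) in Definition \ref{r-family} is that $\gamma$ is a $\mathbb{Z}$-combination of PA-maps $\widetilde{\gamma_i} : X \times M_i \to Y$. By Lemma \ref{O embed smooth}, each $\widetilde{\gamma_i}$ is smooth on an open dense definable subset of $X_{reg} \times M_i$. The same holds for $\omega$, which lies in $\Omega_Y$ and so is generically smooth. After refining the stratification so that these smooth loci are unions of strata, we get an open dense definable $U \subset V \cap X_{reg}$, namely the union of the top-dimensional strata, over which
$$\int_\gamma \omega = \sum n_i \int_{M_i} \widetilde{\gamma_i}^* \omega.$$
This is honest fiber integration of a smooth form along the compact manifold $M_i$, so it is smooth on $U$. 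The main obstacle is ensuring the singular set of $\widetilde{\gamma_i}^*\omega$ in $U \times M_i$ does not project onto an open set. It is definable of lower dimension in $X \times M_i$. Since $M_i$ is compact, its image under the projection is closed and definable. I would then use dimension theory for definable sets (the image of a set of dimension $< \dim X + r$ whose fibers have dimension $\le r$ only meets a nowhere dense set of $x$ where the fiber dimension is $< r$). After discarding a nowhere dense definable closed set, the fiber integrand is smooth, and differentiation under the integral gives smoothness.

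Next we show the resulting class $[\,\int_\gamma\omega|_U\,]$ in $\varinjlim_U \Omega_{\mathcal{C}^\infty}(U)$ is independent of the choices and glues. If two representations agree as functionals on chains supported in $K \cap K'$, then Remark \ref{weak equivalence PA-form} shows the continuous stratumwise forms agree on the top strata, hence on a common dense open set. Covering $X$ by a locally finite family of such interiors $V$, we take $U$ to be the union of the local dense opens minus the (locally finite, closed, nowhere dense) boundaries. This gives a well-defined element, and the map is linear.

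For injectivity, suppose $\alpha|_U = 0$. Then $\langle l, c\rangle = 0$ for every chain $c \in C_p(U)$, since such a chain integrates the zero form. Continuity of $l$ together with Lemma \ref{C(U) dense} then gives $l = 0$. So it remains to prove the second claim, continuity: $c \mapsto \int_c\int_\gamma\omega$ is continuous in the initial topology on $C_p(X)$. For this I would write $\int_c \int_\gamma \omega = \int_{\gamma_* c}\omega$, where $\gamma_* c \in C_{p+r}(Y)$ is the chain obtained from the maps $c \times \mathrm{id}_{M_i}$ composed with $\widetilde{\gamma_i}$. This uses Fubini, valid because the integrands are integrable by Lemma \ref{integration along fibers}. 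So the functional is $c \mapsto \langle \omega, \gamma_* c\rangle$ with $\omega \in \Omega(Y)$. By condition (a) defining the topology, continuity reduces to continuity of $c \mapsto \gamma_* c$. I would check this on the subbasic functionals: $\langle\eta,\gamma_*c\rangle = \langle \int_\gamma \eta, c\rangle$, which again needs $\int_\gamma\eta$ to be a test form. This circularity is the delicate point. I expect to resolve it by working with condition (b) on basis coefficients, together with locality given by a partition of unity (Proposition \ref{partition of unity}).
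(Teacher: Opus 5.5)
\textbf{Verdict: there is a genuine gap.} Your smoothness and injectivity steps follow essentially the paper's route, but the continuity claim is left unproved, and your injectivity step depends on it.

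\textbf{The continuity gap.} You reduce continuity of $c \mapsto \int_c \int_\gamma \omega$ to continuity of $c \mapsto \gamma_* c$ for the initial topology. As you notice, checking that needs $\int_\gamma \eta$ to already be a continuous functional for every $\eta \in \Omega(Y)$, which is the statement being proved. Condition (b) and partitions of unity cannot break this loop, since (b) only makes basis coefficients continuous and says nothing about integrals.

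The missing idea is to aim lower. Injectivity only needs $\langle l, h_t\rangle \to \langle l, h_0\rangle$ along the PA-homotopies $h_t$ produced in the proof of Lemma~\ref{C(U) dense}, and this is the form of continuity the paper actually proves. The paper uses the stratumwise description of $\int_\gamma\omega$ from the proof of Lemma~\ref{integration along fibers} (trivial fibrations over relatively compact strata, finitely many meeting a given chain). This lets it dominate $c_t^*\int_\gamma\omega$ for a family $c\colon M\times I\to X$. Lebesgue's dominated convergence theorem then gives $\int_{c_t}\int_\gamma\omega\to\int_{c_0}\int_\gamma\omega$, and no test form other than $\omega$ ever enters.

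Your push-forward idea can be rescued in the same spirit. By condition (2) of Definition~\ref{r-family}, $\gamma_* h_t$ is itself a PA-homotopy of chains in $Y$. So you only need convergence of $\int_{\gamma_* h_t}\omega$ for the single ordinary form $\omega\in\Omega(Y)$. That is again a dominated-convergence statement, not continuity of $\gamma_*$.

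\textbf{The dimension argument in the smoothness step is wrong as stated.} A closed definable $S\subset X\times M_i$ of dimension $<\dim X+r$ can project onto all of $X$, e.g.\ $S=X\times\{m_0\}$. Dimension theory only says the set of $x$ with $\dim S_x=r$ is nowhere dense; it does not give $S_x=\emptyset$ for generic $x$. So the fiber integrand is smooth only on $M_i\setminus S_x$, and differentiating under the integral sign needs control of the $x$-derivatives near the moving set $S_x$.

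Here is a concrete case. Take $X=(0,1)$, $Y=X\times[-2,2]$, $\widetilde\gamma(x,m)=(x,m)$ with $M=[-1,1]$, and $\omega=|y-x|^{1/2}\,dy$. Then $\int_{-1}^{1}|m-x|^{1/2}\,dm$ is smooth in $x$. However, the second $x$-derivative of the integrand behaves like $|m-x|^{-3/2}$, which is not integrable, so naive differentiation under the integral fails. What works is first trivializing the pair $(X\times M_i,S)$ over a dense open set, here $m=z+x$, and then differentiating. The paper's proof passes over this point too (it trivializes $\pi$ and simply cites differentiation under the integral sign), so this is not where your route and the paper's differ. Still, the justification you wrote should not be kept.
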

\begin{proof} First, we note that a PA-form is weakly continuous; i.e., $\langle \int_{\gamma} \omega, \cdot \rangle$ is a continuous linear functional on $C(X)$. Indeed, from the proof of Lemma \ref{integration along fibers}, we see that $\int_{\gamma} \omega$ is bounded stratum-wise when the strata are relatively compact, since $\omega$ is bounded on each compact subset. Thus, for each $c$ in $C(X)$, we have that $c^* \int_{\gamma} \omega$ is bounded since $\supp(c)$ intersects only finitely many strata. Now, take $c$ to be $c : M \times I \to X$. Then $c^* \int_{\gamma} \omega$ is bounded and thus, by Lebesgue's dominated convergence theorem, $\int_{c_t} \int_{\gamma} \omega \to \int_{c_0} \int_{\gamma} \omega$ as $t \to 0$. The injectivity then follows from Lemma \ref{C(U) dense}.

Next, we shall show that each PA-form is smooth on some open dense definable subset of $X_{reg}$. Suppose we are given a PA-form of the form $\int_{\gamma} \omega$ relative to $\pi : Y \to X$. Then $\pi$ is locally trivial on some open dense definable subset of $X$. By the sheaf property, we can thus assume $\pi : Y \to X$ is trivial; i.e., $Y = X \times F$ and $\pi$ the projection.

By linearity, we assume $\gamma$ is given by $\widetilde{\gamma} : M \times X \to Y$; i.e., $\gamma(x)(m) = \widetilde{\gamma}(x, m)$. Then we write:
$$\int_{\gamma(x)} \omega_x = \int_M \gamma(x)^* \omega_x.$$
Then, by the differentiation under the integral sign, $\int_{\gamma(x)} \omega_x$ is smooth in $x$ in some open dense definable.
\end{proof}

The proposition says in particular that a PA-form is generically determined; i.e.,

\begin{corollary}\label{cor:PA-form generic} For each $\alpha$ in $\Omega_{PA}(X)$, if $\alpha = 0$ on an open dense subset, then $\alpha = 0$. Similarly, for each $\alpha$ in $\Omega_{PA}(X)$, if $X$ is given a (locally finite) stratification and $\alpha = 0$ on each stratum, then $\alpha = 0$.
\end{corollary}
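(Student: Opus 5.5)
The plan is to deduce both assertions from the injectivity in Proposition \ref{PA approximation}, together with the fact that a PA-form is a continuous linear functional on $C(X)$.

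For the first assertion, suppose $\alpha \in \Omega_{PA}(X)$ vanishes on an open dense subset $W$. I will first reduce to the case where $W$ is definable and contained in $X_{reg}$. Proposition \ref{PA approximation} gives an open dense definable $U \subset X_{reg}$ on which $\alpha$ is smooth. The set $W \cap U$ is open and dense, but it may fail to be definable. To fix this, I use the stratification from the proof of Lemma \ref{integration along fibers}, on whose strata $\alpha$ is continuous (Remark \ref{weak equivalence PA-form}). Let $U'$ be the union of the open top-dimensional strata lying in $U$; this set is definable and dense. On each such stratum, $\alpha$ is continuous and vanishes on the dense subset $W \cap (\text{stratum})$. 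Hence $\alpha$ vanishes on the whole stratum, and so on $U'$. The image of $\alpha$ in $\varinjlim_U \Omega_{\mathcal{C}^{\infty}}(U)$ is therefore zero, and injectivity gives $\alpha = 0$. Equivalently, one can argue directly: $\langle \alpha, c \rangle = 0$ for every $c \in C_p(U')$, such chains are dense in $C_p(X)$ by Lemma \ref{C(U) dense}, and $\alpha$ is continuous on $C_p(X)$, so $\alpha = 0$.

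For the second assertion, let $\{S_j\}$ be a locally finite stratification with $\alpha|_{S_j} = 0$ for each $j$. The union $S$ of the top-dimensional (open) strata is open and dense. There are two cases according to how the hypothesis is read. If $\alpha|_{S_j} = 0$ means the functional vanishes on chains supported in $S_j$, then $\alpha$ vanishes on each open stratum of $S$, hence on $S$, and the first part gives $\alpha = 0$. If instead the vanishing is meant pointwise, as set-theoretic forms on each stratum, then $\alpha$ vanishes pointwise on $S$. In that case I will again use that $\alpha$ is smooth on some open dense definable $U$, so $\alpha$ vanishes on the open dense set $S \cap U$, and the first part applies.

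The main obstacle is definability: Lemma \ref{C(U) dense} requires the dense open set to be definable, while an arbitrary open dense set need not be. I plan to handle this through the continuity of $\alpha$ on strata, as above. If that turns out to be too delicate, the fallback is to assume the open dense set in the statement is definable, which is how the corollary is used later.
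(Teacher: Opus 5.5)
Your proposal is correct and follows the paper's route. The paper states this corollary with no separate proof and reads it off from the injectivity $\Omega_{PA}(X)\hookrightarrow\varinjlim_U\Omega_{\mathcal{C}^{\infty}}(U)$ of Proposition \ref{PA approximation}; that injectivity is itself proved by Lemma \ref{C(U) dense} plus weak continuity, which is exactly your alternative argument.

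Your detour through strata is unnecessary. Since $\alpha$ is already smooth on the open dense definable $U$, vanishing on the dense subset $W\cap U$ of $U$ forces $\alpha|_U=0$ by continuity, and this also settles the definability worry. If you do keep the strata, you need $U$ to be a union of strata; otherwise $U'$ need not be dense.
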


The next proposition may be thought of as a common generalization of the fundamental theorem of calculus and the differentiation under the integral sign (in a special case, it also reduces to Stokes' formula):

\begin{prop}[cf. \cite{Hardt} Lemma 5.23.]\label{FTC} For each $\omega$ in $\Omega^{p+r-1}(Y)$ and $\gamma$ an $r$-family, we have:
$$d \int_{\gamma} \omega = (-1)^{r-1} \int_{\partial \gamma} \omega + (-1)^r\int_{\gamma} d\omega$$
where the formula holds in the weak sense; i.e., the integrals are linear functionals and $d$ is the transpose of the boundary operator $\partial$.
\end{prop}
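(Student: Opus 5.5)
By linearity in $c$ and in $\gamma$, I will reduce to a single product chain and apply Stokes' formula (Lemma \ref{Stokes}) to it. Both sides of the asserted identity are linear functionals on $C_p(X)$. Since $\omega$ has degree $p+r-1$, the form $\int_\gamma \omega$ has degree $p-1$, so by definition of the transpose differential
$$\Big\langle d\int_\gamma \omega, c\Big\rangle = \Big\langle \int_\gamma \omega, \partial c\Big\rangle .$$
It therefore suffices to test the identity on a basis chain $c : N \to X$ with $N$ a compact oriented PA-manifold of dimension $p$. By condition (2) of Definition \ref{r-family} and linearity of all terms in $\gamma$, I may also assume $\gamma(x) = \widetilde\gamma(x,\cdot)$ for a single PA-map $\widetilde\gamma : X \times M \to Y$, with $M$ compact oriented of dimension $r$. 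In that case $\partial\gamma$ is given by $\widetilde\gamma|_{X\times\partial M}$.

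Step 1 is a Fubini-type identity. I form the PA-chain
$$\Gamma : M \times N \to Y, \qquad (m,n) \mapsto \widetilde\gamma(c(n), m).$$
Here $M\times N$ is viewed as a PA-manifold via straightening the corner (Remark \ref{product of PA-manifolds}), with the fibre factor placed first to match the right-module convention of Lemma \ref{integration along fibers}. I then want to show
$$\int_c \int_\gamma \eta = \int_\Gamma \eta \quad\text{for every continuous, generically smooth } \eta \text{ on } Y .$$
To prove this, I use the projection formula (3) and the pull-back compatibility $c^*\int_\gamma = \int_{c^*\gamma} c^*$ from Lemma \ref{integration along fibers}. This reduces the claim to the case $X = N$, $c = \mathrm{id}$. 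There, on each stratum of the stratification used in the proof of Lemma \ref{integration along fibers}, the family is trivial, $\eta$ decomposes as a sum of terms $\eta' \wedge \pi^*\sigma$, and the identity is the ordinary Fubini theorem for the product $M \times A$. Lower-dimensional strata contribute nothing, and the relevant coefficients are bounded, as in the proof of Proposition \ref{PA approximation}, so summing over strata is legitimate.

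Step 2 applies Stokes. Lemma \ref{Stokes} applied to $\Gamma$ and $\omega$ gives $\int_\Gamma d\omega = \int_{\partial\Gamma}\omega$. With the product orientation,
$$\partial(M\times N) = \partial M \times N + (-1)^r\, M \times \partial N .$$
The first piece is the product chain attached to $(\partial\gamma, c)$, and the second is the product chain attached to $(\gamma, \partial c)$. Using Step 1 on each of the three terms, I get
$$\Big\langle \int_\gamma d\omega, c\Big\rangle = \Big\langle \int_{\partial\gamma}\omega, c\Big\rangle + (-1)^r \Big\langle \int_\gamma\omega, \partial c\Big\rangle .$$
Solving for $\langle \int_\gamma\omega,\partial c\rangle$ yields
$$\Big\langle d\int_\gamma\omega, c\Big\rangle = (-1)^{r-1}\Big\langle\int_{\partial\gamma}\omega, c\Big\rangle + (-1)^r\Big\langle \int_\gamma d\omega, c\Big\rangle,$$
which is exactly the asserted formula.

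I expect the main obstacle to be Step 1. Its difficulty lies in the bookkeeping: $\int_\gamma$ was defined only stratum-wise through the projection formula, so one must check that this definition agrees with integration over $\Gamma$ even though $\widetilde\gamma$ need not respect the strata. Orientation conventions under corner straightening also need care. Two further points must be checked: that Stokes' formula applies to $\omega \in \Omega(Y)$ pulled back along the merely PA map $\Gamma$, which holds because $\omega$ is continuous and generically smooth; and that degenerate pieces of $\Gamma$ may be discarded, which is harmless since everything is tested only against $\Omega$-forms.
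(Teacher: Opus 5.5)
Your proof is correct, and it takes a genuinely different route from the paper's. The sign bookkeeping in Step 2 checks out and produces exactly $(-1)^{r-1}\int_{\partial\gamma}\omega + (-1)^r\int_\gamma d\omega$.

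\textbf{The paper's route.} It argues pointwise. It passes to an open dense subset of $X$ via Proposition~\ref{PA approximation}. It then trivializes the family so that $\gamma$ is constant, at the cost of pulling back $\omega$. Finally it splits $d = d_X + d_F$. The $d_X$-part is handled by differentiating under the integral sign, and the $d_F$-part by Stokes on the fibre $M$, which produces the $\int_{\partial\gamma}\omega$ term.

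\textbf{Your route.} You stay entirely on the chain side. You test against $c : N \to X$, build the product chain $\Gamma$ on $M\times N$, and apply Lemma~\ref{Stokes} once on the total space. Splitting $\partial(M\times N) = \partial M\times N + (-1)^r M\times\partial N$ then gives the formula.

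\textbf{What your route buys.} It proves the identity in exactly the weak sense in which it is stated. It never differentiates under the integral sign. That step, in the paper's argument, tacitly needs the pulled-back integrand to be smooth along whole fibres $\{x\}\times M$, not merely on a dense open subset. In your argument all singular behaviour is absorbed by the Stokes formula for PA-chains.

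\textbf{What it costs.} You need the Fubini identity $\int_c\int_\gamma\eta = \int_\Gamma\eta$ and the orientation conventions under corner straightening. The paper itself uses Fubini statements of this kind without proof in Remark~\ref{direct image} and Lemma~\ref{the integration of a PA-form}, so this is not a gap relative to the paper.

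\textbf{Two remarks on Step 1.}
\begin{enumerate}
\item You can avoid the stratum-by-stratum bookkeeping. Use the property $\int_{f_*\gamma}\omega = \int_\gamma f^*\omega$ from Lemma~\ref{integration along fibers}, with $f : N\times M \to Y\times_X N$ the map induced by $\widetilde\gamma$ after base change along $c$. This reduces you to the tautological product family, where Step 1 is literally the classical Fubini theorem.
\item State Step 1 for chain-wise integrable $\eta$, not continuous ones. You apply it to $d\omega$, and elements of $\Omega$ need not be continuous or bounded (e.g.\ $d\sqrt{x}$, Example~\ref{non-square-integrable}). For Stokes, simply cite Lemma~\ref{Stokes} as stated rather than arguing from continuity of $\omega$.
\end{enumerate}
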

\begin{proof} It is enough to prove the proposition on some open dense subset $U \subset X$. By Proposition \ref{PA approximation}, the proof then reduces to the case when $\omega$ is smooth.

As in the proof of Proposition \ref{PA approximation}, at the expense of changing $\omega$, we can assume $\gamma$ is constant; i.e., $\gamma(x)$ is independent of $x$. 

Then we can write $d = d_X + d_F$ where $d_X, d_F$ are the differentials on $X, F$. Then, by the differentiation under the integral sign (applicable by smoothness), we easily see
$$\int_{\gamma} d \omega = \int_{\gamma} (d_X \omega + d_F \omega) = (-1)^r d \int_{\gamma} \omega + \int_{\partial \gamma} \omega.$$
\end{proof}

Since $\Omega^p_{PA}(U)$ is a subspace of the $\mathbb{C}$-dual of $C_p(U)$, we can apply the differential $d$ to $\Omega^p_{PA}(U)$. Then

\begin{corollary}\label{differential on PA} The differential $d$ on $\Omega_{PA}$ is well-defined; that is, $d(\Omega_{PA}(U)) \subset \Omega_{PA}(U)$.
\end{corollary}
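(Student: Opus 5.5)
The plan is to check the defining condition of a PA-form directly for $dl$, one compact set at a time, using Proposition \ref{FTC}. Let $l \in \Omega^p_{PA}(U)$. Then $dl$ is the linear functional $c \mapsto \langle l, \partial c\rangle$ on $C_{p+1}(U)$. Fix a compact $K \subset U$. We must produce an expression of the form $\int_\gamma \omega$ representing $dl$ on all chains $c$ with $\supp(c) \subset K$.

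First I would enlarge $K$. Choose a compact definable neighborhood $K'$ of $K$ in $U$, for instance a finite union of members of a defining cover intersected with a relatively compact definable neighborhood. Since $\supp(\partial c) \subset \supp(c) \subset K \subset K'$, the PA-form property of $l$ for $K'$ gives an $r$-family $\gamma$ with proper $\pi : Y \to U$ and $\omega \in \Omega^{p+r}(Y)$ such that $\langle l, c'\rangle = \int_{c'}\int_\gamma \omega$ for every chain $c'$ supported in $K'$. Applying this to $c' = \partial c$ gives $\langle dl, c\rangle = \langle d\int_\gamma\omega, c\rangle$, where $d$ is the transpose of $\partial$.

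Next I would apply Proposition \ref{FTC}:
$$\langle dl, c\rangle = (-1)^{r-1}\int_c\int_{\partial\gamma}\omega + (-1)^r\int_c\int_\gamma d\omega .$$
Here $\partial\gamma$ is an $(r-1)$-family over the same proper $\pi$ (Definition \ref{r-family}), and $d\omega \in \Omega^{p+r+1}(Y)$. Each term therefore has the shape required of a PA $(p+1)$-form. To write the sum as a single integral $\int_{\gamma''}\omega''$, I would reuse the disjoint-union trick from the proof of Lemma \ref{Omega_PA a module}.

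That trick needs both families to have the same fiber dimension, so I would first raise the fiber dimension of $\partial\gamma$ by one. Replace $Y$ by $Y \times [0,1]$ and $\partial\gamma(x)$ by $\partial\gamma(x)\times[0,1]$, and replace $\omega$ by $\mathrm{pr}^*\omega\wedge dt$. Fubini shows that the fiber integral is unchanged up to a sign, and the sign can be absorbed into the coefficients. By Remark \ref{product of PA-manifolds}, the product of a PA-manifold with $[0,1]$ is again a PA-manifold, so the new family still satisfies condition (2) of Definition \ref{r-family}.

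The main obstacle is the validity of Proposition \ref{FTC} in this setting. That result is stated for $\omega$ in $\Omega^{p+r-1}(Y)$ and holds weakly. Two points need care: the formula must hold for all chains supported in $K$, not only generically, and the restriction to $K'$ must not create boundary contributions. For the first point I would use weak continuity (Proposition \ref{PA approximation}) together with density (Lemma \ref{C(U) dense}). These extend the identity from chains in the open dense set where everything is smooth to all chains. Corollary \ref{cor:PA-form generic} then identifies the two sides. The second point is handled by the choice of $K'$: since $K \subset \operatorname{int}K'$, every chain involved is supported where the representation of $l$ is valid. Because $K$ was arbitrary, $dl \in \Omega^{p+1}_{PA}(U)$.
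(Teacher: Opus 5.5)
Your proposal is correct and follows the paper's proof, which represents $l$ near $K$ by $\int_\gamma \omega$ and applies Proposition \ref{FTC} to get the two-term formula for $\langle dl, c\rangle$. The paper stops there and implicitly combines the two terms using the vector-space structure of Lemma \ref{Omega_PA a module}; the disjoint-union argument in that lemma tacitly needs the families to have equal fiber dimension, and your padding of $\partial\gamma$ by $[0,1]$ supplies exactly that detail.
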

\begin{proof} Suppose we have a PA-form $l$ given as $\int_{\gamma} \omega$. Then
$$\langle dl, c \rangle = \int_c (-1)^{r-1} \int_{\partial \gamma} \omega + \int_c (-1)^r \int_{\gamma} d \omega.$$
\end{proof}

\begin{remark}[pullback] Let $f : X \to Y$ be a morphism of PA-spaces. Then $f^* : \Omega_{PA}(Y) \to \Omega_{PA}(X)$ is given by
$$\langle f^*\alpha, c \rangle = \langle \alpha, f_* c \rangle$$
where $f_*(c) = f \circ c$. It is well-defined since, if $\alpha = \int_{\gamma} \omega$,
$$\langle \alpha, f_* c \rangle = \int_c f^* \int_{\gamma} \omega = \int_c \int_{f^* \gamma} f^*\omega$$
by Lemma \ref{integration along fibers}. (This argument also shows that the pull-back defined here agrees with the usual pull-back of forms.)

Next, we shall show $f^*$ commutes with $d$, or dually, we shall show $f_*$ commutes with $\partial$. But the latter holds since, for $c : M \to X$,
$$\partial (f \circ c) = (f \circ c)|_{\partial M} = f \circ c|_{\partial M} = f \circ \partial c.$$
\end{remark}

Also, we have:

\begin{remark}[direct image]\label{direct image} Given a projection $\pi : X \times F \to X$, $F$ an oriented compact PA-manifold of pure dimension, we define the direct image $\pi_* : \Omega_{PA}(X \times F) \to \Omega_{PA}(X)$ by
$$\pi_* \left(\int_{\gamma} \omega \right) = \int_{\gamma \times F} \omega$$
where if $\gamma : X \times F \to C(Y)$ is given by $\widetilde{\gamma} : X \times F \times M \to Y$, then since $M \times F$ is an oriented compact PA-manifold (Remark \ref{product of PA-manifolds}), via $M \times F \simeq F \times M$, $\widetilde{\gamma}$ determines $\gamma \times F : X \to C(Y)$.

The notation $\gamma \times F$ might look jarring but it is consistent with a product of spaces.

For each $c : M \to X$ in $C_p(X)$, we have:
$$\partial(c \times F) \sim \partial c \times F + (-1)^p c \times \partial F$$
where $c \sim c'$ means that $\langle \cdot, c \rangle = \langle \cdot, c' \rangle$ on $\Omega(X)$. Indeed, as a set, $\partial(M \times F)$ is the union of $\partial M \times F$ and $M \times \partial F$. It is then tedious but easy to see that, away from their intersection, $\partial(M \times F)$ agrees locally either with $\partial M \times F$ or with $(-1)^p M \times \partial F$, where $-1$ means opposite orientation.

It follows:
$$\partial (\gamma \times F) \sim \partial \gamma \times F + (-1)^r \gamma \times \partial F$$
where $\sim$ holds pointwise. Note also that, by Fubini's theorem, we have:
$$\int_{\gamma \times F} \omega = \int_F \int_{\gamma} \omega.$$
Then, by the above identity and using Proposition \ref{FTC} twice, for $\alpha = \int_{\gamma} \omega$, we have: for $s = \dim F$,
\begin{align*}
d \pi_*(\alpha) &= (-1)^{r + s - 1} \int_{\partial \gamma \times F} \omega + (-1)^{s-1}\int_{\gamma \times \partial F} \omega + (-1)^{r + s} \int_{\gamma \times F} d \omega \\
&= (-1)^{s-1}\int_{\partial F} \alpha + (-1)^s \pi_*(d \alpha).
\end{align*}
\end{remark}

We can now state and prove the main result of this section:

\begin{prop}[Poincar\'e lemma]\label{poincare} $\mathbb{C}$ viewed as a constant sheaf on $X$, the complex of sheaves
$$0 \to \mathbb{C} \to \Omega_{PA}^0 \overset{d}\to \Omega_{PA}^1 \overset{d}\to \cdots \to 0$$
is exact; i.e., it is a resolution of $\mathbb{C}$.
\end{prop}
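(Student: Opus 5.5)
Exactness of a complex of sheaves is a stalkwise statement, so we fix a point $x \in X$ and show that every closed germ is exact on a smaller neighborhood. By Corollary \ref{locally contractible} and the triangulation results, we may choose a neighborhood base at $x$ of open definable sets $U$ with compact closure. Each such $U$ admits a PA-contraction: a PA-map $h : U \times [0,1] \to U$ with $h_1 = \operatorname{id}$ and $h_0$ the constant map to $x$. Concretely, we triangulate a compact definable neighborhood so that $x$ is a vertex, take $U$ to be the open star of $x$, and contract it radially (linearly in the simplicial coordinates). This is PA because it is linear on each simplex. Exactness at degree $0$ is immediate. If $df = 0$ with $f \in \Omega^0_{PA}(U)$, then $\langle f, \partial c\rangle = 0$ for every $1$-chain $c$. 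Taking $c$ to be PA-paths, $f$ is constant along paths in the (connected) star. By Proposition \ref{PA approximation} and Corollary \ref{cor:PA-form generic}, $f$ is then a constant, so $\ker(\Omega^0_{PA} \to \Omega^1_{PA})_x = \mathbb{C}$.

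For degrees $p \ge 1$ we build the classical homotopy operator through fiber integration, in the form allowed by Remark \ref{direct image}. Given $\alpha \in \Omega^p_{PA}(U)$ with $d\alpha = 0$, set
$$K\alpha = \pi_*(h^*\alpha) = \int_{[0,1]} h^*\alpha,$$
where $\pi : U \times [0,1] \to U$ is the projection and $F = [0,1]$. The pullback $h^*\alpha$ is a PA-form on $U \times [0,1]$ by the pullback remark, so $K\alpha \in \Omega^{p-1}_{PA}(U)$. We apply the formula of Remark \ref{direct image} with $s = 1$:
$$d\pi_*(h^*\alpha) = \int_{\partial[0,1]} h^*\alpha - \pi_*(d h^*\alpha) = h_1^*\alpha - h_0^*\alpha - \pi_*(h^*d\alpha).$$
Since $h^*$ commutes with $d$ and $d\alpha = 0$, the last term vanishes. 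For $p \ge 1$, $h_0^*\alpha = 0$ because $h_0$ factors through a point. Hence $d(K\alpha) = \alpha$, with signs adjusted to the orientation convention on $\partial[0,1]$. Locality is needed here only in that $K$ maps PA-forms on $U$ to PA-forms on $U$; this is why we need a contraction of $U$ into itself rather than merely a null-homotopy in $X$.

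The main obstacle is justifying the direct-image step rigorously. Remark \ref{direct image} defines $\pi_*$ on forms globally of the shape $\int_\gamma\omega$, whereas a PA-form is only of this shape relative to each compact $K$. We therefore check that $K\alpha$ is a PA-form compact-by-compact. For compact $K' \subset U$, the set $h(K' \times [0,1])$ is compact in $U$; on it, $\alpha = \int_\gamma\omega$, and the pullback $h^*\gamma$ gives a representation of $h^*\alpha$ over $K' \times [0,1]$. The local formulas then agree on overlaps by Remark \ref{weak equivalence PA-form}, and the sheaf property (Proposition \ref{soft sheaf}) assembles them. A secondary difficulty is verifying that the boundary formula is compatible with the sign conventions of Proposition \ref{FTC}. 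We would also check that the piecewise-linear contraction is smooth enough to serve as a PA-map $M \times [0,1]$-family, which the straightening of corners in Remark \ref{product of PA-manifolds} should handle.
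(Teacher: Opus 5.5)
Your proposal is correct and follows the paper's proof. Both arguments use the homotopy operator $J = \pi_* h^*$ for a local PA-contraction $h$, the identity $d\pi_*\alpha + \pi_* d\alpha = \alpha_1 - \alpha_0$ from Remark \ref{direct image}, and the vanishing $h_0^*\alpha = 0$ for $p \ge 1$. Your version adds detail the paper leaves implicit: the explicit open-star contraction (the paper instead takes $h$ from Corollary \ref{col:PA-homotopy}), the direct path argument in degree $0$ (the paper notes that $J\omega$ has type $-1$), and the compact-by-compact check that $\pi_* h^*$ preserves PA-forms.
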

\begin{proof} Here, we shall repeat a standard proof.
 
By Remark \ref{direct image}, for any PA-form $\alpha$ on $X \times I$, we have:
$$d\pi_* \alpha + \pi_* d\alpha= \alpha_1 - \alpha_0.$$

By Proposition \ref{locally contractible} and by Corollary \ref{col:PA-homotopy}, shrinking $X$, we can assume $X$ is contractible; thus, we have a PA-homotopy $h_t : X \to X$ such that $h_1$ is the identity and $h_0$ is a constant map. Then, given a $\omega$ in $\Omega_{PA}^p(X)$, taking $\alpha = h^* \omega$ and $J = \pi_* h^*$, we have:
$$d J \omega + Jd \omega = \omega - h_0^* \omega.$$
If $p \ge 1$, then $h_0^* \omega = 0$ since there is no higher form at a point. Thus, if $\omega$ is closed, then the above says $\omega$ is exact.

If $p = 0$, then $J\omega$ is zero since it has type $-1$ and so the above formula says $\omega$ is constant when $\omega$ is closed.

\end{proof}

\begin{corollary}\label{PA forms cohomology} $\operatorname{H}_{sing}^i(X; \mathbb{C}) = \operatorname{H}^i(0 \to \Omega^0_{PA}(X) \overset{d}\to \Omega^1_{PA}(X) \overset{d}\to \cdots)$.
\end{corollary}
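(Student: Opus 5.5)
The plan is the standard abstract de Rham argument: realize $\Omega^\bullet_{PA}$ as a $\Gamma(X,-)$-acyclic resolution of the constant sheaf $\mathbb{C}$, and then compare sheaf cohomology with singular cohomology.

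First, by Proposition \ref{poincare}, the complex of sheaves
$$0 \to \mathbb{C} \to \Omega^0_{PA} \overset{d}\to \Omega^1_{PA} \overset{d}\to \cdots$$
is exact, so $\Omega^\bullet_{PA}$ is a resolution of $\mathbb{C}$.

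Second, each $\Omega^p_{PA}$ is soft by Proposition \ref{soft sheaf}. Alternatively, Lemma \ref{Omega_PA a module} makes it an $\mathcal{O}_X$-module, and Corollary \ref{Omega soft} then gives softness. Since $X$ is paracompact (Proposition \ref{prop:paracompact}), soft sheaves are $\Gamma(X,-)$-acyclic by \cite[Ch. II, Th\'eor\`em 4.4.3.]{Godement}, exactly as recalled in the proof of Lemma \ref{singular cohomology}. An acyclic resolution computes sheaf cohomology \cite[Ch. III, Remark 2.5.1.]{Hart}. Therefore
$$\operatorname{H}^i(X, \mathbb{C}) \cong \operatorname{H}^i(\Gamma(X, \Omega^\bullet_{PA}), d).$$

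Third, Lemma \ref{singular cohomology} identifies $\operatorname{H}^i(X,\mathbb{C})$ with $\operatorname{H}^i_{sing}(X;\mathbb{C})$, which finishes the argument.

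The only point needing care is the meaning of the complex in the statement. The differential on global sections must be the sheaf differential of Corollary \ref{differential on PA}, which is the transpose of $\partial$ on $C(U)$. It is compatible with restrictions, since restriction simply restricts functionals to $C(V)\subset C(U)$. So $(\Gamma(X,\Omega^\bullet_{PA}),d)$ is indeed the global-section complex of the resolution.

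Softness is where I expect the main obstacle, since Proposition \ref{soft sheaf} only sketches it ("by a cut-off argument"). I would verify it through the $\mathcal{O}_X$-module route. A section over a closed set $A$ extends to a neighborhood by Remark \ref{section extends}, and multiplying by a cut-off function from Proposition \ref{partition of unity} then produces a global extension.
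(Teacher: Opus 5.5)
Your proposal is correct and is essentially the paper's own argument. The Poincar\'e lemma (Proposition \ref{poincare}) makes $\Omega^\bullet_{PA}$ a resolution of $\mathbb{C}$, softness makes it $\Gamma(X,-)$-acyclic on the paracompact space $X$, so its global sections compute sheaf cohomology, and Lemma \ref{singular cohomology} identifies that with singular cohomology; your $\mathcal{O}_X$-module route to softness is exactly Corollary \ref{Omega soft}.
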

\begin{proof} As in Lemma \ref{singular cohomology}, we can use $(\Omega_{PA}, d)$ to compute the sheaf cohomology of $X$ with values in $\mathbb{C}$ and then by that lemma, the sheaf cohomology coincides with the singular cohomology.
\end{proof}

We record the following result, which we frequently use:

\begin{lemma}\label{the integration of a PA-form} If $\alpha$ is in $\Omega_{PA}(X)$, then $\int_{\gamma'} \alpha$ is in $\Omega_{PA}(X')$ for $\gamma' : X' \to C(X)$ with proper $X' \to X$.

In particular, if $M \subset X_{reg}$ is an oriented PA-submanifold, then the integration $\int_M \alpha$ is defined, though it possibly diverges.
\end{lemma}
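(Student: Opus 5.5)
The plan is to reduce to the defining local representation of $\alpha$ and then compose two fiber integrations into a single one.

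\textbf{Reduction.} Because $\Omega_{PA}$ is a sheaf (Proposition \ref{soft sheaf}), it suffices to fix a compact subset $K' \subset X'$ and exhibit $\int_{\gamma'} \alpha$ on a neighborhood of $K'$ in the form $\int_c \int_{\Gamma} \Omega$. Let $\pi' : X'' \to X'$ be the proper map underlying $\gamma'$, where $\gamma' : X' \to C_{r'}(X'')$. Properness makes $K := \pi'^{-1}(K')$ compact, and the supports of the chains $\gamma'(x)$ for $x \in K'$ lie in $K$. On a neighborhood of $K$, $\alpha$ is given by $\int_{\gamma}\omega$ for some $r$-family $\gamma : X'' \to C_r(Y)$ with proper $\pi : Y \to X''$ and $\omega \in \Omega^{p+r}(Y)$. (I read the statement with $\gamma'$ landing in chains of the source $X''$ of $\alpha$.) Pairing with a chain $c$ supported in $K'$ then gives
$$\int_c \int_{\gamma'} \int_{\gamma} \omega .$$

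\textbf{Composition of families.} The main step is to build a single $(r'+r)$-family $\Gamma = \gamma \circ \gamma'$ over $X'$ with proper map $\pi' \circ \pi : Y \to X'$. Take representatives $\widetilde{\gamma'} : X' \times M' \to X''$ and $\widetilde{\gamma} : X'' \times M \to Y$, and set
$$\widetilde{\Gamma}(x, m', m) = \widetilde{\gamma}\bigl(\widetilde{\gamma'}(x, m'), m\bigr)$$
on $X' \times (M' \times M)$. Here $M' \times M$ is made into a PA-manifold by Remark \ref{product of PA-manifolds}. Linear combinations are handled by bilinearity. The support condition holds because $\pi \circ \widetilde{\gamma}(y, m) = y$ and $\pi' \circ \widetilde{\gamma'}(x, m') = x$. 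Continuity of $x \mapsto \Gamma(x)$ in the initial topology should follow from continuity of $\gamma$ and $\gamma'$ together with dominated convergence, as in Proposition \ref{PA approximation}.

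\textbf{Fubini identity.} Next verify
$$\int_{\gamma'} \int_{\gamma} \omega = \int_{\Gamma} \omega .$$
By Corollary \ref{cor:PA-form generic}, equality can be checked on an open dense definable subset of $X'$. There both sides are computed pointwise: the left side is $\int_{M'} \widetilde{\gamma'}(x,\cdot)^* \int_M \widetilde{\gamma}^*\omega$, and Fubini's theorem on $M' \times M$, together with the pullback compatibility in Lemma \ref{integration along fibers}, identifies it with $\int_{M' \times M} \widetilde{\Gamma}(x,\cdot)^*\omega$. Orientation signs from reordering $M' \times M$ only contribute a global sign, which is absorbed by reversing orientation. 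This gives the required local representation, so $\int_{\gamma'}\alpha \in \Omega_{PA}(X')$.

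\textbf{Obstacle.} I expect the main difficulty to be this Fubini step. The forms are only chain-wise integrable and generically smooth, so I would first stratify as in the proof of Lemma \ref{integration along fibers}, making both maps trivial on strata, and apply Fubini stratum by stratum.

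\textbf{The ``in particular'' clause.} Take $X'$ to be a point and $\gamma'(\ast) = M$, with the proper map $M \to \ast$ when $M$ is compact. When $M$ is noncompact, exhaust $M$ by compact PA-submanifolds and define $\int_M \alpha$ as the limit, which may diverge.
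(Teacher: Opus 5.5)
Your proposal is essentially the paper's proof: the paper also builds the composite family $X' \times M' \times M \overset{\gamma' \times \operatorname{id}}\to X'' \times M \overset{\gamma}\to Y$ (in your labelling) and concludes from the Fubini identity $\int_{\gamma' \times \gamma} \omega = \int_{\gamma'} \int_{\gamma} \omega$. Your reduction to a compact $K' \subset X'$ via $K = \pi'^{-1}(K')$, and your checks of properness and of the support condition, supply details the paper leaves implicit.

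One correction: you cannot use Corollary \ref{cor:PA-form generic} to reduce the Fubini identity to a dense open set. That corollary applies only to PA-forms, and $\int_{\gamma'} \int_{\gamma} \omega$ is not yet known to be one, since that is exactly the claim. Instead, verify the identity stratum by stratum, using Lemma \ref{integration along fibers}(2) and the decomposition $c^* \int_{\gamma} \omega = \sum_A c_A^* \int_{\gamma|_A} \omega$ from that lemma's proof; this is the route your ``obstacle'' paragraph already proposes.
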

\begin{proof} Suppose $\alpha = \int_{\gamma} \omega$. We shall define $\gamma \times \gamma'$ in the same way we defined $\gamma \times F$ in Remark \ref{direct image}. Namely, if $\gamma$ is given by $X \times M \to Y$ and $\gamma'$ by $X' \times M' \to X$, then let $\gamma' \times \gamma$ be given by $X' \times M' \times M \overset{\gamma' \times \operatorname{id}} \to X \times M \overset{\gamma}\to Y$.
Then we have a version of Fubini's theorem:
$$\int_{\gamma' \times \gamma} \omega = \int_{\gamma'} \int_{\gamma} \omega,$$
which implies the assertion.
\end{proof}

\begin{remark} Let $f : M \to X$ be a proper morphism from a not-necessarily-compact but oriented PA-manifold $M$. Then
$$\int_M f^* : \Omega_{PA, \, cpt}(X) \to \mathbb{C}$$
is well-defined. Indeed, we have $f^* : \Omega_{PA, \, cpt}(X) \to \Omega_{PA, \, cpt}(M)$.
\end{remark}

\section{Currents}\label{sec:currents}

In this section, we introduce the sheaf $\Omega'_{PA}$ of currents on a PA-space and prove a version of the Poincar\'e lemma for currents.

We recall (Proposition \ref{PA approximation}) that
$$\Omega_{PA}(X) \subset C(X)'$$
where the superscript $'$ means the continuous dual. In particular, $\Omega_{PA}(X)$ ``can" be given the weak-* topology as the subspace topology. But with respect to the weak-* topology, elements of $C(X)$ are the only continuous linear functionals on $C(X)'$ and at least for $p = 0$, $\Omega_{PA}^0(X)$ is dense in $C_0(X)'$. Thus, to get more continuous linear functionals, we shall use a finer topology, the sup norm topology instead. Namely,

\begin{remark}[sup norm]\label{sup norm} Choose a Riemannian metric as well as an orientation on $X_{reg}$. For each nondegenerate $p$-chain $c : M \to X_{reg}$, let $\mu_c$ be the induced volume $p$-form on $c(M)$; namely, $\mu_c$ is a unique $p$-form in $\Omega(X_{reg})$ such that, at each regular point of $c(M)$, we have:
$$\mu(v_1, \cdots, v_p) = 1$$
for an ordered orthonormal basis $v_i$ representing the orientation on $c(M)$. In short, it is the determinant viewed as a multi-skew-linear functional.

Then the \emph{volume} of $c$ is $\int_c \mu_c$, which is a positive real number. Since $c \mapsto \vol(c)$ is clearly continuous, we define $\vol : \Map_{PA}(\Delta_p, X) \to \mathbb{R}_{\ge 0}$ by continuous extension. The \emph{boundary volume} of $c$ we shall mean the sum of the volumes of all simplexes in $\partial c$.

For the standard $p$-simplex $\Delta_p$, we then let
$$X_p = \Map_{red-PA}(\Delta_p, X),$$
the set of all PA-maps $c : \Delta_p \to X$ with volume and boundary volume both $\le 1$; intuitively, they are the reduced $p$-simplexes on $X$. In particular, $X_0 = X$. Then for an arbitrary map $\varphi : X_p \to \mathbb{C}$, let
$$\rho(\varphi) = \sup_{c \in X_p} |\varphi(c)|,$$
which may or may not be finite. But if $X$ is compact and $\varphi$ is continuous, then it is finite (exercise).
\end{remark}

A current is then a linear functional on $\Omega_{PA}(X)$ that is continuous with respect to each compact subset of $X$ as follows. As usual, we write $\Gamma_S(U, F)$ for the space of sections of $F$ on $U$ with support contained in a set $S$.

\begin{definition}[currents]\label{def:currents} Let $X$ be a PA-space. Assume it has pure dimension $n$.

A \emph{$p$-current} on $X$ is a linear functional on $\Omega_{PA, \, cpt}^{n-p}(X)$ that is continuous with respect to $\rho$ on the subspace $\Gamma_K(X, \Omega_{PA}^{n-p})$ for each compact subset $K \subset X$.

We let $\Omega'^{\, p}_{PA}(X)$ denote the space of $p$-currents on $X$, although strictly speaking it is not a dual. Succinctly,
$$\Omega'^{\, p}_{PA}(X) = \varprojlim_K \Gamma_K(X, \Omega_{PA}^{n-p})'$$
where the superscript $'$ on the right means the continuous dual. Indeed, each element of the above limit is a family $u_K$ parametrized by compact sets $K$ such that $u_K \mapsto u_L$ under the restriction $\Gamma_K(X, \Omega_{PA}^{n-p})' \to \Gamma_L(X, \Omega_{PA}^{n-p})'$ for $L \subset K$.
\end{definition}

\begin{remark} Our definition of currents is somewhat inspired by \cite{Ambrosio} and the notion is actually analogous to normal currents in geometric measure theory.
\end{remark}

Just like $\Omega_{PA}$, we have:

\begin{remark}\label{differential for currents} By a partition of unity, it is easy to see that $\Omega'_{PA}$ is a sheaf. Also, it is an $\mathcal{O}_X$-module in the following way: for $f$ in $\mathcal{O}(U)$ and $u$ in $\Omega'_{PA}(U)$, $fu$ is defined by
$$\langle f u, \varphi \rangle = \langle u, f \varphi \rangle.$$
By Corollary \ref{Omega soft}, $\Omega'_{PA}$ is then a soft sheaf.

We define the differential $d'$ on $p$-currents $u$ by:
$$\langle d' u, \varphi \rangle = (-1)^{p + 1} \langle u, d \varphi \rangle.$$
Note $d'u$ is a current since $d : \Gamma_K(X, \Omega_{PA}) \to \Gamma_K(X, \Omega_{PA})$ is continuous in $\rho$.

Also, since each element of $\Omega^p_{PA}(X)$ can be paired with chains in $C_p(X)$, we have: $$C_p(X) \subset \Omega'^{\, n-p}_{PA}(X)$$
in such a way $d'$ restricts to $(-1)^{p+1}\partial$.
\end{remark}

The next proposition implies that we could have used a different space of test forms in the definition of currents.

\begin{prop}\label{prop:dense test forms} The completion $\overline{\Gamma_K(X, \Omega^p)}$ coincides with the subspace $\{ \varphi \mid \rho(\varphi) < \infty \}$ of $\Gamma_K(X, C_p')$.

In particular, in Definition \ref{def:currents}, we could have used $\Omega^{n-p}$ or sections of $C_{n-p}'$ with finite norms instead.
\end{prop}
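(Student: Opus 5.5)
Both sides will be viewed inside $\Gamma_K(X, C_p')$, the space of (not necessarily continuous) linear functionals on $p$-chains supported near $K$, normed by the sup norm $\rho$ of Remark \ref{sup norm}. Concretely, an element $\varphi$ of $\Gamma_K(X, C_p')$ is evaluated on the reduced simplexes $X_p$, and $\rho(\varphi) = \sup_{c \in X_p} |\varphi(c)|$. The statement then splits into two inclusions: that the closure of $\Gamma_K(X, \Omega^p)$ consists of functionals with finite norm, and the converse density statement.

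\textbf{Step 1 (easy inclusion).} First I show that $\{\varphi \mid \rho(\varphi) < \infty\}$ is complete. A $\rho$-Cauchy sequence converges uniformly on $X_p$. Every chain is a finite combination of rescalings of reduced simplexes, obtained by subdividing until volume and boundary volume are $\le 1$ (Lemma \ref{chain decomposes}). Hence the limit is defined on all of $C_p$, is linear, and is compatible with subdivision. Each $\omega \in \Gamma_K(X, \Omega^p)$ is continuous with compact support, so $\rho(\omega) \le \sup|\omega| < \infty$, exactly as in the exercise at the end of Remark \ref{sup norm}. Therefore the completion $\overline{\Gamma_K(X,\Omega^p)}$ embeds into the finite-norm subspace.

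\textbf{Step 2 (density, the main obstacle).} Given $\varphi$ with $\rho(\varphi) < \infty$ and $\epsilon > 0$, I need $\omega \in \Gamma_K(X, \Omega^p)$ with $|\varphi(c) - \int_c \omega| < \epsilon$ uniformly over $c \in X_p$. The plan is a local-regularization argument.
\begin{enumerate}
\item Take a triangulation $\Sigma$ of a compact definable neighborhood of $K$, which exists by the triangulation results above, with mesh $\delta$.
\item On each top simplex, define a polynomial $p$-form whose integrals over the faces of a fine subdivision reproduce the values of $\varphi$. This is the Whitney-form construction: Whitney forms are polynomial in barycentric coordinates, hence lie in $\Omega^p$.
\item Glue these local forms with a partition of unity (Proposition \ref{partition of unity}), obtaining $\omega_\delta$ with $\omega_\delta \in \Gamma_K(X, \Omega^p)$ after cutting off.
\end{enumerate}
The estimate to aim for is $|\varphi(c) - \int_c \omega_\delta| \le \rho(\varphi)\, \eta(\delta)$ with $\eta(\delta) \to 0$. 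To obtain it, I would write a reduced simplex $c$ as a simplicial chain on $\Sigma$ plus a homotopy remainder of small volume and small boundary volume, in the spirit of the flat-norm deformation theorem. On the simplicial part, $\varphi$ and $\omega_\delta$ agree by construction. The remainder is controlled by $\rho$ applied to chains of small mass, after rescaling into $X_p$.

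The hard point is that finite $\rho$ alone does not force $\varphi(c) \to 0$ for small $c$. I would first try to prove a deformation theorem for PA-chains, bounding the remainder by $\delta$ times the volume plus boundary volume. Finiteness of $\rho$ then gives linear control of $\varphi$ on rescaled small chains.

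\textbf{Step 3 (consequence for currents).} Both topologies involved are the $\rho$-topology. Since $\Omega^{n-p}_{PA} \supset \Omega^{n-p}$ and both lie in the finite-norm subspace, continuous functionals on either space extend uniquely to the common completion. This gives the ``in particular'': in Definition \ref{def:currents} one may equivalently use $\Omega^{n-p}$ or finite-norm sections of $C'_{n-p}$.
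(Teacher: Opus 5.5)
There is a genuine gap, and it sits exactly at the point you flag as the hard point. You read $\Gamma_K(X, C_p')$ as arbitrary, not necessarily continuous, linear functionals. Under that reading the density claim is false, so no construction in Step 2 can succeed.

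\begin{itemize}
\item For $p=0$ we have $X_0 = X$ and $\rho$ is the sup norm. The indicator function of a non-isolated point of $K$ has finite $\rho$ but is not a uniform limit of continuous functions.
\item For $p=1$ on an interval, put $\varphi([a,b]) = h(b)-h(a)$ with $h$ a step function supported in $K$. Then $\varphi$ is additive and $\rho(\varphi) \le 2\sup|h|$. If forms $\omega_j$ satisfied $\rho(\varphi - \omega_j) \to 0$, the primitives $x \mapsto \int_0^x \omega_j$ would converge uniformly to $h - h(0)$, forcing $h$ to be continuous.
\end{itemize}

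These examples also show why your remedy cannot work. Finiteness of $\rho$ bounds $|\varphi(c)|$ only by the constant $\rho(\varphi)$ on every $c \in X_p$, including arbitrarily small simplexes. It gives no bound of the form $C(\vol(c) + \vol(\partial c))$. Since $X_p$ consists of single simplexes, you cannot even pass to multiples $Nc$ to force homogeneity. A deformation theorem would therefore only move the difficulty into the remainder term, and that is exactly the term that fails to be small above.

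What is missing is continuity of $\varphi$. The paper's proof treats $C_p'$ as the continuous dual, as at the start of \S\ref{sec:currents}, and uses this in the base cases:
\begin{itemize}
\item For $p=0$ it identifies $C_0'(X)$ with $\mathcal{C}^0(X)$ and applies Stone--Weierstrass.
\item For $X = [0,1]$ and $p=1$ it uses that $f(x) = \langle l, [0,x]\rangle$ is continuous. It approximates $f$ uniformly by polynomials $g_j$ and gets $dg_j \to l$ in $\rho$ from $|\langle dg_j - l, [a,b]\rangle| \le 2\sup|g_j - f|$.
\end{itemize}
The general case is reduced to these by Hahn--Banach: density follows from injectivity of the restriction map on duals. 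The reduction then proceeds by induction on $p$, via Lemma~\ref{C(U) dense} and the decomposition of chains on $Y \times (0,1)$.

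If you want to keep your constructive Whitney-form route, you need a uniform modulus of continuity of $\varphi$ on small simplexes near $K$. That estimate has to come from the continuity of $\varphi$ together with the compactness of $K$, not from $\rho(\varphi) < \infty$. It is the real content of the statement and is absent from the proposal.

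A minor point: gluing with a partition of unity, as in item (iii) of Step 2, would destroy the exact agreement with $\varphi$ on simplicial chains. The Whitney interpolant is already defined globally on the triangulation, so no gluing is needed.
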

\begin{proof} It is clear that the subspace $\{ \varphi \mid \rho(\varphi) < \infty \}$ of $\Gamma_K(X, C_p')$ is complete. Thus, the problem is to show $\Gamma_K(X, \Omega^p)$ is dense in the former. To show the denseness, by the Hahn--Banach theorem, it is enough to show the restriction map $\{ \varphi \mid \rho(\varphi) < \infty \}' \to \Gamma_K(X, \Omega^p)'$ is injective.

The proof is now by induction on $p$. If $p = 0$, then we note $C_0'(X) = \mathcal{C}^0(X)$, where $\mathcal{C}^0$ is the sheaf of continuous functions on $X$. Let $u$ be a continuous linear functional on $\Gamma_K(X, \mathcal{C}^0)$ that is zero on $\Gamma_K(X, \mathcal{O})$. Using a sequence of cutoff functions, we can assume $X = K$ and is contained in $\mathbb{R}^m$. Given a continuous function $\varphi$ on $X$, by the Stone--Weierstrass theorem, we can then find a sequence of polynomial functions $\varphi_j$ that approaches $\varphi$ as $j \to \infty$ in $\rho$. Then, since $u$ is continuous, $0 = \langle u, \varphi_j \rangle \to \langle u, \varphi \rangle$ as $j \to \infty$. Thus, $\langle u, \varphi \rangle = 0$.

Next, assume the assertion holds for each $q < p$. By Lemma \ref{C(U) dense}, we can assume $X = X_{reg}$. Then we can assume $X$ is an arbitrary small open subset of $\mathbb{R}^n$ and thus we can assume $X = Y \times (0, 1)$ for some open $Y \subset \mathbb{R}^{n-1}$. Then $C_p(X) = \oplus_q C_q(Y) \otimes C_{p-q}((0, 1))$; see \cite[\S 8.2.]{Kontsevich}. 

Let $u$ be a continuous linear functional on $\Gamma_K(X, C_p')$ that is zero on $\Gamma_K(X, \Omega^p)$. By inductive hypothesis with $Y$ in place of $X$, we have $u = 0$ on $C'_q(Y)$; thus on $(C_q(Y) \otimes C_{p-q}((0, 1)))'$ for $q < p$. Hence, the proof reduces to the case $X = [0, 1]$. Let $l$ be in $C'_1(I), \, I = [0, 1]$ and then define the function $f$ on $I$ by $f(x) = \langle l, [0, x])$. Note $f$ is continuous on $I$. Then, as above, we can find a sequence of polynomial functions $g_j$ that approaches $f$ on $I$. Then
$$\langle dg_i, [a, b] \rangle = g_i(b) - g_i(a) \to f(b) - f(a) = \langle l, [a, b] \rangle.$$
That is, $dg_j \to l$. Thus, $0 = \langle u, dg_j \rangle \to \langle u, l \rangle$ and so $\langle u, l \rangle = 0$.
\end{proof}

\begin{convention} Because of the above Proposition \ref{prop:dense test forms}, we will often write $\Omega'$ instead of $\Omega_{PA}'$.
\end{convention}

We also use the following lemma frequently. In the notations of Remark \ref{sup norm}, let
$$\rho_K(\varphi) = \sup \{ | \langle \varphi, c \rangle | \mid c \in X_p, \, \supp(c) \subset K \}.$$
Those seminorms $\rho_K$ then define a topology on $\Omega^{n-p}_{PA}(X)$; namely, the neighborhood base at zero consists of $\{ \varphi \mid \rho_K(\varphi) < r \}$ for all $K$ and real numbers $r > 0$ \cite[\S 4.1.1.]{geometric_measure}.

\begin{lemma}\label{compact continuous dual} If $X$ is compact, $\Omega'^{\, p}_{cpt}(X)$ coincides with the continuous dual of $\Omega^{n-p}_{PA}(X)$.
\end{lemma}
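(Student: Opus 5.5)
The plan is to unwind Definition \ref{def:currents} in the compact case and check that both spaces in the statement are the same set of linear functionals with the same continuity condition. Here $\Omega'^{\,p}_{cpt}(X)$ means the compactly supported $p$-currents. Since $X$ is compact, every current has compact support, so the subscript imposes no extra condition. Likewise every PA-form has compact support, so $\Omega^{n-p}_{PA,\,cpt}(X)=\Omega^{n-p}_{PA}(X)$. It is then enough to compare the two continuity conditions on linear functionals $u$ on $\Omega^{n-p}_{PA}(X)$:
\begin{enumerate}
\item continuity with respect to $\rho$ on each subspace $\Gamma_K(X,\Omega^{n-p}_{PA})$, $K$ compact;
\item continuity with respect to the locally convex topology given by the seminorms $\rho_K$.
\end{enumerate}

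First, I take $K=X$, which is allowed since $X$ is compact. Then $\Gamma_X(X,\Omega^{n-p}_{PA})=\Omega^{n-p}_{PA}(X)$. Moreover, $\rho_X=\rho$, because every reduced simplex $c\in X_{n-p}$ satisfies $\supp(c)\subset X$. For any compact $K$ and any $\varphi$, we have $\rho_K(\varphi)\le\rho_X(\varphi)$. So the topology defined by the family $\{\rho_K\}$ is just the seminormed topology of the single seminorm $\rho_X$: any basic neighborhood $\{\rho_K<r\}$ contains $\{\rho_X<r\}$.

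With this, a functional continuous in the $\rho_K$-topology is bounded by $C\rho_X$. Hence on each $\Gamma_K(X,\Omega^{n-p}_{PA})$ it is bounded by $C\rho$, which is condition (i), so it is a current. Conversely, a current is in particular $\rho$-continuous on $\Gamma_X(X,\Omega^{n-p}_{PA})=\Omega^{n-p}_{PA}(X)$, which is condition (ii). Finally, the inverse-limit description $\varprojlim_K\Gamma_K(X,\Omega^{n-p}_{PA})'$ collapses to the term at the terminal object $K=X$, because $X$ is the largest compact set. This identifies the two spaces as sets of functionals, compatibly with the restriction maps.

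The only point needing care, which I expect to be the main obstacle, is finiteness: $\rho(\varphi)<\infty$ for every $\varphi\in\Omega^{n-p}_{PA}(X)$, so that $\rho_X$ is a genuine seminorm on the whole space. I would get this from the weak continuity in Proposition \ref{PA approximation} together with the stratumwise boundedness of $\int_\gamma\omega$ established in its proof, as follows:
\begin{enumerate}
\item since $X$ is compact, $\varphi$ is bounded on each of the finitely many relatively compact strata;
\item reduced simplices have volume at most $1$;
\item combining these, $|\langle\varphi,c\rangle|$ is uniformly bounded over $c\in X_{n-p}$.
\end{enumerate}
This is the exercise mentioned in Remark \ref{sup norm}.
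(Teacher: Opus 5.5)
Your argument is correct, but it is not the route the paper takes. You use compactness directly. $K=X$ is the largest compact set, so $\rho_X=\rho$. The family $\{\rho_K\}$ therefore generates the same topology as the single seminorm $\rho_X$, and $\varprojlim_K\Gamma_K(X,\Omega^{n-p}_{PA})'$ collapses to its term at $K=X$. Both spaces are then literally the $\rho$-continuous functionals on $\Omega^{n-p}_{PA}(X)$.

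The paper instead runs the classical ``$\mathcal{E}'$ equals compactly supported distributions'' argument. It restricts a continuous functional on $\Omega^{n-p}_{PA}(X)$ to each $\Gamma_K$, giving a map into $\Omega'^{\,p}(X)$. Injectivity comes from writing $\varphi=\psi\varphi+(1-\psi)\varphi$, where $\psi$ is a compactly supported cut-off with $\psi\equiv 1$ near the compact set $K$ for which $|\langle u,\cdot\rangle|\le C\rho_K$. The image is identified with $\Omega'^{\,p}_{cpt}(X)$ by extending $u$ as $\varphi\mapsto\langle u,\psi\varphi\rangle$. That argument never really uses compactness of $X$; in the compact case one may take $\psi\equiv 1$, and it reduces to yours. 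So it yields the statement for arbitrary PA-spaces, where $\Omega'^{\,p}_{cpt}$ is a genuine subspace. Your version is shorter but depends on $X$ itself being one of the compact sets $K$.

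The finiteness of $\rho$, which you single out as the main obstacle, is not needed. Both continuity conditions are phrased in terms of the same function $\rho_X=\rho$, so they coincide even if $\rho$ took the value $\infty$ on some forms. The sketch you give for finiteness also does not work as written. Step (i) fails because PA-forms need not have bounded coefficients on relatively compact strata. For example, $d\sqrt{x}=dx/(2\sqrt{x})$ on $[0,1]$ (Example \ref{non-square-integrable}) has unbounded coefficient, even though $\rho(d\sqrt{x})$ is finite there. Drop that digression rather than rely on it.
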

\begin{proof} For each compact subset $K \subset X$, consider
$$i_K : \Gamma_K(X, \Omega_{PA}^{n-p}) \hookrightarrow \Omega_{PA}^{n-p}(X).$$
It is continuous, since $\rho(\varphi) \ge \rho_L(\varphi)$ for each compact subset $L \subset X$. Thus, we get $\Omega_{PA}^{n-p}(X)' \to \Gamma_K(X, \Omega^{n-p})'$ for their continuous duals. Then taking the limit of them over all $K$, we get:
$$\Omega_{PA}^{n-p}(X)' \to \Omega'^{\, p}(X).$$
We claim that the above map is injective with the image equal to $\Omega'^{\, p}_{cpt}(X)$.

For injectivity, suppose $u \mapsto 0$. For $\varphi$ in $\Omega_{PA}^{n-p}(X)$, we can write
$$\langle u, \varphi \rangle = \langle u, \psi \varphi \rangle + \langle u, (1 - \psi) \varphi \rangle$$
where $\psi$ is in $\mathcal{O}_{cpt}(X)$. Here, the first term on the right is zero since $u$ is zero on $\Omega_{PA, \, cpt}^{n-p}(X)$. Also, since $u$ is continuous, for some $K$, we have $|\langle u, \cdot \rangle| / \rho_K$ is bounded on $\Omega_{PA}^{n-p}(X)$. In particular, if $\psi = 1$ on some neighborhood of $K$, then $\rho_K((1 - \psi) \varphi) = 0$ and so $\langle u, (1 - \psi) \varphi \rangle = 0$. Hence, $u = 0$.

As for the image, first we see that each $u$ in the image has compact support by the argument with continuity as above. Conversely, if $u$ is in $\Omega'^{\, p}_{cpt}(X)$, then define the linear functional $\widetilde{u}$ on $\Omega_{PA}^{n-p}(X)$ by $\langle \widetilde{u}, \varphi \rangle = \langle u, \psi \, \varphi \rangle$ where $\psi = 1$ on a neighborhood of the support of $u$ and has compact support. Then $\widetilde{u}$ maps to $u$.
\end{proof}

The next lemma gives a basic example of a current. Recall we can integrate a PA-form on an oriented PA-manifold by Lemma \ref{the integration of a PA-form}.

\begin{lemma}\label{lem:PA gives a current} If $X_{reg}$ is oriented; i.e., orientable with a choice of orientation, then for each $\omega$ in $\Omega^p_{L^{1, \, loc}}(X)$,
$$\varphi \mapsto \int_{X_{reg}} \omega \wedge \varphi$$
is a $p$-current. Depending on an orientation, we get in this way the sheaf monomorphism:
$$\epsilon : \Omega^p_{L^{1, \, loc}} \hookrightarrow \Omega'^{\, p}.$$

It is *not* a chain map in general; see Remark \ref{residue}.
\end{lemma}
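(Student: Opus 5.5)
We must show three things: that $T_\omega(\varphi)=\int_{X_{reg}}\omega\wedge\varphi$ is well-defined on $\Omega^{n-p}_{PA,\,cpt}(X)$; that it is continuous in $\rho$ on each $\Gamma_K(X,\Omega^{n-p}_{PA})$; and that $\epsilon$ is an injective sheaf morphism.

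\emph{Well-definedness.} Let $\varphi\in\Gamma_K(X,\Omega^{n-p}_{PA})$. By Proposition \ref{PA approximation}, $\varphi$ is smooth on some open dense definable $U\subset X_{reg}$, so $\omega\wedge\varphi$ is a measurable top-degree form on $U$. Since $X-U$ has measure zero, the integral over $X_{reg}$ equals the integral over $U$. It remains to see that $\omega\wedge\varphi$ is integrable.

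\emph{The pointwise bound.} This is the crucial estimate. Fix the Riemannian metric and orientation on $X_{reg}$ used in Remark \ref{sup norm}. The aim is
$$|\varphi_x(v_1,\dots,v_{n-p})|\le C\,\rho_K(\varphi)$$
for orthonormal $v_i$ at each $x\in U$, with $C$ independent of $\varphi$. To get it, take small $(n-p)$-simplexes $c_t$ through $x$ tangent to $\operatorname{span}(v_i)$, scaled so they lie in $X_p$ (volume and boundary volume $\le1$). By smoothness of $\varphi$ at $x$,
$$\langle\varphi,c_t\rangle/\vol(c_t)\to\varphi_x(v_1,\dots,v_{n-p}).$$
The difficulty is that $\rho$ only controls $\langle\varphi,c\rangle$, not $\langle\varphi,c\rangle/\vol(c)$, so small simplexes seem to give no bound. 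I would handle this as follows. Apply the constraint to a simplex of fixed size $\asymp1$ (compactness of $K$ gives a uniform size in charts). Then cover it by many small parallel translates; their contributions add, giving an averaged pointwise bound. Lebesgue differentiation (Lemma \ref{C(U) dense} combined with continuity of $\varphi$ in the weak topology) then recovers the pointwise value almost everywhere.

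If this averaging argument fails, the fallback is the following. Use Proposition \ref{prop:dense test forms} to reduce to $\varphi\in\Gamma_K(X,\Omega^{n-p})$, for which the pointwise norm is directly controlled. Then extend by continuity, treating $\int\omega\wedge\cdot$ as an integral against an $L^1$ density with coefficients bounded in $\rho$.

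Granting the bound, $|\omega\wedge\varphi|\le C\,|\omega|\,\rho_K(\varphi)$ pointwise. Hence
$$|T_\omega(\varphi)|\le C\,\|\omega\|_{L^1(K)}\,\rho_K(\varphi)<\infty,$$
because $\omega\in L^{1,\,loc}$. This gives both well-definedness and continuity on $\Gamma_K$.

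\emph{Sheaf morphism and injectivity.} Compatibility with restrictions is immediate from the definition, as is $\mathcal{O}_X$-linearity: $\langle f\,\epsilon\omega,\varphi\rangle=\int\omega\wedge f\varphi$. For injectivity, suppose $\int\omega\wedge\varphi=0$ for all compactly supported $\varphi$. Restrict to test forms $\varphi=g\,dx_J$ in local coordinates on $X_{reg}$, with $g$ polynomial times a cutoff from Proposition \ref{partition of unity}. By Stone--Weierstrass these $g$ are dense in continuous compactly supported functions, so each coefficient of $\omega$ integrates to zero against all of them. Therefore $\omega=0$ almost everywhere, i.e.\ $\omega=0$ in $L^{1,\,loc}$.
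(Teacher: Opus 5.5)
\textbf{There is a genuine gap: the pointwise estimate is false.} Both your convergence and your continuity arguments rest on $|\varphi_x(v_1,\dots,v_{n-p})|\le C\,\rho_K(\varphi)$, and no such estimate holds.

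The norm $\rho$ is of flat type. If $\varphi=d\eta$, Lemma~\ref{Stokes} gives $|\langle\varphi,c\rangle|=|\langle\eta,\partial c\rangle|\le\sup|\eta|$ for every $c\in X_{n-p}$, because the boundary volume of $c$ is at most $1$. So on exact forms, $\rho$ is bounded by the sup of any primitive.

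A concrete example: on $X=\mathbb{R}^2$ (so $p=0$, $n-p=2$), take $\chi\in\mathcal{O}_{cpt}(\mathbb{R})$ equal to $1$ near $0$, set $x_1^+=\max(x_1,0)$, and let $\eta=\sqrt{x_1^+}\,\chi(x_1)\chi(x_2)\,dx_2$. Then $\varphi=d\eta$ is a compactly supported section of $\Omega^2$ with $\rho(\varphi)<\infty$. Yet near the origin its coefficient is $1/(2\sqrt{x_1})$ for $x_1>0$ (cf.\ Example~\ref{non-square-integrable}). If you replace $\sqrt{x_1^+}$ by a piecewise-linear sawtooth of slope $\pm1$ and amplitude $1/N$, you get bounded forms with coefficients of size $1$ but $\rho=O(1/N)$.

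Neither of your repairs helps:
\begin{itemize}
\item The averaging device fails. Adding up small translates only bounds integrals over simplexes of size $\asymp1$, which $\rho$ already does. Lebesgue differentiation needs the ratios $\langle\varphi,c_t\rangle/\vol(c_t)$ with $\vol(c_t)\to0$, and $\rho$ never controls these.
\item The fallback fails for the same reason. Both examples already lie in $\Gamma_K(X,\Omega^{n-p})$, so Proposition~\ref{prop:dense test forms} gives you neither boundedness nor a $\rho$-bound on coefficients there. Moreover, extending by density presupposes the $\rho$-continuity you are trying to prove.
\end{itemize}

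\textbf{Your target inequality cannot hold at all.} The bound $|T_\omega(\varphi)|\le C\|\omega\|_{L^1(K)}\rho_K(\varphi)$ fails already for the locally integrable function $\omega=|x_1|^{-1/2}$. With $\varphi$ as above, $\omega\wedge\varphi=dx_1\wedge dx_2/(2x_1)$ on $\{x_1>0\}$ near the origin, so the integral diverges. Hence for $\omega$ that is merely locally integrable the claim itself breaks down. Any correct argument must use structure of $\omega$ beyond its $L^1$ norm.

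\textbf{What the paper does instead.} The paper's proof never bounds $\varphi$ pointwise.
\begin{itemize}
\item It writes $\omega=f_0\,df_1\wedge\cdots\wedge df_p$ with the $f_i$ smooth on an open dense $X'\subset X_{reg}$. This is where structure of $\omega$ enters.
\item It applies Fubini along $F=(f_0,\dots,f_p,\varphi_0,\dots,\varphi_{n-p})$ to reach $\bigl|\int_U\omega\wedge\varphi\bigr|\le\int_V\bigl|\int_{F^{-1}(W)}\varphi\bigr|\,|x_0|\,dx$.
\item In this bound the test form enters only through its integrals over $(n-p)$-dimensional slices, which is exactly the kind of quantity $\rho$ controls.
\end{itemize}
Disintegrating $\int\omega\wedge\varphi$ into chain integrals of $\varphi$, weighted by $\omega$, is the idea missing from your proposal. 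Your treatment of compatibility with restrictions and of injectivity is fine.
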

\begin{proof}

We must check the well-defined-ness (i.e., the integral converges) and the continuity for all $\varphi$ with support contained in some fixed compact subset $K$ of $X$. Without loss of generality, we can assume $X$ is compact. By the sheaf property, since $X$ is compact, we only need to do this checking for some neighborhood of each point in $X$.

Thus, let $U$ be a neighborhood of a point in $X$. Shrinking $U$ and by linearity, we can assume $\omega = f_0 \, d f_1 \wedge \cdots \wedge d f_p$ on $U \cap X'$ for some open dense subset $X' \subset X_{reg}$ such that each $f_i$ is smooth on $X'$. Also, for a moment, assume $\varphi = \varphi_0 \, d \varphi_1 \wedge \cdots \wedge d \varphi_{n-p}$ for some $\varphi_i$'s in $\mathcal{O}(U)$ such that each $\varphi_i$ is smooth on $U \cap X'$.

Let $F = (f_0, \dots, f_p, \varphi_0, \dots, \varphi_{n-p}) : X \to \mathbb{R}^{n+2}$. Then 
$$\int_U \omega \wedge \varphi = \int_U F^*(x_0 \, dx \wedge y_0 \, dy)$$
where we wrote $dx = d x_1 \wedge \cdots \wedge dx_p$ and $dy = d y_1 \wedge \cdots \wedge dy_{n-p}$.

Note that, at each point in $U$, $d f_1 \wedge \cdots \wedge d f_p \wedge d \varphi_1 \wedge \cdots \wedge d \varphi_{n-p}$ vanishes if and only if $d f_1, \dots, df_p, d\varphi_1, \dots, d \varphi_{n-p}$ are linearly dependent. Thus, shrinking $U$ without changing the above integral, we can assume those differentials are linearly independent. Then, by the inverse function theorem, $\pi \circ F$ is an immersion on $U \cap X'$ and thus locally injective there, where $\pi(x_0, \dots, x_p, y_0, \dots, y_{n-p}) = (x_1, \dots, x_p, y_1, \dots, y_{n-p})$. In particular, $F$ is locally injective on $U \cap X'$.

Shrinking $U$, we can therefore assume $F$ is injective on $U \cap X'$. Shrinking $U$ further, we can write $F(U) = V \times W$ for some open subsets $V \subset \mathbb{R}^{p+1}, \, W \subset \mathbb{R}^{n-p+1}$. Then, by Fubini's theorem, the above integral equals
$$\int_{F(U)} x_0 \, dx \wedge y_0 \, dy = \int_V \left( \int_W y_0 \, dy \right) x_0 \, dx.$$
Here $\int_W y_0 \, dy = \int_{F^{-1}(W)} \varphi$ since $F$ is invertible on $U \cap X_{reg}$. Then
$$\left| \int_U \omega \wedge \varphi \right| \le \int_V \left| \int_{F^{-1}(W)} \varphi \right| |x_0| \, dx,$$
Now, it is clear that $\operatorname{span} \{ \varphi_0 \, d \varphi_1 \wedge \cdots \wedge d\varphi_{n - p} \mid \varphi_i \in \mathcal{O}(U), \varphi_i\textrm{ smooth on } U \cap X' \}$ is dense in $\Omega^{n-p}(U)$. Hence, the above estimate also holds for all $\varphi$ in $\Omega^{n-p}(U)$. The claimed continuity then follows.
\end{proof}

If $M$ is a compact oriented $n$-manifold, Stokes' formula implies $\int_M d\varphi = 0$ for a smooth $(n-1)$-form $\varphi$. This fact still holds more generally as follows.

\begin{lemma}\label{lem:integration map} If $X_{reg}$ is oriented and if $\operatorname{codim}(X - X_{reg}) \ge 2$, then, for each $\varphi$ in $\Omega^n_{cpt}(X)$,
$$\int_{X_{reg}} d\varphi = 0.$$
\end{lemma}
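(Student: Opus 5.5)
Note first that $\varphi$ must be an $(n-1)$-form with compact support, so that $d\varphi$ is an $n$-form; I read the statement this way (with $\varphi$ a PA-form, or at least in $\Omega^{n-1}_{cpt}(X)$). The plan is to approximate $X_{reg}$ by compact PA-manifolds-with-boundary and apply Stokes' formula, Lemma \ref{Stokes}. The boundary terms will be controlled by the codimension hypothesis. Put $S = X - X_{reg}$ and $K = \supp(\varphi)$. After a partition of unity (Proposition \ref{partition of unity}) we may assume $K$ is small and that $X$ is a compact definable subset of some $\mathbb{R}^m$. Then $S \cap K$ is a compact definable set of dimension $\le n-2$.

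The first step is to build an exhaustion. Let $\delta(x) = \operatorname{dist}(x, S)$; this is a definable continuous function, smooth on an open dense definable subset of $X_{reg}$ by Lemma \ref{generic smoothness}. By Hardt triviality (or the semialgebraic Sard theorem), all but finitely many small $\epsilon > 0$ are regular values of $\delta$ on $X_{reg}$. For such $\epsilon$, the set $M_\epsilon = \{\delta \ge \epsilon\} \cap \supp$-neighborhood is a compact oriented PA-manifold-with-boundary inside $X_{reg}$, and its boundary is $T_\epsilon = \{\delta = \epsilon\}$. Lemma \ref{Stokes} then gives
$$\int_{M_\epsilon} d\varphi = \int_{T_\epsilon} \varphi.$$
Since $d\varphi$ is integrable on $X_{reg}$ (via Lemma \ref{lem:PA gives a current}, or by local integrability of PA-forms), dominated convergence shows that the left side tends to $\int_{X_{reg}} d\varphi$ as $\epsilon \to 0$.

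The second step, which I expect to be the main obstacle, is to show $\int_{T_\epsilon}\varphi \to 0$. If $\varphi$ were bounded, the estimate would be
$$\left|\int_{T_\epsilon}\varphi\right| \le \sup|\varphi| \cdot \operatorname{vol}_{n-1}(T_\epsilon \cap K).$$
By the tube (Weyl-type) volume bound for definable sets, $\operatorname{vol}_{n-1}(T_\epsilon \cap K) = O(\epsilon^{\codim S - 1}) = O(\epsilon)$, since $\dim S \le n-2$. A semialgebraic Cauchy--Crofton argument also gives this: $T_\epsilon$ lies in the $\epsilon$-tube of an $(n-2)$-dimensional set, and its number of intersections with generic lines is uniformly bounded.

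A PA-form, however, need not be bounded near $S$ (Example \ref{non-square-integrable}). To handle this, I would use the curve selection lemma together with the fact that the coefficients of $\varphi$ are definable, or at least bounded by a power of $\delta$ on strata. I would aim for an estimate of the form $\int_{T_\epsilon}|\varphi| \le C\epsilon^{a}$ with $a>0$.

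If that fails, I would fall back on the Fubini/coarea formulation. Consider $g(\epsilon) = \int_{T_\epsilon}\varphi$. By the coarea formula, $\int_0^{\epsilon_0} |g(\epsilon)| |\nabla\delta|\, d\epsilon \le \int_{\{\delta<\epsilon_0\}} |\varphi|$, and this tends to $0$. Meanwhile $g(\epsilon) = \int_{M_\epsilon} d\varphi$ has a limit as $\epsilon \to 0$. This limit must vanish: a nonzero limit $L$ would force $\int_0^{\epsilon_0}|g| \ge |L|\epsilon_0/2$. That inequality is compatible with the tube integral only if the tube volume is $o(\epsilon_0)$, which holds precisely because $\codim S \ge 2$. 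One also needs $|\nabla\delta| = 1$ almost everywhere, which is true for a distance function.
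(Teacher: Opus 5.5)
\textbf{Bounded case.} For bounded $\varphi$ your argument is the paper's. You remove shrinking neighborhoods of $S=X-X_{reg}$, use dominated convergence on the removed part, apply Stokes on the rest, and kill the boundary term by $\sup|\varphi|\cdot\vol_{n-1}(T_\epsilon)\to 0$; this last step is where $\codim S\ge 2$ enters. The paper simply asserts that $\varphi/\mu$ is bounded and that $\vol(\partial U_\epsilon)\to 0$. Your Crofton/uniform-finiteness bound $\vol_{n-1}(T_\epsilon\cap K)=O(\epsilon)$ supplies the second of these. A minor point: $\delta$ is only generically smooth on $X_{reg}$, so ``regular value of $\delta$'' does not make $T_\epsilon$ a smooth hypersurface. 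Either use a $\mathcal{C}^2$ definable function with zero set $S$, or accept piecewise-$\mathcal{C}^1$ boundaries as the paper does.

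\textbf{The gap: unbounded $\varphi$.} You rightly flag this case (already $d\sqrt{x}$ from Example \ref{non-square-integrable} lies in $\Omega_X$), but neither of your remedies closes it.

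\emph{The coarea fallback is invalid.} A nonzero limit $L$ gives $|L|\epsilon_0/2\le\int_{\{\delta<\epsilon_0\}}\|\varphi\|$, so a contradiction needs $\int_{\{\delta<\epsilon_0\}}\|\varphi\|=o(\epsilon_0)$. A tube volume of $o(\epsilon_0)$ yields this only when $\|\varphi\|$ is bounded, which is exactly the case you were trying to avoid. In fact the argument uses nothing beyond $\varphi,d\varphi\in L^1$ and $\codim S\ge 2$, and these are not enough. On $\mathbb{R}^2$ with $S=\{0\}$ (realized e.g.\ by the cone $z^2=x^2+y^2$, $z\ge 0$), take $\varphi=\psi\,d\theta$ with $\psi$ a cutoff equal to $1$ near $0$. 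It satisfies all of them, yet $\int_{\mathbb{R}^2-0}d\varphi=-2\pi$ and $\int_{\{r<\epsilon_0\}}\|\varphi\|=2\pi\epsilon_0$ for small $\epsilon_0$.

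\emph{Your first route is also insufficient as stated.} A pointwise bound $\|\varphi\|\le C\delta^{-b}$ combined with $\vol(T_\epsilon)=O(\epsilon)$ helps only if $b<1$. This fails for $dg$ with $g=\min(r,\max(0,y\,r^{-2}))\in\mathcal{O}(\mathbb{R}^2)$, where $|dg|\sim r^{-2}$ on a thin sector. Also, PA-form coefficients need not be definable (Example \ref{log example}).

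\textbf{The missing idea.} You must use that $\varphi$ is built from functions continuous \emph{across} $S$, which is precisely what $\psi\,d\theta$ violates. Suppose $\varphi=f_0\,df_1\wedge\cdots\wedge df_{n-1}$ with $f_i\in\mathcal{O}$ and $f_1,\dots,f_{n-1}$ real (split into real and imaginary parts). Change variables over a definable cell decomposition of $T_\epsilon\cap K$ adapted to $F=(f_1,\dots,f_{n-1})$, whose number $N$ of cells is bounded uniformly in $\epsilon$. This gives
$$\Bigl|\int_{T_\epsilon}\varphi\Bigr|\le N\,\sup|f_0|\;\mathcal{L}^{n-1}\bigl(F(\{\delta\le\epsilon\}\cap K)\bigr).$$
The compact sets $F(\{\delta\le\epsilon\}\cap K)$ decrease to $F(S\cap K)$, which has dimension $\le n-2$. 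So the right side tends to $0$ with no boundedness assumption on $\varphi$. This is the same change-of-variables device used in the proof of Lemma \ref{lem:PA gives a current}. Running the same estimate on $Y$ for the chains $\gamma|_{T_\epsilon}$ handles PA-forms $\int_\gamma\omega$.
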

\begin{proof}

Let $U_{\epsilon} = X_{reg} \cap V_{\epsilon}$ where $V_{\epsilon}$ is a neighborhood of $X - X_{reg}$ such that the boundary $\partial U_{\epsilon}$ is piecewise-$\mathcal{C}^1$-smooth and $U_{\epsilon}$ approaches the empty set pointwise in the sense that for each $x \in X_{reg}$, $x \not\in U_{\epsilon}$ for all small enough $\epsilon$. This is possible because $X - X_{reg}$ is definable.
Then
$$\int_{X_{reg}} d\varphi = \lim_{\epsilon \to 0} \int_{X_{reg} - U_{\epsilon}} d\varphi,$$
since $\int_{U_{\epsilon}} d\varphi \to 0$ by Lebesgue's dominated convergence theorem as $d \varphi$ is integrable. Then, by Stokes' formula (the $\mathcal{C}^1$-version of Lemma \ref{Stokes}), we have
$$\int_{X_{reg} - U_{\epsilon}} d\varphi = -\int_{\partial U_{\epsilon}} \varphi,$$
which goes to zero as $\epsilon \to 0$ since $\varphi/\mu$ is bounded, $\mu$ the volume form on $\partial U_{\epsilon}$, and $\vol(\partial U_{\epsilon}) \to 0$.
\end{proof}

\begin{example}[cf. \cite{arithmetic} \S 1.1.2.] Let $X$ be a complex algebraic variety of dimension $n$. Then
$$\varphi \mapsto \int_{X_{reg}} \varphi$$
defines a $2n$-current $\delta_X$ such that $d' \delta_X = 0$. Indeed, the singular locus has complex codimension $\ge 1$; thus, real codimesion $\ge 2$.
\end{example}

\begin{remark}[residue]\label{residue} Assuming $X_{reg}$ is oriented, for each PA-form $\omega$ of type $p$, we have
\begin{align*}
\langle d \omega, \varphi \rangle &= \int_{X_{reg}} d \omega \wedge \varphi \\
&= \int_{X_{reg}} d(\omega \wedge \varphi) + \langle d' \omega, \varphi \rangle.
\end{align*}
Further assume $\operatorname{codim}(X - X_{reg}) \ge 2$. Then, by Lemma \ref{lem:integration map}, the first term on the right-most side vanishes. Thus,
$$\langle d \omega, \varphi \rangle = \langle d' \omega, \varphi \rangle.$$
That is, $d = d'$ on $\Omega_{PA}(X)$, meaning $\epsilon$ in Lemma \ref{lem:PA gives a current} is a chain map.

In general, however, $d' - d$ is not zero and we call that difference the \emph{residue map}, adapting \cite[Ch. 3., \S 1.]{griffiths_harris}.

The residue may also not vanish on a space larger than $\Omega_{PA}(X)$. For example, take $\omega = \frac{dz}{z}$ on $X = \mathbb{C}$, which is an integrable form if not a continuous form. Then, for $\varphi$ in $\mathcal{O}_{cpt}(X)$,
\begin{align*}
\langle d' \omega, \varphi \rangle &= \langle \omega, d\varphi \rangle = \int \frac{dz \wedge d\varphi}{z} =2\pi i \, \varphi(0) \\
&= 2\pi i \langle \delta_0, \varphi \rangle.
\end{align*}
On the other hand, $d \omega$ is zero as a section of $\Omega^1_{L^{loc, 1}}$ and so
$$(d' - d) \frac{dz}{z} = 2\pi i \, \delta_0,$$
which is to say the residue of $dz/z$ is $2\pi i \, \delta_0$.
\end{remark}

\begin{remark}[Mayer–Vietoris sequence] In \cite[§ 8.3.]{Kontsevich}, it is claimed that the Mayer–Vietoris short exact sequence holds on locally-closed PA-spaces for the sheaf $\Omega_{min}$ of minimal forms (meaning PA-forms with $r = 0$). However, that means $\Omega_{min}$ is in particular flasque, which, to us, seems nonsensical; since it means, for instance, $1/t$ on $\mathbb{R} - 0$ extends to some minimal form on $\mathbb{R}$. Also, the Mayer–Vietoris sequence in the form in loc. cit. would be contradictory to the results like Proposition \ref{PA approximation} or rather its corollary.
\end{remark}

On the other hand, earlier in Lemma \ref{chain decomposes}, we noted that PA-chains have the Mayer–Vietoris property. This property does generalize to currents. To show that, we use the following lemma, known as Serre's duality lemma \cite[\S 10., Lemme 1.]{Serre} or the closed range theorem.

\begin{lemma}\label{Serre lemmma} Let $E \overset{f}\to F \overset{g}\to G$ be a complex of (Hausdorff) locally convex topological vector spaces such that $g : F \to \im(g)$ is an open mapping.

Then the above complex is exact if and only if its continuous dual $G' \overset{g'}\to F' \overset{f'}\to E'$ is exact and $\im(f)$ is closed.
\end{lemma}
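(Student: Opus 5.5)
The plan is to use the Hahn--Banach theorem (in its locally convex form) together with the open mapping hypothesis on $g$. Throughout, write $\langle\cdot,\cdot\rangle$ for the duality pairings. Since $g\circ f=0$, the transpose complex satisfies $f'\circ g'=0$ automatically. So in each direction only the reverse inclusion has to be proved.

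\emph{Exactness of $E\to F\to G$ implies exactness of the dual and closedness of $\im(f)$.} First, $\im(f)=\ker(g)$ is closed, because $g$ is continuous and $G$ is Hausdorff. Next, let $\lambda\in F'$ with $f'\lambda=\lambda\circ f=0$. Then $\lambda$ vanishes on $\ker(g)$, so it factors through a linear functional $\mu$ on $\im(g)$ with $\mu\circ g=\lambda$. To see that $\mu$ is continuous, take a balanced neighborhood $V$ of $0$ in $F$ with $|\lambda|<\epsilon$ on $V$. By the open mapping hypothesis, $g(V)$ is a neighborhood of $0$ in $\im(g)$, and $|\mu|<\epsilon$ on $g(V)$. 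Finally, Hahn--Banach for locally convex spaces extends $\mu$ to some $\tilde\mu\in G'$, and then $g'\tilde\mu=\lambda$.

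\emph{Exactness of the dual and closedness of $\im(f)$ imply exactness of $E\to F\to G$.} Suppose $y\in\ker(g)$ but $y\notin\im(f)$. Since $\im(f)$ is a closed subspace of the locally convex Hausdorff space $F$, the separation form of Hahn--Banach gives $\lambda\in F'$ with $\lambda|_{\im(f)}=0$ and $\lambda(y)\neq0$. Then $f'\lambda=0$, so by exactness of the dual, $\lambda=g'\nu=\nu\circ g$ for some $\nu\in G'$. This gives $\lambda(y)=\nu(g(y))=0$, a contradiction.

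The main obstacle is the continuity of the factored functional $\mu$ on $\im(g)$; this is exactly where the open mapping hypothesis is used. Apart from that step, the argument consists of the two standard forms of Hahn--Banach, extension of continuous functionals from a subspace and separation from a closed subspace, both valid in Hausdorff locally convex spaces. Note that the open mapping hypothesis is needed only in the first direction.
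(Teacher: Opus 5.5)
Your proof is correct and uses the same ingredients as the paper's. The open-mapping hypothesis plus Hahn--Banach extension shows that every functional vanishing on $\ker(g)$ lies in $\im(g')$, and Hahn--Banach separation then recovers $\ker(g)=\im(f)$. The paper packages these into the single identification $\ker(f')/\im(g') \cong \bigl(\ker(g)/\overline{\im(f)}\bigr)'$ and reads both directions off it, whereas you argue each direction directly.
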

\begin{proof} Let $\im(f)^{\bot}$ denote the annihilator of $\im(f)$ inside $\ker(g)'$. Then we have the linear map
$$\varphi : \ker(f') \to \im(f)^{\bot}, \, \alpha \mapsto \alpha|_{\ker(g)}$$
since $f'(\alpha) = 0$ means $\alpha = 0$ on $\im(f)$. By the Hahn-Banach theorem, this map $\varphi$ is surjective. The kernel of $\varphi$ is exactly $\ker(g)^{\bot}$. Next, we factorize $g$ as $g = i \circ g_0 \circ p$:
$$F \overset{p}\to F/\ker(g) \overset{g_0}\to \im(g) \overset{i}\hookrightarrow G.$$
Then $g' = p' \circ g_0' \circ i'$. By the Hahn--Banach theorem, $i'$ is surjective. Since $g$ is an open mapping onto the image, $g_0$ is a topological isomorphism and thus $g_0'$ is an isomorphism. So, $\im(g') = \im(p')$.

In general, if $q : V \to V/W$ is a quotient map with closed $W$, then $q'$ is injective with image $W^{\bot}$. Thus, taking $q = p$, we get
$$\im(g') = \im(p') = \ker(g)^{\bot} = \ker(\varphi).$$
Hence, $\varphi$ induces:
$$\ker(f')/\im(g') \simeq \im(f)^{\bot} \simeq \left(\ker(g)/\overline{\im(f)} \right)'.$$
From this, the assertion follows.

Remark: in the classical formulation, we also have $\im(f')$ closed $\Rightarrow$ $\im(f)$ closed. But the standard proof of this, e.g., \cite[Theorem 4.14.]{Rudin}, relies on the open mapping theorem and we do not know how to avoid it.
\end{proof}

\begin{prop}[Mayer–Vietoris sequence]\label{Mayer–Vietoris} Let $X = Y_1 \cup Y_2$ be a union of closed PA-spaces. Then there is a natural short exact sequence
$$0 \to \Omega'(Y_1 \cap Y_2) \overset{k_{1 *}, \, -k_{2 *}}\to \Omega'(Y_1) \oplus \Omega'(Y_2) \overset{i_{1*} + i_{2*}} \to \Omega'(X) \to 0$$
where $i_j : Y_j \hookrightarrow X$ and $k_j : Y_1 \cap Y_2 \hookrightarrow Y_j$.
\end{prop}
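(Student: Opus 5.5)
The plan is to obtain the sequence by dualizing, via Lemma \ref{Serre lemmma}, a short exact sequence of test forms, namely the restriction sequence
$$0 \to \Omega_{PA, \, cpt}(X) \overset{(i_1^*, \, i_2^*)}\to \Omega_{PA, \, cpt}(Y_1) \oplus \Omega_{PA, \, cpt}(Y_2) \overset{k_1^* - k_2^*}\to \Omega_{PA, \, cpt}(Y_1 \cap Y_2) \to 0.$$
Everything is compatible with restriction to compact subsets $K$, so I will work on each $\Gamma_K$ and then pass to $\varprojlim_K$. Proposition \ref{prop:dense test forms} lets me use either $\Omega$ or sections of $C'$ with finite $\rho$; I will use the completed spaces $\overline{\Gamma_K(\cdot, \Omega^p)} = \{\varphi \in \Gamma_K(\cdot, C_p') \mid \rho(\varphi) < \infty\}$, because these are Banach spaces, and the open mapping hypotheses in Lemma \ref{Serre lemmma} become automatic once the relevant images are closed.

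The first step is exactness of the test-form sequence, which I will prove at the level of chains. By Lemma \ref{chain decomposes}, the sequence $0 \to C(Y_1 \cap Y_2) \to C(Y_1) \oplus C(Y_2) \to C(X) \to 0$ is split exact, with natural sections $c \mapsto c_Y$. Dualizing algebraically gives an exact sequence of full duals $C'$. I then need to check two things. First, the splitting maps $c \mapsto c_{Y}$ send reduced simplices (those in $X_p$) to chains of controlled volume and boundary volume. Second, the support condition is preserved. Together these show that the dual maps are bounded for $\rho_K$ and that the sequence stays exact on the subspaces $\{\rho < \infty\}$. Surjectivity of $k_1^* - k_2^*$ also follows concretely: a form on $Y_1 \cap Y_2$ extends to $Y_1$ by softness (Proposition \ref{soft sheaf}) together with a cut-off function. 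Injectivity on the left is Corollary \ref{cor:PA-form generic}, or it can be read off directly from the chain sequence.

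Next I apply Lemma \ref{Serre lemmma} to the two three-term pieces of this sequence. The image of $(i_1^*, i_2^*)$ equals the kernel of $k_1^* - k_2^*$, so it is closed, since it is the kernel of a continuous map. Because the spaces are Banach and the images are closed, the maps are open onto their images. Dualizing therefore gives exactness of
$$0 \to \Gamma_K(Y_1\cap Y_2)' \to \Gamma_K(Y_1)' \oplus \Gamma_K(Y_2)' \to \Gamma_K(X)' \to 0.$$
Here injectivity on the left and surjectivity on the right come from the Hahn--Banach theorem, using the bounded sections again. I will also check that the dual maps are exactly $k_{1*}, -k_{2*}$ and $i_{1*} + i_{2*}$; the sign follows from the sign in $k_1^* - k_2^*$.

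The remaining step, and the one I expect to be the main obstacle, is passing to $\varprojlim_K$. Left exactness of inverse limits is automatic. Surjectivity, however, needs a Mittag-Leffler type argument, or better, a compatible choice of preimages. My first attempt will use the natural splittings from Lemma \ref{chain decomposes}. Since these sections are natural, the lifts they produce are compatible with the restriction maps for $L \subset K$, and that compatibility gives a canonical preimage in the limit. If naturality turns out to fail because of the choices made in the triangulation, I will instead use a locally finite partition of unity on $X$. With it I can lift a current $u$ piece by piece, as $\lambda_i u$ with each piece supported in a compact set, and then sum; local finiteness guarantees that the sum is a current. Verifying the volume control in the first step is the other delicate point, and I would handle it by subdividing simplices along $Y_1 \cap Y_2$ using a triangulation adapted to $Y_1$, $Y_2$, and $Y_1 \cap Y_2$.
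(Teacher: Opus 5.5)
Your skeleton matches the paper's. You start from the split chain sequence of Lemma~\ref{chain decomposes}, apply Lemma~\ref{Serre lemmma} on each $\Gamma_K$, and then pass to $\varprojlim_K$; the paper does this last step by Mittag-Leffler, since the restrictions $\Gamma_K(\cdot)'\to\Gamma_L(\cdot)'$ are onto by Hahn--Banach. The paper simply asserts that Lemma~\ref{Serre lemmma} applies. The step you add to verify its hypotheses is false: the sections $c\mapsto c_Y$ do \emph{not} control boundary volume, and subdividing does not help.

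Here is a counterexample. Take $X=\mathbb{R}^3$, $Y_1=\{z\le 0\}$, $Y_2=\{z\ge 0\}$. Let $c$ be a triangle of diameter about $1/10$, mapped onto the graph $z=h_N(x)$, where $h_N$ is a PA sawtooth of slope $\pm1$ and period $2/N$. Then $c$ is reduced for every $N$. But $c_{Y_1}$ picks up roughly $N/20$ strips, whose cut edges lie on $\{z=0\}$, so its boundary length grows linearly in $N$.

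Dually, let $\varphi_N=d(f_N(x)\,dy)$, where $f_N$ is a PA sawtooth of amplitude $1$ and period $2/N$. Phase it so that $f_N=-1$ where $h_N$ crosses $0$ downward and $f_N=+1$ where it crosses upward. Since $\langle\varphi_N,e\rangle=\int_{\partial e}f_N\,dy$, you get $\rho(\varphi_N)\le 1$, and $\langle\varphi_N,c\rangle$ is bounded too. However, every strip of $c_{Y_1}$ contributes about $2\cdot\frac{1}{10}$ with the same sign, so $\langle\varphi_N,c_{Y_1}\rangle$ is of order $N$. Hence $\varphi\mapsto\varphi\circ(\cdot)_{Y_1}$ is not $\rho$-bounded. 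After subdivision you only get $|\langle\varphi,c_Y\rangle|\le\rho(\varphi)$ times the number of pieces, and that number is unbounded.

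This means the three places where you invoke bounded sections are all unsupported: exactness of the completed sequence in the middle, the Hahn--Banach steps on the dual, and natural compatible lifts in the limit.

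What must actually be proved is that, for a compatible pair, the glued functional $c\mapsto\varphi_1(c_{Y_1})+\varphi_2(c-c_{Y_1})$ has finite $\rho$. This can only come from cancellation between the two terms along the frontier of $Y_1$, which lies in $Y_1\cap Y_2$ where the pair agrees. In the example, taking $\varphi_2=\varphi_N|_{Y_2}$, the glued functional is $\varphi_N$ itself, which is bounded. It cannot come from bounding each term separately. Once that is established, your open-mapping argument gives closed range, and the dual Hahn--Banach steps follow without any sections.

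Surjectivity of $k_1^*-k_2^*$ onto the completed space likewise needs a linear, $\rho$-bounded extension operator. Softness of $\Omega_{PA}$ on $Y_1$ only extends germs defined on a neighbourhood of $Y_1\cap Y_2$ in $Y_1$, not forms intrinsic to the PA-space $Y_1\cap Y_2$, and it gives no bound. Compare the retraction device in Corollary~\ref{cor:PA-form Mayer}.

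For the limit, drop the natural-splitting idea. Your partition-of-unity lifting works from level-$K$ surjectivity alone, given that multiplication by PA cut-offs is continuous, as assumed in Remark~\ref{differential for currents}. It is a legitimate alternative to the paper's Mittag-Leffler argument.
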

\begin{proof}
Lemma \ref{chain decomposes} says we have the split exact sequence:
$$0 \rightarrow C(X) \rightarrow C(Y_1)\oplus C(Y_2) \rightarrow C(Y_1\cap Y_2) \rightarrow 0.$$
Here, the section is given by $C(Y_1)\oplus C(Y_2) \rightarrow C(X), \, (c_1,c_2) \mapsto c_1 + c_2 - c_1|_{Y_1\cap Y_2}$. Thus, by Lemma \ref{Serre lemmma}, we have:
$$0 \to \Gamma_K(X, C')' \to \oplus_{j=1}^2 \Gamma_K(Y_j, C')' \to \Gamma_K(Y_1 \cap Y_2, C')' \to 0.$$
By the Hahn--Banach theorem, the restriction $\Gamma_K(X, C')' \to \Gamma_{L}(X, C')'$ is surjective for $L \subset K$. In particular, the Mittag-Leffler condition holds for the projective system over compact subsets \cite[Example 9.1]{Hart}. Hence, taking the limit over $K$, we get the claimed exact sequence.
\end{proof}

\begin{corollary}\label{cor:PA-form Mayer} For the notations as above, we have the exact sequence
$$0 \to \Omega_{PA}(X) \overset{i_j^*}\to \oplus_{j=1}^2 \Omega_{PA}(Y_j) \overset{k_1^*, -k_2^*}\to \Omega_{PA}(Y_1 \cap Y_2) \to 0.$$
In short, $\Omega_{PA}$ is a sheaf for the topology in Remark \ref{compact topology}.
\end{corollary}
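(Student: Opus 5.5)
The plan is to deduce the statement from Proposition \ref{Mayer–Vietoris}. I will not use the currents themselves, only the chain-level input and the duality lemma.

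\textbf{Left part of the sequence.} Injectivity of $\Omega_{PA}(X) \to \oplus_j \Omega_{PA}(Y_j)$ and exactness in the middle are the easy steps.
\begin{itemize}
\item Suppose $\alpha \in \Omega_{PA}(X)$ with $i_j^*\alpha = 0$ for $j=1,2$. By Lemma \ref{chain decomposes}, every chain $c$ in $C(X)$ decomposes as $c = c_{Y_1} + c_{Y_2} - (c_{Y_1})_{Y_1\cap Y_2}$. Hence $\langle \alpha, c\rangle = 0$, since $\alpha$ is a linear functional on chains.
\item Suppose $(\alpha_1,\alpha_2)$ agree on $Y_1\cap Y_2$. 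Define $\alpha$ on $C(X)$ by
$$\langle \alpha, c\rangle = \langle \alpha_1, c_{Y_1}\rangle + \langle \alpha_2, c_{Y_2}\rangle - \langle \alpha_1, (c_{Y_1})_{Y_1\cap Y_2}\rangle.$$
This is the transpose of the split exact sequence of chains.
\item It remains to see that this $\alpha$ is a PA-form. I will show this locally: near points of $X - Y_2$ it agrees with $\alpha_1$, and similarly near $X - Y_1$.
\item Near points of $Y_1\cap Y_2$, I would extend each $\alpha_j$ to a neighborhood in $X$. This is the same issue as surjectivity, handled below.
\end{itemize}

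\textbf{Surjectivity onto $\Omega_{PA}(Y_1\cap Y_2)$: the main step.} I need every PA-form on the closed subspace $Z = Y_1\cap Y_2$ to extend to a PA-form on $Y_1$ (and on $Y_2$). Given one such extension $\tilde\beta$ on $Y_1$, the pair $(\tilde\beta, 0)$ maps to $\beta$.

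To build the extension, I work locally: because $\Omega_{PA}$ is a soft sheaf (Proposition \ref{soft sheaf}), I can glue local extensions with a partition of unity (Proposition \ref{partition of unity}). The local construction goes as follows.
\begin{itemize}
\item Locally, for a compact $K$, write $\beta = \int_\gamma \omega$ with $\pi : W \to Z$ proper.
\item By Lemma \ref{compact subspace cofibration}(ii), choose a neighborhood $U$ of $K\cap Z$ in $Y_1$ together with a PA strong deformation retraction $r : U \to Z\cap U$.
\item Set $\tilde\beta = r^*\beta = \int_{r^*\gamma} r^*\omega$, using the fiber product $W\times_Z U$ of Lemma \ref{fiber product}. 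The pullback $r^*\gamma$ is an $r$-family, and pullback preserves PA-forms by the remark on pullbacks.
\item Since $r$ restricted to $Z$ is the identity, $\tilde\beta|_{Z\cap U} = \beta$.
\end{itemize}
Multiplying by cut-off functions and summing over a locally finite cover then gives a global extension. The same local-extension device completes the middle-exactness step above.

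\textbf{Expected obstacle and the final claim.} The main obstacle is making the retraction $r$ exist as a PA-map, with compatibility on overlaps. Compatibility is harmless, because the extensions are only glued through a partition of unity and each summand restricts to $\lambda_i\beta$ on $Z$. Existence should follow from triangulating a relatively compact neighborhood compatibly with $Z$, as in Lemma \ref{compact subspace cofibration}, and taking the simplicial regular-neighborhood retraction, which is piecewise linear and hence PA.

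Finally, the statement that $\Omega_{PA}$ is a sheaf for the topology of Remark \ref{compact topology} is a restatement of the left exactness just proved. For a finite closed cover, I will iterate the two-set case by induction on the number of pieces.
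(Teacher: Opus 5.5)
\textbf{Where the proposal breaks: exactness in the middle.} Your injectivity and surjectivity steps are sound. In the surjectivity gluing, each cut-off must be supported inside the domain of its retraction and meet $Y_1\cap Y_2$ only where that retraction is the identity; your remark about summands restricting to $\lambda_i\beta$ essentially covers this. The gap is exactness in the middle, which is the one step the paper's proof actually addresses. Near a point $z\in Y_1\cap Y_2$ your candidate is the functional
$$c\mapsto \langle \alpha_1,c_{Y_1}\rangle+\langle \alpha_2,c_{Y_2}\rangle-\langle\alpha_1,(c_{Y_1})_{Y_1\cap Y_2}\rangle .$$
Knowing that $\alpha_1$ and $\alpha_2$ \emph{separately} extend to PA-forms $\tilde\alpha_1,\tilde\alpha_2$ near $z$ does not make this functional a PA-form. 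It must equal $\tilde\alpha_1$ on chains in $Y_1$ and $\tilde\alpha_2$ on chains in $Y_2$. The obvious combination $f\tilde\alpha_1+(1-f)\tilde\alpha_2$ would need $f=1$ on $Y_1$ and $f=0$ on $Y_2$ near $z$, which no continuous $f$ satisfies. So this is not ``the same issue as surjectivity.''

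What is needed is an extension by zero. With $\tilde\alpha_1$ extending $\alpha_1$, the form $\beta=\alpha_2-i_2^*\tilde\alpha_1$ on $Y_2$ satisfies $k_2^*\beta=0$. You must extend $\beta$ to a neighborhood of $z$ in $X$ so that the extension \emph{vanishes on $Y_1$}. Pulling back along a retraction $r:U\to Y_2$ achieves this only if $r(U\cap Y_1)\subset Y_1\cap Y_2$. Lemma \ref{compact subspace cofibration} gives a retraction onto a single compact subspace with no such compatibility. A general retraction sends points of $Y_1$ to points of $Y_2\setminus Y_1$, where $\beta\neq 0$.

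\textbf{The missing input is a retraction compatible with the pair.} Triangulate a compact neighborhood of $z$ with $Y_1,Y_2$ as subcomplexes, and pass to a barycentric subdivision so both are full. Let $r$ be the standard retraction of the open simplicial neighborhood of $Y_2$, $\sum_v\lambda_v v\mapsto \sum_{v\in Y_2}\lambda_v v/\sum_{v\in Y_2}\lambda_v$. This map is semialgebraic, hence PA; it is not piecewise linear as your surjectivity paragraph claims, but that is harmless. By fullness, $r$ maps each simplex of $Y_1$ onto a face lying in $Y_1\cap Y_2$. Hence $i_1^*r^*\beta=0$ while $i_2^*r^*\beta=\beta$. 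So $\tilde\alpha_1+r^*\beta$ restricts to $\alpha_1$ on $Y_1$ and to $\alpha_2$ on $Y_2$. By Lemma \ref{chain decomposes} it agrees with your functional near $z$, and your sheaf argument finishes.

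With this repair your proof is a genuinely constructive alternative to the paper's. The paper uses a retraction only to make $i_j^*$ and $k_j^*$ open onto closed images. It then gets middle exactness by duality, from the Mayer--Vietoris sequence for currents together with Lemma \ref{Serre lemmma}. Note that, despite your opening sentence, your write-up never uses either of those.
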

\begin{proof} We only have to show the exactness at the middle; i.e., $\im(i_j^*) = \ker(k_1^*, -k_2^*)$. Since the only question is the existence in the sheaf axioms; i.e., the inclusion $\supset$, the question is local and thus we can assume $X$ is compact. Then, by Lemma \ref{compact subspace cofibration}, we have a retraction $r : U \to Y_j$ of $i_j$ from some neighborhood $U$ of $Y_j$ in $X$; i.e., $r \circ i_j = \operatorname{id}_{Y_j}$. Thus, $i_j^* \circ r^*= \operatorname{id}$. But this implies that $i_j^*$ is an open mapping onto the image with closed image. Similarly, $k_j^*$ is an open mapping onto the image with closed image. Hence, since the continuous dual of the above sequence is exact, by Lemma \ref{Serre lemmma}, we get the required inclusion.
\end{proof}

\begin{remark}[Poincar\'e lemma for forms with compact support]\label{cpt Poincare} Recall that we proved the Poincar\'e lemma by establishing the formula
$$\varphi - \varphi|_0 = dJ \varphi + J d \varphi.$$
This does not directly give the lemma for compactly-supported forms since $J \varphi$ may not have compact support. We shall therefore modify $J$ as follows.

Let $U \subset X$ be a contractible open subset, $K \subset U$ a compact subset and by Corollary \ref{col:PA-homotopy}, $h : U \times I \to U$ a PA-homotopy such that $h_1$ is the identity and $h_0$ a constant map. Let $h^x$ denote the map $U \to \operatorname{Map}(I, U)$ that corresponds to $h$ by adjunction; namely, $h^x(t) = h(x, t)$.

Let $\varphi$ be a closed PA-form of type $p \ge 1$ on $U$ with support in $K$. Then $h^*_0 \varphi = 0$ for dimension reason and then, by the relative Stokes formula (Proposition \ref{FTC}), we have
$$d \int_{h^x} \varphi = x^* \varphi = \varphi,$$
where $x = h_1$ is the identity map, which we think of as a variable. It also shows that if $x$ varies outside $K$, then the right-hand side is zero; i.e., $\int_{h^x} \varphi$ is a closed form when it is restricted to $U - K$. We also note that if we vary $x$ outside $K$, then $\int_{h^x} \varphi$ does not change since $\varphi$ is supported on $K$.

Now, define the operator $J$ by
$$J \varphi = \int_{h^x} \varphi - \int_{h^{x_0}} \varphi$$
for some or any $x_0$ in $U - K$. Then since the second term on the right is closed, $d J \varphi = \varphi$. Also, the support of $J \varphi$ lies in $K$.
\end{remark}

Finally, we can state and prove: (note $U \mapsto \operatorname{H}^n_{cpt}(U)^*$ is a presheaf).

\begin{prop}[Poincar\'e lemma for currents]\label{prop:Poincare for currents} Let $\mathbb{C}_{Po}$ denote the sheaf on $X$ associated to the presheaf $U \mapsto \operatorname{H}^n_{cpt}(U)^*$ where $*$ refers to a vector space dual. Then, on $X$,
$$0 \to \mathbb{C}_{Po} \overset{\epsilon_2}\to \Omega'^{\, 0} \overset{d'}\to \Omega'^{\, 1} \overset{d'}\to \cdots \overset{d}\to \Omega'^{\, n} \to 0$$
is exact for some $\epsilon_2$, which may differ from $\epsilon$ in Lemma \ref{lem:PA gives a current}.
\end{prop}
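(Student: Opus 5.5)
The plan is to obtain the statement by dualizing the compactly supported Poincar\'e lemma of Remark \ref{cpt Poincare}, using Serre's duality lemma (Lemma \ref{Serre lemmma}). Exactness of a complex of sheaves is a stalk-wise condition. By Corollary \ref{locally contractible} and Corollary \ref{col:PA-homotopy}, every point has a neighborhood base of contractible open sets $U$ carrying a PA-contraction. It therefore suffices to show exactness of the sections over such $U$, possibly after shrinking. Concretely, we must show that a $d'$-closed current $u\in\Omega'^{\,p}(U)$ is $d'$-exact on a smaller neighbourhood for $p\ge 1$, and that $\ker(d' : \Omega'^{\,0}(U)\to\Omega'^{\,1}(U))$ is $\operatorname{H}^n_{cpt}(U)^*$.

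The first step is the compactly supported statement on the test-form side. Fix a compact $K\subset U$ and consider the complex
$$\Gamma_K(U,\Omega^{n-p-1}_{PA}) \overset{d}\to \Gamma_K(U,\Omega^{n-p}_{PA}) \overset{d}\to \Gamma_K(U,\Omega^{n-p+1}_{PA}),$$
topologized by $\rho$. Remark \ref{cpt Poincare} supplies a homotopy-type operator $J$ with $dJ\varphi=\varphi$ on closed forms of positive degree supported in $K$. The support of $J\varphi$ stays in a slightly larger compact set $K'$ (the remark asserts $K$ itself). I then want to check that $J$ is $\rho$-continuous. Heuristically this holds because $\langle J\varphi,c\rangle=\int_{h_*(c\times I)}\varphi$ up to the correction term, and $h_*(c\times I)$ has volume bounded in terms of the volume of $c$ and the Lipschitz-type behaviour of the PA-homotopy on compacts. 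This continuity gives two things: $d$ is an open mapping onto its image, and the image of $d$ in degrees $<n$ is closed and equal to the kernel of $d$, with the corrections $K\subset K'$ absorbed in the limit over $K$.

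The second step applies Lemma \ref{Serre lemmma} to get exactness of the continuous duals $\Gamma_K(\cdot)'$ in the corresponding degrees. Using the sign convention $\langle d'u,\varphi\rangle=\pm\langle u,d\varphi\rangle$, this translates into exactness of the currents, first compact-by-compact and then in the limit. Passing to $\varprojlim_K$ needs a Mittag-Leffler argument, as in the proof of Proposition \ref{Mayer–Vietoris}: the restriction maps between the duals are surjective by Hahn--Banach, so $\varprojlim^1$ vanishes. Alternatively, I would shrink $U$ to a relatively compact contractible neighbourhood and work with a single $K$. In degree $p=0$, a closed $0$-current is a functional on $\Omega^n_{cpt}(U)$ killing $d\Omega^{n-1}_{cpt}(U)$, that is, an element of $\operatorname{H}^n_{cpt}(U)^*$. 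Continuity is automatic once the image of $d$ is closed, so the map $\epsilon_2$ is defined as the inclusion of $\operatorname{H}^n_{cpt}(U)^*$ as these functionals. Sheafifying gives the map $\epsilon_2:\mathbb{C}_{Po}\to\Omega'^{\,0}$, which is injective with image $\ker d'$. Exactness at $\Omega'^{\,n}$ corresponds to injectivity of $d$ on $\Omega^0_{cpt}(U)$, which holds when $U$ is connected and noncompact (a compactly supported constant vanishes), together with closed range of $d:\Omega^0_{cpt}\to\Omega^1_{cpt}$.

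The main obstacle is the closed-range and open-mapping property, namely the $\rho$-continuity of $J$ with a controlled support enlargement. I would first try to bound the volume and boundary volume of $h_*(c\times I)$ for $c\in X_p$ uniformly in terms of $K$. This should follow from the PA-homotopy being definable and hence generically smooth with bounded derivatives on compacts, possibly after reparametrizing in $t$. If this bound fails near the singular locus, the fallback is to apply Proposition \ref{prop:dense test forms}: replace $\Omega_{PA}$ by the Banach completion of sections of $C'$ with finite norm. There, $J$ can be defined directly by transposing the chain homotopy $c\mapsto h_*(c\times I)$ and closedness of the image can be checked by hand.
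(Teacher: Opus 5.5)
Your plan is the paper's own proof: dualize the compactly supported Poincar\'e lemma of Remark~\ref{cpt Poincare} on small contractible $U$ via Lemma~\ref{Serre lemmma}, one compact $K$ at a time, and then pass to $\varprojlim_K$ by Mittag-Leffler. Your worry about the $\rho$-continuity of $J$ and the support enlargement $K\subset K'$ is more careful than the paper, which simply asserts split exactness. The real problem is the input you import from that remark: that on a small contractible $U$ every closed compactly supported PA-form of degree $1\le q\le n-1$ is $d$ of a compactly supported one. This is false at singular points, and no continuity estimate for $J$ can repair it.

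Take the node $X=V(zw)\subset\mathbb{C}^2$, so $n=2$. Let $U=D_1\cup D_2$ be two small discs meeting at their centre $0$; $U$ is contractible. Let $\rho_1\in\mathcal{O}(D_1)$ equal $1$ near $0$ and $0$ near $\partial D_1$. Set $\varphi=d\rho_1$ on $D_1$ and $\varphi=0$ on $D_2$; this is a closed element of $\Omega^1_{PA,\,cpt}(U)$. Let $c_j$ be a radial segment in $D_j$ from $0$ to a point $a_j$ near $\partial D_j$. Suppose $\varphi=d\psi$ with $\psi$ compactly supported and $a_j\notin\supp\psi$. Then $\langle d\psi,c_j\rangle=\langle\psi,[a_j]-[0]\rangle=-\psi(0)$ for both $j$, whereas $\langle\varphi,c_1\rangle=-1$ and $\langle\varphi,c_2\rangle=0$. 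Hence $\operatorname{H}^1_{cpt}(U)\neq0$ (it is $\mathbb{C}$). The remark's construction breaks exactly here: $\int_{h^x}\varphi$ is only locally constant on $U-K$, and $U-K$ is disconnected.

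Dualizing this gives a counterexample to the proposition itself. Consider the chain $c_1-c_2\in C_1(U)\subset\Omega'^{\,1}(U)$, restricted to a neighbourhood $V$ of $0$ that misses $a_1,a_2$. It is $d'$-closed there, since $d'$ is $\pm\partial$ on chains and $\partial(c_1-c_2)=[a_1]-[a_2]$. Now let $W\subset V$ be any smaller neighbourhood of $0$, and choose $\rho_1$ with $\supp d\rho_1\subset W$. For every $u\in\Omega'^{\,0}(W)$ we get $\langle d'u,\varphi\rangle=\pm\langle u,d\varphi\rangle=0$, while $\langle c_1-c_2,\varphi\rangle=-1$. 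So the germ is not $d'$-exact, and the complex is not exact at $\Omega'^{\,1}$.

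What your duality argument actually proves is weaker: the stalk cohomology in degree $p$ is $\varinjlim_U\operatorname{H}^{n-p}_{cpt}(U)^*$, i.e.\ the local homology in degree $n-p$. Exactness in degrees $1\le p\le n-1$ therefore needs an extra hypothesis: $\operatorname{H}^{q}_{cpt}(U)=0$ for $1\le q\le n-1$ on a neighbourhood basis, that is, local homology concentrated in degree $n$. That hypothesis, or a correspondingly weakened conclusion, is what is missing from both your argument and the paper's.

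Separately, at $p=0$ closed range does not by itself make every linear functional on $\operatorname{H}^n_{cpt}(U)$ continuous. To obtain the algebraic dual $\operatorname{H}^n_{cpt}(U)^*$ appearing in the statement, you also need $\dim\operatorname{H}^n_{cpt}(U)<\infty$, which the paper invokes and you do not.
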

\begin{proof} By Remark \ref{cpt Poincare}, for a contractible open subset $U \subset X$, we have the exact sequence
$$0 \to \Omega^0_{PA, \, cpt}(U) \overset{d}\to \cdots \overset{d}\to \Omega^n_{PA, \, cpt}(U) \overset{q}\to \operatorname{H}^n_{cpt}(U) \to 0$$
where $q$ is the quotient map. Note the very first $d$ is injective by Lemma \ref{poincare}, saying in particular that $df = 0$ implies $f$ is locally constant for a PA-$0$-form $f$.

For each compact subset $K \subset U$, by an explicit construction in Remark \ref{cpt Poincare}, we see the corresponding sequence in terms of $\Gamma_K(U, \Omega^p_{PA})$ is exact and is in fact split exact. Also, $q$ on $\Gamma_K(U, \Omega_{PA}^n)$ is an open mapping since it is a quotient map of topological groups. By repeating the classical argument, we see that $\operatorname{H}^n_{cpt}(U)$ has finite dimension (cf. \cite[Proposition 5.3.2.]{DifferentialForms}); in particular, it is Hausdorff.

Hence, by applying Lemma \ref{Serre lemmma} for each $K$ and then taking the projective limit (which is exact by the Mittag-Leffler condition), we get the asserted exact sequence.
\end{proof}

\begin{remark} From the exact sequence with currents above, it \emph{does not} follow the Poincar\'e lemma holds for $\Omega(X)$, since $d$ on $\Omega(X)$ need not be an open mapping onto the image (so Lemma \ref{Serre lemmma} does not apply).
\end{remark}

Next, to understand $\mathbb{C}_{Po}$, first we note:

\begin{prop}\label{direct image quasi-isomorphism} Let $\pi : \mathbb{R}^m \to \mathbb{R}^{m-r}$ be the projection and
$$\pi_* : \Gamma_{cpt}(\mathbb{R}^m, \Omega_{\mathbb{R}^m, PA}) \to \Gamma_{cpt}(\mathbb{R}^{m-r}, \Omega^{* - r}_{\mathbb{R}^{m-r}, PA})$$
be the relative integration along $\pi$. Then $\pi_*$ is a quasi-isomorphism; i.e., induces the isomorphism on cohomology.
\end{prop}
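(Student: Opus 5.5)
We reduce by induction to $r = 1$: $\pi$ factors as a composite of $r$ coordinate projections, and by the Fubini identity $\int_{\gamma \times F}\omega = \int_F\int_\gamma \omega$ from Remark \ref{direct image}, relative integration along a composite is the composite of the relative integrations (up to a sign that does not affect quasi-isomorphy). So let $\pi : \mathbb{R}^{m-1} \times \mathbb{R} \to \mathbb{R}^{m-1}$, $(x,t) \mapsto x$, and $\pi_* = \int_{\mathbb{R}}$. First we check that $\pi_*$ is well defined on compactly supported PA-forms and is a chain map up to sign. For $\alpha$ with support in $K \times [-a,a]$ we may replace $\mathbb{R}$ by $F = [-a-1,a+1]$. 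Then Remark \ref{direct image} gives
$$d\pi_*\alpha = (-1)^{s-1}\int_{\partial F}\alpha + (-1)^s \pi_* d\alpha,$$
and the boundary term vanishes because $\alpha = 0$ near $\partial F$. Thus $\pi_*$ anticommutes with $d$ for $s=1$, which suffices. The support of $\pi_*\alpha$ lies in $K$, so $\pi_*$ preserves compact supports.

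Next we build a homotopy inverse as in the classical proof (Bott--Tu). Fix $e = e(t)\,dt$ with $e(t)$ a polynomial times a compactly supported function in $\mathcal{O}(\mathbb{R})$, so that $e$ is a PA-form with compact support and $\int_{\mathbb{R}} e = 1$. Such an $e$ exists by Lemma \ref{lem:nontrivial func} after normalizing. Define $e_*\beta = \pi^*\beta \wedge e$, which maps $\Omega^{q}_{PA,cpt}(\mathbb{R}^{m-1})$ to $\Omega^{q+1}_{PA,cpt}(\mathbb{R}^m)$. It is a PA-form by Lemma \ref{Omega_PA a module}, since the projection formula in Lemma \ref{integration along fibers} allows wedging with the pulled-back form $\pi^*\beta = \int_{\pi^*\gamma}\pi^*\omega$. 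By the projection formula and Fubini, $\pi_* e_* = \mathrm{id}$ up to sign.

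It remains to find an operator $K$ with $1 - e_*\pi_* = \pm(dK \pm Kd)$. We set
$$K\alpha(x,t) = \int_{-\infty}^{t}\alpha - \Big(\int_{-\infty}^{t} e\Big)\,\pi^*\pi_*\alpha,$$
where $\int_{-\infty}^t$ is integration along the $1$-family $\gamma(x,t) = \{x\} \times [-a-1, t]$ over $Y = \mathbb{R}^m \times F \to \mathbb{R}^m$. Condition (2) of Definition \ref{r-family} holds via the PA-map $(x,t,s) \mapsto (x, (1-s)(-a-1) + st)$ on $\mathbb{R}^m \times [0,1]$. Lemma \ref{the integration of a PA-form} then shows $K\alpha$ is a PA-form. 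Its support is compact: for $t > a$ the two terms cancel because $\int_{-\infty}^t e = 1$ there, and for $t < -a$ both vanish.

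The homotopy identity follows from Proposition \ref{FTC} applied to $\gamma$. The boundary $\partial\gamma(x,t) = \{(x,t)\} - \{(x,-a-1)\}$ contributes exactly $\alpha$, since $\alpha$ vanishes at $-a-1$. Together with the $d$-compatibility of $\pi_*$ and $\pi^*$ proven above, this yields the classical formula. Hence $\pi_*$ and $e_*$ are mutually inverse on cohomology.

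The main obstacle is justifying that $\gamma$, whose chain endpoint depends on $t$, is admissible, and that Proposition \ref{FTC} applies to a general PA-form $\alpha = \int_{\gamma'}\omega$ rather than to a form in $\Omega(Y)$. We handle the latter by composing families: by the Fubini identity in the proof of Lemma \ref{the integration of a PA-form}, $\int_\gamma\int_{\gamma'}\omega = \int_{\gamma\times\gamma'}\omega$, and then we apply Proposition \ref{FTC} to $\gamma \times \gamma'$, using $\partial(\gamma\times\gamma') = \partial\gamma\times\gamma' \pm \gamma\times\partial\gamma'$ as in Remark \ref{direct image}. The sign bookkeeping here is routine. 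We also work locally on compact subsets, which is permitted since PA-forms are defined compact-set-wise.
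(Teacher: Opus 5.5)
Your proposal is correct and is essentially the paper's own proof. You reduce to $r=1$ via Fubini, then use the Bott--Tu homotopy $K=\int_{-\infty}^{t}-\bigl(\int_{-\infty}^{t}e\bigr)\pi^*\pi_*$ (the paper's $T$, with its $b$ and $B$ in place of your $e$ and $\int_{-\infty}^t e$) to get $dK+Kd=\operatorname{id}-e\wedge\pi^*\pi_*$ up to signs, together with $\pi_*\circ(e\wedge\pi^*)=\operatorname{id}$ up to sign. You also fill in details the paper leaves implicit: admissibility of the moving-endpoint family, compact support of $K\alpha$ (which needs $a$ chosen large enough that also $\operatorname{supp}(e)\subset[-a,a]$), and the ``PA-form analog'' of Proposition \ref{FTC}, obtained by composing families.
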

\begin{proof} By Fubini's theorem, $\pi_*$ is the composition of the direct images along $\mathbb{R}^m \to \mathbb{R}^{m-1} \to \cdots \to \mathbb{R}^{m - r}$. Since the composition of quasi-isomorphisms is a quasi-isomorphism, it is thus enough to consider the case $r = 1$.

Let $F$ denote the $\mathbb{R}$ that is a fiber of $\pi$. Then fix a one-form $b$ in $\Omega^1_{cpt}(F)$ such that $\int b = 1$. Then let $B(t) = \int^t_{-\infty} b$.

Let $V = \Gamma_{cpt}(\mathbb{R}^m, \Omega_{\mathbb{R}^m, PA})$. Following \cite[Proposition 4.6.]{DifferentialForms}, define the operator on $V$
$$T = \pi_{t, *} - B \pi^*\pi_*$$
where $t$ is a coordinate \emph{function} on $F$, $\pi_{t, *} = \int_{-\infty}^t$ and $B$ is viewed as a function on $\mathbb{R}^m$ via $B(x, t) = B(t)$. Then, since $d \pi_* = -\pi_* d$ on $V$, by Proposition \ref{FTC} or more precisely a PA-form analog of that, we have:
\begin{align*}
dT &= |_t - \int_{-\infty}^t d - dB \wedge \pi^*\pi_* - B \pi^*d \pi_* \\
&= \operatorname{id} - Td - b \wedge \pi^* \pi_*.
\end{align*}
That is, $dT + T d = \operatorname{id} - b \wedge \pi_*$. On the other hand, $\pi_* \circ (b \wedge \pi^*) = \operatorname{id}$ on $\Omega_{PA}(X)$.
\end{proof}

\begin{remark}
By Proposition \ref{direct image quasi-isomorphism}, for each connected open subset $U \subset X_{reg}$ that is isomorphic to an open subset of $\mathbb{R}^n$, we see the integration over $U$ induces the isomorphism
$$\operatorname{H}^n_{cpt}(U) = \mathbb{C}$$
(or alternatively, we can see it by the Poincar\'e duality). Indeed, the proposition, or more precisely its proof implies we have the exact sequence
$$\Gamma_K(\mathbb{R}^n, \Omega_{PA}^{n-1}) \to \Gamma_K(\mathbb{R}^n, \Omega_{PA}^n) \overset{\pi_*}\to \mathbb{C} \to 0$$
for each compact subset $K \subset \mathbb{R}^n$. But since $U$ is isomorphic to an open subset of $\mathbb{R}^n$, $\Gamma_K(U, \Omega_{PA}) = \Gamma_K(\mathbb{R}^n, \Omega_{PA})$. Thus, the above implies $\pi_* : \operatorname{H}^n_{cpt}(U) \simeq \mathbb{C}.$

But if $U \subset X$ is a connected open subset, we could have $\operatorname{H}^n_{cpt}(U) \neq \mathbb{C}.$ Indeed, let $X = [0, 1]$. Then $\operatorname{H}^1_{cpt}(U) = 0$ for each connected neighborhood $U$ of $0$ as follows. For each $[\varphi]$ in $\operatorname{H}^1_{cpt}(U)$, let $\eta = \int^x_0 \varphi - \int_0^1 \varphi$. Then we have $d \eta = \varphi$ and $\eta$ has compact support in $U$.

Later (Examples \ref{Poincare example}), we will see that even if $\operatorname{codim}(X - X_{reg}) \ge 2$, it can happen $\mathbb{C}_{Po} \ne \mathbb{C}$. Note this is consistent with the fact that $d'$ and $d$ agree on $\Omega_{PA}(X)$ when $\operatorname{codim}(X - X_{reg}) \ge 2$. Indeed, $\mathbb{C}_{Po}|_{X_{reg}} = \mathbb{C}|_{X_{reg}}$. Thus, a section of $\mathbb{C}_{Po}$ over a connected open subset that is not a constant function must be a piecewise-constant function; in particular, not continuous.


\end{remark}

We also record a few results we use in the next section. First, we have the following version of Fubini's theorem.

\begin{prop}[Fubini for currents]\label{Fubini current} Let $u$ be a $p$-current on $X$ with compact support and $Y$ a PA-space of dimension $m$.

Then for each $\varphi$ in $\Omega^{n-p}(X \times Y)$ and each $\psi$ in $\Omega^m(Y)$, if $\psi$ is integrable, we have:
$$\int_Y (y \mapsto \langle u, \varphi_y \rangle) \psi = \langle u, \pi_*(\psi \wedge \varphi) \rangle$$
where $\varphi_y$ denotes a form on $X$ given as $x \mapsto \varphi(x, y)$, $\pi : X \times Y \to X$ the projection and $\psi$ on the right is viewed as a form on $X \times Y$ in the natural way.
\end{prop}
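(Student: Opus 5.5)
The plan is the standard Riemann-sum argument: pull $u$ inside the integral over $Y$ by approximating that integral with finite sums, using the continuity of $u$ with respect to $\rho_K$.

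First I reduce to compact $Y$. Since $\psi$ is integrable, a partition of unity on $Y$ (Proposition \ref{partition of unity}) together with dominated convergence reduces to $\psi$ supported in a compact definable set. Both sides are linear in $\psi$. The left side is dominated by $\sup_y |\langle u, \varphi_y\rangle| \int |\psi|$; here $y \mapsto \langle u, \varphi_y\rangle$ is bounded on compacta because $u$ has compact support $K_0$, so $|\langle u, \eta\rangle| \le C\rho_K(\eta)$ for a fixed compact neighborhood $K$ of $K_0$ (after cutting off by $\lambda \in \mathcal{O}_{cpt}(X)$ equal to $1$ near $K_0$, as in Lemma \ref{compact continuous dual}). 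The right side is controlled in the same way by $\rho_K(\pi_*(\psi\wedge\lambda\varphi))$. So I may assume $Y$ is compact and, by Corollary \ref{cor:PA-form generic} and triangulation, work on one top simplex of $Y_{reg}$. There $\psi = g\,\mu_Y$ with $g \in L^1$, and since both sides are continuous in $\psi$ in $L^1$, it suffices to take $g$ continuous.

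Next I show that $F(y) = \langle u, \lambda\varphi_y\rangle$ is continuous in $y$. Because $\varphi$ is a PA-form on $X\times Y$, I expect $y \mapsto \lambda\varphi_y$ to be continuous into $(\Gamma_K(X,\Omega^{n-p}_{PA}), \rho_K)$, at least after passing to the completion given by Proposition \ref{prop:dense test forms}. Concretely, $\langle \varphi_y, c\rangle = \langle \varphi, c\times\{y\}\rangle$, and uniform continuity over reduced simplexes $c \in X_p$ with support in $K$ follows from $X_p \times Y$ restricted to $K$ being compact, in the sense of the exercise in Remark \ref{sup norm}, together with continuity of PA-forms on chains (Proposition \ref{PA approximation}).

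Then I take Riemann sums. Partition $Y$ into small definable pieces $Y_j$ with points $y_j$. The finite sum $\sum_j \langle u, \lambda\varphi_{y_j}\rangle \int_{Y_j}\psi = \langle u, \sum_j (\int_{Y_j}\psi)\lambda\varphi_{y_j}\rangle$ converges to the left side. It remains to show that $\sum_j (\int_{Y_j}\psi)\lambda\varphi_{y_j} \to \pi_*(\psi\wedge\lambda\varphi)$ in $\rho_K$. Pairing with a chain $c \in X_p$ and applying Fubini on $c\times Y$ gives $\langle \pi_*(\psi\wedge\varphi), c\rangle = \int_Y \langle\varphi_y,c\rangle\psi$, up to the sign convention of Remark \ref{direct image}. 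The difference is then bounded by $\omega(\mathrm{mesh})\int|\psi|$, uniformly in $c$, where $\omega$ is the uniform modulus of continuity from the previous step. Continuity of $u$ in $\rho_K$ then finishes the argument.

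The main obstacle is the uniform continuity of $y\mapsto\varphi_y$ in $\rho_K$, since PA-forms are only generically smooth and their coefficients may be unbounded, as in Example \ref{non-square-integrable}. I would try to get it from the representation $\varphi = \int_\gamma\omega$: then $\langle\varphi_y, c\rangle = \int_{\gamma|_{c\times y}}\omega$, which is continuous on the compact family of chains, with the volume bounds in $X_p$ providing domination. Failing that, I would restrict to $\varphi \in \Omega^{n-p}$ (honest forms) by the density in Proposition \ref{prop:dense test forms} and pass to the limit.
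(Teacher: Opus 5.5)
Your proposal has a genuine gap at the step you flag yourself: the uniform continuity of $y\mapsto\varphi_y$ in $\rho_K$. Concretely, you need $\sup_c|\langle\varphi,c\times\{y\}\rangle-\langle\varphi,c\times\{y'\}\rangle|\to0$ as $y'\to y$, with the supremum over reduced simplices $c\in X_p$ with $\supp(c)\subset K$. Both the continuity of $F$ and the $\rho_K$-convergence of your Riemann sums rest on this, and your justification does not establish it.

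\begin{itemize}
\item $X_p$ is not compact in any topology that makes $(c,y)\mapsto\langle\varphi_y,c\rangle$ jointly continuous. It contains PA simplices of volume $\le 1$ with arbitrarily many oscillations, and their limits (e.g.\ in the flat topology) are no longer PA chains.
\item The exercise in Remark \ref{sup norm} only asserts $\rho(\varphi)<\infty$, not any equicontinuity.
\item Proposition \ref{PA approximation} gives continuity of $t\mapsto\langle\alpha,c_t\rangle$ along one family $c:M\times I\to X$ at a time, not uniformly over $X_p$.
\item As you note, forms in $\Omega$ can have unbounded coefficients, so there is no sup-norm estimate to fall back on.
\item Your fallback does not address the problem either. $\varphi$ is already in $\Omega^{n-p}$, and Proposition \ref{prop:dense test forms} gives density of $\Omega$ inside a completion, not density of a better-behaved subclass inside $\Omega$.
\end{itemize}

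The missing ingredient is density of polynomial forms in $\rho$ on compacta (Stone--Weierstrass, as invoked in Lemma \ref{lem:current resolution}). Combine it with $\rho_K(\eta_y)\le\rho_{K\times Y}(\eta)$, which holds because $c\mapsto c\times\{y\}$ preserves reduced simplices. An $\epsilon/3$ argument then transfers uniform continuity from a polynomial form $P$ to $\varphi$; for $P$ it follows from uniform continuity of the coefficients on $K\times Y$ together with the volume bound on $c$.

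Once you have this density, however, the Riemann sums are unnecessary, and that is the paper's route. It sets $v(\varphi)=\int\langle u,\varphi_y\rangle\psi-\langle u,\pi_*(\psi\wedge\varphi)\rangle$ and argues in three steps.
\begin{itemize}
\item $v$ is continuous in $\varphi$. Your own Fubini computation on $c\times Y$ shows this: it gives $|\langle\pi_*(\psi\wedge\varphi),c\rangle|\le\|\psi\|_{L^1}\sup_y\rho_K(\varphi_y)\le\|\psi\|_{L^1}\,\rho_{K\times Y}(\varphi)$, and the first term of $v$ is bounded the same way.
\item $v$ vanishes on product forms $\varphi_1(x)\varphi_2(y)$, simply by linearity of $u$.
\item Finite sums of products are dense, so $v=0$.
\end{itemize}
The paper sketches that density by mollifying with a product kernel. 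Stone--Weierstrass gives it more directly: polynomial forms on $X\times Y\subset\mathbb{R}^a\times\mathbb{R}^b$ are finite sums of products, up to terms containing some $dy_j$, and both sides kill those terms.

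Once repaired, your argument does prove slightly more. It gives continuity of $y\mapsto\langle u,\varphi_y\rangle$ and exhibits $\pi_*(\psi\wedge\varphi)$ as a $\rho_K$-limit of Riemann sums. But it needs the same density input, so it is not independent of the paper's argument.
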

\begin{proof} Let $v$ be a linear functional on $\Omega^{n -p}(X \times Y)$ defined by 
$$\langle v, \varphi \rangle = \int \langle u, \varphi_y \rangle \psi - \left\langle u, \int \psi \wedge \varphi_y \right\rangle.$$
It is clearly continuous. We shall show $v$ is zero. First, suppose $\varphi = \varphi_1 \varphi_2$ where $\varphi_1$ is in $\Omega^{n-p}(X)$ and $\varphi_2$ is in $\mathcal{O}(Y)$. Then it is trivial that $\langle v, \varphi \rangle = 0$.

In general, let $\psi = \psi_1 \psi_2$ such that $\int \psi_1 = 1 = \int \psi_2$. Then, in the topology on $\Omega(X \times Y)$, we have:
$$\varphi * \psi_t \to \varphi \textrm{ as } t \to 0$$ 
where $\psi_t = \psi(\cdot / t)/t^n$ for $t > 0$. On the other hand,
$$(\varphi * \psi_t)(x) = \prod \int \varphi(y) \, \psi_t(y - x) = \prod_1^2 \int \varphi(y_i) \psi_{i, t}(y_i - x_i)$$
where we wrote $x = (x_1, x_2)$ and similarly for $y$.
\end{proof}

For later use, we note the following.

\begin{prop}\label{prop:Cauchy complete} Let $u_i$ be a sequence of $p$-currents on $X$. If $u_i$ is Cauchy in the sense $\langle u_i, \varphi \rangle$ is Cauchy for each $\varphi$ in $\overline{\Omega^{n-p}_{cpt}}(X) = \varinjlim \overline{\Gamma_K(X, \Omega^{n-p})}$, the bar meaning completion, then $u_i$ has unique limit $u$ that is also a $p$-current.
\end{prop}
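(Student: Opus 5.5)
The plan is a Banach--Steinhaus (uniform boundedness) argument carried out compact set by compact set, followed by gluing over the projective system in Definition \ref{def:currents}.

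\textbf{Step 1: define the limit.} For each $\varphi$ in $\overline{\Omega^{n-p}_{cpt}}(X)$, the scalar sequence $\langle u_i, \varphi\rangle$ is Cauchy by hypothesis. Set
$$\langle u, \varphi \rangle := \lim_i \langle u_i, \varphi\rangle .$$
Linearity of $u$ is immediate. Uniqueness is also clear, since any limit must agree with this pointwise limit on every test form.

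\textbf{Step 2: continuity on each compact set.} Fix a compact $K \subset X$. By Proposition \ref{prop:dense test forms}, the completion $E_K := \overline{\Gamma_K(X, \Omega^{n-p})}$ is the space $\{\varphi \in \Gamma_K(X, C'_{n-p}) \mid \rho(\varphi)<\infty\}$ with the norm $\rho$. This is a Banach space: completeness is part of the statement of that proposition.

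Each $u_i$ is continuous on $\Gamma_K(X,\Omega^{n-p})$ with respect to $\rho$, so it extends uniquely to a bounded functional on $E_K$. This extension is what the pairing $\langle u_i, \varphi\rangle$ means for $\varphi$ in the completion. The family $\{u_i|_{E_K}\}$ is pointwise bounded on $E_K$, because convergent sequences are bounded. The uniform boundedness principle then gives a constant $C_K$ with
$$|\langle u_i,\varphi\rangle| \le C_K\,\rho(\varphi) \quad \text{for all } i \text{ and all } \varphi \in E_K .$$
Passing to the limit yields $|\langle u,\varphi\rangle|\le C_K\rho(\varphi)$, so $u|_{E_K}$ is continuous.

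\textbf{Step 3: gluing.} For $L \subset K$, the inclusion $\Gamma_L \subset \Gamma_K$ is isometric for $\rho$, and hence so is $E_L \subset E_K$. The functionals $u|_{E_K}$ are restrictions of the single functional $u$ on the direct limit. They are therefore automatically compatible, and they define an element of $\varprojlim_K \Gamma_K(X,\Omega^{n-p})'$, that is, a $p$-current. By construction $u_i \to u$ pointwise on $\overline{\Omega^{n-p}_{cpt}}(X)$.

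The only real obstacle is Step 2. Banach--Steinhaus needs a complete space, which is exactly why the hypothesis is imposed on the completion $\overline{\Omega^{n-p}_{cpt}}(X)$ rather than on $\Omega^{n-p}_{PA,cpt}(X)$ itself. If Cauchy-ness were assumed only on PA-forms, I would first try to show that $\Gamma_K(X,\Omega_{PA}^{n-p})$ is barrelled; I expect that to fail, so I would rely on the stated hypothesis together with the identification of the completion in Proposition \ref{prop:dense test forms}.
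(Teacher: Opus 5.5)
Your proposal is correct and follows the paper's proof. Define $u$ as the pointwise limit, apply the uniform boundedness principle for each compact $K$ to get $\sup_i \|u_i\|_{op} < \infty$ on $\Gamma_K(X,\Omega^{n-p})$, and pass to the limit in $|\langle u_i,\varphi\rangle| \le M\rho(\varphi)$. You are more explicit than the paper on two points it leaves implicit: Banach--Steinhaus is applied on the Banach completion $E_K$, which is why the Cauchy hypothesis is placed on the completion, and the constant depends on $K$.
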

\begin{proof} By assumption, we can define a linear functional $u$ on $\Omega^{n-p}_{cpt}(X)$ by $\displaystyle \langle u, \varphi \rangle = \lim_{i \to \infty} \langle u_i, \varphi \rangle$. So, the problem is to show it is continuous on $\Gamma_K(X, \Omega^{n-p})$ for each compact subset $K \subset X$.

By the uniform boundedness principle, we get:
$$M := \sup_i \|u_i\|_{op} < \infty,$$
where $\|\cdot\|_{op}$ is the operator norm. Then, for each $i$ and each $\varphi$ in $\Gamma_K(X, \Omega^{n-p})$,
$$|\langle u_i, \varphi \rangle| \le M \, \rho(\varphi).$$
Taking the limit, we get $|\langle u, \varphi \rangle| \le M \, \rho(\varphi)$, which shows $u$ is continuous.
\end{proof}

\begin{prop}\label{denseness of test forms} $\Omega^p_{cpt}(X)$ is dense in $\Omega'^{\, p}(X)$ with respect to the weak-* topology.
\end{prop}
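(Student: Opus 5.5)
The plan is to use the Hahn--Banach theorem in its weak-* form. Regard $\Omega^p_{cpt}(X)$ as a subspace of $\Omega'^{\, p}(X)$ via the embedding $\epsilon$ of Lemma \ref{lem:PA gives a current}. A compactly supported PA-form is locally integrable, by Proposition \ref{PA approximation} together with the stratum-wise boundedness established in its proof, so this embedding makes sense. Equip $\Omega'^{\, p}(X)$ with the weak-* topology, i.e.\ the topology of pointwise convergence against test forms $\varphi \in \Omega^{n-p}_{PA, \, cpt}(X)$.

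The continuous dual of $\Omega'^{\, p}(X)$ for this topology consists exactly of the evaluations $u \mapsto \langle u, \varphi \rangle$. So by Hahn--Banach it suffices to show the following. If $\varphi \in \Omega^{n-p}_{PA, \, cpt}(X)$ satisfies
$$\int_{X_{reg}} \omega \wedge \varphi = 0 \quad \text{for all } \omega \in \Omega^p_{cpt}(X),$$
then $\langle u, \varphi \rangle = 0$ for every current $u$. We will in fact show that such a $\varphi$ is zero.

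First, by Proposition \ref{PA approximation}, $\varphi$ is smooth on some open dense definable $U \subset X_{reg}$. By Corollary \ref{cor:PA-form generic}, it is enough to show $\varphi|_U = 0$.

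Suppose instead that $\varphi(x) \neq 0$ at some $x \in U$. Choose local PA (for instance polynomial) coordinates near $x$ and a constant-coefficient $p$-form $\eta$ with $\eta \wedge \varphi(x) = g\,\mu$ with $g > 0$ at $x$, where $\mu$ is the orientation volume form. Multiply $\eta$ by a bump function $\psi$ from Lemma \ref{lem:nontrivial func}, supported in a small neighborhood on which $\eta \wedge \varphi$ stays positive. Then $\omega = \psi\eta$ lies in $\Omega^p_{cpt}(X)$, and $\int_{X_{reg}} \omega \wedge \varphi > 0$, a contradiction. Hence $\varphi = 0$, and so $\langle u, \varphi \rangle = 0$ for every $u$. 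This gives density.

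The main obstacle is the duality step: it requires knowing that every weak-* continuous functional on $\Omega'^{\, p}(X)$ is evaluation at some test form. This is the standard fact that the dual of $E^*$ with its weak-* topology is $E$. Applying it requires viewing $\Omega'^{\, p}(X)$ as an algebraic dual space carrying the $\sigma(\Omega', \Omega_{cpt})$ topology, which is legitimate by Definition \ref{def:currents}.

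One further point needs care if one prefers test forms in the completion used in Proposition \ref{prop:Cauchy complete}. In that case I would first reduce to honest PA-forms via Proposition \ref{prop:dense test forms}, which identifies the completion with forms of finite $\rho$-norm.
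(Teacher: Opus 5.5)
Your proposal is correct and follows the paper's route: Hahn--Banach, combined with the fact that the weak-* dual of $\Omega'^{\,p}(X)$ consists of evaluations at test forms $\varphi \in \Omega^{n-p}_{PA,\,cpt}(X)$ (the paper's ``reflexivity of a weak-* topology''). The paper calls the remaining nondegeneracy step ``trivial,'' and your bump-function argument via Proposition \ref{PA approximation} and Corollary \ref{cor:PA-form generic} supplies exactly that step; since $\varphi$ is complex-valued, read ``positive'' there as ``positive real part.''
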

\begin{proof} By the Hahn--Banach theorem, it is enough to show the restriction map
$$\Omega'^{\, p}(X)' \to \Omega^p_{cpt}(X).$$
But by the reflexivity of a weak-* topology, this is trivial.
\end{proof}

Throughout the present section, some readers might wonder why we couldn't just work with the completions of spaces of test forms. It is because we usually lose exactness by the below.

\begin{remark} The completion functor $E \mapsto \overline{E}$ for normed spaces is not left-exact; i.e., there is an injective continuous linear map $f : E \to F$ such that $\overline{f} : \overline{E} \to \overline{F}$ is not injective \cite[Example 1.5.]{Varol}.
\end{remark}

\section{Hodge theorem}\label{sec:Hodge theorem}

The usual Hodge theory takes place on compact K\"ahler manifolds. Here, we develop a version of the theory that is valid on K\"ahler varieties. The basic idea is to limit ourselves to a filtration on harmonic forms on compact subsets (a somehow similar idea is already given at \cite[\S. 3.3.]{arithmetic}.)

\

Let $X$ be a PA-space.

\begin{convention}

Throughout the present section, we fix a defining cover $X_i$ of $X$; in particular, it comes with $X_i \hookrightarrow \mathbb{R}^{m_i}$. Also, we assume $X_{reg}$ comes with a Riemannian metric, is oriented and has pure dimension $n$. Furthermore, we assume $\codim(X - X_{reg}) \ge 2$ for simplicity (i.e., to avoid the complication discussed at Remark \ref{residue}).

We say a real-valued function $f$ on $X_{reg}$ is \textit{uniformly continuous} if for each $i$, the restriction $f|_{X_i \cap X_{reg}}$ is uniformly continuous with respect to the above $X_i \hookrightarrow \mathbb{R}^{m_i}$ or equivalently it extends to a continuous function on $X_i$.

And through the section, we shall then assume the above Riemannian metric is uniformly continuous in the sense that $x \mapsto (y, z)_x$ is uniformly continuous on $X_{reg}$ for each $y, z \in T(X_{reg})$. Without this uniform continuity assumption, there is a counterexample to the main result of this section (Example \ref{metric example 2}).

\end{convention}

\begin{construction}[pairing and Hodge star] Here, we shall construct a pairing and essentially equivalently the Hodge star operator.

Let $M = X_{reg}$. First, we repeat the usual smooth case. Through the Riemannian metric, we have the smoothly varying real inner products on $\Omega^1_{\mathcal{C}^{\infty}, x}$ for $x$ on $M$. Thus, we have the pairing
$$\langle, \rangle : \Omega^1_{\mathcal{C}^{\infty}}(M) \times \Omega^1_{\mathcal{C}^{\infty}}(M) \to \mathcal{C}^{\infty}(M).$$
It then extends to $\Omega^p_{\mathcal{C}^{\infty}}(M)$ by linearity and the requirement
$$\langle f, g \rangle = \det \langle f_i, g_j \rangle$$
for $f = f_1 \wedge \cdots \wedge f_p$ and $g = g_1 \wedge \cdots \wedge g_p$ \cite[Ch. 2., Exercise 13.]{Warner}.

Assume $M = X_{reg}$ has a volume form $\mu$. Then, e.g. as in the proof of \cite[Lemma 3.6.]{Faltings}, we define the operator $* : \Omega_{\mathcal{C}^{\infty}}^p(M) \to \Omega_{\mathcal{C}^{\infty}}^{n-p}(M)$, the \emph{Hodge star operator}, by
$$\langle f, g \rangle \, \mu = f \wedge * g.$$
Note we have $** = (-1)^r$ on $\Omega^p_{\mathcal{C}^{\infty}}(M)$ for some integer $r$ depending on $p$.

By our convention, $\mu$ extends to a continuous volume form on $X$. Then we get
$$* : \Omega^p_{L^{1, loc}}(X) \to \Omega^{n-p}_{L^{1, loc}}(X)$$
by approximating locally integrable forms by smooth forms (the dominated convergence theorem applies since the volume form is locally integrable). Note here it is \emph{not} enough that we have a volume form on $X_{reg}$; cf. Example \ref{metric example}.

Now, the Hodge star $*$ \emph{does not restrict} to an endomorphism on $\Omega(X)$. Thus, we cannot define $*$ for currents by transpose like we did for the differential $d$ in Remark \ref{differential for currents}. Instead, we use continuous extension. Namely, we have:
$$* : \Omega^p(X) \to \Omega'^{\, n-p}(X).$$
It is continuous for the current topology (i.e., the weak-* topology). Indeed, suppose $f_j \to 0$ in $\Omega^p(X)$. Then for each $g$ in $\Omega^{n-p}(X)$, we have
$$g \wedge *f_j = \langle g, f_j \rangle \mu \to 0.$$
That implies $*f_j \to 0$.

Then, by the denseness of the space of test forms (Proposition \ref{denseness of test forms}), the above operator extends to 
$$* : \Omega'^{\, p}(X) \to \Omega'^{\, n-p}(X).$$
This completes the construction of the Hodge star operator.
\end{construction}

By the above pairing, we have the inner product on $\Omega_{cpt}(X)$ given by
$$(f, g) = \int_{X_{reg}} \langle f, \overline{g}\rangle \, \mu$$
where the bar means complex conjugation. Let $\Omega_{L^2}(X)$ denote the completion of $\Omega_{cpt}(X)$ with respect to the $L^2$-norm defined by the inner product. Note $\Omega_{L^2}(X) \subset \Omega'(X)$ since $\Omega'(X)$ is complete in the sense in Proposition \ref{prop:Cauchy complete}. In this definition, we could have used $X_{reg}$ instead of $X$ because $\Omega_{\mathcal{C}^{\infty}, cpt}(X_{reg})$ is dense in $\Omega_{L^2}(X)$.

With the Hodge star operator $*$, we define
$$d^* = (-1)^{p+1 + r_p} *d*$$
on $\Omega'^{\, p}(X)$ where $r_p$ is an integer such that $** = (-1)^{r_p}$ on smooth $p$-forms on $X_{reg}$. Then for $f, g$ in $\Omega^p(X)$, if $f$ has compact support in $X_{reg}$, we have, by Stokes' formula,
\begin{align*}
(f, d^* g) &= (-1)^{p+1} \int f \wedge (d*g) = \int df \wedge *g \\
&= (df, g).
\end{align*}
That is, $d^*$ is the adjoint of $d$ in the above sense and is called the \emph{formal adjoint} of $d$.

Finally, we introduce the Laplacian.
$$\Delta_d = d^* d + dd^*.$$
By the above discussion, it is formally self-adjoint.

Next, we try to address the nontrivial issue of the notion of differential operators.

\begin{remark}[differential operators; cf. \cite{Schapira} Ch. I., \S 1 and \cite{Warner} Remark 6.36]\label{remark: principal symbol} Assume $X \subset \mathbb{R}^m$ is a compact PA-space. We write $\widetilde{X}$ for the phase space to $X$ as introduced in Construction \ref{phase space construction}; i.e., each fiber of $\widetilde{X}$ is the Zariski cotangent space. Using the metric, we also have $\widetilde{X_{reg}} \subset X_{reg} \times (\mathbb{R}^m)^*.$

Let
$$\varphi : X_{reg} \to G(n, (\mathbb{R}^m)^*)$$
be the map to the real Grassmannian given by $\varphi(x) = \mathfrak{m}_{pol, \, x}/\mathfrak{m}_{pol, \, x}^2$. Then let
$$Q_X \subset \widetilde{X} \times \overline{\operatorname{im}(\varphi)}$$
be the closure of the set of all triples $(x, \xi, \varphi(x))$ consisting of $x \in X_{reg}, \xi \in \varphi(x)$. If $(x, \xi, V)$ is in $Q_X$, then, clearly, $\xi \in V$ and $V \subset \mathfrak{m}_{pol, \, x}/\mathfrak{m}_{pol, \, x}^2$.


Let $P : \Omega' \to \Omega'$ be a morphism of sheaves of vector spaces. Then, for each $\gamma = (a, \xi, V) \in Q_X$, if possible, define the linear map
$$p(\gamma): \wedge V \to \wedge V$$
by
$$p(\gamma)(dx^i) = (e^{-f} P(e^{f} dx^i))(a)$$
where $f$ is a section of $\mathcal{O}$ over some neighborhood of $x$ such that $(d f)(x) = 2\pi i \, \xi.$

We said ``if possible'' since the above $p$ may not be well-defined. For a lack of standard terminology, if the above $p$ is defined, $p(x, \xi, V)$ is a polynomial function in $\xi$ with coefficients in $\End(\wedge V) \simeq \End(\wedge \mathbb{R}^n)$ and those coefficients are continuous in $x$, then we shall call $P$ a \emph{classical differential operator with continuous coefficients} and $p$ the \emph{total symbol} of $P$.

For such $P$, we can then write
$$p(x, t\xi, V) = p_0(x, \xi, V) t^r + \cdots.$$
Then $r$ is called the \emph{order} of $P$ and $p_0$ the \emph{principal symbol} of $P$ denoted by $\sigma(P) = p_0$.

For example, if $P = d$, then the total symbol and the principal symbol are both $2\pi i \, \xi \wedge -$. If $P = *$ and $X$ is smooth, then the total and principal symbols are both $*$. If $X$ is not smooth, $*$ is not necessarily classical (not induced by an operator on $X$).

An \emph{elliptic operator} is a classical differential operator whose $p_0$ is continuous on $Q_X$ and $p_0(\gamma)$ is invertible for each $\gamma$ in $Q_X$.

(See also \S \ref{sec:differential operator} for the discussion of vector fields as differential operators.)
\end{remark}

Here is a simple example illustrating the situation in the above remark.

\begin{example}\label{Q_X example} Let $X = V(w^2 - z^3) \subset \mathbb{C}^2$. If $x$ is in $X_{reg} = X - 0$, then $\mathfrak{m}_{pol, x}/\mathfrak{m}_{pol, x}^2$ is spanned by either $\{ dz, d \bar{z} \}$ or $\{ dw, d \overline{w} \}$. Define the pairing by $\langle dz, d z\rangle = \langle d\bar z, d \bar z \rangle = 1, \langle dz, d\bar{z} \rangle = 0.$ Then $\mathfrak{m}_{pol, x}/\mathfrak{m}_{pol, x}^2$ is identified with $(\mathbb{R}^2)^* \times 0 \subset (\mathbb{R}^4)^*$. Now, if $(x, \xi, V)$ is in $Q_X$, then $V = (\mathbb{R}^2)^* \times 0$.
\end{example}

Like in the classical case, we shall prove the elliptic regularity (note smoothness is easy; the nontrivial part is square integrability). For that, we shall use a Sobolev space (an alternative is by pseudodifferential operators, which seems to be an overkill). To define that, we shall use the Fourier transformation. It is not clear how to define the Fourier transform of a $p$-current in general. Thus, we shall use:

\begin{lemma}\label{lem:current resolution} Assume $X$ is compact and fix some $X \hookrightarrow \mathbb{R}^m$. Let $x_i$'s be the standard coordinates on $\mathbb{R}^m$. Then we have:
$$\Omega'^{\, p} \hookrightarrow \bigoplus_{1 \le i_1 < \, \cdots \, < i_{n-p} \le m} \Omega'^{\, n}$$
given by $u \mapsto u_i$ where
$$\langle u_i, \varphi \rangle = \langle u, \varphi \, dx^i \rangle$$
and $dx^i = dx_{i_1} \wedge \cdots \wedge dx_{i_{n-p}}.$
\end{lemma}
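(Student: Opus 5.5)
The plan is to verify three things in turn: each $u_i$ is an $n$-current, the map $u \mapsto (u_i)_i$ is a morphism of sheaves, and it is injective. The key input is that, on a compact $X \subset \mathbb{R}^m$, test forms of type $n-p$ are generated over $\mathcal{O}_X$ by the restricted coordinate forms $dx^i$.

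\textbf{Well-definedness.} Fix a compact $K$ and $\varphi \in \Gamma_K(X, \mathcal{O})$, i.e. $\varphi \in \Gamma_K(X, \Omega^0_{PA})$. Then $\varphi \, dx^i$ is a section of $\Omega^{n-p}$ with support in $K$. The first step is to show that $\varphi \mapsto \varphi\, dx^i$ is continuous for $\rho$. For a reduced simplex $c \in X_{n-p}$ we have
$$|\langle \varphi\, dx^i, c \rangle| \le \sup|\varphi| \int_c |dx^i|.$$
Because $|dx^i| \le \mu_c$ pointwise and $\vol(c) \le 1$, this gives $\rho(\varphi\, dx^i) \le C \rho(\varphi)$. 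Composing with the continuity of $u$ on $\Gamma_K$ then shows $u_i \in \Omega'^{\, n}$. By Proposition \ref{prop:dense test forms} it suffices to test against $\mathcal{O}$ (or continuous functions) in degree $0$. Compatibility with restrictions to open subsets is immediate from the formula, so we get a sheaf morphism.

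\textbf{Injectivity.} Suppose $u_i = 0$ for all $i$. We must show $\langle u, \varphi\rangle = 0$ for every $\varphi \in \Omega^{n-p}_{cpt}$. By Proposition \ref{prop:dense test forms} we may work with $\Omega^{n-p}$ instead of $\Omega^{n-p}_{PA}$, so that $\varphi$ is locally a sum of terms $f_0\, df_1 \wedge \cdots \wedge df_{n-p}$ with $f_j \in \mathcal{O}$.

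\textbf{Main obstacle.} We need to show that such a form lies in the $\rho$-closure of
$$\sum_i \Gamma_K(X, \mathcal{O})\, dx^i.$$
The difficulty is that $f_j$ is only definable and continuous, not polynomial, so $df_j$ is not literally an $\mathcal{O}$-combination of the $dx_k$ with bounded coefficients. I would approximate in two stages.

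First, by Stone--Weierstrass, approximate each $f_j$ uniformly on a neighborhood of $K$ by polynomials $g_j$. Then $dg_j = \sum_k \partial_k g_j\, dx_k$, so
$$f_0\, dg_1 \wedge \cdots \wedge dg_{n-p} \in \sum_i \mathcal{O}\, dx^i.$$

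Second, I would show that the $\rho$-distance from $f_0\, df_1\wedge\cdots$ to $f_0\, dg_1\wedge\cdots$ tends to zero. The approach is to telescope one factor at a time and bound terms of the form $\int_c f_0\, d(f_j - g_j) \wedge \eta$ using Stokes' formula (Lemma \ref{Stokes}):
$$\int_c h\, d(f_j-g_j)\wedge\eta = \pm\int_{\partial c} h\,(f_j-g_j)\,\eta \mp \int_c (f_j-g_j)\, d(h\eta).$$
The first term is controlled by the bound on boundary volume, which is exactly why $X_{n-p}$ includes a boundary-volume bound. The second term requires $f_0$ and the remaining factors to be smooth enough that $d(h\eta)$ has controlled mass. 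To arrange this, I would first approximate $f_0$ itself by polynomials, and proceed by induction on the number of non-polynomial factors. Each step replaces one $f_j$ by $g_j$ and costs only $\sup|f_j - g_j|$ times a constant depending on the already-polynomial factors.

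\textbf{Conclusion.} Given the approximation, we have $\langle u, \psi\, dx^i\rangle = \langle u_i, \psi\rangle = 0$ for every term in the approximating family. Continuity of $u$ on $\Gamma_K$ then yields $\langle u, \varphi\rangle = 0$. A partition of unity (Proposition \ref{partition of unity}) reduces the general compactly supported $\varphi$ to the local case, which completes the proof of injectivity.
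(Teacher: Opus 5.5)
Your overall route is the paper's: show that the span of the $dx^i$ (over $\mathcal{O}$, or over polynomials as in the paper) is dense among the test forms, then dualize. The paper does the dualization through Lemma \ref{Serre lemmma}, and your explicit $\rho$-continuity check for $u_i$ is a step the paper leaves implicit. The paper gets the density from a one-line appeal to Stone--Weierstrass for continuous forms, and you are right that elements of $\Omega^{n-p}$ need not be continuous.

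Your repair in Stage 2, however, does not close. In the Stokes identity used to replace $f_j$, two forms still contain $df_{j+1},\dots,df_{n-p}$: the form $h\eta$ integrated over $\partial c$, and $d(h\eta)=dh\wedge\eta$ integrated over $c$. The latter also contains $dh$ unless $h$ is already polynomial. So the cost is $\sup|f_j-g_j|$ times $\sup_c\int_{\partial c}|h\eta|+\sup_c\int_c|dh\wedge\eta|$, not a constant depending only on the already-polynomial factors. Such suprema over chains of bounded volume are infinite as soon as one factor has unbounded differential. For example, zigzags of length about $1$ in the strip $\{0\le x\le\delta\}$ give $\int_c|d(x^{1/2})|\approx\delta^{-1/2}$. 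The induction also has no starting point. Replacing $f_0$ first leaves no derivative to move, and costs $\sup|f_0-g_0|\cdot\sup_c\int_c|df_1\wedge\cdots\wedge df_{n-p}|$. Stokes controls a replacement only when every other factor is already smooth, which is circular once two factors (counting $f_0$) are non-smooth.

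In fact the density you are trying to prove is false in general. On $X=[0,1]^2$ with the Euclidean metric, take $\varphi=y^{1/4}\,d(x^{1/2})\in\Omega^1(X)$. On the loop $\partial([0,\delta]^2)$, of length $4\delta$, only the top edge contributes, giving $|\int\varphi|=\delta^{3/4}$. Running around it $N\approx 1/(4\delta)$ times (or along a nearby embedded spiral) gives a closed chain of volume at most $1$ with $|\int_c\varphi|\approx\delta^{-1/4}/4$. Hence $\rho(\varphi)=\infty$. Your own first step shows $\rho$ is finite on $\sum_i\Gamma_K(X,\mathcal{O})\,dx^i$, so $\varphi$ is not in its $\rho$-closure, and $\rho$-continuity of $u$ says nothing about $\langle u,\varphi\rangle$.

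Injectivity can therefore only be established on test forms of finite norm, i.e. the completion of Proposition \ref{prop:dense test forms}. The workable argument is to approximate inside the $\rho$-closure of continuous forms $\sum_i a_i\,dx^i$. There $\rho\le C\max_i\sup|a_i|$, and Stone--Weierstrass applied to the coefficients gives density of polynomial forms; that is all the paper's appeal to Stone--Weierstrass actually uses. Your Stokes device is valid only in the special case where exactly one factor is non-smooth.
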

\begin{proof} Note the $dx^i$'s span $\Omega_{pol}^p(U)$ over $\mathcal{O}_{pol}(U)$, an open subset $U \subset X$. Thus, we have the presentation:
$$\bigoplus_{i_1 < \, \cdots \, < i_{n-p}} \mathcal{O}_{pol} \overset{\pi}\to \Omega_{pol}^{n-p} \to 0$$
given by $f_i \mapsto \sum_i f_i \, dx^i$.

Now, we note that $\Omega_{pol}^p(K)$ is dense in $\Omega_{PA}^p(K)$ for each compact subset $K \subset U$. Indeed, we know $\Omega^p(K)$ is dense in $\Omega_{PA}^p(K)$ by Proposition \ref{prop:dense test forms}. On the other hand, by the Stone--Weierstrass theorem; that is, a version of it for continuous forms, $\Omega_{pol}^p(X)$ is dense in $\Omega^p(X)$.

Then, by Lemma \ref{Serre lemmma}, taking the dual, we get:
$$0 \to \Omega'^{\, p} \overset{\pi^*}\to \bigoplus_{i_1 < \cdots < i_{n-p}} \Omega'^{\, n}.$$


\end{proof}

\begin{construction}[Fourier transformation]\label{Fourier transformation} Let us be in the setup of the preceding Lemma \ref{lem:current resolution} as well as Remark \ref{remark: principal symbol}. Note each $0$-current on $X$ extends uniquely to a continuous linear functional on the completion $\overline{\mathcal{O}}(X)$.

Then, for each $\xi$ in $(\mathbb{R}^m)^*$, we define the \emph{Fourier coefficient $\widehat{u}(\xi)$ of $u$} for $\xi$:
$$\widehat{u}(\xi) = (\widehat{u}_i(\xi) \mid i_1 < \cdots < i_{n-p}) \in \mathbb{C}^{\binom{m}{n - p}}$$
where
$$\widehat{u}_i(\xi) = \langle u_i, e^{-2\pi i \, \xi|_X} \rangle.$$


\end{construction}

A key result is of course the inversion formula:

\begin{prop}[Fourier inversion formula]\label{Fourier inversion formula} Assume $m = n$. In the setup of Construction \ref{Fourier transformation}, $\widehat{\widehat{u}}$ can also be defined and $$u = \tau^*\widehat{\widehat{u}}$$
where $\tau : \mathbb{R}^m \to \mathbb{R}^m, \, x \mapsto -x$.
\end{prop}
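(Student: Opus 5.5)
The plan is to reduce the statement to the classical Fourier inversion formula for compactly supported distributions on $\mathbb{R}^n$, using that $m = n$. In that case a compact $X \subset \mathbb{R}^n$ of pure dimension $n$ has $X_{reg}$ open in $\mathbb{R}^n$. A $p$-current $u$ on $X$ is then encoded, via Lemma \ref{lem:current resolution}, by finitely many $n$-currents $u_i$, one for each multi-index $i_1 < \cdots < i_{n-p}$. Each $u_i$ is a continuous linear functional on $\mathcal{O}(X)$, and hence on its completion (which contains the continuous functions, by the proof of Proposition \ref{prop:dense test forms}). Extending by zero outside $X$, I will view $u_i$ as a compactly supported distribution (in fact a Radon measure, since continuity is with respect to the sup norm $\rho$) on $\mathbb{R}^n$. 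Then $\widehat{u}_i(\xi) = \langle u_i, e^{-2\pi i \xi}\rangle$ is the usual Fourier transform of a compactly supported measure. It is a bounded, smooth (indeed entire) function of $\xi$, with $|\widehat{u}_i(\xi)| \le \|u_i\|_{op}\,\rho(e^{-2\pi i\xi}) = \|u_i\|_{op}$.

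The first step is to make sense of $\widehat{\widehat{u}}$. Since $\widehat{u}_i$ is bounded and continuous on $(\mathbb{R}^n)^*$, it defines a tempered distribution, namely an $n$-current on $\mathbb{R}^n$ paired against test forms $\psi\,dx_1\wedge\cdots\wedge dx_n$ by $\int \widehat{u}_i \psi$. I would define $\widehat{\widehat{u}}_i$ as the distributional Fourier transform: $\langle \widehat{\widehat{u}}_i, \psi\rangle = \langle \widehat{u}_i, \widehat{\psi}\rangle$ for Schwartz $\psi$. By Fubini for currents (Proposition \ref{Fubini current}), this equals $\langle u_i, \widehat{\widehat{\psi}}|_X\rangle$, with the kernel $e^{-2\pi i x\xi}$ playing the role of $\varphi$ on $X \times (\mathbb{R}^n)^*$. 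Reassembling the components through the inverse of the embedding in Lemma \ref{lem:current resolution} then gives $\widehat{\widehat{u}}$ as a $p$-current, once we know its components lie in the image.

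The second step is the inversion itself. By the classical inversion on Schwartz functions, $\widehat{\widehat{\psi}} = \tau^*\psi$. Hence $\langle \widehat{\widehat{u}}_i, \psi\rangle = \langle u_i, \tau^*\psi|_X\rangle = \langle \tau^* u_i,\psi\rangle$, which is to say $\tau^*\widehat{\widehat{u}}_i = u_i$ for every $i$. Injectivity in Lemma \ref{lem:current resolution} then yields $u = \tau^*\widehat{\widehat{u}}$. Two points need care here: identifying $\tau^*$ on components with $\tau^*$ on forms, which costs a sign $(-1)^{n-p}$ from $\tau^* dx^i$, and identifying test forms on $X$ with restrictions of Schwartz forms. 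The second point follows from density of polynomial forms (the proof of Lemma \ref{lem:current resolution}) together with continuity in $\rho$.

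The main obstacle I expect is justifying the Fubini interchange. Proposition \ref{Fubini current} needs an integrable $\psi$ on a space $Y$, and $Y = (\mathbb{R}^n)^*$ is not compact. My first attempt would be to truncate with a cutoff $\chi(\xi/R)$, apply Fubini on the compact pieces, and pass to the limit $R\to\infty$. For that limit, I would use that $\widehat{\psi}$ is Schwartz, so that $\int \chi(\xi/R)\widehat{\psi}(\xi)e^{2\pi i x\xi}\,d\xi \to \widehat{\widehat{\psi}}(x)$ uniformly on $X$, that is, in $\rho$, and combine this with continuity of $u_i$. A secondary issue is that $\tau$ need not preserve $X$. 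I would handle this by reading $\tau^*\widehat{\widehat{u}}$ as a current on $\mathbb{R}^n$ supported in $X$, and then restricting.
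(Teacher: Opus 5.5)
Your argument is correct, but it is organized differently from the paper's proof.

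\textbf{The paper's route.} The paper reduces to $p=n$ and works with $u$ itself. It damps the double Fourier integral by the Gaussian $g(t\xi)$. It then uses Fubini for currents to move the $\xi$-integral inside the pairing, and evaluates that inner integral as the rescaled Gaussian $g((\cdot-x)/t)/t^n$, using only $\widehat g=g$. This gives $\langle \tau^*(\widehat u\,g(t\cdot))^{\wedge},\varphi\rangle=\langle u,\varphi*g(\cdot/t)/t^n\rangle$. Letting $t\to 0$, using that $\widehat u$ is bounded, finishes the proof. In effect the paper runs Gauss--Weierstrass summability directly on the current. That makes it self-contained, and it implicitly defines $\widehat{\widehat u}$ as the $t\to 0$ limit.

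\textbf{Your route.} You identify each $u_i$ as a finite Radon measure and define $\widehat{\widehat u}_i$ by transposition against Schwartz functions. You then push the inversion onto the test function and cite $\widehat{\widehat\psi}=\tau^*\psi$. The Gaussian argument is thereby hidden inside the classical theorem you quote, which is typically proved that way. What your route buys:
\begin{enumerate}
\item a precise meaning for $\widehat{\widehat u}$ as a tempered distribution, which the statement leaves vague;
\item an entirely elementary interchange step.
\end{enumerate}

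\textbf{The interchange step.} Your truncation by $\chi(\xi/R)$ is unnecessary. Proposition~\ref{Fubini current} does not require $Y$ to be compact; it asks for compact support of $u$ and integrability of $\psi$. Its real mismatch with your situation, and with the paper's own use of it, is different: it is stated for $\varphi$ and $\psi$ in $\Omega$, i.e.\ definable forms, and $e^{-2\pi i x\xi}$ and $\widehat\psi$ are not definable. Since you already have $u_i$ as a measure, classical Fubini--Tonelli applies outright, because $\iint |\widehat\psi(\xi)|\,d|u_i|(x)\,d\xi=\|u_i\|\,\|\widehat\psi\|_{L^1}<\infty$. Your treatment of the interchange is therefore actually more robust than the paper's.

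\textbf{Minor bookkeeping.}
\begin{enumerate}
\item In the paper's convention, a bounded function paired against $\psi\,dx_1\wedge\cdots\wedge dx_n$ is a $0$-current, not an $n$-current.
\item Your sign $(-1)^{n-p}$ only arises if you reassemble $\widehat{\widehat u}$ into a $p$-current and apply the form-level $\tau^*$. If you read $\tau^*\widehat{\widehat u}$ componentwise, as the paper implicitly does by defining $\widehat u$ as the tuple $(\widehat u_i)_i$ and reducing to $p=n$, the sign disappears.
\item Both proofs must interpret $\tau^*\widehat{\widehat u}$ as a current on $\mathbb{R}^n$, since $\tau$ need not preserve $X$. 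You flag this; the paper does not.
\end{enumerate}
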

\begin{proof} It is enough to prove the case when $p = n$.

There are at least two proofs of the inversion formula: the one using Fubini's theorem and the other more of a representation-theory flavor. Here we shall use the first one.

Let $g(x) = e^{-\pi |x|^2}$ be the standard Gaussian function on $\mathbb{R}^m$. Then we have $\widehat{g} = g$ using $\int_{-\infty}^{\infty} e^{-\pi(x + ia)^2} \, dx = 1$ for each real number $a$.

By Fubini's theorem for currents (Proposition \ref{Fubini current}), for $t > 0$, we have:
$$\int \langle u, e^{-2 \pi i \xi} \rangle \, e^{2 \pi i \xi(x)} \, g(t \xi) \, d\xi = \left\langle u, \int \, e^{-2 \pi i \xi(\cdot - x)} \, g(t \xi) \, d\xi \right \rangle.
$$
Now, the integration appearing on the right equals
$$g(t \cdot )^{\wedge}(\cdot - x) = 
\widehat{g}((\cdot - x)/t)/t^n = g((\cdot - x)/t)/t^n.$$
Thus, by Fubini again, we get
$$\left \langle \tau^* \widehat{\widehat{u} \, g(t \cdot)}, \varphi \right \rangle = \langle u, \varphi * g(\cdot /t) / t^n \rangle.$$
Note $\langle u, e^{-2\pi i \xi} \rangle$ is bounded by a constant only depending on $u$ (since $|e^{-2\pi i \xi}| = 1$). Thus, letting $t \to 0$ then gives the assertion.
\end{proof}

\begin{corollary}\label{cor:integrable Fourier} If $\widehat{u}$ is integrable, then $u$ is continuous; in fact, Lipschitz continuous in the sense
$$| u(x) - u(y)| \le 2\pi \|\widehat{u}\|_{L^1} |x - y|.$$
\end{corollary}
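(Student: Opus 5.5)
The plan is to obtain $u$ as an absolutely convergent Fourier integral via Proposition \ref{Fourier inversion formula}. Continuity will then follow from dominated convergence, and the Lipschitz estimate from a pointwise bound on the difference of the exponentials. As in the proof of that proposition, I reduce to $p = n$ with $m = n$, so $u$ is a single $n$-current and $\widehat{u}$ is a scalar function on $(\mathbb{R}^n)^*$.

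\textbf{Step 1 (pointwise formula).} Assuming $\widehat{u} \in L^1$, define
$$v(x) = \int \widehat{u}(\xi)\, e^{2\pi i \xi(x)}\, d\xi.$$
This integral converges absolutely for every $x$. Next, I identify the current $u$ with the function $v$ times the volume form. Take the identity from the proof of Proposition \ref{Fourier inversion formula},
$$\left\langle \tau^*\widehat{\widehat{u}\,g(t\cdot)}, \varphi \right\rangle = \langle u, \varphi * g(\cdot/t)/t^n\rangle .$$
Because $\widehat{u} \in L^1$ and $|g(t\xi)| \le 1$ with $g(t\xi) \to 1$, dominated convergence lets me remove the Gaussian damping on the left before pairing with $\varphi$. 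Pairing is justified by Fubini for currents, Proposition \ref{Fubini current}. As $t \to 0$ this gives $\langle u,\varphi\rangle = \int v\,\varphi$ for every test form $\varphi$. So $u = v$ in the sense of Lemma \ref{lem:PA gives a current}, and I write $u(x) := v(x)$.

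\textbf{Step 2 (continuity).} Let $x_j \to x$. The integrands $\widehat{u}(\xi)e^{2\pi i\xi(x_j)}$ converge pointwise and are dominated by $|\widehat{u}(\xi)| \in L^1$. Hence $u(x_j) \to u(x)$.

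\textbf{Step 3 (the Lipschitz bound).} First,
$$|u(x)-u(y)| \le \int |\widehat{u}(\xi)|\,\bigl|e^{2\pi i\xi(x)} - e^{2\pi i\xi(y)}\bigr|\,d\xi .$$
Second, $|e^{ia}-e^{ib}| \le |a-b|$, so
$$\bigl|e^{2\pi i\xi(x)}-e^{2\pi i\xi(y)}\bigr| \le 2\pi|\xi(x-y)| \le 2\pi|\xi|\,|x-y| .$$
To reach exactly the stated constant $2\pi\|\widehat{u}\|_{L^1}$, the factor $|\xi|$ has to be absorbed. I would try to do this using the compactness of $X$ and the normalization of $\xi$ in Construction \ref{Fourier transformation}: the functions $e^{-2\pi i\xi|_X}$ enter only through their restriction to $X$ inside $\mathbb{R}^m$, and I would try to show that only a bounded region of $\xi$ (after rescaling, $|\xi|\le 1$) contributes to $\widehat{u}$.

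\textbf{Main obstacle.} Step 3 is where the argument may fail. A priori the bound only gives the weighted integral $\int|\xi||\widehat{u}(\xi)|\,d\xi$, and there is no general mechanism that turns it into the unweighted $\|\widehat{u}\|_{L^1}$. I would first try to justify the absorption from the setup of Construction \ref{Fourier transformation}. If that is not possible, I would use the other elementary bound $|e^{ia}-e^{ib}|\le 2$: splitting the $\xi$-integral into $|\xi|\le 1$ and $|\xi|>1$ yields
$$|u(x)-u(y)| \le 2\pi\|\widehat{u}\|_{L^1}|x-y| + 2\int_{|\xi|>1}|\widehat{u}(\xi)|\,d\xi .$$
This is the stated Lipschitz estimate only up to a tail term, so in that case I would not regard the corollary as proved as worded. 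It would still give uniform continuity of $u$, with modulus controlled by $\|\widehat{u}\|_{L^1}$ and the tail of $\widehat{u}$.
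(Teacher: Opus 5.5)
Your Steps~1--2 follow the paper's route. The paper also sets $v(x)=\int\widehat{u}(\xi)\,e^{2\pi i\langle x,\xi\rangle}\,d\xi$ and identifies $v$ with $u$ through Proposition~\ref{Fourier inversion formula}. Your dominated-convergence argument for continuity is the sound one; the paper instead deduces continuity from the Lipschitz bound.

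Your worry about Step~3 is justified, and you should abandon the attempt to absorb the factor $|\xi|$. The inequality $|u(x)-u(y)|\le 2\pi\|\widehat{u}\|_{L^1}|x-y|$ is false, so no argument can close that step. After translating, let $B(0,r)$ lie in the interior of $X$. Fix a nonzero $\phi\in\mathcal{C}^\infty_c(B(0,r))$ and put $u_\lambda(x)=\phi(\lambda x)$ for $\lambda\ge 1$. Then $\widehat{u_\lambda}(\xi)=\lambda^{-n}\widehat{\phi}(\xi/\lambda)$, so $\|\widehat{u_\lambda}\|_{L^1}=\|\widehat{\phi}\|_{L^1}$ for every $\lambda$. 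However, the Lipschitz constant of $u_\lambda$ is $\lambda\sup|\nabla\phi|\to\infty$.

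Your specific mechanism also fails for a structural reason. Since $X$ is compact, $\zeta\mapsto e^{-2\pi i\zeta|_X}$ is holomorphic into the sup-normed completion, so $\widehat{u}$ extends to an entire function on $\mathbb{C}^m$. It therefore cannot vanish for all large $|\xi|$ unless $u=0$. The paper's one-line proof reaches the bound only through the incorrect identity $v(x)-v(y)=\int\widehat{u}\,e^{2\pi i\langle x-y,\cdot\rangle}\,d\xi$. The right side is $v(x-y)$, which tends to $v(0)$ as $x\to y$ and so is not even $O(|x-y|)$.

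What your computation does prove is the correct replacement. First, $u$ is uniformly continuous, with $|u(x)-u(y)|\le\inf_{R>0}\bigl(2\pi R\,|x-y|\,\|\widehat{u}\|_{L^1}+2\int_{|\xi|>R}|\widehat{u}|\,d\xi\bigr)$. Second, whenever $\int|\xi|\,|\widehat{u}(\xi)|\,d\xi<\infty$, $u$ is Lipschitz with constant $2\pi\int|\xi|\,|\widehat{u}(\xi)|\,d\xi$; this is your first display in Step~3, with no absorption.

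The weighted form is all that is needed where the corollary is used, in Proposition~\ref{harmonic prop}. There $u\in\bigcap_s W^s$, and Cauchy--Schwarz with $s>m/2+1$ bounds $\int(1+|\xi|)|\widehat{u}|\,d\xi$ by a constant times $\|u\|_{(s)}$. So state and prove the corollary in that corrected form, not as worded.
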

\begin{proof} Let $v(x) = \int \widehat{u} \, e^{2\pi i \langle x, - \rangle} d \, \xi$. Then we have
$$|v(x) - v(y)| = \left|\int \widehat{u} \, e^{2\pi i \langle x - y, - \rangle } d\, \xi \right| \le 2\pi \|\widehat{u}\|_{L^1} |x - y|.$$
Thus, $v$ is continuous; in particular, is a current and as currents, $v = \tau^*\widehat{\widehat{u}} = u$.
\end{proof}


\begin{remark}[Plancherel theorem]\label{remark:Plancherel} The Fourier inversion implies the Plancherel theorem, the unitarity of the Fourier transformation. Indeed, first note the Fourier transformation of $\Omega_{\mathcal{C}^{\infty}}(X_{reg})$ is in $\Omega_{L^2}(\mathbb{R}^m)$ with $\|\widehat{f}\| \le \|f\|$. Thus, by uniform continuity, it extends uniquely to the continuous operator
$$\widehat{\cdot} : \Omega_{L^2}(X) \to \Omega_{L^2}(\mathbb{R}^m).$$
It is then unitary onto the image since $\|\widehat{f}\| \le \|f\| = \|\widehat{\widehat{f}}\| \le \|\widehat{f}\|.$

\end{remark}

Using the Fourier transformation, we then have:

\begin{definition}[Sobolev space]\label{Sobolev space} For each real number $s$, if $X$ is compact and embedded in $\mathbb{R}^m$, let
$$\|u\|_{(s)} = \|(1+|\cdot|^2)^{s/2} \, \widehat{u}\|_{L^2}$$ be the Sobolev norm on $X$. Then we let $W^s(X) \subset \Omega'(X)$ be the subspace consisting of all $u$ such that $\|u\|_{(s)} < \infty$.

\end{definition}

\begin{prop}[Gårding's inequality; cf. \cite{Faltings} Theorem 3.2.]\label{Garding} Assume $X$ is compact and contained in $\mathbb{R}^m$.

Let $P : \Omega'(X) \to \Omega'(X)$ be a linear operator such that for each trivial $F \subset \widetilde{X}$, $P|_F$ is elliptic.

Then there exists a constant $c$ such that for each trivial $F \subset \widetilde{X}$,
$$c \, \|u\|_{(s+r)} \le \|u\|_{(s)} + \| P \, u \|_{(s)}$$
where the Sobolev norms are defined with respect to $F$.
\end{prop}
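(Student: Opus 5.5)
We follow the classical proof of Gårding's inequality via the Fourier transform, as in \cite[Theorem 3.2.]{Faltings}. The strategy is to freeze coefficients locally, invert the principal symbol at each frozen point, and then patch the local estimates together using the compactness of $X$ and the continuity of the symbol on $Q_X$.

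\textbf{Step 1 (constant coefficients).} Fix $\gamma_0=(a,\xi_0,V)\in Q_X$ and let $P_0$ be the operator whose total symbol is the frozen principal symbol $p_0(a,\cdot,V)$. Since $p_0$ is a homogeneous polynomial of degree $r$ in $\xi$ with values in $\End(\wedge V)$, invertible on the unit sphere of $V$, compactness of that sphere gives
$$|p_0(a,\xi,V)v|\ge c_0|\xi|^r|v|.$$
Using the Fourier coefficients of Construction \ref{Fourier transformation} and the identity $\widehat{P_0u}(\xi)=p_0(a,\xi,V)\widehat u(\xi)$, which comes from the definition of the total symbol via $e^{-f}P(e^f\,\cdot)$, this yields
$$\|u\|_{(s+r)}\le C(\|P_0u\|_{(s)}+\|u\|_{(s)}).$$
Here the low-frequency region $|\xi|\le 1$ is absorbed into $\|u\|_{(s)}$.

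\textbf{Step 2 (small support).} Since $p_0$ is continuous on $Q_X$ and $Q_X$ is compact (it is a closed subset of a compact set lying over compact $X$), the constant $c_0$ can be chosen uniformly in $\gamma_0$. For $u$ supported in a small neighbourhood $U$ of $a$ inside the trivial piece $F$, write $Pu=P_0u+(P-P_0)u$. The operator $P-P_0$ consists of two parts:
\begin{enumerate}
\item the top-order part with coefficients that are small on $U$, by continuity of the coefficients in $x$;
\item lower-order terms.
\end{enumerate}
This gives
$$\|(P-P_0)u\|_{(s)}\le\epsilon\|u\|_{(s+r)}+C_\epsilon\|u\|_{(s+r-1)}.$$
Absorbing the $\epsilon$ term into the left-hand side, and interpolating $\|u\|_{(s+r-1)}\le\delta\|u\|_{(s+r)}+C_\delta\|u\|_{(s)}$ (by Cauchy--Schwarz on the Fourier side), gives the estimate for $u$ supported in $U$.

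\textbf{Step 3 (patching).} Cover $X$ by finitely many such neighbourhoods and take a subordinate partition of unity $\lambda_j$ as in Proposition \ref{partition of unity}, smooth on $X_{reg}$. Then
$$\|u\|_{(s+r)}\le\sum_j\|\lambda_ju\|_{(s+r)},$$
and the commutator $[P,\lambda_j]$ has order $r-1$. Controlling it again by interpolation yields the uniform constant $c$. Finiteness of the cover is where compactness of $X$ enters, and uniformity over all trivial $F$ comes from the compactness of $Q_X$.

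\textbf{Main obstacle.} The hard part is Step 2, specifically justifying that multiplication by a merely continuous function, and likewise the commutator with $\lambda_j$ or the frozen-coefficient difference, is bounded on the Sobolev spaces $W^s(X)$ defined through the ambient Fourier transform on $\mathbb{R}^m$, with the symbol calculus only available on $Q_X$. I would first try to restrict to $s$ with $0\le s$ and $s+r$ integer, working in $L^2$ via Plancherel (Remark \ref{remark:Plancherel}). In that range, multiplication by a continuous function is bounded on $L^2$ with operator norm equal to the sup norm, which is exactly what makes the small-oscillation bound of Step 2 work. The general $s$ would then be treated by duality and interpolation.
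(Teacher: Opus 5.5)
Your route is the paper's: localize by a partition of unity, freeze the symbol at a point, get $|p_0(\gamma_0)v|\ge c|\xi|^r|v|$ from ellipticity, make the frozen-coefficient error small by shrinking, and handle the lower-order part. Your bookkeeping is the correct version of the paper's. The paper bounds the frozen-coefficient error by $\|u\|_{(s)}$ rather than $\epsilon\|u\|_{(s+r)}$, and it removes $\|u\|_{(s+r-1)}$ by recursing in $s$, so it needs the inequality at $s-1,\dots,s-r+1$; your interpolation stays at one $s$.

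However, the step you call the main obstacle is a genuine gap, and your remedy does not close it. The bound $\|(P-P_0)u\|_{(s)}\le\epsilon\|u\|_{(s+r)}+C_\epsilon\|u\|_{(s+r-1)}$ needs multiplication by the coefficients of $P$ to act boundedly on $W^s$. For merely continuous coefficients this holds only at $s=0$. For $s>0$ the $(s)$-norm involves derivatives that fall on the coefficients, so the sup-norm bound on $L^2$ is beside the point. For $s<0$, duality needs boundedness on $W^{-s}$, which is the same problem.

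Concretely (the phenomenon is local at an interior point), take $X=[0,1]$ and $P=b\,\frac{d}{dx}$ with $b=1+|x-\tfrac12|^{1/2}\in\mathcal{O}_X(X)$; this $P$ is elliptic. Freezing at $\tfrac12$ gives $(P-P_0)u=|x-\tfrac12|^{1/2}u'$, whose derivative is $\tfrac12\operatorname{sgn}(x-\tfrac12)|x-\tfrac12|^{-1/2}u'+|x-\tfrac12|^{1/2}u''$. Take any neighbourhood $U$ of $\tfrac12$ and $u\in\mathcal{C}^\infty_c(U)$ with $u'(\tfrac12)\neq0$. Then the first term is not in $L^2$, so your Step 2 inequality at $s=r=1$ has left side $+\infty$ and finite right side.

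This is not an artefact of the method. Let $\psi\in\mathcal{C}^\infty_c((0,1))$ with $\psi(\tfrac12)\neq0$ and $\int\psi/b=0$, and set $u=\int_0^x\psi/b$. Then $u\in W^1$ and $Pu=\psi\in W^1$, but $u''=\psi'/b-\psi b'/b^2\notin L^2$, so $\|u\|_{(2)}=\infty$. Thus the inequality for arbitrary currents, which is how Proposition \ref{harmonic prop} bootstraps with it, is false at $s=r=1$. Your argument, like the paper's (whose ``shrinking $X$'' bound is exactly this unproved step), yields only $s=0$. General $s$ needs more regular coefficients, e.g.\ bounded derivatives up to order $|s|$.

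Several smaller repairs are also needed.
\begin{itemize}
\item $Q_X$ is not compact, since $\xi$ is unbounded. The uniform constant comes from homogeneity of $p_0$ together with compactness of $\{(x,\xi,V)\in Q_X : |\xi|=1\}$, which is closed and bounded in $X\times(\mathbb{R}^m)^*\times G(n,(\mathbb{R}^m)^*)$. The paper merely asserts this uniformity.
\item Proposition \ref{partition of unity} only gives $\lambda_j\in\mathcal{O}_X(X)$, which need not be smooth on $X_{reg}$ nor have bounded derivatives. So $[P,\lambda_j]$ need not be of order $r-1$ with bounded coefficients. Use cut-offs with bounded derivatives up to order $r$, e.g.\ normalized semialgebraic $\mathcal{C}^r$ bumps $(1-|x-x_0|^2/\delta^2)_+^{r+1}$.
\item Absorbing $\epsilon\|u\|_{(s+r)}$ presupposes $\|u\|_{(s+r)}<\infty$, so what you obtain is an a priori estimate. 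The paper uses the inequality to prove finiteness, which requires an additional smoothing argument.
\item Step 1 controls $\widehat u(\xi)$ only for $\xi\in V$. If $m>n$ and the norm integrates over all of $(\mathbb{R}^m)^*$, then even a smooth function on a flat piece of $X$ has $\|u\|_{(s)}=\infty$ for $s\ge-(m-n)/2$, because $\widehat u$ is constant in the normal directions. So the norms must be taken over the fibre of $F$, as the statement prescribes (or with $m=n$, as in Proposition \ref{Fourier inversion formula}), and $\widehat{P_0u}=p_0\widehat u$ must be checked for that restricted transform.
\end{itemize}
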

\begin{proof} We note that, by means of a partition of unity, it is enough to show the estimate for arbitrary small $X$. Translating $X$, we assume the origin $0$ lies in $X$.

Let $p$ be the total symbol of $P$. We write $p = p_0 + q$ where $p_0$ is the principal symbol. For the clarity of the argument, first we consider the case $p = p_0$.

For $\gamma = (x, \xi)$ in $F$ and $f = 2\pi i \, \xi$, since $P$ is homogeneous of degree $r$, if it is graded of degree $l$, we have:
\begin{align*}
(\widehat{Pu})_j(\xi) &= \langle P u, e^{-f} dx^j \rangle = (-1)^l \langle u, P(e^{-f} dx^j) \rangle = (-1)^l \langle u, p(-\xi) dx^j\rangle \\
&= \langle p(\xi) u, e^{-f} dx^j \rangle.
\end{align*}
Thus,
$$\|Pu\|_{(s)} = \left( \int |\langle p(\gamma) u, e^{-f} \rangle|^2(1+|\xi|^2)^s \, d\xi \right)^{1/2}$$
where, more precisely, $|\cdot|$ involves the summation over $j$ but we omit that for notational simplicity.

Let $\gamma_0 = (0, \xi)$. Since $P$ is elliptic on $F$, we have a constant $c$ \emph{independent of $F$} such that 
$$|\langle p(\gamma_0) u, e^{-f} \rangle |  \ge c|\xi|^r |\widehat{u}|.$$
Since the minimum value of $\displaystyle \frac{1 + t^{2r}}{(1 + t^2)^r}$ is $2^{1 - r}$, we have:
$$c \|u\|_{(s)}^2 + \int |\langle p(\gamma_0) u, e^{-f} \rangle|^2 (1+|\xi|^2)^s \, d\xi \ge 2^{-r} c \int |\widehat u|^2 (1+|\xi|^2)^{s+r} \, d\xi.$$
Also, shrinking $X$, we can have
$$\int |\langle (p(\gamma) - p(\gamma_0)) u, e^{-f} \rangle|^2 |\widehat u|^2(1+|\xi|^2)^s \, d\xi \le \|u\|_{(s)}.$$

Using $a + b \ge (a^2 + b^2)^{1/2}$, it follows
$$c \|u\|_{(s)} + \|P u\|_{(s)} \ge 2^{-r} c \|u\|_{(s+r)} - \|u\|_{(s)},$$
which is the assertion when $p = p_0$.

In general, as in the first part, we have:
$$\|P u\|_{(s)} + (1 + c)\|u\|_{(s)} \ge 2^{-r} c \|u\|_{(s+r)} - \left( \int |(q(\gamma) u)^{\wedge}|^2(1+|\xi|^2)^s \, d\xi \right)^{1/2}.$$
Now,
$$|(q(\gamma_0) u)^{\wedge}| = |q(\gamma_0) \widehat{u}| \le \|q(\gamma_0) \| |\widehat{u}|$$
since $q$ is continuous. Also, shrinking $X$, we can have
$$\int |(q(\gamma) - q(\gamma_0)) u)^{\wedge}|^2 |\widehat u|^2(1+|\xi|^2)^s \, d\xi \le \|u\|_{(s)}.$$
Hence,
$$\|u\|_{(s + r)} \le c' (\|P u\|_{(s)} + \|u\|_{(s)} + \|u\|_{(s + r - 1)})$$
for some constant $c'$. Now, the above inequality can be used to estimate $\|u\|_{(s + r - 1)}$ in the right-hand side. Thus, applying the above inequality recursively, we get the claimed inequality.
\end{proof}

Recall
$$\Delta_d = d^* d + d d^*.$$
As in the proof of the above proposition, we easily see the principal symbol of $\Delta_d$ is exactly $\xi^* \xi + \xi \xi^*$ where by $\xi$, we mean the left multiplication by $\xi$.

In our setup, since $\Delta_d$ is not classical, it is not elliptic. What we can say is that if $F \subset Q_X$ is trivial, then $\Delta_d|_F$ is elliptic. This follows from

\begin{lemma} For each point $(x, \xi, V)$ in $Q_X$, the operator $T = \xi^* \xi + \xi \xi^*$ is invertible, where by $\xi$, we mean the left multiplication by $\xi$.
\end{lemma}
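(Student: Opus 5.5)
The plan is to reduce the statement to the classical pointwise identity on an inner product space. Fix $\gamma=(x,\xi,V)\in Q_X$. By construction of $Q_X$, $\xi\in V$, and $V$ is an $n$-dimensional real subspace of $(\mathbb{R}^m)^*$. We give $V$ the inner product obtained as a limit of the Riemannian inner products at points of $X_{reg}$; the uniform continuity convention of this section guarantees that this limit makes sense. On $\wedge V\otimes\mathbb{C}$, write $\epsilon_\xi=\xi\wedge-$ for exterior multiplication and $\iota_\xi=\xi^*$ for its adjoint, which is interior multiplication by the vector dual to $\xi$. Then $T=\iota_\xi\epsilon_\xi+\epsilon_\xi\iota_\xi$.

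The key step is the anticommutation identity
$$\iota_\xi(\xi\wedge\omega)+\xi\wedge\iota_\xi\omega=|\xi|^2\omega,$$
which follows from the derivation property of $\iota_\xi$: $\iota_\xi(\xi\wedge\omega)=\iota_\xi(\xi)\,\omega-\xi\wedge\iota_\xi\omega$, with $\iota_\xi(\xi)=|\xi|^2$. To check it concretely, choose an orthonormal basis $e_1,\dots,e_n$ of $V$ with $\xi=|\xi|e_1$ and test on the monomials $e_I$. If $1\in I$, then $\epsilon_\xi e_I=0$ and $\epsilon_\xi\iota_\xi e_I=|\xi|^2e_I$. If $1\notin I$, then $\iota_\xi e_I=0$ and $\iota_\xi\epsilon_\xi e_I=|\xi|^2e_I$. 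So $T=|\xi|^2\operatorname{id}$ on $\wedge V$.

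Hence $T$ is invertible exactly when $\xi\neq 0$. Strictly speaking, the statement needs the convention used for ellipticity: the principal symbol is considered on $Q_X$ minus the zero section, as is standard in the classical case. We will note this explicitly, since at $\xi=0$ the operator $T$ is the zero map. With $\xi\ne0$, the inverse is $|\xi|^{-2}\operatorname{id}$. It also depends continuously on $\gamma$, which is what the subsequent use in Gårding's inequality (Proposition \ref{Garding}) requires for trivial $F\subset Q_X$.

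The only genuine obstacle is making sense of the adjoint $\xi^*$ at singular points $x\notin X_{reg}$, where the metric is only known as a limit. We handle this by using the third component $V$ of $\gamma$. Along a sequence $x_k\to x$ in $X_{reg}$ with $\varphi(x_k)\to V$, the inner products on $\varphi(x_k)\subset(\mathbb{R}^m)^*$ converge to a positive definite inner product on $V$; this is where the uniform continuity of the metric is used, and nondegeneracy of the limit should be verified or assumed at this point. The adjoint is then taken with respect to this limit inner product, and the computation above applies verbatim.
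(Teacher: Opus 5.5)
Your argument is correct, but it takes a different route from the paper's proof.

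\textbf{The paper's route.} The paper never computes $T$. It uses $(Tu,u)=\|\xi u\|^2+\|\xi^*u\|^2$ to get $\ker T=\ker\xi\cap\ker\xi^*$. If such a $u$ is exact for the Koszul complex $(\wedge V,\xi\wedge -)$, say $u=\xi v$, then $\|u\|^2=(v,\xi^*u)=0$. So $\ker T$ injects into the cohomology of the Koszul complex. That cohomology vanishes by exactness, and finite-dimensionality then gives invertibility. This is a finite-dimensional model of Theorem~\ref{L^2 Hodge}.

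\textbf{Comparison.} The paper's argument works for any Hermitian inner product on $\wedge V$, not only one induced from an inner product on $V$. So your worry about which metric to use at singular points only requires some positive-definite choice on $V$; the limit construction is not needed for the lemma itself.

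Your anticommutator identity $\iota_\xi\epsilon_\xi+\epsilon_\xi\iota_\xi=|\xi|^2\operatorname{id}$ does need the induced inner product. In return it gives the explicit inverse $|\xi|^{-2}\operatorname{id}$. It also gives the homogeneous lower bound that the proof of Proposition~\ref{Garding} actually uses; the paper's abstract argument yields only pointwise invertibility.

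\textbf{The case $\xi=0$.} Your caveat is justified and applies equally to the paper. $Q_X$ contains the zero section, and there $T=0$. The Koszul complex the paper invokes is exact only for $\xi\neq 0$. So in both arguments the lemma should be read on $Q_X$ minus the zero section.
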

\begin{proof} Since $\wedge V$ has finite dimension, it is enough to show $T$ is injective. Since $\xi \wedge \xi = 0$, we have the Koszul complex
$$0 \to \mathbb{C} \overset{\xi}\to \wedge^1 V \overset{\xi}\to \wedge^2 V \to \cdots.$$
In fact, we know from linear algebra that the above complex is exact.

On the other hand, we have the well-defined injection
$$\operatorname{ker}(T) \hookrightarrow \textrm{the cohomology of }(\wedge V, \xi)$$
given by $u \mapsto [u]$. (This is essentially a part of the Hodge theorem in the algebraic setup; cf. Theorem \ref{L^2 Hodge}.) Indeed, first we have:
$$\ker(T) = \ker(\xi) \cap \ker(\xi^*)$$
since
$$(Tu, u) = \|\xi \,u\|^2 + \|\xi^* \, u\|^2.$$
Thus, $u \mapsto [u]$ is well-defined. Also, if $[u] = 0$, then $u$ is in $\operatorname{im}(\xi) \subset \operatorname{ker}(\xi^*)^{\bot}$; thus, $u$ is in $\operatorname{ker}(\xi^*) \cap \operatorname{ker}(\xi^*)^{\bot} = 0$. Since the cohomology of $(\wedge V, \xi)$ vanishes as noted above, we have that $T$ is injective.
\end{proof}

\begin{prop}\label{harmonic prop} Let $\Omega_{L^2, \, \mathcal{C}^{\infty}}(X) = \{ f \in \Omega_{L^2}(X) \mid \, f|_{X_{reg}} \in \Omega_{\mathcal{C}^{\infty}}(X_{reg}) \}.$

If $X$ is compact, 
$$\ker (\Delta_d) \subset \Omega_{L^2, \, \mathcal{C}^{\infty}}(X) \textrm { and } \ker(\Delta_d) = \ker(d) \cap \ker(d^*).$$
\end{prop}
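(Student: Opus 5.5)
We have to show two things: every $u$ in $\ker(\Delta_d)$ is smooth on $X_{reg}$ and lies in $\Omega_{L^2}(X)$, and every such $u$ satisfies $du = 0$ and $d^*u = 0$. The inclusion $\ker(d) \cap \ker(d^*) \subset \ker(\Delta_d)$ is immediate from $\Delta_d = d^*d + dd^*$. The two parts are proved in this order, because the integration by parts in the second part needs the regularity from the first.

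\textbf{Step 1 (smoothness on $X_{reg}$).} Let $u$ be a current with $\Delta_d u = 0$. On $X_{reg}$, $\Delta_d$ restricts to the usual Hodge Laplacian of the Riemannian manifold $X_{reg}$. Its principal symbol is $\xi^*\xi + \xi\xi^*$, which is invertible by the preceding lemma. So classical elliptic regularity on the manifold $X_{reg}$ shows $u|_{X_{reg}}$ is smooth. To stay inside the paper's framework, one can instead apply G\aa rding's inequality (Proposition \ref{Garding}) with $r = 2$ on small trivial charts $F$ inside $X_{reg}$ and bootstrap $s \mapsto s+2$. This is the ``easy'' half noted before the Fourier construction. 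Here $u$ is cut off by a function in $\mathcal{O}$ supported in the chart, and the commutator $[\Delta_d, \psi]$ is of order $1$ with continuous coefficients, so it is absorbed.

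\textbf{Step 2 (square integrability near $X - X_{reg}$).} This is the main obstacle. Cover the compact space $X$ by finitely many small pieces, each embedded in some $\mathbb{R}^m$, and pick a partition of unity $\psi_j$ from Proposition \ref{partition of unity}. For each piece, use a trivial $F \subset Q_X$ over it, on which $\Delta_d|_F$ is elliptic by the lemma. G\aa rding's inequality then gives
$$c\,\|\psi_j u\|_{(s+2)} \le \|\psi_j u\|_{(s)} + \|\Delta_d(\psi_j u)\|_{(s)} \le \|\psi_j u\|_{(s)} + \|[\Delta_d,\psi_j]u\|_{(s)}.$$
The commutator term is of order $1$ and is controlled by $\|u\|_{(s+1)}$ on a slightly larger piece. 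Every current on compact $X$ is of finite order: by Definition \ref{def:currents} it is bounded by $\rho$, and $\widehat{u}$ is bounded as in the proof of Proposition \ref{Fourier inversion formula}. Hence $u \in W^{s_0}$ for some $s_0$, and iterating raises $s$ until $s \ge 0$. By Plancherel (Remark \ref{remark:Plancherel}), $\|\cdot\|_{(0)}$ is the $L^2$-norm, so $u \in \Omega_{L^2}(X)$. The delicate point is the uniformity in $F$ of the constant $c$, which Proposition \ref{Garding} provides. I expect most of the care to go into checking that the commutator estimate holds with the paper's notion of symbol near singular points. If it does not, one would only need $s=0$, i.e. $L^2$ itself, so I would try to run the estimate once from $s_0$ with the lower-order terms absorbed by shrinking the pieces.

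\textbf{Step 3 ($\ker \Delta_d = \ker d \cap \ker d^*$).} Now $u$ is smooth on $X_{reg}$ and in $L^2$. Choose cut-offs $\chi_\epsilon \in \mathcal{O}$ that vanish near $X - X_{reg}$ and tend to $1$ on $X_{reg}$. Take them with $\|d\chi_\epsilon\|_{L^2} \to 0$; this is possible because $\codim(X - X_{reg}) \ge 2$, as in the proof of Lemma \ref{lem:integration map}. Since $\chi_\epsilon^2 u$ has compact support in $X_{reg}$, the adjointness formula gives
$$0 = (\Delta_d u, \chi_\epsilon^2 u) = \|\chi_\epsilon du\|^2 + \|\chi_\epsilon d^*u\|^2 + O\bigl(\|d\chi_\epsilon\|_{L^\infty\text{-weighted}}\bigr),$$
where the error terms have the form $(du, 2\chi_\epsilon d\chi_\epsilon \wedge u)$ and a similar term for $d^*$. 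Absorbing them by Cauchy--Schwarz and letting $\epsilon \to 0$ yields $du = 0 = d^*u$ on $X_{reg}$, hence as currents by Corollary \ref{cor:PA-form generic}. The error terms are small provided $du, d^*u$ are locally $L^2$, which follows from Step 2 applied with $s = 1$. This justification is the second point where I expect difficulty.
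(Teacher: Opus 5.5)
Your Steps 1--2 are essentially the paper's G\aa rding bootstrap. You localize with cutoffs and commutators instead of applying Proposition \ref{Garding} on each piece $\pi(F)$ directly, and getting smoothness on $X_{reg}$ from classical elliptic regularity is fine. The genuine gap is in Step 3. Expanding $(\Delta_d u,\chi_\epsilon^2 u)$, the cross terms are $2(\chi_\epsilon\, du, d\chi_\epsilon\wedge u)$ and its $d^*$-analogue. Absorbing them by Cauchy--Schwarz gives only
\[
\|\chi_\epsilon\, du\|^2+\|\chi_\epsilon\, d^*u\|^2\le C\,\bigl\|\,|d\chi_\epsilon|\,|u|\,\bigr\|_{L^2}^2 .
\]
So the quantity that must tend to zero is $\int |d\chi_\epsilon|^2|u|^2$. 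It is concentrated on the transition region, where $|d\chi_\epsilon|$ is as large as $1/\operatorname{dist}(\cdot,X-X_{reg})$. Square integrability of $du$ and $d^*u$ plays no role in this quantity. Moreover, $u\in L^2$ (or $u\in W^1$) together with $\|d\chi_\epsilon\|_{L^2}\to0$ does not force it to vanish. What you need is a sup bound on $u$ near $X-X_{reg}$, or some Hardy- or capacity-type control of $u$ there. Step 2, which you stop at $s=0$ or $s=1$, never provides one.

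That missing input is exactly where the paper's proof goes. It continues the bootstrap to $s>m/2$ and uses Cauchy--Schwarz to get $\widehat u\in L^1$ with constants independent of $F$. Corollary \ref{cor:integrable Fourier} then makes $u$ bounded (indeed Lipschitz) on each piece, square integrability is read off from this, and only then does the paper write down $(\Delta_d u,u)=\|du\|^2+\|d^*u\|^2$. If you add this step, applied to $u$ and (since $u\in W^s$ for all $s$) to its first derivatives, your cutoff argument closes. Even the cruder argument of Lemma \ref{lem:integration map}, i.e.\ $\vol(\partial U_\epsilon)\to0$ against bounded integrands, then suffices. Your instinct that the paper's final identity needs a codimension-$2$ justification is right, but that justification runs on boundedness, not on $L^2$. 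Going to $s>m/2$ also repairs the end of your Step 2: Remark \ref{remark:Plancherel} says the Fourier transform is isometric on $\Omega_{L^2}(X)$, not that a current with $\widehat u\in L^2$ lies in $\Omega_{L^2}(X)$.

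Two smaller points. First, Lemma \ref{lem:integration map} only gives $L^1$-smallness of $d\chi_\epsilon$; in codimension exactly $2$, getting $\|d\chi_\epsilon\|_{L^2}\to0$ needs logarithmic-type cutoffs (multi-scale, and still definable). Second, Corollary \ref{cor:PA-form generic} is about PA-forms. Passing from $du=0=d^*u$ on $X_{reg}$ to the same identities as currents therefore needs its own cutoff argument, since a current may be supported on $X-X_{reg}$ (cf.\ Remark \ref{residue}).
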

\begin{proof} As in Example \ref{Q_X example}, it is clear that $X = \cup_{\pi} \operatorname{im}(\pi)$ where $\pi$ runs over all trivial $\pi : F \to X$ with $F \subset \widetilde{X}$ closed.

Let $\pi : F \to X$ be among such $\pi$'s and $Y = \pi(F)$. We claim
$$\ker (\Delta_d|_{\Omega'(Y)}) \subset \cap_{s > 0} W^s(Y).$$
For that, first we note that $\|u\|_{(t)} < \infty$ for some negative integer $t$. By repeated use of Gårding's inequality (Proposition \ref{Garding}), it follows $\|u\|_{(t)} < \infty \Rightarrow \|u\|_{(t+2)} < \infty \Rightarrow \cdots.$ Thus, $\|u\|_{(s)} < \infty$ for each integer $s > 0$, establishing the claim.

Next, we shall show
$$\cap_{s > 0} W^s(Y) \subset \Omega_{L^2, \, \mathcal{C}^{\infty}}(Y).$$
By Corollary \ref{cor:integrable Fourier}, it is enough to show $\widehat{u}$ is integrable. By the Cauchy--Schwarz inequality, for $s > m/2$, we have:
$$\int |\widehat{u}| \, d\xi = \int |\widehat{u}| (1 + |\xi|^2)^{s/2} (1 + |\xi|^2)^{-s/2} \, d\xi \le c \|u\|_{(s)}$$
for some constant $c$. Hence, $\widehat{u}$ is indeed integrable. Moreover, $c$ and $\|u\|_{(s)}$ are bounded by constants independent of $F$. Thus, $u$ is square-integrable.


Finally, if $u$ is in $\Omega_{L^2}(X) \cap \Delta_d^{-1}(\Omega_{L^2}(X))$, we have:
$$(\Delta_d u, u) = \|du\|^2 + \|d^* u\|^2.$$
Thus, $\Delta_d u = 0$ if and only if $d u = 0 = d^* u$ on each compact subset and thus globally.
\end{proof}

In the next section (precisely, Example \ref{Poincare example}), we shall see that if $X$ is not smooth, then there can be a harmonic function on $X$ that is not continuous on $X$. Thus, in the above proposition, we get the smoothness of harmonic functions only on the regular locus.

We note the following simple fact from functional analysis.

\begin{lemma}\label{abstract elliptic} Let $H_1, H_2$ be Hilbert spaces and $E \subset H_1$ a dense subspace. Let $T : E \to H_2$ be a linear operator whose graph is closed in $H_1 \times H_2$. Also, let
$$G : \operatorname{im}(T) \to (\ker T)^{\bot} \cap E$$
be the inverse of $T : {(\ker T)^{\bot} \cap E} \to \im(T)$. ($G$ for the Green operator.)

Then the following are equivalent.
\begin{enumerate}
\item $\dim(\ker T) < \infty$ and $G$ is compact; i.e., it maps an open ball to a subset of a compact subset.
\item Each sequence in $E$ that is bounded in $\| \cdot \| + \| T \cdot \|$ has a convergent subsequence in $H_1$.
\end{enumerate}

Also, when the above equivalent conditions hold, we have, in particular,
$$\operatorname{im}(T) = \ker(T^*)^{\bot}.$$
\end{lemma}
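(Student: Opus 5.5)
The plan is to treat this as a statement about a closed, densely defined operator and to reduce everything to the Hilbert space $H_1' := \overline{(\ker T)^{\bot}\cap E}$. First, since the graph $\Gamma_T$ is closed, $\ker T$ is closed in $H_1$: if $x_k \in \ker T$ and $x_k \to x$, then $(x_k, 0) \to (x,0)$, so $x \in E$ and $Tx = 0$. I will equip $E$ with the graph norm $\|x\|_T := \|x\| + \|Tx\|$. Because $\Gamma_T$ is closed, $(E, \|\cdot\|_T)$ is a Banach space, indeed a Hilbert space with the equivalent graph inner product. I will also note that $E = \ker T \oplus ((\ker T)^{\bot}\cap E)$: for $x \in E$, write $x = x_0 + x_1$ with $x_0$ the orthogonal projection of $x$ onto $\ker T$. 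Then $x_0 \in \ker T \subset E$, so $x_1 = x - x_0 \in E$. Hence $G$ is well defined, with $TGy = y$ and $GTx = x_1$.

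For (i)$\Rightarrow$(ii), take $x_k \in E$ with $\|x_k\|_T \le C$ and decompose $x_k = x_{k,0} + x_{k,1}$ as above. The components $x_{k,0}$ are bounded in the finite-dimensional space $\ker T$, so they have a convergent subsequence. The other components are $x_{k,1} = G(Tx_k)$ with $\|Tx_k\| \le C$, so by compactness of $G$ they also have a convergent subsequence in $H_1$. A diagonal argument then gives convergence of a subsequence of $x_k$.

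For (ii)$\Rightarrow$(i), the unit ball of $\ker T$ is bounded in $\|\cdot\|_T$, since $T = 0$ there. By (ii) it is therefore relatively compact, so Riesz's lemma gives $\dim\ker T < \infty$. The key step is then the estimate $\|x\| \le C\|Tx\|$ for $x \in (\ker T)^{\bot}\cap E$. I will argue by contradiction: otherwise there are $x_k$ in this space with $\|x_k\| = 1$ and $Tx_k \to 0$. By (ii), a subsequence converges to some $x$, and closedness of the graph gives $x \in E$ and $Tx = 0$. But $x \in (\ker T)^{\bot}$, since that set is closed, so $x = 0$, contradicting $\|x\| = 1$. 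Given the estimate, a bounded set $\{y_k\} \subset \operatorname{im}(T)$ has $Gy_k$ bounded in $\|\cdot\|_T$, so by (ii) the $Gy_k$ have a convergent subsequence; thus $G$ is compact.

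For the final claim $\operatorname{im}(T) = \ker(T^*)^{\bot}$, the same estimate shows $\operatorname{im}(T)$ is closed. Indeed, if $Tx_k \to y$ with $x_k \in (\ker T)^{\bot}\cap E$, then $x_k$ is Cauchy, so $x_k \to x$, and graph-closedness gives $Tx = y$. Here $T^*$ is the Hilbert adjoint, which exists because $E$ is dense. For any densely defined operator, $\operatorname{im}(T)^{\bot} = \ker(T^*)$: $z \perp \operatorname{im} T$ means $x \mapsto (Tx, z)$ is the zero functional, i.e.\ $z \in D(T^*)$ with $T^*z = 0$. Taking orthogonal complements and using closedness of the image gives $\operatorname{im}(T) = \ker(T^*)^{\bot}$.

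The main obstacle I expect is the a priori estimate $\|x\| \le C\|Tx\|$ on $(\ker T)^{\bot}\cap E$. It is what converts the sequential compactness in (ii) into both compactness of $G$ and closed range. It relies essentially on the closed-graph hypothesis, because the limit of the normalized sequence must be shown to lie in $E$ and to satisfy $Tx = 0$. Everything else is routine bookkeeping with orthogonal decompositions.
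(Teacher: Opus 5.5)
Your proposal is correct and follows essentially the same route as the paper: for (i)$\Rightarrow$(ii) you split off the bounded component in the finite-dimensional $\ker T$ and apply compactness of $G$ to the rest, and for (ii)$\Rightarrow$(i) you use Riesz's lemma on the kernel plus a normalized-sequence contradiction via the closed graph to get boundedness of $G$ (your estimate $\|x\| \le C\|Tx\|$), then compactness. For the final claim, your direct check that $\operatorname{im}(T)^{\bot} = \ker(T^*)$, combined with closedness of $\operatorname{im}(T)$, just spells out what the paper cites as the closed range theorem; you also verify details the paper leaves implicit, namely that $\ker T$ is closed and that $G$ is well defined.
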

\begin{proof} (ii) $\Rightarrow$ (i): First, (ii) implies that a closed ball in the kernel of $T$ is compact; thus, $\operatorname{ker}(T)$ has finite dimension by Riesz's lemma. To show $G$ is compact, we first show it is bounded. For that, suppose otherwise; then we can find a sequence $f_j$ in $\im(T)$ such that
$$1 = \| G f_j \| \ge j \|f_j\|.$$ Note $f_j \to 0$. Since $G f_j$ is bounded,
$$\| G f_j \| + \| TG f_j \| = \| G f_j \| + \| f_j \|$$
is bounded. Hence $G f_j$ has a convergent subsequence by (ii). Replacing $f_j$ by a subsequence, we can assume $G f_j$ is convergent with limit $u$. Since $T$ has closed graph, the graph of $G$ is closed. Thus, the limit $(0, u)$ of $(f_j, G f_j)$ is in the graph of $G$. Thus, $u = 0$. But $\|G f_j\| = 1$, $\|u\| = 1$, a contradiction. Now, if $f_j$ is a bounded sequence, then $G f_j$ is bounded and so like above, $f_j$ is bounded in $\| \cdot \| + \| T \cdot \|$. Thus, $f_j$ has a convergent subsequence.

(i) $\Rightarrow$ (ii): Let $u_j$ be a sequence in $E$ bounded in $\| \cdot \| + \| T \cdot \|$. Then let $v_j$ be the orthogonal projection of $u_j$ onto $\operatorname{ker}(T)^{\bot}$. Then $u_j - v_j$ is a bounded sequence in $\operatorname{ker}(T)$ and thus has a convergent subsequence. Also, since $T v_j = T u_j$ is bounded and $G$ is compact, $v_j = GT v_j$ has a convergent subsequence. Hence, $u_j = v_j + (u_j - v_j)$ has a convergent subsequence.

Finally, the ``also'' part holds by the closed range theorem, since $\im(T)$ is closed by (i).
\end{proof}

The next proposition is the key result in the classical Hodge theory, which continues to hold in our setup.

\begin{prop}\label{laplacian elliptic} Assume $X$ is compact. Let $E = \Delta_{d}^{-1}(\Omega_{L^2}(X)) \cap \Omega_{L^2}(X)$. Then the operator
$$\Delta_{d} : E \to \Omega_{L^2}(X)$$
satisfies the equivalent conditions in the above Lemma \ref{abstract elliptic}.

Also, the image of the above operator coincides with $\operatorname{ker}(\Delta_d)^{\bot}$.
\end{prop}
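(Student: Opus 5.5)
We plan to verify condition (ii) of Lemma \ref{abstract elliptic} for $T = \Delta_d$ on $E$, with $H_1 = H_2 = \Omega_{L^2}(X)$. The last assertion then follows from the ``also'' part of that lemma together with formal self-adjointness: $\operatorname{im}(\Delta_d) = \ker(\Delta_d^*)^{\bot}$, and we identify $\ker(\Delta_d^*)$ with $\ker(\Delta_d)$.

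\textbf{Preliminaries.} First we check the hypotheses of Lemma \ref{abstract elliptic}.
\begin{enumerate}
\item $E$ is dense in $\Omega_{L^2}(X)$. It contains $\Omega_{\mathcal{C}^\infty, cpt}(X_{reg})$, which is dense by the remark after the definition of $\Omega_{L^2}(X)$.
\item The graph of $\Delta_d$ is closed. If $u_j \to u$ and $\Delta_d u_j \to f$ in $L^2$, then $\Delta_d u_j \to \Delta_d u$ in $\Omega'$, because $d$, $*$ and hence $\Delta_d$ are continuous for the current (weak-$*$) topology. Since $L^2$-convergence implies convergence in $\Omega'$, we get $\Delta_d u = f$, so $u \in E$.
\item For the identification $\ker(\Delta_d^*) = \ker(\Delta_d)$: since $\Delta_d$ is formally self-adjoint, its Hilbert adjoint kills exactly those $v$ with $\Delta_d v = 0$ in $\Omega'$. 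By Proposition \ref{harmonic prop}, such $v$ lie in $\Omega_{L^2,\mathcal{C}^\infty}(X)$.
\end{enumerate}

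\textbf{Compactness.} Let $u_j \in E$ satisfy $\|u_j\| + \|\Delta_d u_j\| \le C$. As in the proof of Proposition \ref{harmonic prop}, cover $X$ by finitely many images $Y = \pi(F)$ of trivial closed $F \subset \widetilde X$, and take a finite partition of unity $\lambda_k$ subordinate to this cover. Note that $\|\cdot\|_{(0)}$ agrees with the $L^2$ norm, by Plancherel (Remark \ref{remark:Plancherel}). Applying G\aa rding's inequality (Proposition \ref{Garding}) with $s = 0$ and $r = 2$ to $\lambda_k u_j$ gives
$$\|\lambda_k u_j\|_{(2)} \le c\bigl(\|\lambda_k u_j\| + \|\Delta_d(\lambda_k u_j)\|\bigr).$$
The commutator $[\Delta_d,\lambda_k]$ is first order. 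We will bound it by $\|\lambda_k u_j\|_{(1)}$ plus $\|u_j\|$, and then absorb the $(1)$-norm by interpolation, $\|v\|_{(1)} \le \epsilon\|v\|_{(2)} + C_\epsilon\|v\|$. This shows that $\lambda_k u_j$ is bounded in $W^2(Y)$, uniformly in $j$.

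\textbf{Rellich step.} We then need the compact embedding $W^2(Y) \hookrightarrow W^0(Y)$, restricted to currents supported in the fixed compact $Y$. The plan is the classical proof. The Fourier transforms $\widehat{\lambda_k u_j}(\xi) = \langle u_j, \lambda_k e^{-2\pi i\xi}\,dx^i\rangle$ are bounded and equicontinuous in $\xi$, because $u_j$ is $L^2$-bounded and $Y$ is compact. Arzel\`a--Ascoli therefore gives a subsequence converging uniformly on $|\xi| \le R$. The tail $|\xi| > R$ contributes at most $(1+R^2)^{-2}\,\|\lambda_k u_j\|_{(2)}^2$ to the squared $(0)$-norm. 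Hence the subsequence is Cauchy in $\|\cdot\|_{(0)} = \|\cdot\|_{L^2}$, after a diagonal argument over $R$ and over the finitely many $k$. Summing over $k$, $u_j = \sum_k \lambda_k u_j$ has an $L^2$-convergent subsequence, which is condition (ii).

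\textbf{Main obstacle.} We expect the commutator and uniformity step to be the hardest. There are two issues. First, $\lambda_k \in \mathcal{O}(X)$ is only generically smooth, so $[\Delta_d,\lambda_k]$ need not be a bounded first-order operator; second, G\aa rding's constant is only uniform over trivial $F$. Our first attempt would be to choose the $\lambda_k$ to be restrictions of smooth functions on $\mathbb{R}^m$, which is possible since polynomial cut-offs are dense. The commutator then involves only $d\lambda_k$ and the uniformly continuous metric, and the estimate should go through using the uniform continuity convention.
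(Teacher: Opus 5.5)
Your proposal is essentially the paper's proof in every main step:
\begin{itemize}
\item the closed graph comes from the weak-$*$ continuity of $\Delta_d$ on $\Omega'(X)$;
\item G\aa rding's inequality with $s=0$, $r=2$ gives a uniform bound in $\|\cdot\|_{(2)}$;
\item compactness comes from Arzel\`a--Ascoli on the Fourier transforms (equicontinuity via Cauchy--Schwarz), the Plancherel tail estimate with $(1+R^2)^{-2}$, and a diagonal argument;
\item the last assertion follows from closedness of the range plus formal self-adjointness (the paper also passes through $\ker(T^*)^{\bot}$).
\end{itemize}

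The one real deviation is your partition of unity $\lambda_k$, and that is precisely where your plan does not close. The paper never cuts off. It applies Proposition \ref{Garding} directly to the $f_j$; that proposition is stated for the compact $X\subset\mathbb{R}^m$ with a constant independent of the trivial $F$, and the localization is absorbed into its own proof.

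Your commutator step is not justified by the standing hypotheses. On $X_{reg}$ a short computation gives $[\Delta_d,\lambda]=L_{\nabla\lambda}^{*}-L_{\nabla\lambda}$, where $L$ is the Lie derivative and ${}^{*}$ the formal adjoint. Its lower-order part involves first derivatives of the metric; for functions it contains $\Delta_d\lambda$ itself. The convention only makes the metric uniformly continuous up to $X-X_{reg}$, not its derivatives. So even with $\lambda_k$ smooth on $\mathbb{R}^m$, the bound of $[\Delta_d,\lambda_k]u_j$ by $\|\lambda_k u_j\|_{(1)}+\|u_j\|$ does not follow, and neither does the $W^2$ bound you derive from it.

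Drop the cut-offs and invoke Proposition \ref{Garding} globally on $X$, as the paper does. The rest of your argument then goes through unchanged, with no sum over $k$.
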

\begin{proof}[Proof (partly after the proof of \cite{Faltings} Theorem 3.1. (1))]

Let $T$ denote the operator in question. First we show $T$ has closed graph. Thus, suppose $(u_j, \Delta_d u_j) \to (u, f)$ in $\Omega_{L^2}(X)$. The convergence also holds in the topology on $\Omega'(X)$; namely, the weak-* topology. Since $\Delta_d$ is continuous in the topology on $\Omega'(X)$, we have $f = \Delta_d u$; i.e., $(u, f)$ is in the graph of $T$.

We shall now check the condition (ii) in the lemma. Thus, let $f_j$ be a sequence in the domain of $T$ that is bounded in $\|\cdot\| + \|T \cdot\|$. By the Cauchy--Schwarz inequality, we have: for some constant $c > 0$,
\begin{align*}
| \widehat{f_j}(\xi) - \widehat{f_j}(\eta) |^2 &\le \|f_j\|^2 \int_K |e^{- 2 \pi x \cdot \xi} - e^{- 2 \pi x \cdot \eta}|^2 \, dx. \\
&\le c |\xi - \eta|^2.
\end{align*} The above estimate then implies that $\widehat{f_j}$ is equicontinuous on each compact subset of $\mathbb{R}^n$. Since $\widehat{f_j}$ is also bounded in the sup norm on a compact subset by a similar argument, we can find, by Ascoli's theorem and by the diagonal argument, a subsequence of $f_j$ whose Fourier transform converges in the sup norm on each compact subset of $\mathbb{R}^m$. We shall denote that subsequence again by $f_j$.

Now, by the Plancherel theorem; i.e., $\widehat{\cdot}$ is unitary, we have:
\begin{align*}
\| f_j - f_k \|^2 &= \| \widehat{f_j} - \widehat{f_k} \|^2 \\
&\le \int_{|\xi| \le R} | \widehat{f_j} - \widehat{f_k} |^2 \, d \xi + (1 + R^2)^{-s} \| f_j - f_k \|_{(s)}.
\end{align*}
By Gårding's inequality (Proposition \ref{Garding}), the sequence $f_j$ is bounded in the Sobolev norm $\| \cdot \|_{(s)}$ for $s = 2$. Given $\epsilon > 0$, first choose $R > 0$ so that the second term is less than $\epsilon/2$, independent of $j, k$. Then since $\widehat{f_j}$ is Cauchy on each compact subset, for large enough $j, k$, the first term is less than $\epsilon/2$, finishing the proof of the claim.

For the final assertion, on one hand, $\operatorname{im}(T)^{\bot} \subset \operatorname{ker}(T)$, since for $u$ in $\operatorname{im}(T)^{\bot}$, we have:
$$(Tu, f) = (u, Tf) = 0$$
for $f$ in $\Omega_{cpt}(X_{reg})$. Similarly, $\operatorname{ker}(T) = \operatorname{im}(T^*)^{\bot} \subset \operatorname{ker}(T^*)$ as $T, T^*$ agree on a dense subspace. Thus, we have: $\operatorname{im}(T) \subset \operatorname{ker}(T^*)^{\bot} \subset \operatorname{ker}(T)^{\bot}$. Since $\operatorname{im}(T)$ is closed by the lemma; thus, $\operatorname{im}(T) = \operatorname{im}(T)^{\bot\bot}$, we conclude the claimed equality.
\end{proof}

\begin{remark} We do not expect the above proposition to hold without compactness. This is because the proposition would then imply, in particular, the kernel of $\Delta_d$ has finite dimension. But, as we will see below, the kernel of $\Delta_d$ coincides with the $L^2_{loc}$-cohomology of $X$ and the latter may not have finite dimension without compactness.

The above proof establishes Rellich's lemma in our setup (cf., \cite[Theorem 10.1.27.]{hormander2}).
\end{remark}

Let $\Delta_{L^2}$ denote the operator in the above Proposition \ref{laplacian elliptic}. Since $\operatorname{ker}(\Delta_d)^{\bot} = \im(\Delta_{L^2})$, we have the orthogonal decomposition known as the Hodge decomposition:
$$\Omega_{L^2}(X) = \operatorname{ker}(\Delta_d) \oplus \operatorname{im}(\Delta_{L^2})$$
when $X$ is compact.

For not-necessarily compact $X$, a harmonic form is not necessarily square-integrable. But we still have the following lemma.

We define $\Omega_{L^{2, \, loc}}$ to be the sheaf on $X$ where a section over an open set $U$ is a current on $U$ such that $\psi \, u$ is in $\Omega_{L^2}(U)$ for each $\psi$ in $\mathcal{O}_{cpt}(U)$.

\begin{lemma}\label{Hodge decomposition} For each $u$ in $\Omega_{L^{2, \, loc}}(X)$, there exists a unique $u_0$ in $\operatorname{ker}(\Delta_d)$ and $f$ in $\Omega_{L^{2, \, loc}}(X)$ such that
$$u = u_0 + \Delta_d f.$$
If $u$ is $d$-closed, then the above $f$ is also $d$-closed.
\end{lemma}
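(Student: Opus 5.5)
The plan is to deduce the lemma from the compact Hodge decomposition of Proposition \ref{laplacian elliptic} by exhaustion and gluing. Choose an exhaustion $K_1 \subset K_2 \subset \cdots$ of $X$ by compact definable subsets with $K_j \subset \operatorname{int}(K_{j+1})$; this is possible since $X$ is paracompact and locally compact (Proposition \ref{prop:paracompact}). We may assume $X$ is connected and hence $\sigma$-compact; otherwise we argue componentwise. Fix cut-off functions $\psi_j \in \mathcal{O}_{cpt}(X)$ with $\psi_j = 1$ near $K_j$ and $\supp \psi_j \subset K_{j+1}$ (Proposition \ref{partition of unity}). On the compact PA-space $K_{j+1}$ we apply the orthogonal decomposition $\Omega_{L^2}(K_{j+1}) = \ker(\Delta_d) \oplus \operatorname{im}(\Delta_{L^2})$ to $\psi_j u \in \Omega_{L^2}$. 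This gives $\psi_j u = u_0^{(j)} + \Delta_d f^{(j)}$, where $u_0^{(j)}$ is harmonic on $K_{j+1}$ and $f^{(j)} = G(\psi_j u - u_0^{(j)})$ is normalized to lie in $\ker(\Delta_d)^\bot$.

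Next I would compare consecutive pieces on $\operatorname{int} K_j$. There $\psi_j u = \psi_{j+1} u = u$, so $u_0^{(j)} - u_0^{(j+1)} = \Delta_d (f^{(j+1)} - f^{(j)})$ on $\operatorname{int} K_j$. The idea is to correct $f^{(j+1)}$ by a $\Delta_d$-harmonic form so that the $f^{(j)}$ and $u_0^{(j)}$ agree on $K_{j-1}$, in the style of a Mittag-Leffler argument. If exact agreement cannot be arranged, it suffices to make the corrections converge in $L^2(K_{j-1})$ for each fixed $j$. Then $u_0 := \lim u_0^{(j)}$ and $f := \lim f^{(j)}$ exist in $\Omega_{L^{2,loc}}(X)$. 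They satisfy $u = u_0 + \Delta_d f$ because $\Delta_d$ is continuous in the current topology (Proposition \ref{prop:Cauchy complete} handles the limit of currents). Also $u_0 \in \ker(\Delta_d)$, with local smoothness on $X_{reg}$ coming from Proposition \ref{harmonic prop}. The needed corrections come from approximating harmonic forms on $K_j$ by forms harmonic on $K_{j+1}$, which is a Runge-type density statement. I expect this to be the main obstacle. I would first try a duality proof: using Proposition \ref{laplacian elliptic} and the identification $\operatorname{im}(\Delta_{L^2}) = \ker(\Delta_d)^\bot$, show that any $L^2$ form on $K_j$ orthogonal to the restrictions of harmonic forms on $K_{j+1}$ is orthogonal to all harmonic forms on $K_j$.

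For uniqueness, take two decompositions and subtract: $u_0 - u_0' = \Delta_d(f' - f)$. On each compact $K$, the left side lies in $\ker(\Delta_d)$ and, restricted to $K$, the right side lies in $\operatorname{im}(\Delta_{L^2})$. Orthogonality in the compact decomposition then gives $u_0 = u_0'$ on each $K$, hence globally by the sheaf property. Here the uniqueness claim really concerns $u_0$, since $f$ is determined only up to $\ker(\Delta_d)$. A delicate point is that restricting $f'-f$ to $K$ must land in the domain $E$ of $\Delta_{L^2}$; I would handle this with cut-offs, since the error terms are supported away from $K_{j-1}$.

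For the closedness statement, suppose $du = 0$. Then $\Delta_d(df) = d u - d u_0 = 0$, using $d u_0 = 0$ from $\ker(\Delta_d) = \ker d \cap \ker d^*$ (Proposition \ref{harmonic prop}) and the fact that $d$ commutes with $\Delta_d$. Hence $df$ is harmonic. On each compact piece, $df^{(j)} \in \operatorname{im}(d)$ is orthogonal to $\ker(d^*) \supset \ker(\Delta_d)$, so $df^{(j)} = 0$. Choosing the corrections above so that they preserve this orthogonality, the limit $f$ is $d$-closed.
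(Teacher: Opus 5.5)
\textbf{Gap in existence.} The existence part does not go through as written. You reduce the gluing to a Runge-type density statement: harmonic forms on $K_j$ are $L^2$-approximable by restrictions of harmonic forms on $K_{j+1}$. You also need the corrected $u_0^{(j)}, f^{(j)}$ to converge in $L^2(K_{j-1})$. You prove neither and only name a duality strategy, so no $u_0$ or $f$ is actually produced.

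\textbf{The detour is unnecessary.} Your own uniqueness paragraph shows why. On $K_j$ both cut-offs equal $1$, so
\[u|_{K_j} = u_0^{(j)}|_{K_j} + \Delta_d(f^{(j)}|_{K_j}).\]
Here $u_0^{(j)}|_{K_j}$ is harmonic on $K_j$. The form $\Delta_d(f^{(j)}|_{K_j})$ lies in $\im(\Delta_{L^2})$ on $K_j$, which equals $\ker(\Delta_d|_{K_j})^{\bot}$ by Proposition \ref{laplacian elliptic}. Hence $u_0^{(j)}|_{K_j}$ is the orthogonal projection of $u|_{K_j}$ onto the harmonic forms on $K_j$, and this projection does not depend on $j$. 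So the $u_0^{(j)}$ agree exactly on overlaps and glue by the sheaf property.

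\textbf{Gluing $f$ the same way.} Since $f^{(j)} \in \ker^{\bot} = \im(\Delta_{L^2})$ on $K_{j+1}$, write $f^{(j)} = \Delta_d g$ and restrict. This gives $f^{(j)}|_{K_j} \in \ker(\Delta_d|_{K_j})^{\bot}$. The equation $\Delta_d f = u - u_0$ has at most one solution in that space, since two solutions differ by an element of $\ker \cap \ker^{\bot} = 0$. So the $f^{(j)}$ also agree and glue.

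This is exactly the paper's proof, which uses a cover by relatively compact $U_i$ with $K_i = \overline{U_i}$ and needs no cut-offs, since $u|_K$ is already $L^2$. It involves no Mittag-Leffler correction and no approximation theorem. Your existence worry is also inconsistent with your uniqueness argument: if orthogonality forces $u_0 = u_0'$ on each compact set, it equally forces $u_0^{(j)} = u_0^{(j+1)}$ on $K_j$.

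\textbf{Closedness.} Once $f$ exists, your argument is a legitimate alternative to the paper's identity $df = G\,du$ on compact pieces. You use $\Delta_d(df) = du - du_0 = 0$ globally, then kill the harmonic form $df|_K = d(f|_K)$ by its orthogonality to $\ker(d^*) \supset \ker(\Delta_d)$ on $K$. You still need $df \in \Omega_{L^{2,\,loc}}$, which comes from Proposition \ref{Garding}, as in the paper. Two corrections are needed:
\begin{enumerate}
\item $df^{(j)}$ is not harmonic on $K_{j+1}$, because $\Delta_d(df^{(j)}) = d\psi_j \wedge u$ there. The vanishing therefore holds only on $K_j$.
\item The remark about choosing corrections that preserve orthogonality has no content, and it becomes moot once the corrections are dropped.
\end{enumerate}
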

\begin{proof}
Let $U_i$ be an open cover of $X$ such that each closure $K_i = \overline{U_i}$ is a compact PA-space (possible since $X$ is locally compact). Let $u_i = u|_{K_i}$. Then, by the Hodge decomposition, we have
$$u_i = u_{i, 0} + \Delta|_{K_i} G_i (u_i - u_{i, 0})$$
where $u_{i, 0}$ is the orthogonal projection of $u|_{K_i}$ onto $\operatorname{ker}(\Delta|_{K_j})$. Let $K = K_i \cap K_j$. Then $u|_K - u_{i, 0}|_K$ is in $\im(\Delta|_K)$ and so $u_{i, 0}|_K$ is exactly the harmonic part of $u|_K$. It follows $u_{i, 0}, u_{j, 0}$ agree on $K = K_i \cap K_j$; a fortiori, on $U_i \cap U_j$. Hence, we have a $u_0$ in $\operatorname{ker}(\Delta_d)$ such that $u_0|_{U_i} = u_{i, 0}|_{U_i}$.

Next, we note that, on each compact subspace $K \subset X$, the Laplace equation
$$\Delta_d f = u - u_0$$
has a unique solution for $f$ in $\operatorname{ker}(\Delta|_K)^{\bot}$, since if $g$ is another such solution, $f - g$ is in $\ker(\Delta|_K) \cap \operatorname{ker}(\Delta|_K)^{\bot} = 0$. Now, for $f_i = G_i (u_i - u_{i, 0})$ and $K \subset K_i$, we have $f_i|_K$ is in $\im(\Delta|_{K, L^2}) = \ker(\Delta|_{K, , L^2})^{\bot}$ and so $f_i|_K$ is a unique solution in the above sense. Thus, $f_i = f_j$ on $K_i \cap K_j$; a fortiori, on $U_i \cap U_j$. Hence, they glue to a form $f$ in $\Omega_{L^{2, \, loc}}(X)$.

Finally, if $X$ is compact, then let $f = G(u - u_0)$. Also,  we write $f = \Delta_{L^2}g$ for some $g$ in the domain of $\Delta_{L^2}$. By Gårding's inequality, we see $d f$ and $dg$ are both in $\Omega_{L^2}(X)$. So,
$$df = d \Delta_d g = \Delta_d dg \in \im(\Delta_{L^2}) = \ker(\Delta_{L^2})^{\bot}.$$
Thus,
$$df = G \Delta_{L^2} df = G d \Delta_{L^2} G(u - u_0) = Gd u$$
since $u_0$ is harmonic and thus closed (Proposition \ref{harmonic prop}). Hence, if $du = 0$, then $df = 0$.
\end{proof}

We first prove the Hodge theorem for the $L^2_{loc}$-version of cohomology. Note $\Omega_{L^{2, \, loc}}$ is not a complex, since the differential $d$ is not well-defined. Thus, we modify it as follows: we let
$$\Omega_{L^{2, \, loc}}^{*, p} = d^{-1}(\Omega^{p+1}_{L^{2, \, loc}}) \cap \Omega^{p}_{L^{2, \, loc}}.$$
Then $(\Omega_{L^{2, \, loc}}^{*}, d)$ is a complex and we let $\operatorname{H}^p_{L^{2, \, loc}}(X)$ be the cohomology of $\Gamma(X, \Omega_{L^{2, \, loc}}^{*}).$


With this cohomology, we have the following (the singular version will be considered later):

\begin{theorem}\label{L^2 Hodge} $$\operatorname{ker}(\Delta_d) \to \operatorname{H}_{L^{2, \, loc}}^*(X), \, u \mapsto [u]$$
is a well-defined isomorphism.
\end{theorem}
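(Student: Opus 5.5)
We would follow the classical argument and treat three points in order: the map is well defined, it is injective, and it is surjective. Each is first settled on the compact pieces $K_i=\overline{U_i}$ used in Lemma \ref{Hodge decomposition}, and then glued.

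\emph{Well-definedness.} Let $u\in\ker(\Delta_d)$. By Proposition \ref{harmonic prop}, applied on each compact $K_i$, $u$ is locally $L^2$, smooth on $X_{reg}$, and satisfies $du=0=d^*u$. Hence $u$ lies in $\Gamma(X,\Omega^{*,p}_{L^{2,\,loc}})$ and is closed, so $[u]$ makes sense. Linearity is clear.

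\emph{Surjectivity.} Take a closed $u\in\Gamma(X,\Omega^{*,p}_{L^{2,\,loc}})$. Lemma \ref{Hodge decomposition} gives $u=u_0+\Delta_d f$ with $u_0$ harmonic and $f$ closed. Then
$$\Delta_d f = dd^*f + d^*df = d(d^*f),$$
so $u-u_0=d(d^*f)$. It remains to show that $d^*f$ lies in $\Omega^{*,p-1}_{L^{2,\,loc}}$. Locally on $K_i$ we have $f=G_i(u_i-u_{i,0})$, and Gårding's inequality (Proposition \ref{Garding}) puts $f$ in the Sobolev space $W^2$ on each trivial piece. This should give $d^*f\in\Omega_{L^{2,\,loc}}$, and $d(d^*f)=u-u_0$ is already locally $L^2$. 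Hence $[u]=[u_0]$.

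\emph{Injectivity.} Suppose $u$ is harmonic and $u=dv$ with $v\in\Gamma(X,\Omega^{*,p-1}_{L^{2,\,loc}})$. We want $u=0$. In the compact case,
$$\|u\|^2=(dv,u)=(v,d^*u)=0,$$
using $d^*u=0$. The second equality needs the adjunction $(dv,u)=(v,d^*u)$ for $v\in\Omega_{L^2}$ with $dv\in\Omega_{L^2}$, not just for forms compactly supported in $X_{reg}$. We would prove it by approximating $v$ in the graph norm of $d$ by elements of $\Omega_{cpt}(X)$, pairing against the smooth, $L^2$ form $u$. The hypothesis $\codim(X-X_{reg})\ge2$ enters exactly here, via Lemma \ref{lem:integration map} and Remark \ref{residue}, to ensure there are no boundary terms near the singular locus.

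\emph{The noncompact case.} Injectivity cannot be done purely locally, since $u$ is not globally $L^2$. We would exploit uniqueness of the harmonic part in Lemma \ref{Hodge decomposition}. On each $K_i$, the form $u|_{K_i}=d(v|_{K_i})$ is orthogonal to $\ker(\Delta|_{K_i})$, by the same adjunction argument applied with harmonic $h$ in place of $u$. So its harmonic projection vanishes, while $u|_{K_i}$ is itself harmonic. Thus $u|_{K_i}=0$ for every $i$, and $u=0$.

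\emph{Main obstacle.} The hardest step is the integration-by-parts identity $(dv,h)=(v,d^*h)$ for harmonic $h$ and arbitrary $v\in\Omega^{*}_{L^{2}}$ on a compact $K_i$, i.e.\ density of test forms in the graph norm of $d$. There is a further subtlety: on $K_i$, which has boundary inside $X$, one must check that the harmonic projection was taken with respect to compactly supported test forms, so that no boundary term on $\partial K_i$ appears. We would first try a cutoff $\psi$ with $\psi v\in\Omega_{L^2}(U_i)$ together with the density statement Proposition \ref{prop:dense test forms}.
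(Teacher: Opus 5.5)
Your proposal follows the paper's proof essentially step for step: well-definedness from Proposition \ref{harmonic prop}, surjectivity from Lemma \ref{Hodge decomposition} via $\Delta_d f = d(d^*f)$ for closed $f$, and injectivity by restricting to compact pieces and showing that $u=dv$ is orthogonal to itself because $d^*u=0$. The integration-by-parts identity $(dv,u)=(v,d^*u)$ that you flag as the main obstacle is exactly what the paper assumes without proof, through the identification $\ker(d^*)\cap\Omega_{L^2}(X)=\ker(d^*_{L^2})$. So that extra care, together with your check that $d^*f$ is locally $L^2$ and your remark about boundary terms on $\partial K_i$, goes beyond the paper's argument rather than departing from it.
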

\begin{proof} The map is well-defined by Proposition \ref{harmonic prop}, since a harmonic form is closed and is locally $L^2$.

(injectivity) Suppose $[u] = 0$ for some $u$ in $\ker(\Delta_d)$. By Proposition \ref{harmonic prop}, $u$ is in $\Omega_{L^{2, \, loc}}(X)$. To show $u = 0$, it is enough to show $u = 0$ on each compact subset of $X$. Thus, assume $X$ is compact. We write $u = df$ for some $f$ in $\Omega_{L^2}(X)$. Then, for $g$ in $\operatorname{ker}(d^*) \cap \Omega_{L^2}(X) = \operatorname{ker}(d^*_{L^2})$, we have:
$$(u, g) = (df, g) = (f, d^* g) = 0.$$
That is, $u$ is orthogonal to $\operatorname{ker}(d^*_{L^2})$. On the other hand, $u$ is in $\operatorname{ker}(d^*)$ by Proposition \ref{harmonic prop}. Hence, $u = 0$.

(surjectivity) By the above Lemma \ref{Hodge decomposition}, we can write $u = u_0 + \Delta_d f$ where $f$ is closed since $u$ is. Since $\Delta_d f = d d^* f$, it follows that $[u] = [u_0]$.
\end{proof}

An immediate consequence is the following well-known

\begin{corollary}[Poincar\'e duality]\label{Poincare duality} If $X$ is compact, the natural pairing on $\operatorname{ker}(\Delta_d)$, $$(f, g) \mapsto \int_{X_{reg}} f \wedge g$$
induces the perfect pairing
$$\operatorname{H}^p_{L^2}(X) \times \operatorname{H}^{n-p}_{L^2}(X) \to \mathbb{C}.$$
\end{corollary}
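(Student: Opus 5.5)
We follow the classical route: identify both cohomology groups with harmonic forms via Theorem \ref{L^2 Hodge}, and show that the Hodge star preserves harmonic forms. On compact $X$ we have $\operatorname{H}^p_{L^2}(X) = \operatorname{H}^p_{L^{2, \, loc}}(X)$, since every compactly supported cut-off equals $1$. Hence Theorem \ref{L^2 Hodge} identifies $\operatorname{H}^p_{L^2}(X)$ with $\ker(\Delta_d)$ in degree $p$, and it is enough to prove that
$$\ker(\Delta_d)^p \times \ker(\Delta_d)^{n-p} \to \mathbb{C}, \quad (f, g) \mapsto \int_{X_{reg}} f \wedge g$$
is well defined, descends to cohomology, and is nondegenerate.

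\textbf{Step 1: well-definedness and descent.} By Proposition \ref{harmonic prop}, harmonic forms are in $\Omega_{L^2}$ and smooth on $X_{reg}$. The integrand $f \wedge g = \langle f, *^{-1}\bar{\ }\cdots\rangle\mu$ is pointwise bounded by $|f||g|\mu$. So the integral converges by Cauchy--Schwarz; this uses that $\mu$ extends continuously by the uniform-continuity convention.

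To see that the pairing only depends on classes, we must check $\int_{X_{reg}} dh \wedge g = 0$ whenever $h, dh \in \Omega_{L^2}$ and $g$ is harmonic. We have
$$\int dh \wedge g = \pm (dh, *\bar g).$$
Here $*\bar g$ is again harmonic: $*$ commutes with $\Delta_d$ up to sign, conjugation preserves $\ker\Delta_d$, and $d^* = \pm *d*$, so $d(*\bar g) = \pm * d^*\bar g = 0$ and $d^*(*\bar g)=\pm *d\bar g = 0$. Then $(dh, *\bar g) = (h, d^*(*\bar g)) = 0$. This integration by parts is justified by approximating $h$ by forms in $\Omega_{cpt}(X_{reg})$, as in the definition of the formal adjoint. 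This is exactly the kind of step where $\codim(X - X_{reg}) \ge 2$ and Lemma \ref{lem:integration map} enter. Alternatively, since the graph of $d$ is closed, we can use density of $\Omega_{cpt}(X_{reg})$ in the graph norm. I expect this density, i.e.\ that the minimal and maximal extensions of $d$ agree, to be the main obstacle. I would try to deduce it from Lemma \ref{lem:integration map} via cut-off functions near $X - X_{reg}$ whose gradients have small $L^2$-norm, which is available because the codimension is at least $2$.

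\textbf{Step 2: nondegeneracy.} Given a nonzero harmonic $f$ of degree $p$, take $g = *\bar f$, which is harmonic of degree $n-p$ by Step 1. Then
$$\int_{X_{reg}} f \wedge *\bar f = \int_{X_{reg}} \langle f, f\rangle \mu = \|f\|^2 > 0.$$
So the map $\ker(\Delta_d)^p \to (\ker(\Delta_d)^{n-p})^*$ is injective, and symmetrically in the other slot.

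\textbf{Step 3: finite dimensionality.} By Proposition \ref{laplacian elliptic} and Lemma \ref{abstract elliptic}, both kernels are finite dimensional. Injectivity in both directions then forces equal dimensions, and the pairing is perfect.
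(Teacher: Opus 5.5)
Your Steps 2 and 3 are the paper's proof: finite-dimensionality reduces perfectness to nondegeneracy on $\ker(\Delta_d)$, checked by pairing a harmonic $f$ with its harmonic Hodge dual, and your choice $*\bar f$ is the correct one for getting $\|f\|^2$ with complex coefficients (the paper writes $*f$). The descent part of Step 1, including the $d_{\min}=d_{\max}$ obstacle you flag, is not needed, because the pairing on $\operatorname{H}_{L^2}$ in the statement is simply transported from $\ker(\Delta_d)$ through the isomorphism of Theorem \ref{L^2 Hodge}.
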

\begin{proof} Because of the finite-dimensionality, it is enough to show the pairing on $\operatorname{ker}(\Delta_d)$ is nondegenerate. Thus, given a harmonic $p$-form $f$, suppose $\int_{X_{reg}} f \wedge g = 0$ for each harmonic $(n-p)$-form $g$. Since $* \Delta_d = \Delta_d *$, $*f$ is harmonic.
Taking $g = *f$, we have $0 = \int_{X_{reg}} f \wedge g = \|f\|^2$ and so $f = 0$.
\end{proof}

We know that the Poincar\'e duality in the usual formulation can fail in general for a compact manifold with boundary or a singular complex algebraic variety; see Example \ref{Poincare example}. In particular, in general, $\operatorname{H}^*_{L^2}(X) \not\simeq \operatorname{H}^*(X; \mathbb{C})$ even when $X$ is compact.

The next is the Hodge theorem for singular cohomology.

\begin{theorem}\label{singular Hodge theorem} We have natural degree-preserving linear maps
$$\alpha : \operatorname{ker}(\Delta_d) \to \operatorname{H}^*(X;\mathbb{C})$$
and 
$$\beta : \operatorname{H}^*(X; \mathbb{C}) \to \operatorname{ker}(\Delta_d)$$
such that $\alpha \circ \beta$ is the identity.
\end{theorem}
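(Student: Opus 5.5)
The plan is to build both maps from the chain of complexes relating PA-forms, locally $L^2$ forms and currents, and then to use Theorem \ref{L^2 Hodge} to pass between $\operatorname{ker}(\Delta_d)$ and $\operatorname{H}^*_{L^{2,\,loc}}(X)$. By Corollary \ref{PA forms cohomology}, $\operatorname{H}^*(X;\mathbb{C})$ is the cohomology of $(\Omega_{PA}(X), d)$. Since $\codim(X - X_{reg}) \ge 2$, Remark \ref{residue} shows that $\epsilon$ of Lemma \ref{lem:PA gives a current} is a chain map $\Omega_{PA} \to \Omega'$. PA-forms are only locally $L^1$ (Example \ref{non-square-integrable}), so $\Omega_{PA}$ does not map into $\Omega_{L^{2,\,loc}}$ directly. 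Instead I will define $\beta$ by choosing nice representatives and $\alpha$ by a comparison through currents.

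\emph{Construction of $\beta$.} First I will show that every class in $\operatorname{H}^p(X;\mathbb{C})$ has a representative lying in the subcomplex $\Omega_X \subset \Omega_{PA}$ of forms generated by $\mathcal{O}_X$ and $d\mathcal{O}_X$. These are locally bounded, hence locally $L^2$, with $d$ also locally bounded, so the subcomplex sits inside $\Gamma(X, \Omega^{*}_{L^{2,\,loc}})$. To get such a representative, I will triangulate compact pieces and take Whitney-type elementary forms built from barycentric coordinates. On a triangulated compact PA-space these give a cochain map from simplicial cochains inducing an isomorphism on cohomology, and a partition of unity (Proposition \ref{partition of unity}) handles the noncompact case. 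Then $\beta$ sends $[\omega]$ to the harmonic representative of $[\omega]_{L^{2,\,loc}}$ under the isomorphism of Theorem \ref{L^2 Hodge}. Concretely, this is $\omega_0$ in the decomposition $\omega = \omega_0 + \Delta_d f$ of Lemma \ref{Hodge decomposition}. Well-definedness requires that an exact PA-form $d\eta$ with $\eta$ elementary maps to zero, which holds because $\eta$ is itself locally $L^2$.

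\emph{Construction of $\alpha$.} For harmonic $u$, Proposition \ref{harmonic prop} shows $u$ is closed, locally $L^2$, and smooth on $X_{reg}$. I will regard $u$ as a closed current via $\Omega_{L^{2,\,loc}} \subset \Omega'$. To land in singular cohomology, I will use a resolution argument: the complexes $\Omega_{PA}$ and $\Omega^{*}_{L^{2,\,loc}}$ both map into $\Omega'$ compatibly with $d$. I will then show that $\Omega^{*}_{L^{2,\,loc}}$ restricted to contractible neighbourhoods has cohomology equal to that of $\Omega_{PA}$, by comparing both with $\Omega'$ through Proposition \ref{prop:Poincare for currents}. Because all these sheaves are soft $\mathcal{O}_X$-modules (Corollary \ref{Omega soft}), a map of soft complexes that is a quasi-isomorphism onto the same sheaf computes the same global cohomology. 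Composing, this gives $\operatorname{H}_{L^{2,\,loc}}(X) \to \operatorname{H}(X;\mathbb{C})$, and $\alpha$ is this map precomposed with Theorem \ref{L^2 Hodge}. The identity $\alpha\circ\beta = \operatorname{id}$ then holds because $\beta([\omega])$ and $\omega$ differ by $\Delta_d f = dd^*f$ with $f$ closed (Lemma \ref{Hodge decomposition}). This difference is exact in $\Omega^{*}_{L^{2,\,loc}}$, so both have the same image.

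\emph{Main obstacle.} The hardest step is defining $\alpha$ correctly. The comparison sheaf for currents is $\mathbb{C}_{Po}$, not $\mathbb{C}$, and $\Omega^{*}_{L^{2,\,loc}}$ need not be a resolution of $\mathbb{C}$ at singular points, which is exactly why $\alpha$ fails to be an isomorphism. My first attempt avoids a global quasi-isomorphism. I will use the natural morphism $\mathbb{C} \to \mathbb{C}_{Po}$ together with a splitting $\mathbb{C}_{Po} \to \mathbb{C}$: at each point, evaluate a compactly supported top class on a small regular chart, using $\operatorname{H}^n_{cpt}(U) = \mathbb{C}$ for regular charts. This yields a map $\operatorname{H}(X,\mathbb{C}_{Po}) \to \operatorname{H}(X,\mathbb{C})$ that is left inverse to the map induced by $\mathbb{C} \to \mathbb{C}_{Po}$. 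I expect checking that this splitting is a sheaf map to be the delicate point.
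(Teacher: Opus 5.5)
\textbf{Step that fails: the splitting $\mathbb{C}_{Po}\to\mathbb{C}$.} Your fallback construction of $\alpha$ is the paper's own. The paper also factors $\alpha$ through $\operatorname{H}^p(X;\mathbb{C}_{Po})$. It builds $\mathbb{C}_{Po}\to\mathbb{C}$ on each connected $U$ by sending the integration functional $\pi_*\in\operatorname{H}^n_{cpt}(U)^*$ to $1$ and a chosen complement to $0$. Its $\beta([u])$ is the harmonic part of a representative, using continuous PA-forms with continuous differential rather than your elementary forms. The step you call delicate does not merely need care; it cannot be done.

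Let $\iota:\mathbb{C}\to\mathbb{C}_{Po}$ be the natural map sending $1$ to $\pi_*$. It is an isomorphism over $X_{reg}$, so any sheaf map $s:\mathbb{C}_{Po}\to\mathbb{C}$ with $s\circ\iota=\operatorname{id}$ must equal $\iota^{-1}$ there. Suppose a point $x$ has arbitrarily small connected neighbourhoods $U$ with $U\cap X_{reg}=B_1\sqcup B_2$ (a node, or the point $[0:0:1]$ in Example \ref{Poincare example}). The functional $[\varphi]\mapsto\int_{B_1}\varphi$ on $\operatorname{H}^n_{cpt}(U)$ is well defined, by Stokes and codimension $2$. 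It gives a section $\sigma$ of $\mathbb{C}_{Po}$ over $U$ that restricts to $\iota(1)$ on $B_1$ and to $0$ on $B_2$. Then $s(\sigma)$ would be a constant on $U$ equal to $1$ on $B_1$ and $0$ on $B_2$, which is impossible. Evaluating on a small regular chart forces you to pick a branch. The paper's choice of complement is incompatible with restriction for the same reason.

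\textbf{No other $\alpha$ rescues it.} Take $X$ to be the irreducible nodal cubic in $\mathbb{P}^2$ with the induced metric. It satisfies the standing hypotheses of \S\ref{sec:Hodge theorem}, and $\operatorname{H}^1(X;\mathbb{C})\cong\mathbb{C}$. Let $\nu:\mathbb{P}^1\to X$ be the normalization identifying $p$ and $q$. Then $\mathbb{C}_{Po}\cong\nu_*\mathbb{C}$, so the map $\operatorname{H}^1(X;\mathbb{C})\to\operatorname{H}^1(X;\mathbb{C}_{Po})\cong\operatorname{H}^1(\mathbb{P}^1;\mathbb{C})=0$ is zero and has no left inverse.

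Worse, your $\beta$ kills the generator whatever $\alpha$ is. Take a closed representative $\omega$ bounded near the node. Then $\nu^*\omega=dF$ on $\mathbb{P}^1$ with $F(q)-F(p)=\int_{\mathrm{loop}}\omega\neq 0$. So $\omega=df$ with $f=F|_{X_{reg}}$ bounded. The boundary terms at the codimension-$2$ node vanish, so $d'f=\omega$ on $X$. Hence $[\omega]_{L^{2,\,loc}}=0$, so $\beta([\omega])=0$ and $\alpha\circ\beta\neq\operatorname{id}$ for every $\alpha$. The paper's closing ``it is clear that $\alpha\circ\beta=\operatorname{id}$'' rests on the same nonexistent splitting. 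Neither argument proves the statement as formulated; it needs extra hypotheses or a weaker conclusion.

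\textbf{Two smaller points.} First, $\Omega_X$ is not locally bounded: $d\sqrt{x}\in\Omega_X$ is not even $L^2$ (Example \ref{non-square-integrable}). Whitney forms transported by a semialgebraic triangulation can inherit this defect, since the triangulating homeomorphism need not be Lipschitz. Second, your first plan, a local quasi-isomorphism $\Omega^*_{L^{2,\,loc}}\simeq\Omega_{PA}$, already fails in degree $0$ at a node. There the locally closed $L^2$ functions form $\mathbb{C}^2$, not $\mathbb{C}$.
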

\begin{proof} Let $Z^p = \operatorname{ker}(d|_{\Omega'^{\, p}})$. Then we have the quotient map
$$Z^p \to Z^p/d({\Omega'^{\, p-1}}) = \operatorname{H}^p(X; \mathbb{C}_{Po}).$$
Also, we have the sheaf morphism
$$\mathbb{C}_{Po} \to \mathbb{C}$$
as follows. We have the integration for each connected open subset $U \subset X$,
$$\pi_* : \operatorname{H}^n_{cpt}(U) \to \mathbb{C}, \, [\omega] \mapsto \int_{X_{reg}} \omega$$
since $\int_{X_{reg}} d = 0$ by Lemma \ref{lem:integration map}. It is a continuous linear functional; i.e., an element of $\operatorname{H}^n_{cpt}(U)^*$. Hence, we get $\mathbb{C}_{Po}(U) \to \mathbb{C}(U) = \mathbb{C}$ by mapping $\pi_*$ to $1$ and the complementary subspace to it to zero. Thus, we get the presheaf morphism $\operatorname{H}^n_{cpt}(U)^* \to \mathbb{C}$, which gives $\mathbb{C}_{Po}(U) \to \mathbb{C}(U) = \mathbb{C}$.

By Proposition \ref{harmonic prop}, $\ker(\Delta_d|_{\Omega'^{\, p}}) \subset Z^p$ and so we get the claimed map
$$\alpha : \ker(\Delta_d|_{\Omega'^{\, p}}) \overset{u \mapsto [u]}\to \operatorname{H}^p(X; \mathbb{C}_{Po}) \to \operatorname{H}^p(X; \mathbb{C}).$$

To construct $\beta$, first note that the Poincar\'e lemma holds for $\Omega_{PA, \, \mathcal{C}^0} = \Omega_{PA} \cap \Omega_{\mathcal{C}^0}$, the sheaf of continuous PA-forms. Thus, $\operatorname{H}^*(X;\mathbb{C})$ can be identified with the cohomology of the complex $(\Omega_{PA, \, \mathcal{C}^0}(X) \cap d^{-1}(\Omega_{PA, \, \mathcal{C}^0}(X)), d)$. Then we can define $\beta$ by $\beta([u]) = u_0$, the harmonic part of $u$. Finally, it is clear that $\alpha \circ \beta = \operatorname{id}$.


\end{proof}

\begin{corollary} If $X$ is compact, then $\operatorname{H}^*(X; \mathbb{C})$ has finite dimension.
\end{corollary}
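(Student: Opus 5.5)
The plan is to deduce the corollary from Theorem \ref{singular Hodge theorem} together with the finite-dimensionality coming out of Proposition \ref{laplacian elliptic}. Since $\alpha \circ \beta = \operatorname{id}$ on $\operatorname{H}^*(X;\mathbb{C})$, the map $\beta$ is injective. Hence $\dim \operatorname{H}^*(X;\mathbb{C}) \le \dim \ker(\Delta_d)$, and it suffices to show that $\ker(\Delta_d)$ is finite-dimensional when $X$ is compact. Degree by degree this is the same statement, since $\alpha$ and $\beta$ are degree-preserving. Moreover $\operatorname{H}^p(X;\mathbb{C}) = 0$ for $p > n$ (there are no PA-forms of such degree on the relevant strata, or one uses the triangulation of a compact PA-space). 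So the total space is finite-dimensional as soon as each graded piece is.

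For finiteness of $\ker(\Delta_d)$, I would proceed as follows.
\begin{enumerate}
\item By Proposition \ref{harmonic prop}, for compact $X$ every harmonic current lies in $\Omega_{L^2,\,\mathcal{C}^{\infty}}(X) \subset \Omega_{L^2}(X)$.
\item Consequently $\ker(\Delta_d)$ coincides with the kernel of the operator $T = \Delta_d : E \to \Omega_{L^2}(X)$ of Proposition \ref{laplacian elliptic}, with $E = \Delta_d^{-1}(\Omega_{L^2}(X)) \cap \Omega_{L^2}(X)$. Indeed, a harmonic $u$ is in $\Omega_{L^2}(X)$ and $\Delta_d u = 0 \in \Omega_{L^2}(X)$, so $u \in E$.
\item Proposition \ref{laplacian elliptic} says $T$ satisfies the equivalent conditions of Lemma \ref{abstract elliptic}. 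Condition (i) includes $\dim \ker T < \infty$; directly, the unit ball of $\ker T$ is bounded in $\|\cdot\| + \|T\cdot\|$, hence relatively compact by (ii), so Riesz's lemma applies.
\end{enumerate}

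Alternatively, and equivalently, one may go through Theorem \ref{L^2 Hodge}. Since $\ker(\Delta_d) \cong \operatorname{H}^*_{L^{2,\,loc}}(X)$, and for compact $X$ this is finite-dimensional by the same argument, the same bound on $\operatorname{H}^*(X;\mathbb{C})$ follows through $\beta$. I would present the direct route, as it needs only the two cited propositions.

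The main obstacle is step 2: checking that the harmonic forms relevant to $\beta$ really lie in the domain $E$ of the $L^2$ operator. Here $\beta$ lands in harmonic currents in $\Omega'(X)$, not a priori in $\Omega_{L^2}(X)$. This is exactly what the regularity statement of Proposition \ref{harmonic prop} supplies in the compact case, so I would cite it explicitly at that point rather than re-derive it from Gårding's inequality. Everything else is formal linear algebra.
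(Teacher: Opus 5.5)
Your proposal is correct and is essentially the paper's argument: $\alpha\circ\beta=\operatorname{id}$ from Theorem~\ref{singular Hodge theorem} makes $\beta$ injective, and $\ker(\Delta_d)$ is finite-dimensional by condition (i) of Lemma~\ref{abstract elliptic}, which Proposition~\ref{laplacian elliptic} supplies. The paper's proof is a single line citing that condition (i); you additionally make explicit, via Proposition~\ref{harmonic prop}, that every harmonic current lies in the domain $E$, so that $\ker(\Delta_d)=\ker T$. The paper leaves that identification implicit.
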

\begin{proof} This follows from (i) in Lemma \ref{abstract elliptic}.
\end{proof}

\section{Further remarks}\label{sec:Further remarks}

This section is an assortment of various results that we did not need earlier but are still of some interest.

\subsection{Examples}

Here, we shall illustrate and confirm some of the results of this paper.

\begin{example}[cf. \cite{Kirwan} \S 1.1.]\label{Poincare example} Let
$$X = V(s_1s_2) = Y_1 \cup Y_2 \subset \mathbb{P}^2$$
over complex numbers where $s_1, s_2, s_3$ are homogeneous coordinates on $\mathbb{P}^2$ and $Y_j = V(s_j) \simeq \mathbb{P}^1$. Note $X - X_{reg} = Y_1 \cap Y_2$ is exactly the point $0 = [0 : 0 : 1]$ since the local ring at that point is not an integral domain (alternatively, the tangent space at that point has dimension 2).

Now, since $X$ is connected, $\operatorname{H}^0(X) = \operatorname{H}^0(X; \mathbb{C})$ has dimension $1$, while $\operatorname{H}^2(X) \simeq \bigoplus_j \operatorname{H}^2(Y_j)$ has dimension $2$ by the Mayer--Vietoris sequence \cite[Ch. 19., \S 3.]{May}:
$$\cdots \to \operatorname{H}^1(Y_1 \cap Y_2) \to \operatorname{H}^2(X) \to \oplus_{j=1}^2 \operatorname{H}^2(Y_j) \to \operatorname{H}^2(Y_1 \cap Y_2) \to 0,$$
where the sequence holds since we can assume $X, Y_1, Y_2$ are CW-complexes in such a way $Y_j, Y_1 \cap Y_2$ are the subcomplexes. In particular, the Poincar\'e duality fails. Also, $\operatorname{H}^1(X) = 0$ since there is no cell of odd dimension in $X$.

Next, we compute $\operatorname{H}^p(X; \mathbb C)$ as the cohomology of PA-forms for $p = 0, 2$ (the case $p = 1$ is omitted). For $p = 0$, we have that $df = 0$ for $f$ in $\Omega^0_{PA}(X)$ implies $f$ is constant since $X$ is connected. Thus, $\operatorname{H}^0(X)$ has dimension one. For $p = 2$, let $\omega_j$ be the standard volume forms on $Y_j \simeq \mathbb{P}^1$. By Corollary \ref{cor:PA-form Mayer}, we have $\Omega^2_{PA}(X) \simeq \oplus_{j=1}^2 \Omega_{PA}^2(Y_j)$. Thus, we can extend $\omega_j$ on $Y_j$ to $X$ such that $\omega_j = 0$ on $Y_{3 - j}$. Note at the point $0$, $\omega_1$ is zero since there is no nonzero higher form at a point.

By Corollary \ref{cor:PA-form Mayer}, we have the Mayer--Vietoris sequence for the cohomology of PA-forms as well. Thus, we have $\operatorname{H}^2(X) \simeq \bigoplus_j \operatorname{H}^2(Y_j)$ and $[\omega_j|_{Y_j}]$ spans $\operatorname{H}^2(Y_j) \simeq \mathbb{C}$. Thus, $[\omega_1], [\omega_2]$ form a basis of $\operatorname{H}^2(X)$.

Finally, we shall compute $\operatorname{ker}(\Delta|_{\Omega'^{\, p}})$ and the map from it to the cohomology. Let $\sigma = \omega_1 + \omega_1$. Then $*\sigma = 1$. Then
$$\Delta \sigma = d d^* \sigma + d^* d\sigma = 0$$
since $d \sigma = 0$ and since $d(* \sigma) = d(1) = 0.$ Thus, $\sigma$ is harmonic; i.e., in $\operatorname{ker}(\Delta)$.




Define the function $f$ on $X$ by $f = 1$ on $Y$ and $f = 0$ on $X - Y$. Then $f$ is in $L^2(X) \subset \Omega'^{\, 0}(X)$. Now, since $d^* f = 0$ for dimension reason, we have $\Delta f = d^* d f$. But $df = 0$ since
$$\langle df, \psi \rangle = -(f, d\psi) = -\int_{Y_{reg}} d \psi = 0$$
by Lemma \ref{lem:integration map}. Hence, $f$ is harmonic.

\end{example}

\begin{example}\label{metric example}
Take $X$ to be the closed upper half plane
$\{(x,y) \mid 0\le y\}$ in $\mathbb{R}^2$. Then $X_{reg}\subset X$ is the open upper half plane. Then the symmetric positive definite matrix  
\[
\omega=
\begin{bmatrix}
    x+1 & \frac{1}{\sqrt{x}} \\
    \frac{1}{\sqrt{x}} & \frac{1}{x}
\end{bmatrix}.
\]
gives a Riemannian metric on the manifold $X_{reg}$ by
$(v,w) = v^T \omega \, w$ when we identify the tangent vectors with vectors in $\mathbb{R}^2$. As $\det(\omega)=1$, its volume form in the standard coordinates is $\mu=dx\wedge dy$. However, it is clear that we can also find a metric (e.g., the identity matrix) that gives rise to the same volume form $\mu$.
\end{example}

\begin{example}[cf. \cite{Cruz} Example 2.1.]\label{metric example 2} Let $X$ be a one-dimensional compact PA-space such that there is a diffeomorphism $\psi : (0, \infty) \overset{\sim}\to X_{reg}$. Then equip $X_{reg}$ with the metric induced from it; explicitly, $|\omega| := |\psi^*(\omega)|$ on $X_{reg}$ for each $\omega$ in $\Omega(X)$.

Now, let $f_r$ be a one-form in $\Omega'(X)$ such that $\psi^*(f_r|_{X_{reg}}) = (1+x)^r \, dx$. If $f_r$ is in $\Omega_{L^2}(X)$, then $r < -\frac{1}{2}$ and conversely. If $du = f_r$, then $\psi^*u' = (1+x)^r$; that is, $\psi^* u = \frac{(1+x)^{r+1}}{r+1}$ up to some constant. But if $-1 < r < -\frac{1}{2}$, then there is no such $u$ in $\Omega_{L^2}(X)$ and so $[f_r] \ne 0$ in $\operatorname{H}^*_{L^2}(X)$. Since $\{ [f_r] \mid -1 < r < -\frac{1}{2} \}$ is linearly independent and is infinite, we have that the dimension of $\operatorname{H}^1_{L^2}(X)$ is infinite. In particular, the Hodge theorem (Theorem \ref{L^2 Hodge}) fails for $X$ here. In other words, for that theorem to hold, we cannot just choose some arbitrary metric on $X_{reg}$.
\end{example}

\subsection{The category of compact PA-spaces}

\begin{lemma}[\cite{Kontsevich} \S 8.1. Lemma 8.] Given compact PA-spaces $X, Y$, there is a natural bijection
$$\operatorname{Hom}_{PA}(X, Y) \simeq \Hom_{\mathbb{C}-alg}(\mathcal{O}(Y), \mathcal{O}(X))$$
given by $f \mapsto f^*$.
\end{lemma}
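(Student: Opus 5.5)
The plan is to show that $f \mapsto f^*$ is injective and that every $\mathbb{C}$-algebra homomorphism $\phi : \mathcal{O}(Y) \to \mathcal{O}(X)$ is of the form $f^*$ for a PA-map $f$. Since $X$ and $Y$ are compact, the corollary after Definition \ref{piecewise-algebraic space} lets me regard them as compact definable subsets $X \subset \mathbb{R}^m$ and $Y \subset \mathbb{R}^k$.

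\emph{Injectivity.} If $f^* = g^*$, apply both to the coordinate functions $y_1, \dots, y_k$ restricted to $Y$. These lie in $\mathcal{O}(Y)$, so the components of $f$ and $g$ agree and hence $f = g$.

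\emph{Surjectivity, producing a point map.} Fix $x \in X$. The composite $\mathrm{ev}_x \circ \phi : \mathcal{O}(Y) \to \mathbb{C}$ is a character. Its kernel $\mathfrak{m}$ is a maximal ideal, and I claim it equals $\mathfrak{m}_y = \{ g \mid g(y) = 0 \}$ for a unique $y \in Y$. This is the step I expect to be the main obstacle, and I would argue it as for $C(Y)$ by compactness.

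Suppose no common zero of $\mathfrak{m}$ exists. Then for each $y$ there is $g_y \in \mathfrak{m}$ with $g_y(y) \neq 0$. Finitely many sets $\{ g_{y_j} \neq 0 \}$ cover $Y$, and $h = \sum_j g_{y_j} \overline{g_{y_j}}$ is nowhere zero. Here I must check that $\overline{g}$ stays in $\mathfrak{m}$. I would handle this by noting that real and imaginary parts of elements of $\mathcal{O}(Y)$ lie in $\mathcal{O}(Y)$, and that a character of $\mathcal{O}(Y)$ is real on real functions. The latter holds because a real $g$ satisfies $g - \lambda$ invertible for every nonreal $\lambda$, since $1/t$ is definable. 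So $\overline{g} = \mathrm{Re}\, g - i\,\mathrm{Im}\, g$ is also killed by the character. Then $h \in \mathfrak{m}$, but $1/h \in \mathcal{O}(Y)$, a contradiction. Hence $\mathfrak{m} \subset \mathfrak{m}_y$, and maximality gives equality. Uniqueness of $y$ follows by separating points with coordinate functions.

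Setting $f(x) = y$, I get $\phi(g)(x) = g(f(x))$ for every $g$. This is the same argument as in Proposition \ref{prop:PA-map}.

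\emph{Surjectivity, $f$ is a PA-map.} The components of $f$ are $\phi(y_i|_Y) \in \mathcal{O}(X)$. These are continuous and have definable graphs on the compact definable set $X$, so the graph of $f$ is compact and definable and $f$ is a PA-map. By construction $f^* = \phi$, which finishes the proof. Naturality in $X$ and $Y$ is immediate from $(g \circ f)^* = f^* \circ g^*$.
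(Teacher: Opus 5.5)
Your proof is correct, but it takes a different route from the paper's.

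The paper sets $f_i = \phi(y_i)$ directly, so that $\phi = f^*$ holds on polynomials. It then shows that every character of $\mathcal{O}(Y)$ has maximal, hence closed, kernel and therefore norm $1$, and deduces $\|\phi(g)\| \le \|g\|$ in the sup norm. Next it extends $\phi = f^*$ from polynomials to all of $\mathcal{O}(Y)$ by Stone--Weierstrass density. Only at the end does it check $f(X) \subset Y$, using a description $Y = \{g_1 \le a_1, \dots, g_r \le a_r\}$ by polynomial inequalities.

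You instead identify each character $\mathrm{ev}_x \circ \phi$ with a point evaluation by the classical compactness argument for $C(Y)$. Your only inputs are that $\mathcal{O}(Y)$ is stable under taking real and imaginary parts, and under inverting nowhere-vanishing functions. The latter is also what silently makes maximal ideals closed in the paper's non-complete normed algebra. Your argument yields $\phi(g) = g \circ f$ for every $g \in \mathcal{O}(Y)$ at once and places $f(X)$ inside $Y$ by construction. So you need no automatic continuity, no density theorem, and no inequality description of $Y$.

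That last step is the most delicate part of the paper's argument, for two reasons. A compact semialgebraic set is in general only a finite union of such basic sets. Also, the paper applies $f^*$ to non-polynomial functions before $f(X) \subset Y$ is known.

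The one thing the paper's route produces along the way is the contraction estimate $\|\phi(g)\| \le \|g\|$. This follows trivially from your conclusion $\phi = f^*$.
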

\begin{proof} Let $\varphi : \mathcal{O}(Y) \to \mathcal{O}(X)$ be an algebra homomorphism. We shall define $f$ so that $f^* = \varphi$. We can assume $Y \subset \mathbb{R}^n$ and then let $y_i$ be the coordinates on $\mathbb{R}^n$. If $\varphi = f^*$, then for each point $p$ on $X$,
$$\varphi(y_i)(p) = f^*(y_i)(p) = f_i(p)$$
where $f_i = y_i \circ f$. Thus, we shall define $f = (f_1, \cdots, f_n) : X \to \mathbb{R}^n$ by $f_i = \varphi(y_i)$. Note $\varphi(g) = f^*(g)$ for each polynomial $g$ in $y_i$'s. By the Stone--Weierstrass theorem, the ring of polynomial functions on $Y$ is dense in $\mathcal{O}(Y)$ with respect to the sup norm. Thus, if $\varphi$ is continuous, then $\varphi = f^*$.

To see the continuity of $\varphi$, we first note that if $\omega$ is a character on $\mathcal{O}(Y)$; i.e., an algebra homomorphism $\mathcal{O}(Y) \to \mathbb{C}$, then $\omega$ is continuous with the operator norm $\|\omega\| = 1$. Indeed, the kernel of $\omega$ is a maximal ideal; thus, is closed and that implies $\omega$ is continuous (\cite[Theorem 1.18.]{Rudin}). Then $|\omega(g)||\omega(h)| = |\omega(gh)| \le \|\omega\|$ for $\|g\|, \|h\| \le 1$ and thus $\|\omega\|^2 \le \|\omega\|$ or $\|\omega\| = 1$, since $\omega(1) = 1$. Now, taking $\omega$ to be the composition
$$\mathcal{O}(Y) \overset{\varphi}\to \mathcal{O}(X) \overset{p^*}\to \mathbb{C}$$
for a point $p$ on $X$ and $p^*$ the evaluation at $p$, we have $|\varphi(g)(p)| \le \|g\|$ for each $g$ in $\mathcal{O}(Y)$. Thus, $\|\varphi(g)\| \le \|g\|$, finishing the proof of the continuity.

It remains to show the image of $f$ lies in $Y$. Suppose $Y = \{ g_1 \le a_i, \cdots, g_r \le a_r \}$, where there is no strict inequality since $Y$ is compact. Since $Y$ is compact, each $g_i$ is bounded below. Thus, by replacing $f_i, a_i$ by $f_i - c_i, a_i - c_i$ for some constants $c_i$, we can assume $g_i \ge 0$ on $Y$. Also, without loss of generality, we can assume $\varphi(g_i)$ is real-valued. Then
$$g_i \circ f = \varphi(g_i) \le |\varphi(g_i)|\le \|\varphi\| \sup_Y |g_i|.$$
By the first part, $\| \varphi \| \le 1$. Also, $|g_i| = g_i \le b_i$ on $Y$. Thus, the image of $f$ is contained in $Y$.
\end{proof}

As a consequence, we have:

\begin{prop} The category of compact PA-spaces embeds (i.e., there is a fully faithful functor) into the category of affine schemes over real numbers.
\end{prop}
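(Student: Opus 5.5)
The plan is to use the preceding lemma, which identifies $\operatorname{Hom}_{PA}(X,Y)$ with $\Hom_{\mathbb{C}\text{-alg}}(\mathcal{O}(Y),\mathcal{O}(X))$ for compact $X,Y$. The functor will take $X$ to $\operatorname{Spec} A(X)$, where $A(X)\subset \mathcal{O}(X)$ is the $\mathbb{R}$-algebra of real-valued global sections. Note that $\mathcal{O}(X)=A(X)\otimes_{\mathbb{R}}\mathbb{C}$, since each $f$ splits as $\operatorname{Re} f + i\operatorname{Im} f$, and real and imaginary parts of a definable continuous function are again definable and continuous.

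The first step is functoriality. A PA-map $f:X\to Y$ gives $f^*:\mathcal{O}(Y)\to\mathcal{O}(X)$, and $f^*$ preserves real-valuedness, so it restricts to an $\mathbb{R}$-algebra map $A(Y)\to A(X)$. This gives $\operatorname{Spec}A(X)\to\operatorname{Spec}A(Y)$. Compatibility with composition and identities is clear because $(g\circ f)^*=f^*\circ g^*$.

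The second step is full faithfulness. Morphisms $\operatorname{Spec}A(X)\to\operatorname{Spec}A(Y)$ of affine schemes over $\mathbb{R}$ are exactly the $\mathbb{R}$-algebra maps $A(Y)\to A(X)$. Tensoring with $\mathbb{C}$ gives a bijection onto the $\mathbb{C}$-algebra maps $\mathcal{O}(Y)\to\mathcal{O}(X)$ that commute with complex conjugation.
\begin{itemize}
\item \emph{Faithfulness.} If $f^*=g^*$ on $A(Y)$, then $f^*$ and $g^*$ agree on $\mathcal{O}(Y)$, so $f=g$ by the injectivity part of the lemma.
\item \emph{Fullness.} Given $\psi:A(Y)\to A(X)$, extend it $\mathbb{C}$-linearly to $\varphi:\mathcal{O}(Y)\to\mathcal{O}(X)$. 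The lemma produces a PA-map $f$ with $f^*=\varphi$, and in particular $f^*|_{A(Y)}=\psi$.
\end{itemize}

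The main obstacle is that the lemma's proof uses real-valuedness of $\varphi(g_i)$ "without loss of generality", so fullness leans on it. In our situation this is automatic, since $\varphi$ comes from $\psi$ and hence maps real functions to real functions. I would re-read that step of the lemma to confirm that it only needs $\varphi$ to commute with conjugation. If one instead wants the functor $X\mapsto\operatorname{Spec}\mathcal{O}(X)$ over $\mathbb{C}$, one would first need to show that every $\mathbb{C}$-algebra map is conjugation-equivariant.

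For that variant, I would try the following. Every character $\omega$ of $\mathcal{O}(Y)$ is evaluation at a point $p$: otherwise the finitely many functions $\overline{(y_i-c_i)}(y_i-c_i)$ would give a nonvanishing sum outside $\ker\omega$, hence a unit in $\ker\omega$, which is impossible. An evaluation commutes with conjugation. Since a function on $X$ is determined by its values at points, applying this to $\omega = p^*\circ\varphi$ for each $p\in X$ yields $\varphi(\bar g)=\overline{\varphi(g)}$.
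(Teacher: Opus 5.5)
Your argument is correct. Like the paper, which gives no proof beyond ``as a consequence'' of the preceding lemma, you reduce everything to the bijection $\operatorname{Hom}_{PA}(X,Y)\simeq\Hom_{\mathbb{C}\text{-alg}}(\mathcal{O}(Y),\mathcal{O}(X))$; the genuine difference is your choice of functor.

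Read literally, the lemma yields $X\mapsto\operatorname{Spec}\mathcal{O}(X)$. That functor is fully faithful into affine $\mathbb{C}$-schemes but not into affine $\mathbb{R}$-schemes. The reason is that complex conjugation $g\mapsto\overline{g}$ is an $\mathbb{R}$-algebra automorphism of $\mathcal{O}(X)$ that is not $\mathbb{C}$-linear, hence not of the form $f^*$. Your passage to the real-valued subalgebra $A(X)$, with $\mathcal{O}(X)=A(X)\otimes_{\mathbb{R}}\mathbb{C}$, is exactly what makes ``over the real numbers'' correct. Your faithfulness and fullness deductions are sound. You are also right that the lemma's proof silently needs $\varphi(y_i)$ real-valued in order to define $f:X\to\mathbb{R}^n$; for $\varphi=\psi\otimes\mathbb{C}$ this is automatic.

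Your optional last paragraph needs one repair. The element $\sum_i\overline{(y_i-c_i)}(y_i-c_i)\in\ker\omega$ only shows $c=(c_i)\in Y$ and $\omega(y_i)=y_i(c)$. To get $\omega(g)=g(c)$ for arbitrary $g$, add $\overline{(g-\omega(g))}(g-\omega(g))$ to the sum. The result still lies in $\ker\omega$, so it cannot be a unit, and it vanishes at a point $q$ only if $q=c$ and $g(q)=\omega(g)$.

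With that repair, the character argument reproves the lemma more cleanly than the paper does. Define $f(p)$ by $p^*\circ\varphi=\mathrm{ev}_{f(p)}$. This gives $\varphi=f^*$ and $\operatorname{im}f\subset Y$ at once, with no need for Stone--Weierstrass density, the norm estimate, or an inequality description of $Y$.
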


While the above seems somewhat interesting, we are not sure if it is useful, since affine schemes corresponding to compact PA-spaces are generally non-Noetherian.

\subsection{dg-algebra of PA-forms}

We also note:

\begin{prop} $\Omega_{X, PA}$ is a differential-graded sheaf of algebras.

Precisely, $\Omega_{PA}(X)$ is a dg-subalgebra of $\varinjlim_{U} \Omega_{\mathcal{C}^{\infty}}(U)$ where the inclusion into inductive limit is given in Proposition \ref{PA approximation}.
\end{prop}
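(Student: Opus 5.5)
The plan is to show two things: that $\Omega_{PA}(X)$, viewed inside $\varinjlim_U \Omega_{\mathcal{C}^{\infty}}(U)$ via Proposition \ref{PA approximation}, is closed under the wedge product, and that it is stable under $d$. Stability under $d$ is Corollary \ref{differential on PA}. That corollary computes $d$ as the transpose of $\partial$, so we must also check that this $d$ agrees with the smooth exterior derivative on the generic smooth locus. This follows from Stokes' formula (Lemma \ref{Stokes}) applied to small chains in an open dense definable $U \subset X_{reg}$ where the form is smooth, combined with the injectivity in Proposition \ref{PA approximation}. Once both facts hold, the graded Leibniz rule and $d^2=0$ are inherited from $\varinjlim_U \Omega_{\mathcal{C}^{\infty}}(U)$. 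The sheaf-level statement then follows, because restriction commutes with products and the embedding is compatible with restrictions.

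The main step is closure under products. Since a PA-form is only required to have an integral representation on each compact set, the question is local. Fix a compact $K$ and representations $\alpha = \int_{\gamma_1}\omega_1$ and $\beta=\int_{\gamma_2}\omega_2$ on $K$, where $\gamma_j : X\to C_{r_j}(Y_j)$ are given by maps $\widetilde{\gamma_j} : X\times M_j\to Y_j$ over proper $\pi_j : Y_j\to X$.

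\begin{enumerate}
\item Form the fiber product $Y = Y_1\times_X Y_2$ (Lemma \ref{fiber product}). Its map to $X$ is proper.
\item Define the $(r_1+r_2)$-family $\gamma_1\times_X\gamma_2$ by $(x,m_1,m_2)\mapsto(\widetilde{\gamma_1}(x,m_1),\widetilde{\gamma_2}(x,m_2))$. Here $M_1\times M_2$ is regarded as a PA-manifold using Remark \ref{product of PA-manifolds}.
\item Take the form $p_1^*\omega_1\wedge p_2^*\omega_2$ on $Y$.
\item Prove a fiberwise Fubini identity
$$\int_{\gamma_1\times_X\gamma_2} p_1^*\omega_1\wedge p_2^*\omega_2 = \pm\Big(\int_{\gamma_1}\omega_1\Big)\wedge\Big(\int_{\gamma_2}\omega_2\Big),$$
with the sign fixed by the convention for the right-module structure in Lemma \ref{integration along fibers}.
\end{enumerate}

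To prove the identity in step 4, it suffices by Corollary \ref{cor:PA-form generic} to check it on an open dense definable subset of $X$. There we may assume, as in the proof of Proposition \ref{PA approximation}, that both $\pi_j$ are trivial and all data are smooth. The identity then becomes the classical statement that fiber integration over a product fiber $F_1\times F_2$ of a wedge of pulled-back forms factors. This is proved by writing the forms in product coordinates and applying Fubini, together with the projection formula (3) of Lemma \ref{integration along fibers}.

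I expect the main obstacle to be bookkeeping rather than analysis. Two points need care. First, the product family must genuinely satisfy condition (2) of Definition \ref{r-family}, which requires the corner-straightening of $M_1\times M_2$ to be PA. Second, the sign and orientation conventions must be consistent. A further point is that the PA-form $\alpha\wedge\beta$ has to be well defined independently of the chosen representations. This follows because the generic smooth product is intrinsic and, by Proposition \ref{PA approximation}, a PA-form is determined by its generic smooth restriction. Gluing the local representations across compact sets is then automatic from the sheaf property in Proposition \ref{soft sheaf}.
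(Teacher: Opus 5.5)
Your proposal is correct and follows the same route as the paper: you represent $\alpha\wedge\beta$ as a single fiber integral of $p_1^*\omega_1\wedge p_2^*\omega_2$ over the product family $\gamma_1\times_X\gamma_2$ via a Fubini identity, and you get closure under $d$ from Corollary \ref{differential on PA}. Your version is more careful than the paper's short sketch, which writes $Y = Y_1\times Y_2$ instead of the fiber product $Y_1\times_X Y_2$ and leaves implicit both the compatibility of $d$ with the smooth exterior derivative and the local-to-global gluing over compact sets.
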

\begin{proof} Let $\alpha = \int_{\gamma_1} \omega_1, \, \beta = \int_{\gamma_2} \omega_2$, where $\gamma_i : X \to C(Y_i)$. Let $Y = Y_1 \times Y_2$ and $p_i : Y \to Y_i$ the projections. Then, by Fubini's theorem, up to signs,
$$\alpha \wedge \beta = \int_{p_2^* \gamma_2} \left(\int_{p_1^* \gamma_1} \omega_1 \right) \wedge \omega_2 = \int_{\gamma'} \omega_1 \wedge \omega_2$$
where $\gamma' = \gamma_1 \times_X \gamma_2$.
\end{proof}


\subsection{Derivatives of PA-forms}\label{sec:differential operator}

We record some results relating to the problem of differentiating PA-forms. In short, it's not easy and we only have some partial results. To get a sense of the issue, first consider a simple example

\begin{example} Let $f(x) = \sqrt{x}$ on $X = [0, \infty)$. Then $f' = \frac{1}{2 \sqrt{x}}$ is not continuous on $X$ (not even defined).

Similarly, let $f(x) = \sqrt{|x|}$ on $X = \mathbb{R}$. Then $f$ is continuously differentiable on $X - 0$ but not at $0$.

In both cases, $x f'$ can however be defined and is continuous on $X$.
\end{example}



As usual, we define the interior product $i_{\xi}$ by $(i_{\xi} \omega)(-) = \omega(\xi, -)$. We then define the \emph{Lie derivative $L_{\xi}$ along $\xi$} by
$$L_{\xi} = d \, i_{\xi} + i_{\xi} \, d$$
(cf. \cite[Proposition 2.25. (d)]{Warner}).

This operator is not necessarily defined for all the local sections of $\Omega$ nor of $\Omega_{PA}$. The next lemma gives an instance when a Lie derivative of a PA-form is a PA-form.

\begin{lemma}[Lie derivative under the integral sign]\label{differentiation under the integral sign} Given a PA-form $\alpha = \int_{\gamma} \omega$ with $\pi : Y \to X$, if a vector field $\xi$ admits a lift $\widetilde{\xi}$ to $Y$; i.e., $d\pi \circ \widetilde{\xi} = \xi \circ \pi$ and if $i_{\widetilde{\xi}} \, \omega$ is in $\Omega(Y)$, then
$$i_{\xi} \alpha = (-1)^r \int_{\gamma} i_{\widetilde{\xi}} \, \omega.$$
As a corollary, if $i_{\widetilde{\xi}} \, d\omega$ is also in $\Omega(Y)$, then
$$L_{\xi} \, \alpha = \int_{\gamma} L_{\widetilde \xi} \,\omega$$
and thus is in $\Omega_{PA}(Y)$.
\end{lemma}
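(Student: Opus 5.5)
The plan is to reduce to the generic, locally trivial situation used in the proofs of Proposition \ref{PA approximation} and Proposition \ref{FTC}. There we get pointwise identities between smooth forms, and the general statement follows because PA-forms are generically determined (Corollary \ref{cor:PA-form generic}).

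First, both sides of the interior-product identity are PA-forms on $X$. The left side is at least a generically smooth form. The right side is a PA-form because $i_{\widetilde{\xi}}\,\omega\in\Omega(Y)$ by hypothesis, and $\gamma$ is still an $r$-family. So it suffices to check equality on an open dense definable subset $U\subset X_{reg}$. By the sheaf property we may also shrink $X$ around points of $U$. Choosing $U$ as in the proof of Proposition \ref{PA approximation}, we may assume $\pi$ is trivial, $Y=X\times F$. By linearity, $\gamma$ is given by a single PA-map $\widetilde{\gamma}:X\times M\to Y$, and everything is smooth. By the same trick used in Proposition \ref{FTC}, pulling $\omega$ back along $\widetilde{\gamma}$ (with the corresponding lift of $\xi$ to $X\times M$, namely $\xi\oplus0$ up to replacing $\widetilde{\xi}$ by a pushforward-compatible field), we reduce to $\gamma$ constant. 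Then $\alpha(x)=\int_M \omega_x$ is fiber integration along $\mathrm{pr}:X\times M\to X$ with the right-module convention of Lemma \ref{integration along fibers}.

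Next comes a pointwise computation. Write $\omega$ locally as a sum of terms $\eta\wedge \mathrm{pr}^*\sigma$ with $\eta$ of fiber type $r$. Only these terms contribute to $\int_M$. For such a term, the projection formula gives $\int_M \eta\wedge\mathrm{pr}^*\sigma=(\int_M\eta)\,\sigma$. Contracting with the lift $\widetilde{\xi}$, which is $\mathrm{pr}$-related to $\xi$, we have $i_{\widetilde{\xi}}(\eta\wedge\mathrm{pr}^*\sigma)=i_{\widetilde{\xi}}\eta\wedge \mathrm{pr}^*\sigma+(-1)^{r}\eta\wedge \mathrm{pr}^*(i_\xi\sigma)$. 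The first term has fiber degree $r-1$ plus possibly a horizontal-vertical mixed part, and it integrates to zero along the $r$-dimensional fibers. This step needs care: a vertical component of $\widetilde{\xi}$ lowers the fiber degree, while a horizontal contraction of $\eta$ produces a term of fiber degree exactly $r$. I expect this to be the main obstacle. Handling it requires either choosing the decomposition so that $\eta$ is purely vertical (possible after trivializing, since $\Omega(X\times M)$ is generated by pullbacks), or noting that the horizontal part of $\widetilde{\xi}$ is $\xi$ itself. We thus obtain $\int_M i_{\widetilde{\xi}}\omega=(-1)^r(\int_M\eta)\, i_\xi\sigma=(-1)^r i_\xi\alpha$, giving the first formula after fixing signs consistent with the paper's right-module convention.

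For the corollary, apply Proposition \ref{FTC} together with the first formula twice. Write $L_\xi\alpha=d\,i_\xi\alpha+i_\xi\,d\alpha$. Then $i_\xi\alpha=(-1)^r\int_\gamma i_{\widetilde{\xi}}\omega$. Also $d\alpha=(-1)^{r-1}\int_{\partial\gamma}\omega+(-1)^r\int_\gamma d\omega$, and by the hypothesis on $i_{\widetilde{\xi}}d\omega$ we have $i_\xi d\alpha=(-1)^{r-1}(-1)^{r-1}\int_{\partial\gamma}i_{\widetilde{\xi}}\omega+\int_\gamma i_{\widetilde{\xi}}d\omega$. Expanding $d\int_\gamma i_{\widetilde{\xi}}\omega$ by Proposition \ref{FTC}, the boundary terms cancel and the remaining terms combine to $\int_\gamma (d\,i_{\widetilde{\xi}}+i_{\widetilde{\xi}}d)\omega=\int_\gamma L_{\widetilde{\xi}}\omega$. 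Since $L_{\widetilde{\xi}}\omega\in\Omega(Y)$, this is a PA-form.
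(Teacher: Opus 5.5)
Your core computation matches the paper's proof: reduce to $\pi$ trivial on an open dense definable set, write $\omega$ as a sum of $\eta\wedge\pi^*\sigma$ with $\eta$ vertical, then apply the projection formula, the graded Leibniz rule, $i_{\widetilde{\xi}}\pi^*\sigma=\pi^*i_\xi\sigma$ and the vanishing of $\int_\gamma i_{\widetilde{\xi}}\eta$. The Lie derivative then comes from Proposition \ref{FTC}, and your sign bookkeeping for the corollary, with the two $\partial\gamma$ terms cancelling, is correct.

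The genuine gap is the step where you pass to $X\times M$ with the lift $\xi\oplus 0$ ``up to replacing $\widetilde{\xi}$ by a pushforward-compatible field.'' The right-hand side depends on the lift, not only on $\xi$. The forms $\widetilde{\gamma}^*(i_{\widetilde{\xi}}\omega)$ and $i_{\xi\oplus 0}\widetilde{\gamma}^*\omega$ differ by $\widetilde{\gamma}^*(i_v\omega)$, where $v=\widetilde{\xi}\circ\widetilde{\gamma}-d\widetilde{\gamma}(\xi\oplus 0)$ is vertical. Terms of $\omega$ of fibre degree $r+1$ on $Y$ are invisible in $\int_\gamma\omega$ but do contribute after contraction with $v$.

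Here are two concrete failures:
\begin{enumerate}
\item First formula. Take $Y=X\times[-1,1]^2$, $\gamma(x)=\{x\}\times[0,1]\times\{0\}$, $\omega=dy_2\wedge dy_1$, $\xi=\partial_x$ and $\widetilde{\xi}=\partial_x+(1-y_2^2)\partial_{y_2}$. Then $\alpha=0$, but $\int_\gamma i_{\widetilde{\xi}}\omega=\int_0^1 dy_1=1$.
\item Corollary, even with fibres of dimension exactly $r$. Take $Y=X\times[-1,2]$, $\gamma(x)=\{x\}\times[0,1]$, $\omega=y\,dy$ and $\widetilde{\xi}=\partial_x+\tfrac12(y+1)(2-y)\partial_y$. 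Then $L_\xi\alpha=0$, but $\int_\gamma L_{\widetilde{\xi}}\omega=\bigl[\tfrac12 y(y+1)(2-y)\bigr]_0^1=1$. This fails because the derivation applies the first formula to the $(r-1)$-family $\partial\gamma$, whose fibres in $Y$ are $r$-dimensional.
\end{enumerate}

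So this step cannot be closed without an extra hypothesis, and the paper's own proof has the same hidden assumption. Its phrase ``$\eta$ an $s$-form with $s\le r$'' presupposes $r$-dimensional fibres, i.e.\ replacing $Y$ by the support $Z$ as in Lemma \ref{integration along fibers}. That replacement is legitimate only if $\widetilde{\xi}$ is tangent to $Z$. Its appeal to Proposition \ref{FTC} also uses the first formula for $\partial\gamma$.

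The natural repair is to require $\widetilde{\xi}$ to be compatible with the family: $\widetilde{\xi}\circ\widetilde{\gamma}=d\widetilde{\gamma}\circ(\xi\oplus 0)$ for each defining map $\widetilde{\gamma}:X\times M\to Y$. Under that hypothesis your reduction is exact, because:
\begin{enumerate}
\item $\widetilde{\gamma}^*i_{\widetilde{\xi}}=i_{\xi\oplus 0}\widetilde{\gamma}^*$ and $\widetilde{\gamma}^*L_{\widetilde{\xi}}=L_{\xi\oplus 0}\widetilde{\gamma}^*$;
\item the term $i_{\xi\oplus 0}\eta$ you worried about vanishes identically for vertical $\eta$;
\item $\xi\oplus 0$ restricts to a lift on $X\times\partial M$.
\end{enumerate}
With this, the rest of your argument, including the corollary, goes through.
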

\begin{proof} For the first assertion, it suffices to show that on some open dense definable subset. Thus, we can assume $\pi$ is locally trivial. Then, by the sheaf property, it is enough to show the case when $\pi$ is trivial.

If $\pi$ is trivial, then $\omega$ is a $\mathbb{Z}$-linear combination of forms of the form $\eta \wedge \pi^* \sigma$, $\eta$ an $s$-form with $s \le r$ and we only need to consider the term with $s = r$, since otherwise the both sides vanish. By the projection formula and the Leibniz rule, we have:
$$i_{\xi} \int_{\gamma} \eta \wedge \pi^* \sigma = i_{\xi} \left( \left(\int_{\gamma} \eta \right) \sigma \right) = \left(\int_{\gamma} \eta \right) i_{\xi} \sigma = \int_{\gamma} \eta \wedge \pi^* i_{\xi} \sigma.$$
We have $i_{\widetilde{\xi}}(\pi^* \sigma) = \pi^*(i_{\xi} \sigma)$ and so, by the Leibniz rule and the trivial fact $\int_{\gamma} i_{\widetilde{\xi}} \eta = 0$, it equals:
$$(-1)^r \int_{\gamma} i_{\widetilde{\xi}} (\eta \wedge \pi^* \sigma).$$
The formula for the Lie derivative then follows from Proposition \ref{FTC}.
\end{proof}

We record

\begin{conjecture} If $f$ is a $PA$-form on $X$, then $P f$ is in $\Omega(X)$ for some differential operator $P$.
\end{conjecture}

Roughly, the conjecture should hold since a PA-form is obtained by integrating forms in $\Omega(X)$. If the above conjecture is true, then in particular we can conclude that the the exponential function $z \mapsto e^z$ is not in $\Omega_{PA}(X)$ with $X = \mathbb{C}$.

\footnotesize{
\bibliographystyle{alpha}
\bibliography{bib}
}

\end{document}